\newif\ifpdf
\newtheorem{presumption}[theorem]{Assumption} %%%Added (Hannes)
\newtheorem{algorithm}[theorem]{Algorithm}     %%%Added (Hannes)
\newcommand{\komma}{\textnormal{~,}}
\newcommand{\punkt}{\textnormal{~.}}
\newcommand{\diff}{\mathrm{d}}
\newcommand{\xor}{\textnormal{or}}
\newcommand{\lambdamin}{\lambda_{\min}}
\newcommand{\lambdamax}{\lambda_{\max}}
\newcommand{\mathalf}{\frac{1}{2}}
\newcommand{\matquarter}{\frac{1}{4}}
\newcommand{\WpBound}{\chi_1}
\newcommand{\WpInverseBound}{C_0}
\newcommand{\kappaBound}{\chi_0}
\newcommand{\Fscal}{c}
\renewcommand{\forall}{\textnormal{for all }}
\newcommand{\zeroVector}[1]{0}
\newcommand{\zeroMatrix}[1]{0}
\newcommand{\Sym}[1]{\mathbb{R}_{\mathrm{sym}}^{#1\times#1}}
\newcommand{\Rpos}{\mathbb{R}_{\geq0}}
\newcommand{\subhessian}[2]{\partial^2{#1}{(#2)}}
\definecolor{gold}{rgb}{0.85,.66,0}
\newcommand{\GenaueAngabeOne}{}
\newcommand{\GenaueAngabeTwo}{p.~25~ff,~Chapter~3}  %D
\newcommand{\GenaueAngabeThree}{}
\newcommand{\GenaueAngabeFour}{}
\newcommand{\GenaueAngabeFive}{}
\newcommand{\GenaueAngabeSix}{}
\newcommand{\refh}[1]{\textnormal{(\ref{#1})}}   %as we know from equation \refh{Eq:Number1}
\newcommand{\refH}[1]{\textnormal{\ref{#1}}}     %as we know from Proposition \refH{Prop:Number1}
\newcommand{\textH}[1]{\textnormal{#1}}
\def\fullTitle{{A feasible second order bundle algorithm for nonsmooth, nonconvex optimization problems with inequality constraints: I. Derivation and convergence}}
\def\AuthorOne{{Hannes Fendl}}
\def\AuthorThree{{Hermann Schichl}}
	\newcommand{\cbstartDVI}{\cbstart}
	\newcommand{\cbendDVI}{\cbend}
	\newcommand{\cbeqDVItwo}{{}}
	\newcommand{\cbstartMIFFLIN}{\cbcolor{green}\cbstart}  %für Vorschlag von Mifflin
	\newcommand{\cbendMIFFLIN}{\cbend}      %für Vorschlag von Mifflin	
	\newcommand{\cbmathMIFFLIN}{~\colorbox{green}{\color{green} MIF}}  %für Vorschlag von Mifflin
	\renewcommand{\cbmathMIFFLIN}{{}}
	\renewcommand{\cbstartDVI}{{}}          %für Vorschlag von Mifflin (um Überschneidungen der changebars zu vermeiden)
	\renewcommand{\cbendDVI}{{}}            %für Vorschlag von Mifflin (um Überschneidungen der changebars zu vermeiden)	
	\newcommand{\cbstartBLUE}{\cbcolor{blue}\cbstart}
	\newcommand{\cbendBLUE}{\cbend}
	\newcommand{\cbmathBLUE}{}                                    %Um die "math"-changebar-Symbols für ASUS-Änderungen auszuschalten
  \newcommand{\cbstartDVI}{{}}
	\newcommand{\cbendDVI}{{}}
	\newcommand{\cbeqDVItwo}{{}}
	\newcommand{\cbstartMIFFLIN}{{}}  %für Vorschlag von Mifflin
	\newcommand{\cbendMIFFLIN}{{}}    %für Vorschlag von Mifflin	
\newlength{\widebarargwidth}
\newlength{\widebarwidth}
\newlength{\widebarargheight}
\newlength{\widebarargdepth}
\DeclareRobustCommand{\widebar}[1]{%
  \settowidth{\widebarargwidth}{\ensuremath{#1}}%
  \settoheight{\widebarargheight}{\ensuremath{#1}}%
  \settodepth{\widebarargdepth}{\ensuremath{#1}}%
  \addtolength{\widebarargwidth}{-0.2\widebarargheight}%
  \addtolength{\widebarargwidth}{-0.2\widebarargdepth}%
  \makebox[0pt][l]{\addtolength{\widebarargheight}{0.3ex}%
    \hspace{0.2\widebarargheight}%
    \hspace{0.2\widebarargdepth}%
    \hspace{0.5\widebarargwidth}%
    \setlength{\widebarwidth}{0.6\widebarargwidth}%
    \addtolength{\widebarwidth}{0.3ex}%
    \makebox[0pt][c]{\rule[\widebarargheight]{\widebarwidth}{0.1ex}}}%
  {#1}}
\newcommand{\widebarMatrix}{\bar}
\newcounter{PAGEREFERENCEcounter:Paper:BundleAlgorithms}
\newcommand{\emptyh}[1]{}
\begin{document}

\title{\fullTitle}
\author{\AuthorOne\thanks{This research was supported by the Austrian Science Found (FWF) Grant Nr.~P22239-N13.} \and \AuthorThree}
\institute{Faculty of Mathematics, University of Vienna, Austria\\
  Oskar-Morgenstern-Pl.~1, A-1090 Wien, Austria\\
\email{hermann.schichl@univie.ac.at}}

%%%HANNES%%%
%\tableofcontents
%\newpage
%%%HANNES%%%

%%%%%%%%%%%%%%%%%%%%%%%%%%%%%%%%%%%%%%%%%%%%%%%%%%%%%%%%%%%%%%%%%%%%%%%%%%%%%%%%%%%%%%%%%%%%%%%%%%%%%%%%%%%%%%%%%%%%%%%%
% TITLE %
\maketitle

%%%%%%%%%%%%%%%%%%%%%%%%%%%%%%%%%%%%%%%%%%%%%%%%%%%%%%%%%%%%%%%%%%%%%%%%%%%%%%%%%%%%%%%%%%%%%%%%%%%%%%%%%%%%%%%%%%%%%%%%
% ABSTRACT %
\begin{abstract}
This paper extends the SQP-approach of the well-known bundle-Newton method for nonsmooth unconstrained minimization to the nonlinearly constrained case. Instead of using a penalty function or a filter or an improvement function to deal with the presence of constraints, the search direction is determined by solving a convex quadratically constrained quadratic program to obtain good iteration points.
%AN%Issues arising with this procedure concerning the bundling process as well as in the line search are discussed.
Furthermore, global convergence of the method is shown under certain mild assumptions.
\end{abstract}
%%%%%%%%%%%%%%%%%%%%%%%%%%%%%%%%%%%%%%%%%%%%%%%%%%%%%%%%%%%%%%%%%%%%%%%%%%%%%%%%%%%%%%%%%%%%%%%%%%%%%%%%%%%%%%%%%%%%%%%%

\keywords{Nonsmooth optimization, nonconvex optimization, bundle method}
\subclass{90C56, 49M37, 90C30}

%%%HANNES%%%
%\tableofcontents
%%%HANNES%%%

\section{Introduction}
%%%%%%%%%%%%%%%%%%%%%%%%%%%%%%%%%%%%%%%%%%%%%%%%%%%%%%%%%%%%%%%%%%%%%%%%%%%%%%%%%%%%%%%%%%%%%%%%%%%%%%%%%%%%%%%%%%%%%%%%
Nonsmooth optimization addresses to solve the optimization problem
\begin{equation}
\begin{split}
&\min{f(x)}\\
&\textnormal{ s.t. }F_i(x)\leq0~~~\forall i=1,\dots,m\komma
\label{AOB:OptProb}
\end{split}
\end{equation}
where $f,F_i:\mathbb{R}^n\longrightarrow\mathbb{R}$ are locally Lipschitz continuous.
%%%%%%%%%%%%%%%%%%%%%%%%%%%%%%%%%%%%%%%%%%%%%%%%%%%%%%%%%%%%%%%%%%%%%%%%%%%%%%%%%%%%%%%%%%%%%%%%%%%%%%%%%%%%%%%%%%%%%%%%
%AN%\begin{remark}
%%%PAPER%%%\label{remark:OneNonsmoothConstraint}
%iff statt if and only if
Since $F_i(x)\leq0$ for all $i=1,\dots,m$ if and only if $F(x):=\max_{i=1,\dots,m}{\Fscal_iF_i(x)}\leq0$ with constants $\Fscal_i>0$ and since $F$ is still locally Lipschitz continuous (cf., e.g., \citet[p.~969,~Theorem~6~(a)]{Mifflin}, we can always assume $m=1$ in (\ref{AOB:OptProb}). Since we do not take scaling problems of the constraints into account in this paper, we choose $\Fscal_i=1$ for all $i=1,\dots,m$ and therefore we always consider the nonsmooth optimization problem with a single nonsmooth constraint
\begin{equation}
\begin{split}
&\min{f(x)}\\
&\textnormal{ s.t. }F(x)\leq0\komma
\label{BundleSQP:OptProblem}
\end{split}
\end{equation}
where $F:\mathbb{R}^n\longrightarrow\mathbb{R}$ is locally Lipschitz continuous, instead of (\ref{AOB:OptProb}).
%AN%\end{remark}
%%%%%%%%%%%%%%%%%%%%%%%%%%%%%%%%%%%%%%%%%%%%%%%%%%%%%%%%%%%%%%%%%%%%%%%%%%%%%%%%%%%%%%%%%%%%%%%%%%%%%%%%%%%%%%%%%%%%%%%%

Since locally Lipschitz continuous functions are differentiable almost everywhere, both $f$ and $F$ may have kinks and therefore
%AN%\subsection{Problems of smooth solvers on nonsmooth optimization problems}
%%%%%%%%%%%%%%%%%%%%%%%%%%%%%%%%%%%%%%%%%%%%%%%%%%%%%%%%%%%%%%%%%%%%%%%%%%%%%%%%%%%%%%%%%%%%%%%%%%%%%%%%%%%%%%%%%%%%%%%%
%AN%Already
already
the attempt to solve an unconstrained nonsmooth optimization problem by a smooth solver (e.g., by a line search algorithm or by a trust region method) by just replacing the gradient by a subgradient, fails in general (cf., e.g., \citet[p.~461-462]{Zowe}):
%%%COMPACT%%%\begin{itemize}
%%%%%%%%%%%%%%%%%%%%%%%%%%%%%%
%%%COMPACT%%%\item
If
%AN%$g\in\partial f(x)$,
$g$ is an element of the subdifferential $\partial f(x)$,
then the search direction $-g$ does not need to be a direction of descent (contrary to the behavior of the gradient of a differentiable function).
%%%, wie man aus Beispiel \ref{Bsp:SmoothMethodFails1} entnehmen kann.
%%%%%%%%%%%%%%%%%%%%%%%%%%%%%%
%%%COMPACT%%%\item
Furthermore,
%%%COMPACT%%%It
it
can happen that $\lbrace x_k\rbrace$ converges towards a minimizer $\hat{x}$, although the sequence of gradients $\lbrace\nabla f(x_k)\rbrace$ does not converge towards $\zeroVector{n}$ and therefore we cannot identify $\hat{x}$ as a minimizer.
%%%, was an Beispiel \ref{Bsp:SmoothMethodFails2} abzulesen ist.
%%%%%%%%%%%%%%%%%%%%%%%%%%%%%%
%%%COMPACT%%%\item
Moreover,
%%%COMPACT%%%It
it
can happen that $\lbrace x_k\rbrace$ converges towards a point $\hat{x}$, but $\hat{x}$ is not stationary for $f$.
%%%, wie Beispiel \ref{Bsp:SmoothMethodFails3} zeigt.
%%%%%%%%%%%%%%%%%%%%%%%%%%%%%%
%%%COMPACT%%%\end{itemize}
%%%%%%%%%%%%%%%%%%%%%%%%%%%%%%%%%%%%%%%%%%%%%%%%%%%%%%%%%%%%%%%%%%%%%%%%%%%%%%%%%%%%%%%%%%%%%%%%%%%%%%%%%%%%%%%%%%%%%%%%
The reason for these problems
%AN%can be characterized as follows: If
is that if
$f$ is not differentiable at $x$, then the gradient $\nabla f$ is discontinuous at $x$ and therefore $\nabla f(x)$ does not give any information about the behavior of $\nabla f$ in a neighborhood of $x$.
%%%PAPER%%%One approach for dealing with this difficulty is to use information of $f$ at points near $x$ as the so %%%PAPER%%%called bundle algorithms do (cf.~Subsection \ref{section:BundleAlgorithms}).
%%%FINAL%%%(cf.~Section \ref{section:BundleAlgorithms}).
%%%%%%%%%%%%%%%%%%%%%%%%%%%%%%%%%%%%%%%%%%%%%%%%%%%%%%%%%%%%%%%%%%%%%%%%%%%%%%%%%%%%%%%%%%%%%%%%%%%%%%%%%%%%%%%%%%%%%%%%

Not surprisingly, like in smooth optimization, the presence of constraints adds additional complexity, since constructing a descent sequence whose limit satisfies the constraints is (both theoretically and numerically) much more difficult than achieving this aim without the requirement of satisfying any restrictions.
%%%%%%%%%%%%%%%%%%%%%%%%%%%%%%%%%%%%%%%%%%%%%%%%%%%%%%%%%%%%%%%%%%%%%%%%%%%%%%%%%%%%%%%%%%%%%%%%%%%%%%%%%%%%%%%%%%%%%%%%

Methods that are able to solve nonsmooth optimization problems are, e.g., bundle algorithms which force a descent of the objective function by using local knowledge of the function, the R-algorithm by \citet{Shor} or stochastic algorithms that try to approximate the subdifferential. In the following we will present a few implementations of these methods.
~\\
%%%%%%%%%%%%%%%%%%%%%%%%%%%%%%%%%%%%%%%%%%%%%%%%%%%%%%%%%%%%%%%%%%%%%%%%%%%%%%%%%%%%%%%%%%%%%%%%%%%%%%%%%%%%%%%%%%%%%%%%
%\subsection{Bundle algorithms}
%\label{section:BundleAlgorithms}
%%%%%%%%%%%%%%%%%%%%%%%%%%%%%%%%%%%%%%%%%%%%%%%%%%%%%%%%%%%%%%%%%%%%%%%%%%%%%%%%%%%%%%%%%%%%%%%%%%%%%%%%%%%%%%%%%%%%%%%%
%%%\textbf{\footnotesize[s. SCHRAMM \cite{Schramm}, S. 13]}
%%%%%%%%%%%%%%%%%%%%%%%%%%%%%%%%%%%%%%%%%%%%%%%%%%%%%%%%%%%%%%%%%%%%%%%%%%%%%%%%%%%%%%%%%%%%%%%%%%%%%%%%%%%%%%%%%%%%%%%%
%convex, unconstrained!!!!!
\textbf{Bundle algorithms}.\refstepcounter{PAGEREFERENCEcounter:Paper:BundleAlgorithms}\label{Paper:BundleAlgorithms} Bundle algorithms are iterative methods for solving nonsmooth optimization problems. They only need to compute one element $g$ of the subdifferential $\partial f(x)$ per iteration, which in practice is easily computable by algorithmic differentiation (cf., e.g., \citet{Griewank}). For computing the search direction, they collect information about the function (e.g., subgradients) from previous iterations. This collected information is referred to as ``the bundle''.
%%%
\begin{comment}
Bundle algorithms work according to the following recipe which reads at iteration $k$, where we describe the unconstrained case for easiness of exposition:
\begin{itemize}
%%%%%%%%%%%%%%%%%%%%%%%%%%%%%%
\item Compute a subgradient $g_k\in\partial f(x_k)$.
%%%%%%%%%%%%%%%%%%%%%%%%%%%%%%
\item Compute a search direction $d_k$ by using all information about the function which has been accumulated already.
%%%%%%%%%%%%%%%%%%%%%%%%%%%%%%
\item By using a line search or a trust region approach, determine the next iteration point $x_{k+1}$ and enlarge/improve the information about the function:
%%%%%%%%%%%%%%%%%%%%%%%%%%%%%%
\begin{itemize}
%%%%%%%%%%%%%%%
\item If we can find $t>0$ such that $f$ achieves sufficient descent at the trial point $y_{k+1}:=x_k+td_k$, we perform a ``serious step'':
\begin{itemize}
%%%%%%%%
\item Set $x_{k+1}:=y_{k+1}$ (i.e.~we take the trial point as new iteration point).
%%%%%%%%
\item Add a new subgradient $g_{k+1}\in\partial f(x_{k+1})$ to the bundle.
%%%%%%%%
\end{itemize}
%%%%%%%%%%%%%%%
\item If we cannot find such a $t>0$, we perform a ``null step'':
\begin{itemize}
%%%%%%%%
\item Set $x_{k+1}:=x_k$ (i.e.~we stay at at the current iteration point).
%%%%%%%%
\item Determine $t>0$ such that adding a new subgradient $g_{k+1}\in\partial f(x_{k+1})$ to the bundle yields a ``meaningful extension of the bundle''.
%%%%%%%%
\end{itemize}
%%%%%%%%%%%%%%%
\end{itemize}
%%%%%%%%%%%%%%%%%%%%%%%%%%%%%%
\end{itemize}
%%%%%%%%%%%%%%%%%%%%%%%%%%%%%%%%%%%%%%%%%%%%%%%%%%%%%%%%%%%%%%%%%%%%%%%%%%%%%%%%%%%%%%%%%%%%%%%%%%%%%%%%%%%%%%%%%%%%%%%%
\end{comment}
%%%%%%%%%%%%%%%%%%%%%%%%%%%%%%%%%%%%%%%%%%%%%%%%%%%%%%%%%%%%%%%%%%%%%%%%%%%%%%%%%%%%%%%%%%%%%%%%%%%%%%%%%%%%%%%%%%%%%%%%

%%%PAPER%%%\begin{remark}
As in smooth optimization, convex nonsmooth optimization is much easier than nonconvex nonsmooth optimization as well in theory as in practice because
%%%%%%%%%%%%%%%
convex functions only have global minimizers and
%%%%%%%%%%%%%%%
%\item The cutting plane approximation of a convex function always yields an underestimation (cf. %(\ref{proposition:SubdifferentialForConvexFunctions})) which in particular simplifies convergence analysis.
the cutting plane approximation of a convex function always yields an underestimation which in particular simplifies convergence analysis.
%%%%%%%%%%%%%%%
A good introduction to nonsmooth optimization which treats the convex, unconstrained case in great detail is \citet[p.~106~ff]{Bonn}. Moreover, very detailed standard references for nonsmooth nonconvex optimization are
\citet{KiwielBook} and \citet{MakelaBook}, which both in particular discuss constrained problems extensively.
%%%PAPER%%%\end{remark}
%%%%%%%%%%%%%%%%%%%%%%%%%%%%%%%%%%%%%%%%%%%%%%%%%%%%%%%%%%%%%%%%%%%%%%%%%%%%%%%%%%%%%%%%%%%%%%%%%%%%%%%%%%%%%%%%%%%%%%%%

Now we give a brief overview over a few bundle algorithms. We start this overview with the following bundle algorithms that support nonconvex constraints:
%%%COMPACT%%%\begin{itemize}
%%%%%%%%%%%%%%%%%%%%%%%%%%%%%%
%%%COMPACT%%%\item
The multiobjective proximal bundle method for nonconvex nonsmooth optimization (MPBNGC) by \citet{MPBNGC} is a first order method that uses the improvement function
%Paper%\begin{equation}
$h_{x_k}(x):=\max{\big(f(x)-f(x_k),F(x)\big)}$
%Paper%\label{AN:ImprovementFunction}
%Paper%\punkt
%Paper%\end{equation}
for the handling of the constraints. Further details about the proximal bundle method can be found in \citet{MakelaBook}.
%%%%%%%%%%%%%%%%%%%%%%%%%%%%%%
%%%COMPACT%%%\item
The algorithms in \citet{MifflinLaxenburg,Mifflin,MifflinNB} support a nonconvex objective function as well as nonconvex constraints (cf.~Remark \ref{AN:AOB:remark:Mifflin}).
%%%%%%%%%%%%%%%%%%%%%%%%%%%%%%
%%%COMPACT%%%\item
NOA by \citet{KiwielNOA} is a nonsmooth optimization algorithm that handles nonconvex constraints by using a penalty function or an improvement function, while in the special case of convex constraints it offers an alternative treatment by the constraint linearization technique by \citet{KiwielCLS}.
%%%%%%%%%%%%%%%%%%%%%%%%%%%%%%
%%%COMPACT%%%\item
The limited memory bundle algorithm for inequality constrained nondifferentiable optimization by \citet{Karmitsa} combines LMBM by \citet{KarmitsaLMBM} with the feasible directions interior point technique by \citet{Herskovits,HerskovitsSantos} for dealing with the constraints. The search direction is determined by solving a linear system.
%%%%%%%%%%%%%%%%%%%%%%%%%%%%%%
%%%COMPACT%%%\end{itemize}

%%%%%%%%%%%%%%%%%%%%%%%%%%%%%%%%%%%%%%%%%%%%%%%%%%%%%%%%%%%%%%%%%%%%%%%%%%%%%%%%%%%%%%%%%%%%%%%%%%%%%%%%%%%%%%%%%%%%%%%%
In addition a few bundle algorithms can only handle convex constraints:
%%%COMPACT%%%\begin{itemize}
%%%%%%%%%%%%%%%%%%%%%%%%%%%%%%
%%%COMPACT%%%\item
The bundle trust algorithm by \citet{Schramm,SchrammOfficialPublication}, which also supports a nonconvex objective function, handles the constraints by using the constraint linearization technique by \citet{KiwielCLS}.
%%%%%%%%%%%%%%%%%%%%%%%%%%%%%%
%%%COMPACT%%%\item
The bundle filter algorithm by \citet{FleLey} is only applicable to convex optimization problems and it computes the search direction by solving
%neu%LP.
a linear program.
%%%%%%%%%%%%%%%%%%%%%%%%%%%%%%
%%%COMPACT%%%\item
The bundle-filter method for nonsmooth convex constrained optimization by \citet{Sag1} is based on the improvement function.
%Paper%from (\ref{AN:ImprovementFunction}).
%%%%%%%%%%%%%%%%%%%%%%%%%%%%%%
%%%COMPACT%%%\item
The infeasible bundle method for nonsmooth convex constrained optimization by \citet{Sag2} is also based on the improvement function,
%Paper%from (\ref{AN:ImprovementFunction}),
but it uses neither a penalty function nor a filter.
%%%%%%%%%%%%%%%%%%%%%%%%%%%%%%
%%%COMPACT%%%\end{itemize}

%%%%%%%%%%%%%%%%%%%%%%%%%%%%%%%%%%%%%%%%%%%%%%%%%%%%%%%%%%%%%%%%%%%%%%%%%%%%%%%%%%%%%%%%%%%%%%%%%%%%%%%%%%%%%%%%%%%%%%%%
Moreover, there are some bundle algorithms that support at most linear constraints:
%%%COMPACT%%%\begin{itemize}
%%%%%%%%%%%%%%%%%%%%%%%%%%%%%%
%%%COMPACT%%%\item
The variable metric bundle method PVAR by \citet{LuksanPVARconvex,LuksanPVAR} can solve nonsmooth linearly constrained problems with a nonconvex objective function.
%s. haamiemak2004.pdf für Referenz von PVAR & PBUN-Publikationen
%%%COMPACT%%%\item
The implementation PBUN of the proximal bundle method by \citet{LuksanPBUN,VlcekPBUN,LuksanPBUNandBNEW} optimizes a nonconvex objective function, where the feasible set is given by linear constraints.
%s. haamiemak2004.pdf für Referenz von PVAR & PBUN-Publikationen; LuksanPBUNandBNEW habe ich selbst hinzugefügt
%%%%%%%%%%%%%%%%%%%%%%%%%%%%%%
%%%COMPACT%%%\item
The proximal bundle method by \citet{KiwielRestrictedStep}, which is based on a restricted step concept, can handle a nonconvex objective function and linear constraints.
%%%%%%%%%%%%%%%%%%%%%%%%%%%%%%
%%%COMPACT%%%\item
The focus of the limited memory bundle method LMBM by \citet{KarmitsaLMBM,KarmitsaLBMBpublication1,KarmitsaLBMBpublication2} is the treatment of large-scale nonsmooth nonconvex unconstrained optimization problems.
This is done by combining ideas from the variable metric bundle method
%PVAR
\citet{LuksanPVARconvex,LuksanPVAR}
and limited memory variable metric methods by, e.g, \citet{Byrd}. Its bound constraint version is presented in \citet{KarmitsaLBMBpublication1BC,KarmitsaLBMBpublication2BC}.
%%%%%%%%%%%%%%%%%%%%%%%%%%%%%%
%%%COMPACT%%%\end{itemize}

%%%%%%%%%%%%%%%%%%%%%%%%%%%%%%%%%%%%%%%%%%%%%%%%%%%%%%%%%%%%%%%%%%%%%%%%%%%%%%%%%%%%%%%%%%%%%%%%%%%%%%%%%%%%%%%%%%%%%%%%
All algorithms mentioned above only use first order information of the objective function and the constraints as input. Nevertheless, there are some very interesting
%neu%unconstrained
bundle methods, since they are Newton-like methods (at least in some sense) and which only support the handling of linear constraints yet as far as I know (except for putting the objective function and the constraints into a penalty function with a fixed penalty parameter and then applying the unconstrained algorithm to the penalty function):
%%%COMPACT%%%\begin{itemize}
%%%%%%%%%%%%%%%%%%%%%%%%%%%%%%
%%%COMPACT%%%\item
The quasi-Newton bundle-type method for nondifferentiable convex optimization by \citet{MSQ} generalizes the idea of Quasi-Newton methods to nonsmooth optimization
%%%MSQ-Paper, S.17: F_M is strongly convex if and only if f is strontly convex 
and it converges superlinearly for strongly convex functions (and some additional technical assumptions).
%%%%%%%%%%%%%%%%%%%%%%%%%%%%%%
%%%COMPACT%%%\item
The bundle-Newton method for nonsmooth unconstrained minimization by \citet{Luksan} supports a nonconvex objective function,
%AN%and linear constraints
it is based on an SQP-approach, and it is the only method for solving nonsmooth optimization problems that I know which uses Hessian information. Furthermore, its rate of convergence is superlinear for strongly convex, twice times continuously differentiable functions. Moreover, a description of the implementation PNEW of the bundle-Newton method can be found in \citet{LuksanPBUNandBNEW}).
%%%%%%%%%%%%%%%%%%%%%%%%%%%%%%
%%%COMPACT%%%\end{itemize}

%%%%%%%%%%%%%%%%%%%%%%%%%%%%%%%%%%%%%%%%%%%%%%%%%%%%%%%%%%%%%%%%%%%%%%%%%%%%%%%%%%%%%%%%%%%%%%%%%%%%%%%%%%%%%%%%%%%%%%%%
%AN%\begin{remark}
%We will try to extend the SQP-approach of the bundle-Newton method to nonsmooth, nonconvex inequality constraints %by using quadratic information in a very special form in chapter \ref{chapter:SQCPalgorithm}.
In this paper we extend
%AN%the SQP-approach of
the bundle-Newton method to a second order bundle algorithm
%AN%, where SQCQP is an abbreviation for ``sequential quadratically constrained quadratic programming'',
for nonsmooth, nonconvex inequality constraints by using additional quadratic information:
%%%COMPACT%%%\begin{itemize}
%%%COMPACT%%%\item
We use second order information of the constraint (cf.~(\ref{BundleSQP:OptProblem})).
%AN%(without loss of generality, we can always assume in nonsmooth  optimization that we have a single constraint since %AN%the maximum of finitely many locally Lipschitz continuous function is locally Lipschitz continuous --- and this %AN%is the important class of nonsmooth functions we deal with).
%%%COMPACT%%%\item
Furthermore,
%%%COMPACT%%%We
we
use the SQP-approach of the bundle-Newton method for computing the search direction for the constrained case and combine it with the idea of quadratic constraint approximation, as it is used, e.g., in the sequential quadratically constrained quadratic programming method by \citet{Solodov} (this method is not a bundle method), in the hope to obtain good feasible iterates, where we only accept strictly feasible points as serious steps. Therefore, we have to solve a strictly feasible convex QCQP for computing the search direction
\cbstartMIFFLIN
(Note that this approach also yields a generalization of the original bundle-Newton method in the unconstrained case).
\cbendMIFFLIN
%%%COMPACT%%%\item
Using such a QCQP for computing the search direction yields a line search condition for accepting infeasible points as trial points (which is different to that in, e.g., \citet{MifflinNB}).
%%%COMPACT%%%\item
One of the most important properties of the convex QP (that is used to determine the search direction) with respect to a bundle method is its strong duality (e.g., for a meaningful termination criterion, for global convergence,\dots) which is also true in the case of strictly feasible convex QCQPs
%Paper%(cf.~Section \ref{section:GlobalConvergence}).
(cf.~Subsection \ref{section:GlobalConvergence}).
%%%COMPACT%%%\end{itemize}

For Numerical results we refer the reader to \citet[\GenaueAngabeOne]{HannesPaperA}. Proofs that are presented in this paper can be looked up in explicit detail in \citet[\GenaueAngabeTwo]{HannesDissertation}.\\
%AN%\end{remark}
%%%%%%%%%%%%%%%%%%%%%%%%%%%%%%%%%%%%%%%%%%%%%%%%%%%%%%%%%%%%%%%%%%%%%%%%%%%%%%%%%%%%%%%%%%%%%%%%%%%%%%%%%%%%%%%%%%%%%%%%
%\subsection{Other algorithms for nonsmooth optimization}
%%%%%%%%%%%%%%%%%%%%%%%%%%%%%%%%%%%%%%%%%%%%%%%%%%%%%%%%%%%%%%%%%%%%%%%%%%%%%%%%%%%%%%%%%%%%%%%%%%%%%%%%%%%%%%%%%%%%%%%%
\textbf{Other algorithms for nonsmooth optimization}. There exist several other methods for solving nonsmooth optimization problems that are not based on the bundle approach or that are no bundle algorithms in the sense as described
%Paper%in Subsection \ref{section:BundleAlgorithms}
on page \pageref{Paper:BundleAlgorithms}.
%Paper%(recall from there that bundle algorithms use a concept of serious and null steps, and they compute exactly one %Paper%subgradient at each iteration).
A few representatives of these methods that support at most linear constraints are:
%%%COMPACT%%%\begin{itemize}
%%%%%%%%%%%%%%%%%%%%%%%%%%%%%%
%%%COMPACT%%%\item
The algorithm PMIN by \citet{LuksanPMIN}, which is based on \citet{LuksanPMINpublication}, solves linearly constrained minimax optimization problems, i.e.~the objective function must be maximum of twice times continuously differentiable functions.
%%%%%%%%%%%%%%%%%%%%%%%%%%%%%%
%%%COMPACT%%%\item
The robust gradient sampling algorithm for nonsmooth nonconvex optimization by \citet{BLO} approximates the whole subdifferential at each iteration (cf.~\citet{BLOapprox}) and does not make null steps.
%neu%in contrast to bundle algorithms --- as described in
%%%FINAL%%%Section
%neu%Subsection
%neu%\ref{section:BundleAlgorithms}.
%Paper%Moreover, its authors recommend to use MOSEK by \citet{AndersenPaper} for solving the QP that must be solved %Paper%for the computation of the search direction in this algorithm.
%Paper%%Einfach pdf-Dokument des Gradient Sampling Alg nach MOSEK durchsuchen
%%%%%%%%%%%%%%%%%%%%%%%%%%%%%%
%%%COMPACT%%%\item
The MATLAB-code HANSO by \citet{OvertonHANSO} combines ideas from BFGS algorithms (cf.~\citet{BFGSbasisForHANSO}) and from the gradient sampling algorithm by \citet{BLO} for solving nonsmooth unconstrained optimization problems.
%%%%%%%%%%%%%%%%%%%%%%%%%%%%%%
%%%COMPACT%%%\item
The derivative-free bundle method (DFBM) by \citet{BagirovDFBM}, where ``derivate-free'' means that no derivate information is used explicitly, can solve linearly constrained nonsmooth problems. The subgradients are approximated by finite differences in this algorithm (cf.~\citet{BagirovApproximation}). DFBM is an essential part of the programming library for global and non-smooth optimization GANSO by \citet{BagirovGANSO}.
%%%%%%%%%%%%%%%%%%%%%%%%%%%%%%
%%%COMPACT%%%\item
The discrete gradient method DGM for nonsmooth nonconvex unconstrained optimization by \citet{BagirovDGM} is a bundle-like method that does not compute subgradients, but approximates them by discrete gradients.
%%%%%%%%%%%%%%%%%%%%%%%%%%%%%%
%%%COMPACT%%%\item
The quasisecant method QSM for minimizing nonsmooth nonconvex functions by \citet{BagirovQSM} combines ideas both from bundle methods and from the gradient sampling method by \citet{BLO}.
%QSM unterstützt nur unconstrained minimization (accdording to readme.txt in QSM.tar.gz)
%%%%%%%%%%%%%%%%%%%%%%%%%%%%%%
%%%COMPACT%%%\end{itemize}

%%%%%%%%%%%%%%%%%%%%%%%%%%%%%%%%%%%%%%%%%%%%%%%%%%%%%%%%%%%%%%%%%%%%%%%%%%%%%%%%%%%%%%%%%%%%%%%%%%%%%%%%%%%%%%%%%%%%%%%%
Furthermore, we want to mention the following solver for nonsmooth convex optimization problems:
%%%COMPACT%%%\begin{itemize}
%%%%%%%%%%%%%%%%%%%%%%%%%%%%%%
%%%COMPACT%%%\item
The oracle based optimization engine OBOE by \citet{VialOBOE} is based on the analytic center cutting plane method by \citet{NesterovForOBOE}, which is an interior point framework.
%s. insbesondere OBOE-FAQ.html in OBOE-1.0.1.tar.bz2
%%%%%%%%%%%%%%%%%%%%%%%%%%%%%%
%%%COMPACT%%%\end{itemize}

%%%%%%%%%%%%%%%%%%%%%%%%%%%%%%%%%%%%%%%%%%%%%%%%%%%%%%%%%%%%%%%%%%%%%%%%%%%%%%%%%%%%%%%%%%%%%%%%%%%%%%%%%%%%%%%%%%%%%%%%
Finally, we list a few algorithms that can also handle nonconvex constraints:
%%%%%%%%%%%%%%%%%%%%%%%%%%%%%%%%%%%%%%%%%%%%%%%%%%%%%%%%%%%%%%%%%%%%%%%%%%%%%%%%%%%%%%%%%%%%%%%%%%%%%%%%%%%%%%%%%%%%%%%%
%%%COMPACT%%%\begin{itemize}
%%%%%%%%%%%%%%%%%%%%%%%%%%%%%%
%%%COMPACT%%%\item
The robust sequential quadratic programming algorithm extends the gradient sampling algorithm by \citet{CO} for nonconvex, nonsmooth constrained optimization.
%%%%%%%%%%%%%%%%%%%%%%%%%%%%%%
%%%COMPACT%%%\item
SolvOpt by \citet{SolvOptPublication} is an implementation of the R-algorithm by \citet{Shor}. It handles the constraints by automatically adapting the penalty parameter.
%%%%%%%%%%%%%%%%%%%%%%%%%%%%%%
%%%COMPACT%%%\item
\texttt{ralg} by \citet{Kroshko} is another implementation of the R-algorithm by \citet{Shor} that is only available in (the interpreted programming language) Python. The constraints are handled by a filter technique.
%%%%%%%%%%%%%%%%%%%%%%%%%%%%%%
%%%COMPACT%%%\end{itemize}
%%%%%%%%%%%%%%%%%%%%%%%%%%%%%%%%%%%%%%%%%%%%%%%%%%%%%%%%%%%%%%%%%%%%%%%%%%%%%%%%%%%%%%%%%%%%%%%%%%%%%%%%%%%%%%%%%%%%%%%%
\begin{remark}
\citet{KarmitsaComparison} gives a brief, excellent description of the main ideas (including very good readable pseudo code) of many of the unconstrained methods resp.~the unconstrained versions of the methods which we mentioned
%Paper%in Subsection \ref{section:BundleAlgorithms} and in this subsection.
above (for further information visit the online decision tree for nonsmooth optimization software by \citet{KarmitsaDecisionTree}).
\end{remark}
%%%%%%%%%%%%%%%%%%%%%%%%%%%%%%%%%%%%%%%%%%%%%%%%%%%%%%%%%%%%%%%%%%%%%%%%%%%%%%%%%%%%%%%%%%%%%%%%%%%%%%%%%%%%%%%%%%%%%%%%
The paper is organized as follows: In Section \ref{Paper:OptimizationTheory} we recall the basics of an SQP-method which is a common technique in smooth optimization and we summarize the most important facts about nonsmooth optimization theory. In Section \ref{Paper:DerivationOfTheMethod} we give the theoretical foundation of our second order bundle algorithm and afterwards we present the algorithm and the line search in detail. Finally, we show the convergence of the line search and the global convergence of the algorithm in Section \ref{Paper:Convergence}.
%%%%%%%%%%%%%%%%%%%%%%%%%%%%%%%%%%%%%%%%%%%%%%%%%%%%%%%%%%%%%%%%%%%%%%%%%%%%%%%%%%%%%%%%%%%%%%%%%%%%%%%%%%%%%%%%%%%%%%%%

Throughout the paper we use the following notation:
%AN%
\begin{comment}
%AN%
We denote the binary operation ``Either\dots or\dots'' (excluding or) by $\xor$. We denote the zero vector in $\mathbb{R}^n$ by $o_n$ and the zero matrix in $\mathbb{R}^{n\times n}$ by $O_{n\times n}$.
%AN%
\end{comment}
%AN%
We denote the non-negative real numbers by $\Rpos:=\lbrace x\in\mathbb{R}:~x\geq0\rbrace$. We denote the space of all symmetric $n\times n$-matrices by $\Sym{n}$. For $x\in\mathbb{R}^n$ we denote the Euclidean norm of $x$ by $\lvert x\rvert$,
%PaperB%for
%PaperB%$1\leq i\leq j\leq n$ we define the (MATLAB-like) colon operator
%PaperB%$x_{i:j}:=(x_i,\dots x_j)$,
and for $A\in\mathrm{Sym(n)}$ we denote the spectral norm of $A$ by $\lvert A\rvert$. Furthermore, we denote the smallest resp.~the largest eigenvalue of a positive definite matrix $A\in\mathbb{R}^{n\times n}$ by $\lambdamin(A)$ resp.~$\lambdamax(A)$.
%%%%%%%%%%%%%%%%%%%%%%%%%%%%%%%%%%%%%%%%%%%%%%%%%%%%%%%%%%%%%%%%%%%%%%%%%%%%%%%%%%%%%%%%%%%%%%%%%%%%%%%%%%%%%%%%%%%%%%%%
\cbstartDVI
%%%%%%%%%%%%%%%%%%%%%%%%%%%%%%%%%%%%%%%%%%%%%%%%%%%%%%%%%%%%%%%%%%%%%%%%%%%%%%%%%%%%%%%%%%%%%%%%%%%%%%%%%%%%%%%%%%%%%%%%
%%%PaperB%%%\begin{proposition}
%%%PaperB%%%\label{GolubVanLoan:Satz:PositivDefiniteMatrixEigenwerte}
%%%PaperB%%%Let $A\in\mathbb{R}^{n\times n}$ be positive definite. Then all eigenvalues of $A$ are positive and
Therefore, if $A$ is positive definite, we have
\begin{equation}
\lvert A\rvert=\sqrt{\lambdamax(A)}
\label{GolubVanLoan:Satz:SpektralNormVonPositivDefiniterMatrix}
%%%PaperB%%%\punkt
\end{equation}
%%%PaperB%%%\end{proposition}
%%%%%%%%%%%%%%%%%%%%%%%%%%%%%%%%%%%%%%%%%%%%%%%%%%%%%%%%%%%%
%%%PaperB%%%\begin{proof}
%%%PaperB%%%The proof of the positivity of the eigenvalues can be found in %%%PaperB%%%\citet[p.~394,~Follow~up~of~Theorem~8.1.2]{GolubVanLoanMatrixCompuations}.
%%%PaperB%%%%im pdf-File zum direkten Hinspringen: S.423
%%%PaperB%%%Combining this with (\ref{GolubVanLoan:Satz:DefSpektralNorm}) directly yields %%%PaperB%%%(\ref{GolubVanLoan:Satz:SpektralNormVonPositivDefiniterMatrix}).
%%%PaperB%%%\end{proof}
(cf., e.g., \citet[p.~394,~Follow~up~of~Theorem~8.1.2]{GolubVanLoanMatrixCompuations}).
%%%%%%%%%%%%%%%%%%%%%%%%%%%%%%%%%%%%%%%%%%%%%%%%%%%%%%%%%%%%%%%%%%%%%%%%%%%%%%%%%%%%%%%%%%%%%%%%%%%%%%%%%%%%%%%%%%%%%%%%
\cbendDVI
%%%%%%%%%%%%%%%%%%%%%%%%%%%%%%%%%%%%%%%%%%%%%%%%%%%%%%%%%%%%%%%%%%%%%%%%%%%%%%%%%%%%%%%%%%%%%%%%%%%%%%%%%%%%%%%%%%%%%%%%

%%%%%%%%%%%%%%%%%%%%%%%%%%%%%%%%%%%%%%%%%%%%%%%%%%%%%%%%%%%%%%%%%%%%%%%%%%%%%%%%%%%%%%%%%%%%%%%%%%%%%%%%%%%%%%%%%%%%%%%%
\section{Optimization theory}
\label{Paper:OptimizationTheory}
%%%%%%%%%%%%%%%%%%%%%%%%%%%%%%%%%%%%%%%%%%%%%%%%%%%%%%%%%%%%%%%%%%%%%%%%%%%%%%%%%%%%%%%%%%%%%%%%%%%%%%%%%%%%%%%%%%%%%%%%
In the following section  we summarize the basics of an SQP-method, since we will approximate a nonsmooth problem by a sequence of smooth problems to derive our algorithm in Section \ref{Paper:DerivationOfTheMethod} and hence we will need some facts about smooth optimization, and we present the most important facts about nonsmooth optimization theory.
%%%%%%%%%%%%%%%%%%%%%%%%%%%%%%%%%%%%%%%%%%%%%%%%%%%%%%%%%%%%%%%%%%%%%%%%%%%%%%%%%%%%%%%%%%%%%%%%%%%%%%%%%%%%%%%%%%%%%%%%
\subsection{Smooth optimality conditions \& SQP}
%%%%%%%%%%%%%%%%%%%%%%%%%%%%%%%%%%%%%%%%%%%%%%%%%%%%%%%%%%%%%%%%%%%%%%%%%%%%%%%%%%%%%%%%%%%%%%%%%%%%%%%%%%%%%%%%%%%%%%%%
\begin{theorem}
%%%PAPER%%%\label{AOB:KJUnglgsNBSystemKomplementaereVektoren:Proposition}
Let $f,F_i:\mathbb{R}^n\longrightarrow\mathbb{R}$ (with $i=1,\dots,m$) be continuously differentiable and $\hat{x}\in\mathbb{R}^n$ be a solution of the smooth optimization problem
\begin{equation}
\begin{split}
&\min{f(x)}\\
&\textnormal{ s.t. }F_i(x)\leq0~~~\forall i=1,\dots,m\punkt
\label{SQP:Def:OptProblem}
\end{split}
\end{equation}
%AN%(i.e.~$F_i(\hat{x})\leq0$ for $i=1,\dots,m$ in particular ).
Then there exist $\kappa\geq0$ and $\lambda\geq\zeroVector{m}$ with
\begin{equation}
  \begin{aligned}
    &\kappa\nabla f(\hat{x})^T+\sum_{i=1}^m{\nabla F_i(\hat{x})^T\lambda_i}=\zeroVector{n}\komma\\
    &\lambda_iF_i(\hat{x})= 0~\forall i=1,\dots,m\komma\\
    &\kappa=1~\xor~(\kappa=0\textnormal{, }\lambda\not=\zeroVector{m})\punkt
\label{AOB:KJUnglgsNBSystemKomplementaereVektoren}
  \end{aligned}
\end{equation}
If all occurring functions are convex, then
%%%%%%%%%%%%%%%%%%%%%%%%%%%%%%
the existence of a strictly feasible $x$ (i.e.~$F(x)<0$) always guarantees $\kappa=1$,
and
%%%%%%%%%%%%%%%%%%%%%%%%%%%%%%
the conditions \refh{AOB:KJUnglgsNBSystemKomplementaereVektoren} are sufficient (for a feasible $\hat{x}$ being a minimizer of \refh{SQP:Def:OptProblem}).
%%%%%%%%%%%%%%%%%%%%%%%%%%%%%%
\end{theorem}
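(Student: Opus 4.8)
The plan is to prove the three assertions separately, the first being the Fritz John-type necessary conditions and the remaining two being convexity statements. For the necessary conditions I would first let $I:=\{i : F_i(\hat{x})=0\}$ denote the active index set and argue that there is \emph{no} direction $d\in\mathbb{R}^n$ satisfying $\nabla f(\hat{x})^T d<0$ together with $\nabla F_i(\hat{x})^T d<0$ for all $i\in I$. Indeed, if such a $d$ existed, a first-order Taylor expansion at $\hat{x}$ would give $f(\hat{x}+td)<f(\hat{x})$ and $F_i(\hat{x}+td)<0$ for all $i\in I$ and all sufficiently small $t>0$, while the inactive constraints stay strictly negative by continuity; hence $\hat{x}+td$ would be feasible with strictly smaller objective value, contradicting the minimality of $\hat{x}$. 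The inconsistency of this system of strict inequalities is precisely the hypothesis of a theorem of the alternative (Gordan's theorem), which then yields nonnegative multipliers $\kappa$ and $(\lambda_i)_{i\in I}$, not all zero, with $\kappa\nabla f(\hat{x})+\sum_{i\in I}\lambda_i\nabla F_i(\hat{x})=\zeroVector{n}$. Setting $\lambda_i:=0$ for $i\notin I$ extends this to all indices, enforces the complementarity $\lambda_iF_i(\hat{x})=0$ automatically, and the nontriviality of the multipliers produces the trichotomy: if $\kappa>0$ we rescale to $\kappa=1$, otherwise $\kappa=0$ and $\lambda\neq\zeroVector{m}$.

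For the claim that strict feasibility forces $\kappa=1$ in the convex case, I would pick a strictly feasible $\bar{x}$ with $F_i(\bar{x})<0$ for all $i$ and argue by contradiction. If $\kappa=0$, then $\lambda\neq\zeroVector{m}$ and $\sum_i\lambda_i\nabla F_i(\hat{x})=\zeroVector{n}$. Convexity of each $F_i$ gives $\lambda_iF_i(\bar{x})\geq\lambda_iF_i(\hat{x})+\lambda_i\nabla F_i(\hat{x})^T(\bar{x}-\hat{x})$; summing over $i$ and using complementarity together with $\sum_i\lambda_i\nabla F_i(\hat{x})=\zeroVector{n}$ makes the right-hand side vanish, so $\sum_i\lambda_iF_i(\bar{x})\geq0$. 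But some $\lambda_j>0$ with $F_j(\bar{x})<0$ while every term is nonpositive, so the sum is strictly negative, a contradiction. Hence $\kappa>0$ and, after normalization, $\kappa=1$.

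For sufficiency I would assume convexity, feasibility of $\hat{x}$, and that the conditions hold with $\kappa=1$ (as guaranteed under the Slater point above). Then the Lagrangian $L(x):=f(x)+\sum_i\lambda_iF_i(x)$ is convex as a nonnegative combination of convex functions and satisfies $\nabla L(\hat{x})=\zeroVector{n}$, so $\hat{x}$ is a global minimizer of $L$. For any feasible $x$ one has $\sum_i\lambda_iF_i(x)\leq0$ and, by complementarity, $\sum_i\lambda_iF_i(\hat{x})=0$, whence $f(x)\geq L(x)\geq L(\hat{x})=f(\hat{x})$; thus $\hat{x}$ minimizes $f$ over the feasible set.

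I expect the main obstacle to be the first part: carefully justifying the empty-system claim from the definition of a minimizer and then invoking Gordan's theorem of the alternative to extract multipliers that are \emph{not all zero}, which is exactly what produces the Fritz John trichotomy rather than a vacuous statement. The two convex parts are then routine estimates using the subgradient inequality once the multipliers are in hand.
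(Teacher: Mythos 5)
Your proof is correct, but it takes a different route from the paper in the sense that the paper does not actually prove this theorem at all: its ``proof'' consists of the single line ``Combine, e.g., [Schichl--Neumaier, 4.1~Theorem] and [Boyd--Vandenberghe, 5.5.3~KKT optimality conditions]'', i.e.\ the Fritz John necessary conditions are imported from the first reference and the Slater/sufficiency statements from the second. You instead give a self-contained argument: Gordan's theorem of the alternative applied to the inconsistent strict-inequality system $\nabla f(\hat{x})^Td<0$, $\nabla F_i(\hat{x})^Td<0$ ($i$ active) yields the nontrivial multipliers and hence the trichotomy; the gradient inequality for convex functions rules out $\kappa=0$ in the presence of a Slater point; and the Lagrangian $L(x)=f(x)+\sum_i\lambda_iF_i(x)$, being convex with vanishing gradient at $\hat{x}$, gives sufficiency via $f(x)\geq L(x)\geq L(\hat{x})=f(\hat{x})$ for feasible $x$. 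All three steps are sound (the degenerate case of an empty active set is covered, since Gordan then forces $\nabla f(\hat{x})=0$ with $\kappa>0$). The only point worth flagging is interpretive rather than mathematical: you read the sufficiency claim as applying to the conditions with $\kappa=1$, which is the standard (and surely intended) reading, since with $\kappa=0$ the conditions say nothing about $f$ and cannot be sufficient; the paper's citation of Boyd's KKT conditions confirms this reading. What your approach buys is a complete, elementary derivation at the cost of about a page; what the paper's approach buys is brevity, at the cost of sending the reader to two external sources whose statements must be combined.
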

%%%%%%%%%%%%%%%%%%%%%%%%%%%%%%%%%%%%%%%%%%%%%%%%%%%%%%%%%%%%
\begin{proof}
%This is a version of the Karush-John and Kuhn-Tucker conditions which can be looked up in, e.g.,
%neu%\citet[p.~38,~Proof~of~1.4.2~Theorem~and~p.~40,~Proof~of~1.4.3~Corollary]{Neumaier}.
%pdf-Seiten-Such-Kästchen: S. 19 bwz. S. 257
Combine, e.g., \citet[p.~19,~4.1~Theorem]{SchichlOptBed} and \citet[p.~243,~5.5.3~KKT~optimality~conditions]{Boyd}.
\qedhere
\end{proof}
%%%%%%%%%%%%%%%%%%%%%%%%%%%%%%%%%%%%%%%%%%%%%%%%%%%%%%%%%%%%%%%%%%%%%%%%%%%%%%%%%%%%%%%%%%%%%%%%%%%%%%%%%%%%%%%%%%%%%%%%

%AN%\subsection{SQP}
%%%%%%%%%%%%%%%%%%%%%%%%%%%%%%%%%%%%%%%%%%%%%%%%%%%%%%%%%%%%%%%%%%%%%%%%%%%%%%%%%%%%%%%%%%%%%%%%%%%%%%%%%%%%%%%%%%%%%%%%
%%%\subsection{Grundlagen}
%AN%\label{section:SQP}
%%%\textbf{\footnotesize[s. BERGEN \cite{Bergen}, S. 97]}
%%%%%%%%%%%%%%%%%%%%%%%%%%%%%%%%%%%%%%%%%%%%%%%%%%%%%%%%%%%%%%%%%%%%%%%%%%%%%%%%%%%%%%%%%%%%%%%%%%%%%%%%%%%%%%%%%%%%%%%%
%%%\begin{idea}
%%%PAPER%%%\label{SQP:Idee:SQP}
One possibility to find a solution of the optimization problem (\ref{SQP:Def:OptProblem}) is using an SQP-method (sequential quadratic programming). An SQP-method minimizes the quadratic approximation of the Lagrangian $L:\mathbb{R}^n\times\Rpos^m\longrightarrow\mathbb{R}$ given by $L(x,\lambda):=f(x)+\sum_{i=1}^m{F_i(x)\lambda_i}$
%%%PAPER%%%\begin{equation}
%%%PAPER%%%L(x,\lambda):=f(x)+\sum_{i=1}^m{F_i(x)\lambda_i}
%%%PAPER%%%%%%\label{SQP:Def:L}
%%%PAPER%%%\label{SQP:Satz:L}
%%%PAPER%%%\punkt
%%%PAPER%%%\end{equation}
subject to linearizations of the constraints and then it uses the obtained minimizer as the new iteration point (or it performs a line search between the current iteration point and the obtained minimizer to determine the new iteration point). Since quadratic information is necessary for this approach, we demand $f,F_i:\mathbb{R}^n\longrightarrow\mathbb{R}$ (with $i=1,\dots,m$) to be $C^2$ in this
%Paper%section.
subsection.
%%%\end{idea}
%%%%%%%%%%%%%%%%%%%%%%%%%%%%%%%%%%%%%%%%%%%%%%%%%%%%%%%%%%%%%%%%%%%%%%%%%%%%%%%%%%%%%%%%%%%%%%%%%%%%%%%%%%%%%%%%%%%%%%%%
\begin{proposition}
\label{AN:proposition:SQP-step}
%%%PAPER%%%\begin{presumption}
%%%PAPER%%%\label{SQP:Voraussetzung:CQundPD}
Let
%%%%%%%%%%%%%%%%%%%%%%%%%%%%%%
the matrix $\nabla F(x)\in\mathbb{R}^{m\times n}$ (gradient of the constraints) have full rank (``Constraint qualification'')
%%%%%%%%%%%%%%%%%%%%%%%%%%%%%%
and let the Hessian of the Lagrangian with respect to the $x$-components
%%%PAPER%%%$\nabla_{xx}^2L(x,\lambda)\in\mathbb{R}^{n\times n}$ from (\ref{SQP:Satz:HessematrixLxx})
$\nabla_{xx}^2L(x,\lambda)=\nabla^2f(x)+\sum_{i=1}^m\nabla^2F_i(x)\lambda_i$
be positive definite on the tangent space of the constraints, i.e.~$d^T\nabla_{xx}^2L(x,\lambda)d>0$ for all $d\in\mathbb{R}^n$ with $d\not=\zeroVector{n}$ and $\nabla F(x)d=\zeroVector{m}$
%im pdf-Seiten-Such-Kästchen: S. 550
(cf.~\textH{\citet[p.~531,~Assumption~18.1]{Nocedal}}).
%%%PAPER%%%\begin{equation*}
%%%PAPER%%%d^T\nabla_{xx}^2L(x,\lambda)d>0~~\forall d\in\mathbb{R}^n\textnormal{ with }\nabla F(x)d=\zeroVector{m}
%%%PAPER%%%\label{SQP:Voraussetzung:PD}
%%%PAPER%%%\punkt
%%%PAPER%%%\end{equation*}
%%%%%%%%%%%%%%%%%%%%%%%%%%%%%%
%%%PAPER%%%\end{presumption}
%%%%%%%%%%%%%%%%%%%%%%%%%%%%%%%%%%%%%%%%%%%%%%%%%%%%%%%%%%%%%%%%%%%%%%%%%%%%%%%%%%%%%%%%%%%%%%%%%%%%%%%%%%%%%%%%%%%%%%%%
%%%\begin{remark}
%%%Für eine Erklärung der Voraussetzung s. NOCEDAL \cite{Nocedal}, S.531, Text nach Voraussetzung 18.1.
%%%\end{remark}
%%%%%%%%%%%%%%%%%%%%%%%%%%%%%%%%%%%%%%%%%%%%%%%%%%%%%%%%%%%%%%%%%%%%%%%%%%%%%%%%%%%%%%%%%%%%%%%%%%%%%%%%%%%%%%%%%%%%%%%%
%%%PAPER%%%Let assumption \ref{SQP:Voraussetzung:CQundPD} be satisfied.
Then the \textbf{SQP-step} for optimization problem \refh{SQP:Def:OptProblem} is given by the solution of the QP
\begin{equation}
\begin{split}
%??? inlusive f(x)-Term ???
f(x)\hspace{3pt}+
                 &\min_{d}{\nabla f(x)d+\tfrac{1}{2}d^T\nabla_{xx}^2L(x,\lambda)d}\\
                 &\textnormal{ s.t. }F_i(x)+\nabla F_i(x)d\leq0~~~\forall i=1,\dots,m\punkt
\label{SQP:Satz:QP}
\end{split}
\end{equation}
%%%PAPER%%%where
%%%PAPER%%%\begin{equation}
%%%PAPER%%%\nabla_{xx}^2L(x,\lambda)=\nabla^2f(x)+\sum\limits_{i=1}^m\nabla^2F_i(x)\lambda_i
%%%PAPER%%%\punkt
%%%PAPER%%%\label{SQP:Satz:HessematrixLxx}
%%%PAPER%%%\end{equation}
\end{proposition}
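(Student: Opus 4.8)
The plan is to verify that the first-order (KKT) optimality conditions of the QP \refh{SQP:Satz:QP} coincide with the system obtained by applying one Newton step to the optimality conditions \refh{AOB:KJUnglgsNBSystemKomplementaereVektoren} (with $\kappa=1$) of the original problem \refh{SQP:Def:OptProblem}, taken at the current iterate $(x,\lambda)$. Under the stated hypotheses---$\nabla F(x)$ of full rank (constraint qualification) and $\nabla_{xx}^2 L(x,\lambda)$ positive definite on the tangent space $\lbrace d:\nabla F(x)d=\zeroVector{m}\rbrace$, i.e.~the standard second-order sufficient conditions of the cited Assumption~18.1---the QP \refh{SQP:Satz:QP} has a unique minimizer; moreover, since its constraints are affine, a constraint qualification holds automatically, so the KKT conditions are necessary, and by the second-order sufficiency they are also sufficient. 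Hence it suffices to match the two optimality systems.

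First I would write out the KKT conditions of \refh{SQP:Satz:QP}. Introducing multipliers $\mu\geq\zeroVector{m}$ for the linearized constraints, stationarity in $d$ reads
\begin{equation*}
\nabla f(x)^T+\nabla_{xx}^2 L(x,\lambda)\,d+\sum_{i=1}^m\mu_i\nabla F_i(x)^T=\zeroVector{n},
\end{equation*}
together with complementarity $\mu_i\big(F_i(x)+\nabla F_i(x)d\big)=0$ and feasibility $F_i(x)+\nabla F_i(x)d\leq0$ for all $i=1,\dots,m$. Next I would set up Newton's method for the nonlinear KKT system of \refh{SQP:Def:OptProblem}: collecting the stationarity equation $\nabla f(x)^T+\sum_i\lambda_i\nabla F_i(x)^T=\zeroVector{n}$ and the active constraint equations $F_i(x)=0$ into one map, linearizing at $(x,\lambda)$, and solving for the increment $(d,\Delta\lambda)$, the linearized stationarity equation becomes
\begin{equation*}
\nabla f(x)^T+\sum_{i=1}^m\lambda_i\nabla F_i(x)^T+\nabla_{xx}^2 L(x,\lambda)\,d+\sum_{i=1}^m\nabla F_i(x)^T\Delta\lambda_i=\zeroVector{n},
\end{equation*}
while the linearized active constraints become $F_i(x)+\nabla F_i(x)d=0$. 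Identifying the updated multiplier $\lambda_i^+:=\lambda_i+\Delta\lambda_i$ collapses the first two sums and reproduces exactly the QP stationarity condition with $\mu_i=\lambda_i^+$, and the linearized constraints together with the sign and complementarity requirements on $\lambda^+$ match the remaining QP-KKT conditions. Thus the Newton step $d$ and the QP minimizer coincide, which is the assertion.

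The hard part---really the only point requiring care---is the apparent mismatch in the objective's linear term. A naive quadratic Taylor model of the Lagrangian carries the gradient $\nabla_x L(x,\lambda)d=\nabla f(x)d+\sum_i\lambda_i\nabla F_i(x)d$, whereas \refh{SQP:Satz:QP} uses only $\nabla f(x)d$. I would resolve this by observing that the two QPs differ solely by the linear-in-$d$ term $\sum_i\lambda_i\nabla F_i(x)d$ in the objective, which has the constant Hessian contribution zero; comparing their stationarity conditions shows this difference merely shifts the optimal multipliers by $\lambda$ and leaves the primal minimizer $d$ unchanged. Consequently the stated form \refh{SQP:Satz:QP}, with $\nabla f(x)d$ in place of $\nabla_x L(x,\lambda)d$, yields the same SQP step, completing the verification. (For the global, convex case, the sufficiency of the matched KKT system for optimality is already covered by the sufficiency statement of the preceding theorem, so no extra argument is needed there.)
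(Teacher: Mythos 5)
Your overall route --- matching the KKT system of the QP \refh{SQP:Satz:QP} with one Newton step applied to the KKT system of \refh{SQP:Def:OptProblem} --- is exactly the standard derivation behind the paper's one-line proof (``Straightforward calculations'') and the cited passage of Nocedal and Wright, so the architecture is the intended one. The step that does not hold as you state it is precisely the one you single out as the hard part: the claim that removing the term $\sum_{i}\lambda_i\nabla F_i(x)d$ from the objective ``merely shifts the optimal multipliers by $\lambda$ and leaves the primal minimizer $d$ unchanged.'' For a QP with \emph{inequality} constraints this is false in general, because the shift $\mu_i=\lambda_i+\tilde{\mu}_i$ preserves neither the sign conditions nor the complementarity conditions. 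Concretely, take $n=m=1$, $f(x)=x$, $F(x)=x+\tfrac{1}{2}x^2$, the iterate $x=0$, $\lambda=1$, so that $\nabla_{xx}^2L(x,\lambda)=1$ and the linearized constraint is $d\leq0$: the QP with linear term $\nabla f(x)d=d$ has minimizer $d=-1$ with multiplier $0$, while the QP with linear term $\nabla_xL(x,\lambda)d=2d$ has minimizer $d=-2$ with multiplier $0$; the two primal solutions differ, and the shifted multiplier $\mu=1$ violates complementarity at $d=-1$.

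The correct way to close this is already implicit in your Newton setup: the Newton step is taken on the stationarity equation together with the \emph{active} constraints treated as equalities, $F_i(x)+\nabla F_i(x)d=0$. On that affine set $\nabla F_i(x)d=-F_i(x)$ is constant, so the disputed term $\sum_i\lambda_i\nabla F_i(x)d$ is a constant and may be dropped from the objective without changing the minimizer or the geometry of the problem; there the multiplier shift is harmless because equality multipliers carry no sign or complementarity restrictions. Identifying the resulting step with the solution of the inequality-constrained QP \refh{SQP:Satz:QP} then requires the additional local argument that, near a KKT point satisfying strict complementarity, the optimal active set of the QP coincides with the active set of \refh{SQP:Def:OptProblem} --- which is exactly what the full-rank and second-order hypotheses borrowed from Nocedal and Wright are there to guarantee. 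With that repair your proof matches what the paper leaves to the reader.
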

%%%%%%%%%%%%%%%%%%%%%%%%%%%%%%%%%%%%%%%%%%%%%%%%%%%%%%%%%%%%
\begin{proof}
Straightforward calculations.
\qedhere
\end{proof}
%%%%%%%%%%%%%%%%%%%%%%%%%%%%%%%%%%%%%%%%%%%%%%%%%%%%%%%%%%%%%%%%%%%%%%%%%%%%%%%%%%%%%%%%%%%%%%%%%%%%%%%%%%%%%%%%%%%%%%%%
\begin{remark}
\label{SQP:Bem:LinearNBkonsistent}
%%%%%%%%%%%%%%%%%%%%%%%%%%%%%%
A difficulty of an infeasible SQP-method (e.g., SNOPT by \citet{Gill}) --- i.e.~infeasible iteration points $x_k$ may occur --- is that the linear constraints of the QP (\ref{SQP:Satz:QP}) can be infeasible (cf., e.g, \citet[p.~535,~18.3~Algorithmic~development]{Nocedal}).
%%%%%%%%%%%%%%%%%%%%%%%%%%%%%%
Note that this difficulty does not arise for a feasible SQP-method (e.g., FSQP by \citet{Tits2}) --- i.e.~only feasible iteration points $x_k$ are accepted --- as then $d=\zeroVector{n}$ is always feasible for the QP (\ref{SQP:Satz:QP}). Nevertheless, in this case it can be difficult to obtain feasible points that make good progress towards a solution (cf.~Remark \ref{BundleSQP:remark:Maratos}).
%%%%%%%%%%%%%%%%%%%%%%%%%%%%%%
\end{remark}
%%%%%%%%%%%%%%%%%%%%%%%%%%%%%%%%%%%%%%%%%%%%%%%%%%%%%%%%%%%%%%%%%%%%%%%%%%%%%%%%%%%%%%%%%%%%%%%%%%%%%%%%%%%%%%%%%%%%%%%%

%\section{Nonsmooth optimization}
%%%%%%%%%%%%%%%%%%%%%%%%%%%%%%%%%%%%%%%%%%%%%%%%%%%%%%%%%%%%%%%%%%%%%%%%%%%%%%%%%%%%%%%%%%%%%%%%%%%%%%%%%%%%%%%%%%%%%%%%
\subsection{Nonsmooth Optimality conditions}
%%%%%%%%%%%%%%%%%%%%%%%%%%%%%%%%%%%%%%%%%%%%%%%%%%%%%%%%%%%%%%%%%%%%%%%%%%%%%%%%%%%%%%%%%%%%%%%%%%%%%%%%%%%%%%%%%%%%%%%%
We gather information on the optimality conditions of the nonsmooth optimization problem
(\ref{AOB:OptProb})
%AN%\begin{equation}
%AN%\begin{split}
%AN%&\min{f(x)}\\
%AN%&\textnormal{ s.t. }F_i(x)\leq0~~~\forall i=1,\dots,m\komma
%AN%\label{AOB:OptProb}
%AN%\end{split}
%AN%\end{equation}
%AN%where $f,F_i:\mathbb{R}^n\longrightarrow\mathbb{R}$ are locally Lipschitz continuous.
with locally Lipschitz continuous functions $f,F_i:\mathbb{R}^n\longrightarrow\mathbb{R}$ for $i=1,\dots,m$. For this purpose, we closely follow the exposition in \citet{Borwein}.
%%%%%%%%%%%%%%%%%%%%%%%%%%%%%%%%%%%%%%%%%%%%%%%%%%%%%%%%%%%%%%%%%%%%%%%%%%%%%%%%%%%%%%%%%%%%%%%%%%%%%%%%%%%%%%%%%%%%%%%%
\begin{definition}
Let $U\subseteq\mathbb{R}^n$ be open and $f:\mathbb{R}^n\longrightarrow\mathbb{R}$.
%%%%%%%%%%%%%%%%%%%%%%%%%%%%%%
We define the \textbf{Clarke directional derivative} in $x\in U$ in direction $d\in\mathbb{R}^n$ by
%%%(vgl. auch BORWEIN \cite{Borwein}, S. 144)
\begin{equation*}
f^0(x,d):=
%AN%\limsup_{\substack{h\rightarrow\zeroVector{n}\\ t\downarrow0}}
\limsup_{h\rightarrow\zeroVector{n},t\downarrow0}
\frac{f(x+h+td)-f(x+h)}{t}
%%%\label{AOB:Def:VerallgemeinerteRichtungsableitung}
\end{equation*}
%%%%%%%%%%%%%%%%%%%%%%%%%%%%%%
and we define the \textbf{subdifferential} $\partial f(x)\subseteq\mathbb{R}^n$ of $f$ in $x\in U$ by
\begin{equation*}
\partial f(x)
:=
\mathrm{ch}\lbrace g\in\mathbb{R}^n:~g^Td\leq f^0(x,d)~\forall d\in\mathbb{R}^n\rbrace
\komma
\end{equation*}
where $\mathrm{ch}$ denotes the convex hull of a set. The elements of $\partial f(x)$ are called \textbf{subgradients}.
%%%%%%%%%%%%%%%%%%%%%%%%%%%%%%
We define the set $\subhessian{f}{x}\subseteq\Sym{n}$ of the substitutes for the Hessian of $f$ at $x$ by
\begin{equation}
\subhessian{f}{x}
:=
\left\lbrace
\begin{array}{ll}
\lbrace G\rbrace & \textnormal{if the Hessian }G\textnormal{ of }f\textnormal{ at }x\textnormal{ exists}\\
\Sym{n}          & \textnormal{else}\punkt
\end{array}
\right.
\label{Def:subhessian}
\end{equation}
%%%%%%%%%%%%%%%%%%%%%%%%%%%%%%
\end{definition}
%%%%%%%%%%%%%%%%%%%%%%%%%%%%%%%%%%%%%%%%%%%%%%%%%%%%%%%%%%%%%%%%%%%%%%%%%%%%%%%%%%%%%%%%%%%%%%%%%%%%%%%%%%%%%%%%%%%%%%%%
We summarize the most important properties of the Clarke directional derivative and the subdifferential. The following two results are taken from \citet{Borwein}.
\begin{proposition}
\label{PAPER:LocallyBoundedUpperSemiContinous}
%%%%%%%%%%%%%%%%%%%%%%%%%%%%%%
The subdifferential $\partial f(x)$ is non-empty, convex and compact. Furthermore, $\partial f:\mathbb{R}^n\longrightarrow\mathcal{P}(\mathbb{R}^n)$, where $\mathcal{P}(\mathbb{R}^n)$ denotes the power set of $\mathbb{R}^n$, is locally bounded and upper semicontinuous.
\end{proposition}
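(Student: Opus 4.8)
The plan is to prove the three assertions of Proposition~\ref{PAPER:LocallyBoundedUpperSemiContinous} in turn, working directly from the definitions of $f^0(x,d)$ and $\partial f(x)$ and from the local Lipschitz property of $f$. Let me fix notation: for a point $x$ let $K$ be a Lipschitz constant of $f$ on a neighborhood $U_x$ of $x$, so that $\lvert f(y)-f(z)\rvert\leq K\lvert y-z\rvert$ for $y,z\in U_x$.

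First I would establish the pointwise properties. For nonemptiness, convexity, and compactness of $\partial f(x)$, the key observation is that $d\mapsto f^0(x,d)$ is finite, positively homogeneous, and subadditive (hence convex) in $d$, and satisfies the Lipschitz bound $\lvert f^0(x,d)\rvert\leq K\lvert d\rvert$; these all follow by routine estimates from the $\limsup$ definition together with the Lipschitz inequality. Once $f^0(x,\cdot)$ is known to be a finite sublinear functional, the set $\{g:~g^Td\leq f^0(x,d)~\forall d\}$ is exactly its subdifferential in the convex-analysis sense, which is nonempty (by Hahn--Banach, dominating the linear map on a line and extending), closed and convex, and bounded by $\lvert g\rvert\leq K$ because testing against $d=g$ gives $\lvert g\rvert^2\leq f^0(x,g)\leq K\lvert g\rvert$. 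Taking the convex hull (which is already closed for a bounded convex set in $\mathbb{R}^n$) preserves nonemptiness, convexity, and compactness, so the first sentence follows.

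Next I would prove local boundedness and upper semicontinuity of the set-valued map $\partial f$. Local boundedness is immediate from the uniform bound just obtained: on $U_x$ a single Lipschitz constant $K$ works for all nearby points, so $\partial f(y)\subseteq\{g:\lvert g\rvert\leq K\}$ for all $y$ in a neighborhood, giving a uniform bound. For upper semicontinuity, I would argue by the standard sequential/closed-graph criterion valid for locally bounded maps: suppose $y_j\to x$, $g_j\in\partial f(y_j)$, and $g_j\to g$; I must show $g\in\partial f(x)$. The route is to show the upper limit bound $\limsup_{y\to x}f^0(y,d)\leq f^0(x,d)$ for each fixed $d$, i.e.\ upper semicontinuity of $(y,d)\mapsto f^0(y,d)$, and then pass to the limit in the defining inequality $g_j^Td\leq f^0(y_j,d)$ to obtain $g^Td\leq f^0(x,d)$ for all $d$, which places $g$ in the defining set and hence in $\partial f(x)$.

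The main obstacle I anticipate is precisely this upper semicontinuity of $f^0$ in its base point $y$. The inequality $g^Td\le f^0(x,d)$ only controls membership in the generating set, not directly in its convex hull, so a little care is needed to conclude $g\in\partial f(x)$; here the finite-dimensional fact that the generating set is already closed and convex (so equals its own convex hull) is what rescues the argument. The estimate $\limsup_{y\to x}f^0(y,d)\leq f^0(x,d)$ itself requires unwinding two nested limits — the $\limsup$ over $(h,t)$ in the definition of $f^0(y,d)$ together with the outer limit $y\to x$ — and merging them by choosing, for each $y$ near $x$, nearly optimal $(h,t)$ and absorbing the perturbation $y-x$ into the inner variable $h$ using the translation structure of the difference quotient. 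This diagonal/merging step is the technically delicate part; the boundedness statements and the pointwise algebraic properties are comparatively routine. Since the excerpt attributes these results to \citet{Borwein}, I would in practice cite that source for the standard sublinearity and semicontinuity lemmas and only sketch the merging argument.
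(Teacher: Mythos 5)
Your sketch is correct, but note that the paper itself offers no proof of this proposition at all: it simply imports the statements from \citet{Borwein} (the nonemptiness/convexity/compactness, the local boundedness, and the upper semicontinuity are each cited to specific results there), so any self-contained argument is by construction a different route. What you outline is the standard Clarke-style development and it holds up: finiteness and sublinearity of $d\mapsto f^0(x,d)$ with the bound $\lvert f^0(x,d)\rvert\leq K\lvert d\rvert$, nonemptiness via Hahn--Banach domination, the norm bound from testing against $d=g$, and compactness as a closed bounded set in $\mathbb{R}^n$. You also correctly flag the two genuinely delicate points: first, that the generating set $\lbrace g:~g^Td\leq f^0(x,d)~\forall d\rbrace$ is an intersection of closed half-spaces and hence already closed and convex, so the convex hull in the paper's definition is redundant and the inequality $g^Td\leq f^0(x,d)$ really does characterize membership in $\partial f(x)$; second, that upper semicontinuity reduces (for a locally bounded, compact-valued map) to the closed-graph condition, which in turn rests on $\limsup_{y\to x}f^0(y,d)\leq f^0(x,d)$, proved by choosing near-optimal $(h,t)$ for each $y$ and absorbing $y-x$ into $h$. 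That merging argument is exactly the standard proof of joint upper semicontinuity of $f^0$. What your approach buys is self-containedness at the cost of length; what the paper's citation buys is brevity, which is reasonable for a result this classical. No gaps.
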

%%%%%%%%%%%%%%%%%%%%%%%%%%%%%%%%%%%%%%%%%%%%%%%%%%%%%%%%%%%%
%AN%
\begin{comment}
%AN%
\begin{proof}
%%%%%%%%%%%%%%%%%%%%%%%%%%%%%%
The alternative representation of $\partial f(x)$ can be found in \citet[p.~154,~Theorem~6.2.5]{Borwein}.
%%%%%%%%%%%%%%%%%%%%%%%%%%%%%%
The local boundedness result can be found in \citet[p.~216,~beginning~of~Section~8.2~with~proof~on~p.~220,~Exercise~5]{Borwein}. The result on upper semicontinuity can be found in \citet[p.~156,~Exercise~12(b)(ii)]{Borwein}.
\qedhere
%%%%%%%%%%%%%%%%%%%%%%%%%%%%%%
\end{proof}
%AN%
\end{comment}
%AN%
%%%%%%%%%%%%%%%%%%%%%%%%%%%%%%%%%%%%%%%%%%%%%%%%%%%%%%%%%%%%%%%%%%%%%%%%%%%%%%%%%%%%%%%%%%%%%%%%%%%%%%%%%%%%%%%%%%%%%%%%
\begin{theorem}[First order nonsmooth optimality conditions]
\label{AOB:Satz:KarushJohnUnglgsNBNichtGlattAlternativform}
Let $\hat{x}$ be a local minimizer of \refh{AOB:OptProb} and $f,F_i:\mathbb{R}^n\longrightarrow\mathbb{R}$ (with $i=1,\dots,m$) be Lipschitz continuous in a neighborhood of $\hat{x}$.
Then there exists $\kappa\geq0$ and $\lambda\geq0$ with
\begin{equation*}
  \begin{aligned}
    &\zeroVector{n}\in\kappa\partial f(\hat{x})+\sum_{i=1}^m\lambda_i\partial F_i(\hat{x})\komma\\
    &\lambda_iF_i(\hat{x})= 0~\forall i=1,\dots,m\komma\\
    &\kappa=1~\xor~(\kappa=0\textnormal{, }\lambda\not=\zeroVector{m})\punkt
  \end{aligned}
\end{equation*}
Furthermore, if there exists a direction $d\in\mathbb{R}^n$ that satisfies the (nonsmooth) constraint qualification
%%%%Für die in der Formulierung von Borwein auftretende Menge E s. Borwein, S. 7 oben.
\begin{equation}
F_j^{\circ}(\hat{x},d)<0\textnormal{ for all }j\in\lbrace1,\dots,m\rbrace\textnormal{ with }F_j(\hat{x})=0
\komma
\label{AOB:Satz:KarushJohnUnglgsNBNichtGlattAlternativform:CQ}
\end{equation}
then we can always set $\kappa=1$.
\end{theorem}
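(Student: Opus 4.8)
The plan is to reduce the constrained problem to the \emph{unconstrained} minimization of a max-function and then to apply the first order condition $\zeroVector{n}\in\partial\psi(\hat{x})$ together with the subdifferential calculus for maxima of locally Lipschitz functions.

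First I would introduce $\psi(x):=\max\{f(x)-f(\hat{x}),F_1(x),\dots,F_m(x)\}$ and claim that $\hat{x}$ is an unconstrained local minimizer of $\psi$. Indeed, $\psi(\hat{x})=0$ since $f(\hat{x})-f(\hat{x})=0$ and $F_i(\hat{x})\leq0$; and if $\psi(x)<0$ held for some $x$ arbitrarily close to $\hat{x}$, then $f(x)<f(\hat{x})$ and $F_i(x)<0$ for all $i$, so $x$ would be feasible with strictly smaller objective value, contradicting local minimality of $\hat{x}$ for \refh{AOB:OptProb}. Hence $\psi(x)\geq0=\psi(\hat{x})$ in a neighborhood of $\hat{x}$.

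Next I would invoke the necessary condition for unconstrained nonsmooth minimization, $\zeroVector{n}\in\partial\psi(\hat{x})$, and the max-rule for the subdifferential: writing $I:=\{i:F_i(\hat{x})=0\}$ for the active index set, the only functions attaining the maximum $\psi(\hat{x})=0$ are $f-f(\hat{x})$ and the active $F_i$, $i\in I$, so that $\partial\psi(\hat{x})\subseteq\mathrm{ch}\big(\partial f(\hat{x})\cup\bigcup_{i\in I}\partial F_i(\hat{x})\big)$. Combined with $\zeroVector{n}\in\partial\psi(\hat{x})$ and the convex-hull representation, this yields coefficients $\kappa\geq0$ and $\lambda_i\geq0$ ($i\in I$) with $\kappa+\sum_{i\in I}\lambda_i=1$ and $\zeroVector{n}\in\kappa\partial f(\hat{x})+\sum_{i\in I}\lambda_i\partial F_i(\hat{x})$. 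Setting $\lambda_i:=0$ for the inactive indices $i\notin I$ then gives the inclusion and the complementarity $\lambda_iF_i(\hat{x})=0$ for all $i$ simultaneously, while the normalization $\kappa+\sum\lambda_i=1$ forces $(\kappa,\lambda)\neq(0,\zeroVector{m})$; if $\kappa>0$ one divides the relation by $\kappa$ to reach the normalized case $\kappa=1$ (rescaling only the active multipliers, so complementarity persists), and otherwise $\kappa=0$ with $\lambda\neq\zeroVector{m}$, which is exactly the case distinction in the statement.

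Finally, for the constraint qualification I would argue by contradiction: assume $\kappa=0$, so $\zeroVector{n}\in\sum_{i\in I}\lambda_i\partial F_i(\hat{x})$ with some $\lambda_{i_0}>0$, i.e.~there are $g_i\in\partial F_i(\hat{x})$ with $\sum_{i\in I}\lambda_ig_i=\zeroVector{n}$. Testing this identity against the direction $d$ from \refh{AOB:Satz:KarushJohnUnglgsNBNichtGlattAlternativform:CQ} and using $g_i^Td\leq F_i^{\circ}(\hat{x},d)<0$, which holds for every $g_i\in\partial F_i(\hat{x})$ by the defining property of the subdifferential, gives $0=\sum_{i\in I}\lambda_i(g_i^Td)\leq\sum_{i\in I}\lambda_iF_i^{\circ}(\hat{x},d)<0$, a contradiction; hence $\kappa>0$ and we may take $\kappa=1$. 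The main obstacle is the third step: the max-rule $\partial(\max_jg_j)(\hat{x})\subseteq\mathrm{ch}\{\partial g_j(\hat{x}):g_j(\hat{x})=\psi(\hat{x})\}$ is the genuine analytic content, and for nonconvex locally Lipschitz data it is only an inclusion (not an equality), so I would rely on the corresponding result in the Clarke subdifferential calculus in \citet{Borwein} rather than reprove it.
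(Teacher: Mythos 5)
Your proof is correct and follows the standard max-function reduction; the paper itself gives no proof of this theorem but only cites \citet[Theorem~6.1.8]{Borwein}, whose argument is precisely this reduction of the constrained problem to the unconstrained local minimization of $\psi=\max\{f-f(\hat{x}),F_1,\dots,F_m\}$ followed by the Clarke max-rule. All the steps you outline --- the local minimality of $\hat{x}$ for $\psi$, the inclusion $\partial\psi(\hat{x})\subseteq\mathrm{ch}\big(\partial f(\hat{x})\cup\bigcup_{i\in I}\partial F_i(\hat{x})\big)$, the normalization $\kappa+\sum_{i\in I}\lambda_i=1$ yielding the case distinction, and the contradiction argument under the constraint qualification using $g_i^Td\leq F_i^{\circ}(\hat{x},d)<0$ --- are sound.
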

%%%%%%%%%%%%%%%%%%%%%%%%%%%%%%%%%%%%%%%%%%%%%%%%%%%%%%%%%%%%
%AN%
\begin{comment}
%AN%
\begin{proof}
\citet[p.~146,~Proof~of~Theorem~6.1.8]{Borwein}.
%%stellt eine Alternativform der nichtglatten Optimalitätsbedingungen aus Satz %%\ref{AOB:Satz:KarushJohnUnglgsNBNichtGlatt} zur Verfügung (wir formulieren diese hier für das bisher verwendete %%Clarke-Subdifferential --- dort hingegen ist die Formulierung noch allgemeiner).
\qedhere
\end{proof}
%AN%
\end{comment}
%AN%
%%%%%%%%%%%%%%%%%%%%%%%%%%%%%%%%%%%%%%%%%%%%%%%%%%%%%%%%%%%%%%%%%%%%%%%%%%%%%%%%%%%%%%%%%%%%%%%%%%%%%%%%%%%%%%%%%%%%%%%%
\begin{corollary}
\label{AOB:Satz:KarushJohnUnglgsNBNichtGlattAlternativformCorollary}
Let the constraint qualification \refh{AOB:Satz:KarushJohnUnglgsNBNichtGlattAlternativform:CQ} be satisfied for \refh{BundleSQP:OptProblem}, then the optimality condition for \refh{BundleSQP:OptProblem} reads as follows: There exists $\lambda\geq0$ with
\begin{equation}
            \zeroVector{n}\in \partial f(\hat{x})+\lambda\partial F(\hat{x})\komma\quad
\lambda         F(\hat{x})=   0                                             \komma\quad
                F(\hat{x})\leq0                                             \punkt
\label{AOB:Satz:KarushJohnUnglgsNBNichtGlattAlternativformGlgsSystem}
\end{equation}
\end{corollary}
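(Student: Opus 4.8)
The plan is to obtain the corollary as a direct specialization of Theorem~\ref{AOB:Satz:KarushJohnUnglgsNBNichtGlattAlternativform} to the single-constraint case $m=1$. Problem \refh{BundleSQP:OptProblem} is precisely \refh{AOB:OptProb} with $m=1$ and $F_1=F$, and $\hat{x}$ is understood to be a local minimizer of \refh{BundleSQP:OptProblem} with $f,F$ locally Lipschitz near $\hat{x}$, so all hypotheses of the theorem are met. Applying it therefore yields $\kappa\geq0$ and a scalar multiplier $\lambda\geq0$ (the single entry $\lambda_1$ of the multiplier vector) satisfying the three conditions $\zeroVector{n}\in\kappa\partial f(\hat{x})+\lambda\partial F(\hat{x})$, $\lambda F(\hat{x})=0$, and the alternative $\kappa=1~\xor~(\kappa=0,\ \lambda\not=0)$.

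First I would invoke the constraint-qualification clause of the theorem. By assumption, \refh{AOB:Satz:KarushJohnUnglgsNBNichtGlattAlternativform:CQ} holds for \refh{BundleSQP:OptProblem}, i.e.\ there is a direction $d$ with $F^{\circ}(\hat{x},d)<0$ whenever $F(\hat{x})=0$. The ``Furthermore'' part of Theorem~\ref{AOB:Satz:KarushJohnUnglgsNBNichtGlattAlternativform} then lets us normalize $\kappa=1$, ruling out the degenerate case $\kappa=0$. Substituting $\kappa=1$ into the first inclusion gives $\zeroVector{n}\in\partial f(\hat{x})+\lambda\partial F(\hat{x})$, and the complementarity condition is already $\lambda F(\hat{x})=0$.

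The only remaining ingredient is the feasibility $F(\hat{x})\leq0$, which is not part of the stationarity system produced by the theorem. This I would supply directly: $\hat{x}$ is a minimizer of the constrained problem \refh{BundleSQP:OptProblem} and hence feasible, so $F(\hat{x})\leq0$ holds trivially. Collecting $\zeroVector{n}\in\partial f(\hat{x})+\lambda\partial F(\hat{x})$, $\lambda F(\hat{x})=0$, and $F(\hat{x})\leq0$ reproduces \refh{AOB:Satz:KarushJohnUnglgsNBNichtGlattAlternativformGlgsSystem} exactly.

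I expect no genuine obstacle here: the content is a bookkeeping specialization of the general first-order conditions, and the only two points deserving a word of justification are that the constraint qualification \refh{AOB:Satz:KarushJohnUnglgsNBNichtGlattAlternativform:CQ} licenses the choice $\kappa=1$, and that the feasibility condition $F(\hat{x})\leq0$ comes for free from the minimality of $\hat{x}$ rather than from the stationarity relations themselves.
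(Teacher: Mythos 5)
Your proposal is correct and is exactly the paper's argument: the paper's proof is simply ``Inserting into Theorem \ref{AOB:Satz:KarushJohnUnglgsNBNichtGlattAlternativform} with $m=1$,'' and you have merely spelled out the specialization, the use of the constraint qualification to normalize $\kappa=1$, and the trivial feasibility of $\hat{x}$. No differences worth noting.
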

%%%%%%%%%%%%%%%%%%%%%%%%%%%%%%%%%%%%%%%%%%%%%%%%%%%%%%%%%%%%
\begin{proof}
Inserting into Theorem \ref{AOB:Satz:KarushJohnUnglgsNBNichtGlattAlternativform} with $m=1$.
\qedhere
\end{proof}
%%%%%%%%%%%%%%%%%%%%%%%%%%%%%%%%%%%%%%%%%%%%%%%%%%%%%%%%%%%%%%%%%%%%%%%%%%%%%%%%%%%%%%%%%%%%%%%%%%%%%%%%%%%%%%%%%%%%%%%%
\begin{remark}
\label{AN:AOB:remark:Mifflin}
The algorithms in \citet{MifflinLaxenburg,Mifflin,MifflinNB} (for solving nonlinearly constrained nonsmooth optimization problems) use a fixed point theorem about certain upper semicontinuous point to set mappings by \citet{Merrill} as optimality condition which is different to an approach with the optimality conditions in Theorem \ref{AOB:Satz:KarushJohnUnglgsNBNichtGlattAlternativform} or Corollary \ref{AOB:Satz:KarushJohnUnglgsNBNichtGlattAlternativformCorollary}.
\end{remark}
%%%%%%%%%%%%%%%%%%%%%%%%%%%%%%%%%%%%%%%%%%%%%%%%%%%%%%%%%%%%%%%%%%%%%%%%%%%%%%%%%%%%%%%%%%%%%%%%%%%%%%%%%%%%%%%%%%%%%%%%

%%%\chapter{A feasible second order bundle algorithm for nonsmooth, nonconvex optimization problems with inequality %%%constraints}
%%%\label{chapter:SQCPalgorithm}
%\def\chaptitAlgorithm{{A feasible second order bundle algorithm for nonsmooth, nonconvex optimization problems with %inequality constraints}}
%\chapter{\chaptitAlgorithm}
%\label{chapter:SQCPalgorithm}
%\lhead{Chapter \thechapter. \emph{\chaptitAlgorithm}}
%%%%%%%%%%%%%%%%%%%%%%%%%%%%%%%%%%%%%%%%%%%%%%%%%%%%%%%%%%%%%%%%%%%%%%%%%%%%%%%%%%%%%%%%%%%%%%%%%%%%%%%%%%%%%%%%%%%%%%%%
\section{Derivation of the method}
\label{Paper:DerivationOfTheMethod}
%%%%%%%%%%%%%%%%%%%%%%%%%%%%%%%%%%%%%%%%%%%%%%%%%%%%%%%%%%%%%%%%%%%%%%%%%%%%%%%%%%%%%%%%%%%%%%%%%%%%%%%%%%%%%%%%%%%%%%%%
In this section we discuss the theoretical basics of our second order bundle algorithm and we give a detailed presentation of the algorithm and the line search.
%%%%%%%%%%%%%%%%%%%%%%%%%%%%%%%%%%%%%%%%%%%%%%%%%%%%%%%%%%%%%%%%%%%%%%%%%%%%%%%%%%%%%%%%%%%%%%%%%%%%%%%%%%%%%%%%%%%%%%%%
\subsection{Theoretical basics}
%\section{Introduction}
%%%%%%%%%%%%%%%%%%%%%%%%%%%%%%%%%%%%%%%%%%%%%%%%%%%%%%%%%%%%%%%%%%%%%%%%%%%%%%%%%%%%%%%%%%%%%%%%%%%%%%%%%%%%%%%%%%%%%%%%
We assume in this section that the functions $f,F:\mathbb{R}^n\longrightarrow\mathbb{R}$ are locally Lipschitz continuous, $g_j\in\partial f(y_j)$, $\hat{g}_j\in\partial F(y_j)$, and
%LastHS$G_j,\hat{G}_j\in\Sym{n}$.
let $G_j$ and $\hat{G}_j$ be approximations of elements in $\subhessian{f}{y_j}$ and $\subhessian{F}{y_j}$ (cf.~(\ref{Def:subhessian})), respectively.
%%%%%%%%%%%%%%%%%%%%%%%%%%%%%%%%%%%%%%%%%%%%%%%%%%%%%%%%%%%%%%%%%%%%%%%%%%%%%%%%%%%%%%%%%%%%%%%%%%%%%%%%%%%%%%%%%%%%%%%%

%%%PAPER%%%\begin{motivation}
%%%PAPER%%%\label{AN:BundleSQP:MotivationBundleNewton}
Our goal is to determine a local minimizer for the nonsmooth optimization problem (\ref{BundleSQP:OptProblem})
\begin{equation*}
\begin{split}
&\min_{x\in\mathbb{R}^n}{f(x)}\\
&\textnormal{ s.t. }F(x)\leq0\komma
\end{split}
\end{equation*}
and therefore we want to find
a point that satisfies the first order optimality conditions (\ref{AOB:Satz:KarushJohnUnglgsNBNichtGlattAlternativformGlgsSystem}). To attain this goal, we will propose an extension to the bundle-Newton method for nonsmooth unconstrained minimization by \citet{Luksan}: If we are in the optimization problem (\ref{BundleSQP:OptProblem}) at the iteration point $x_k\in\mathbb{R}^n$ (with iteration index $k$), we want to compute the next trial point (i.e.~the search direction) by approximating both the objective function $f$ and the constraint $F$ at $x_k$ by a piecewise quadratic function and then perform a single SQP-step, as defined in Proposition \ref{AN:proposition:SQP-step}, to the resulting optimization problem.
%%%PAPER%%%\end{motivation
%%%%%%%%%%%%%%%%%%%%%%%%%%%%%%%%%%%%%%%%%%%%%%%%%%%%%%%%%%%%%%%%%%%%%%%%%%%%%%%%%%%%%%%%%%%%%%%%%%%%%%%%%%%%%%%%%%%%%%%%
\begin{definition}
Let $J_k\subseteq\lbrace1,\dots,k\rbrace$.
%%%%%%%%%%%%%%%%%%%%%%%%%%%%%%
We define a quadratic approximation of $f$ resp.~$F$ in $y_j\in\mathbb{R}^n$ with damping parameter $\rho_j$ resp.~$\hat{\rho}_j\in[0,1]$ for $j\in J_k$ by
\begin{equation}
\begin{split}
f_j^{\sharp}(x)&:=f(y_j)+      g_j^T(x-y_j)+\tfrac{1}{2}\rho_j(x-y_j)^TG_j(x-y_j)
\\
F_j^{\sharp}(x)&:=F(y_j)+\hat{g}_j^T(x-y_j)+\tfrac{1}{2}\hat{\rho}_j(x-y_j)^T\hat{G}_j(x-y_j)
\label{Def:frauteCOMPACT}
\end{split}
\end{equation}
and the corresponding gradients by
\begin{equation}
g_j^{\sharp}(x)
:=
\nabla f_j^{\sharp}(x)^T
%%%PAPER%%%\overset{(\ref{Def:frauteCOMPACT})}{=}
=
g_j+\rho_jG_j(x-y_j)
\komma\quad
\hat{g}_j^{\sharp}(x)
:=
\nabla F_j^{\sharp}(x)^T
%%%PAPER%%%\overset{(\ref{Def:frauteCOMPACT})}{=}
=
\hat{g}_j+\hat{\rho}_j\hat{G}_j(x-y_j)
\label{Luksan:Satz:fjrauteGradientCOMPACT}
\punkt
\end{equation}
%%%%%%%%%%%%%%%%%%%%%%%%%%%%%%
We define the piecewise quadratic approximation of $f$ resp.~$F$ in $x_k\in\mathbb{R}^n$ by
\begin{equation}
f_k^{\square}(x):=\max_{j\in J_k}{f_j^{\sharp}(x)}\komma\quad
F_k^{\square}(x):=\max_{j\in J_k}{F_j^{\sharp}(x)}\punkt     \label{Def:fquadratCOMPACT}
\end{equation}
%%%%%%%%%%%%%%%%%%%%%%%%%%%%%%
\end{definition}
%%%%%%%%%%%%%%%%%%%%%%%%%%%%%%%%%%%%%%%%%%%%%%%%%%%%%%%%%%%%%%%%%%%%%%%%%%%%%%%%%%%%%%%%%%%%%%%%%%%%%%%%%%%%%%%%%%%%%%%%
%%%PAPER%%%\begin{remark}
%AN%\begin{idea}
%\label{BundleSQP:IdeeBundleNewton}
%%%PAPER%%%According to Motivation \ref{AN:BundleSQP:MotivationBundleNewton}
Hence we approximate the objective function $f$ at $x_k$ by $f_k^{\square}$ and the constraint $F$ at $x_k$ by $F_k^{\square}$ in the optimization problem (\ref{BundleSQP:OptProblem}) and then we perform a single SQP-step to the resulting optimization problem
\begin{equation}
\begin{split}
&\min_{x\in\mathbb{R}^n}{f_k^{\square}(x)}\\
&\textnormal{ s.t. }F_k^{\square}(x)\leq0\punkt
\label{BundleSQP:Idee:Optimierungsproblemfquadrat}
\end{split}
\end{equation}
It is important to observe here that the local model for the nonsmooth problem
(\ref{BundleSQP:OptProblem}) is the piecewise quadratic nonsmooth problem
(\ref{BundleSQP:Idee:Optimierungsproblemfquadrat}). This problem in turn can, however, be
equivalently written as a smooth QCQP.
%AN%by an SQP-algorithm
%%%FINAL%%%(s.~Section \ref{section:SQP})
%AN%(s.~Subsection \ref{section:SQP}). To keep the effort as small as possible, we only perform a single SQP-step.
%AN%by a single SQP-step, as defined in Proposition \ref{AN:proposition:SQP-step}.
%AN%\end{idea}
%%%PAPER%%%\end{remark}
%%%%%%%%%%%%%%%%%%%%%%%%%%%%%%%%%%%%%%%%%%%%%%%%%%%%%%%%%%%%%%%%%%%%%%%%%%%%%%%%%%%%%%%%%%%%%%%%%%%%%%%%%%%%%%%%%%%%%%%%
\begin{proposition}
The SQP-step $(d,\hat{v})\in\mathbb{R}^{n+1}$ for \refh{BundleSQP:Idee:Optimierungsproblemfquadrat} is given by the solution of the QP
\begin{equation}
\begin{split}
&f(x_k)+\min_{d,\hat{v}}{\hat{v}+\tfrac{1}{2}d^TW^kd}\\
&\hspace{39pt}\textnormal{ s.t. }-\big(f(x_k)-f_j^k\big)+d^Tg_j^k\leq\hat{v}    \hspace{32pt}~~~\forall j\in J_k\\
&\hspace{39pt}\hphantom{\textnormal{ s.t. }}F(x_k)-\big(F(x_k)-F_j^k\big)+d^T\hat{g}_j^k\leq0~~~\forall j\in J_k\komma
\label{BundleSQP:Satz:SQPIterationsschrittProblem}
\end{split}
\end{equation}
where
\begin{equation}
\begin{split}
%Außerhalb dieses Satzes wird nur die Formel von g_j^k und \hat{g}_j^k benötigt
f_j^k&:=f _j^{\sharp}(x_k)
\komma\quad
g_j^k:=g _j^{\sharp}(x_k)\overset{\refh{Luksan:Satz:fjrauteGradientCOMPACT}}{=}
       g_j+\rho_jG_j(x_k-y_j)
\\
F _j^k&:=F _j^{\sharp}(x_k)
\komma\quad
\hat{g}_j^k:=\hat{g}_j^{\sharp}(x_k)\overset{\refh{Luksan:Satz:fjrauteGradientCOMPACT}}{=}
             \hat{g}_j+\hat{\rho}_j\hat{G}_j(x_k-y_j)
\\
W^k
&:=
\sum_{j\in J_{k-1}}{\lambda_j^{k-1}\rho_jG_j}
+
\sum_{j\in J_{k-1}}{\mu_j^{k-1}\hat{\rho}_j\hat{G}_j}
\label{BundleSQP:Def:gjkCOMPACT}
\end{split}
\end{equation}
and $\lambda_j^{k-1}$ resp.~$\mu_j^{k-1}$ denote the Lagrange multipliers with respect to $f$ resp.~$F$ at iteration $k-1$ for $j\in J_{k-1}$.
\end{proposition}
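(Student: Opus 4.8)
The plan is to replace the nonsmooth model problem (\ref{BundleSQP:Idee:Optimierungsproblemfquadrat}) by an equivalent smooth QCQP via an epigraph reformulation and then apply the SQP-step of Proposition \ref{AN:proposition:SQP-step} verbatim. Introducing an auxiliary variable $v\in\mathbb{R}$ and using that the models (\ref{Def:fquadratCOMPACT}) are pointwise maxima, I first rewrite (\ref{BundleSQP:Idee:Optimierungsproblemfquadrat}) as
\[
\min_{(x,v)\in\mathbb{R}^{n+1}} v \quad\textnormal{ s.t. }\quad f_j^{\sharp}(x)-v\leq0,\ \ F_j^{\sharp}(x)\leq0 \quad(j\in J_k).
\]
Since each $f_j^{\sharp}$ and $F_j^{\sharp}$ is quadratic by (\ref{Def:frauteCOMPACT}), this is a genuinely $C^2$ problem in the variable $z:=(x,v)$ with linear objective, so Proposition \ref{AN:proposition:SQP-step} applies directly.

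Next I evaluate the ingredients of the SQP-step QP (\ref{SQP:Satz:QP}) for this reformulation at the current point $z_k=(x_k,v_k)$. The objective $z\mapsto v$ has gradient $(\zeroVector{n},1)$ and vanishing Hessian, so it contributes only the direction component in $v$ to the linearized objective and no quadratic term. Linearizing the constraints $f_j^{\sharp}(x)-v$ and $F_j^{\sharp}(x)$ in $x$ at $x_k$ and using (\ref{Luksan:Satz:fjrauteGradientCOMPACT}) produces exactly the gradients $g_j^k=g_j^{\sharp}(x_k)$ and $\hat{g}_j^k=\hat{g}_j^{\sharp}(x_k)$ of (\ref{BundleSQP:Def:gjkCOMPACT}). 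The one point that carries the real content is the Hessian of the Lagrangian: because $\nabla^2 f_j^{\sharp}=\rho_jG_j$ and $\nabla^2F_j^{\sharp}=\hat{\rho}_j\hat{G}_j$ are constant, the $x$-block of $\nabla_{zz}^2L$ is $\sum_j\lambda_j\rho_jG_j+\sum_j\mu_j\hat{\rho}_j\hat{G}_j$ in the multipliers $\lambda,\mu$ of the QP, while the $v$-row and $v$-column vanish. Substituting the only multiplier estimates available at the start of iteration $k$, namely $\lambda_j^{k-1}$ and $\mu_j^{k-1}$ from iteration $k-1$ (the standard SQP device), reproduces precisely the matrix $W^k$ of (\ref{BundleSQP:Def:gjkCOMPACT}); this is where the definition of $W^k$ originates.

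With these ingredients the SQP-step QP reads $v_k+\min_{(d,d_v)}\big(d_v+\tfrac{1}{2}d^TW^kd\big)$ subject to $f_j^k-v_k+d^Tg_j^k-d_v\leq0$ and $F_j^k+d^T\hat{g}_j^k\leq0$ for $j\in J_k$. Since $v_k$ and the direction $d_v$ enter only through the combination $v_k+d_v$, I substitute the free variable $\hat{v}:=v_k+d_v-f(x_k)$; this affine change of the unconstrained variable leaves the feasible set and the optimal $d$ unchanged, cancels $v_k$ everywhere, turns the objective into $f(x_k)+\hat{v}+\tfrac{1}{2}d^TW^kd$ and the first constraint family into $-(f(x_k)-f_j^k)+d^Tg_j^k\leq\hat{v}$, and (rewriting $F_j^k=F(x_k)-(F(x_k)-F_j^k)$) yields exactly (\ref{BundleSQP:Satz:SQPIterationsschrittProblem}). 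The calculation is otherwise routine, as the paper's ``straightforward'' proof suggests; the only point I expect to need care is that the full Hessian $\nabla_{zz}^2L$ is singular in the $v$-direction, so the positive-definiteness hypothesis of Proposition \ref{AN:proposition:SQP-step} has to be read on the tangent space of the active constraints, where an active epigraph inequality ties $d_v$ to $d$ and the quadratic form collapses to $d^TW^kd$.
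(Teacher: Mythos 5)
Your proof is correct and follows essentially the same route as the paper: an epigraph reformulation of (\ref{BundleSQP:Idee:Optimierungsproblemfquadrat}) into a smooth problem in $(x,v)$, followed by a direct application of Proposition \ref{AN:proposition:SQP-step} with the previous iteration's multipliers supplying $W^k$. The paper merely states this in two lines (fixing $u_k:=f(x_k)$ so that $\hat v$ is the $v$-component of the step), whereas you spell out the Lagrangian Hessian computation and the harmless shift $\hat v:=v_k+d_v-f(x_k)$; the substance is identical.
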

%%%%%%%%%%%%%%%%%%%%%%%%%%%%%%%%%%%%%%%%%%%%%%%%%%%%%%%%%%%%
\begin{proof}
We rewrite (\ref{BundleSQP:Idee:Optimierungsproblemfquadrat}) as a smooth optimization problem by using (\ref{Def:fquadratCOMPACT}).
%%%COMPACT%%%\begin{equation*}
%%%COMPACT%%%\begin{split}
%%%COMPACT%%%&\min_{x,u}{u}\\
%%%COMPACT%%%&\textnormal{ s.t. }f_j^{\sharp}(x)-u\leq0                      ~~~\forall j\in J_k\\
%%%COMPACT%%%&\hphantom{\textnormal{ s.t. }}F_j^{\sharp}(x)\leq0\hspace{18pt}~~~\forall j\in J_k\punkt
%%%COMPACT%%%\end{split}
%%%COMPACT%%%\end{equation*}
If we are at the iteration point $(x_k,u_k)\in\mathbb{R}^n\times\mathbb{R}$ with $u_k:=f(x_k)$ in this
%%%COMPACT%%%optimization problem,
smooth reformulation,
then,  according to (\ref{SQP:Satz:QP}) as well as using (\ref{BundleSQP:Def:gjkCOMPACT}), the SQP-step for this problem is given by the solution of the QP (\ref{BundleSQP:Satz:SQPIterationsschrittProblem}).
\qedhere
\end{proof}
%%%%%%%%%%%%%%%%%%%%%%%%%%%%%%%%%%%%%%%%%%%%%%%%%%%%%%%%%%%%%%%%%%%%%%%%%%%%%%%%%%%%%%%%%%%%%%%%%%%%%%%%%%%%%%%%%%%%%%%%
Since $f_j^{\sharp}$ resp.~$F_j^{\sharp}$ are only global underestimators for convex $f$ resp.~$F$ and $\rho_j=\hat{\rho}_j=0$ and since $f_k^{\square}$ resp.~$F_k^{\square}$ approximate $f$ resp.~$F$ only well for trial points close to $x_k$, we decrease the activity of non local information (e.g., non local subgradients) by
%AN%we will see in (\ref{BundleSQP:Satz:ZuSQPIterationsschrittAequivalentesProblem}).
the following definition.
%%%%%%%%%%%%%%%%%%%%%%%%%%%%%%%%%%%%%%%%%%%%%%%%%%%%%%%%%%%%%%%%%%%%%%%%%%%%%%%%%%%%%%%%%%%%%%%%%%%%%%%%%%%%%%%%%%%%%%%%
\begin{definition}
We define the
%neu%generalized
localized
approximation errors of $f$ resp.~$F$ by
\begin{equation}
\alpha_j^k:=\max{\big(\lvert f(x_k)-f_j^k\rvert,\gamma_1(s_j^k)^{\omega_1}\big)}\komma\quad
     A_j^k:=\max{\big(\lvert F(x_k)-F_j^k\rvert,\gamma_2(s_j^k)^{\omega_2}\big)}\komma
\label{BundleSQP:Def:alphajkCOMPACT} 
\end{equation}
where
\begin{equation}
%%%\begin{align}
s_j^k
%%%PAPER%%%&
:=
\vert y_j-x_j\vert+\sum_{i=j}^{k-1}\vert x_{i+1}-x_i\vert
\label{Luksan:Def:Lokalitaetsmass}
%%%PAPER%%%\\
%%%PAPER%%%\hat{s}_j^k
%%%PAPER%%%&:=
%%%PAPER%%%\vert y_j-x_j\vert+\sum_{i=j}^{k-1}\vert x_{i+1}-x_i\vert
%%%PAPER%%%\label{BundleSQP:Def:LokalitaetsmassNB}
\end{equation}
%%%\end{align}
denotes a locality measure for $j=1,\dots,k$ with fixed parameters $\gamma_i>0$ and $\omega_i\geq1$ for $i=1,2$.
\end{definition}
%%%%%%%%%%%%%%%%%%%%%%%%%%%%%%%%%%%%%%%%%%%%%%%%%%%%%%%%%%%%%%%%%%%%%%%%%%%%%%%%%%%%%%%%%%%%%%%%%%%%%%%%%%%%%%%%%%%%%%%%
%%%PAPER%%%\begin{remark}
%%%PAPER%%%The locality measure $s_j^k$ describes the distance of the path $\overrightarrow{y_jx_j\dots x_k}$.
%%%PAPER%%%\end{remark}
%%%%%%%%%%%%%%%%%%%%%%%%%%%%%%%%%%%%%%%%%%%%%%%%%%%%%%%%%%%%%%%%%%%%%%%%%%%%%%%%%%%%%%%%%%%%%%%%%%%%%%%%%%%%%%%%%%%%%%%%
\begin{proposition}
The locality measure $s_j^k$ has the following properties
\begin{equation}
s_j^k+\vert x_{k+1}-x_k\vert
=
s_j^{k+1}
\komma\quad
s_j^k
\geq
\vert y_j-x_k\vert~~\forall j=1,\dots,k
\punkt
\label{Luksan:Satz:LokalitaetsmassMonotonCOMPACT}
\end{equation}
\end{proposition}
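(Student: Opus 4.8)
The plan is to verify both claims directly from the definition of the locality measure in \refh{Luksan:Def:Lokalitaetsmass}; neither requires anything beyond reindexing a finite sum and a single application of the triangle inequality.

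For the first identity, I would expand $s_j^{k+1}$ according to its definition and peel off the final summand. Concretely,
\begin{equation*}
s_j^{k+1}
=
\vert y_j-x_j\vert+\sum_{i=j}^{k}\vert x_{i+1}-x_i\vert
=
\Big(\vert y_j-x_j\vert+\sum_{i=j}^{k-1}\vert x_{i+1}-x_i\vert\Big)+\vert x_{k+1}-x_k\vert
=
s_j^k+\vert x_{k+1}-x_k\vert,
\end{equation*}
where the middle step merely separates the $i=k$ term from the sum. This establishes the recursion.

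For the second inequality, I would write the difference $y_j-x_k$ as the telescoping sum
\begin{equation*}
y_j-x_k=(y_j-x_j)+\sum_{i=j}^{k-1}(x_i-x_{i+1}),
\end{equation*}
and then apply the triangle inequality together with the symmetry $\vert x_i-x_{i+1}\vert=\vert x_{i+1}-x_i\vert$ of the Euclidean norm to obtain
\begin{equation*}
\vert y_j-x_k\vert
\leq
\vert y_j-x_j\vert+\sum_{i=j}^{k-1}\vert x_{i+1}-x_i\vert
=
s_j^k.
\end{equation*}

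The only point that requires a moment's care (rather than a genuine obstacle) is the boundary case $j=k$, where the sum defining $s_k^k$ is empty: there one has $s_k^k=\vert y_k-x_k\vert$, so the second inequality holds with equality, while the first identity reduces to $s_k^{k+1}=\vert y_k-x_k\vert+\vert x_{k+1}-x_k\vert$. Since the whole argument is purely algebraic and invokes no property of the iterates $x_i,y_j$ beyond the definition of $s_j^k$, no further assumptions enter.
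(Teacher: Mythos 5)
Your proof is correct and follows essentially the same route as the paper's own (the paper merely records ``straightforward calculations'', and its detailed version peels off the $i=k$ term for the recursion and applies the triangle inequality to the telescoping sum for the lower bound, exactly as you do). Your explicit treatment of the empty-sum case $j=k$ is a welcome extra precision but not a departure in method.
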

%%%%%%%%%%%%%%%%%%%%%%%%%%%%%%%%%%%%%%%%%%%%%%%%%%%%%%%%%%%%
\begin{proof}%~
Straightforward calculations.
%%%
\begin{comment}
%%%
\begin{itemize}
\item We have for $j=1,\dots,k$
\begin{align*}
s_j^k+\vert x_{k+1}-x_k\vert
&\overset{(\ref{Luksan:Def:Lokalitaetsmass})}{=}
\vert y_j-x_j\vert+\sum_{i=j}^{k-1}\vert x_{i+1}-x_i\vert+\vert x_{k+1}-x_k\vert
\\
&\hspace{4pt}=
\vert y_j-x_j\vert+\sum_{i=j}^k\vert x_{i+1}-x_i\vert
\\
&\overset{(\ref{Luksan:Def:Lokalitaetsmass})}{=}
s_j^{k+1}
\punkt
\end{align*}
\item We have
\begin{align*}
s_j^k
&\overset{(\ref{Luksan:Def:Lokalitaetsmass})}{=}
\vert y_j-x_j\vert+\sum_{i=j}^{k-1}\vert x_{i+1}-x_i\vert
\\
&\hspace{5pt}=
\vert y_j-x_j\vert+\sum_{i=j}^{k-1}\vert x_i-x_{i+1}\vert
\\
&\hspace{5pt}\geq
\vert y_j-x_j+\sum_{i=j}^{k-1}x_i-x_{i+1}\vert
\\
&\hspace{5pt}=
\vert y_j-x_j+(x_j-x_{j+1})+(x_{j+1}-x_{j+2})+\dots+(x_{k-2}-x_{k-1})+(x_{k-1}-x_k)\vert
\\
&\hspace{5pt}=\vert y_j-x_k\vert
\punkt
\end{align*}
\item We have
\begin{align*}
s_k^k
&\overset{(\ref{Luksan:Def:Lokalitaetsmass})}{=}
\vert y_k-x_k\vert+\sum_{i=k}^{k-1}\vert x_{i+1}-x_i\vert
\\
&\hspace{5pt}=
\vert y_k-x_k\vert
\punkt
\end{align*}
\end{itemize}
%%%
\end{comment}
%%%
\qedhere
\end{proof}
%%%%%%%%%%%%%%%%%%%%%%%%%%%%%%%%%%%%%%%%%%%%%%%%%%%%%%%%%%%%%%%%%%%%%%%%%%%%%%%%%%%%%%%%%%%%%%%%%%%%%%%%%%%%%%%%%%%%%%%%
Like the bundle-Newton method by \citet{Luksan}, our algorithm uses a convex search direction problem and therefore we modify (\ref{BundleSQP:Satz:SQPIterationsschrittProblem}) in the following sense.
\begin{proposition}
If we generalize \refh{BundleSQP:Satz:SQPIterationsschrittProblem} by using the
%neu%generalized
localized
approximation errors \refh{BundleSQP:Def:alphajkCOMPACT} and replacing $W^k$ by a positive definite modification $\widebar{W}_p^k$ (e.g., the Gill-Murray factorization by \textH{\citet{GillMurray}}),
then the generalized version of \refh{BundleSQP:Satz:SQPIterationsschrittProblem} reads
\begin{equation}
\begin{split}
&f(x_k)+\min_{d,\hat{v}}{\hat{v}+\tfrac{1}{2}d^T\widebar{W}_p^kd}\\
&\hspace{39pt}\textnormal{ s.t. }                 -\alpha_j^k+d^Tg_j^k      \leq\hat{v}\hspace{30pt}~~~\forall j\in J_k\\
&\hspace{39pt}\hphantom{\textnormal{ s.t. }}F(x_k)-A_j^k     +d^T\hat{g}_j^k\leq0                   ~~~\forall j\in J_k
\punkt
\label{BundleSQP:Satz:ZuSQPIterationsschrittAequivalentesProblem}
\end{split}
\end{equation}
\end{proposition}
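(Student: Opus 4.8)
The plan is to treat this as a direct substitution into the QP \refh{BundleSQP:Satz:SQPIterationsschrittProblem}, carrying out the two prescribed replacements term by term and then simplifying, so that the proof is essentially bookkeeping rather than a genuine derivation.

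First I would address the objective. Replacing the (possibly indefinite) matrix $W^k$ from \refh{BundleSQP:Def:gjkCOMPACT} by a positive definite modification $\widebar{W}_p^k$ turns the quadratic term $\tfrac{1}{2}d^TW^kd$ into $\tfrac{1}{2}d^T\widebar{W}_p^kd$ and leaves both the linear part $\hat{v}$ and the constant $f(x_k)$ untouched. I would note in passing that this is exactly the step that renders the search direction problem convex: the constraints are already affine in $(d,\hat{v})$, and once the Hessian $\widebar{W}_p^k$ is positive definite the objective is strictly convex.

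Next I would handle the two families of constraints separately. In the first constraint of \refh{BundleSQP:Satz:SQPIterationsschrittProblem} the only constant (data-dependent) term is $f(x_k)-f_j^k$, and ``using the localized approximation errors'' means replacing this quantity by $\alpha_j^k$ from \refh{BundleSQP:Def:alphajkCOMPACT}, which yields $-\alpha_j^k+d^Tg_j^k\leq\hat{v}$. In the second constraint I would first observe that $F(x_k)-\big(F(x_k)-F_j^k\big)$ is written in this redundant form precisely so that the difference $F(x_k)-F_j^k$ can be isolated and replaced by $A_j^k$ while the leading $F(x_k)$ is retained, producing $F(x_k)-A_j^k+d^T\hat{g}_j^k\leq0$. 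Collecting the three pieces reproduces exactly \refh{BundleSQP:Satz:ZuSQPIterationsschrittAequivalentesProblem}.

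The only point that requires care --- and the closest thing to an obstacle --- is that this is a \emph{modification} of \refh{BundleSQP:Satz:SQPIterationsschrittProblem} rather than an equivalent reformulation, so I would want to make the sign handling explicit. Since $\alpha_j^k\geq\lvert f(x_k)-f_j^k\rvert\geq f(x_k)-f_j^k$ and likewise $A_j^k\geq F(x_k)-F_j^k$ by \refh{BundleSQP:Def:alphajkCOMPACT}, the substituted constraints are relaxations of the originals, which is consistent with the localization philosophy of damping the influence of non-local bundle information. Hence the statement is definitional in character, and the proof reduces to verifying that the indicated substitutions reproduce the displayed problem.
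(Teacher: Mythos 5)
Your proposal is correct and matches the paper's own proof, which consists precisely of performing the three indicated replacements in the earlier QP; the additional remarks on convexity and on the substituted constraints being relaxations are accurate but not needed, since the proposition is definitional in character.
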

%%%%%%%%%%%%%%%%%%%%%%%%%%%%%%%%%%%%%%%%%%%%%%%%%%%%%%%%%%%%
\begin{proof}
Replace $f(x_k)-f_j^k$ by $\alpha_j^k$, $F(x_k)-F_j^k$ by $A_j^k$ and $W^k$ by $\widebar{W}_p^k$ in (\ref{BundleSQP:Satz:SQPIterationsschrittProblem}).
\qedhere
\end{proof}
%%%%%%%%%%%%%%%%%%%%%%%%%%%%%%%%%%%%%%%%%%%%%%%%%%%%%%%%%%%%%%%%%%%%%%%%%%%%%%%%%%%%%%%%%%%%%%%%%%%%%%%%%%%%%%%%%%%%%%%%
\begin{remark}
\label{BundleSQP:remark:Maratos}
The standard SQP approach for smooth optimization problems suffers from the Maratos effect \citet{Maratos}, which, in general, prevents infeasible SQP-methods from getting a descent in the merit function and feasible SQP-methods from finding (good) feasible points
%(cf.~\citet[p.~1003,~top~right]{Tits09} and Example \ref{example:Test28})
(cf.~\citet[p.~1003]{Tits09} and Example \ref{example:Test28}). Some well known techniques for avoiding the Maratos effect are replacing the merit function by an augmented Lagrangian, using second order corrections, using a watchdog technique (which is a non-monotone line search)
%%%im pdf-Seiten-Such-Kästchen: S. 459
(cf., e.g., \citet[p.~440,~15.5~The~Maratos~effect]{Nocedal}),
or a quadratic approximation of the constraints (cf.~\citet{Solodov}). We will choose the quadratic constrained approximation approach to avoid the Maratos effect, which makes the search direction problem slightly more difficult to solve than a QP, but, as we will see, still guarantees strong duality which is necessary for proving convergence of our bundle method.
%(BEISPIEL FÜR NONSMOOTH PROBLEM)
\end{remark}
%%%%%%%%%%%%%%%%%%%%%%%%%%%%%%%%%%%%%%%%%%%%%%%%%%%%%%%%%%%%%%%%%%%%%%%%%%%%%%%%%%%%%%%%%%%%%%%%%%%%%%%%%%%%%%%%%%%%%%%%
\begin{example}
\label{example:Test28}
Consider the optimization problem (\ref{BundleSQP:OptProblem}) with $f,F:\mathbb{R}^2\longrightarrow\mathbb{R}$, where $f(x):=x_2$ and $F(x):=x_1^2-x_2$. Then this problem has the (global) minimizer $\hat{x}=\zeroVector{2}$. Furthermore, it is smooth and consequently its SQP-direction, which is obtained by solving the QP (\ref{SQP:Satz:QP}), at the iteration $k=0$ at the iteration point $(x_k,\lambda_k):=(-1,1+10^{-8},1)$, which implies that $x_k$ is close to the boundary, is  given by $d_k=(1,-2)$. Since we have for $t\in[0,1]$ that $F(x_k+td_k)\leq0$ if and only if $t\leq10^{-4}$, a feasible SQP-method can only make a tiny step towards the solution $\hat{x}$ on the standard SQP-direction in this example, and similar observations can be made for any other point $x_k$ with $k\not=0$ that is close to the boundary (Note that the objective function $f$ has no impact on the Hessian of the Lagrangian in the QP (\ref{SQP:Satz:QP}) in this example).
\end{example}
%%%%%%%%%%%%%%%%%%%%%%%%%%%%%%%%%%%%%%%%%%%%%%%%%%%%%%%%%%%%%%%%%%%%%%%%%%%%%%%%%%%%%%%%%%%%%%%%%%%%%%%%%%%%%%%%%%%%%%%%
\cbstartMIFFLIN
Remark \ref{BundleSQP:remark:Maratos} leads to the following idea:
%%%PAPER%%%\label{BundleSQP:Idea:QCQP}
Let $\widebar{G}_j^k,\widebar{\hat{G}}_j^k\in\Sym{n}$ be positive definite (e.g., positive definite modifications of $G_j\in\subhessian{f}{y_j}$ resp.~$\hat{G}_j\in\subhessian{F}{y_j}$; also cf.~Remark \ref{BundleSQPmitQCQP:GlobaleKonvergenz:Bem:Thm3.8}). Then we can try to determine the search direction by solving the convex QCQP
\begin{equation}
\begin{split}
&f(x_k)+\min_{d,\hat{v}}\hat{v}+\tfrac{1}{2}d^T\widebar{W}_p^kd\\
&\hspace{39pt}\textnormal{ s.t. }
-\alpha_j^k+d^Tg_j^k
+\tfrac{1}{2}d^T\widebar{G}_j^kd
\leq\hat{v}
%%%
\hspace{-53pt}
%%%
~~\hspace{86pt}\forall j\in J_k\\
%%%&\hspace{39pt}\hphantom{\textnormal{ s.t. }}
%%%-\alpha_p^k+d^Tg_p^k+\tfrac{1}{2}d^T\widebar{G}^kd\leq\hat{v}~~\hspace{82pt}\textnormal{wenn }i_s\leq i_r\\
&\hspace{39pt}\hphantom{\textnormal{ s.t. }}
F(x_k)-A_j^k+d^T\hat{g}_j^k+\tfrac{1}{2}d^T\widebar{\hat{G}}_j^kd\leq0~~~\forall j\in J_k%%%\\
%%%&\hphantom{\textnormal{ s.t. }}
%%%F(x_k)-A_p^k+d^T\hat{g}_p^k\leq-\tfrac{1}{2}d^T\widebar{\hat{G}}^kd~~\textnormal{wenn }i_s\leq i_r\\
%%%&\hspace{39pt}\hphantom{\textnormal{ s.t. }}     B (x_k+d)\leq     b \\
%%%&\hspace{39pt}\hphantom{\textnormal{ s.t. }}\hat{B}(x_k+d)=   \hat{b}
\label{Luksan:Alg:QCQPTeilproblem}
\end{split}
\end{equation}
instead of the QP (\ref{BundleSQP:Satz:ZuSQPIterationsschrittAequivalentesProblem}), i.e.~instead of just demanding that the first order approximations
%MIFFLIN%of the constraint
are feasible, we demand that the first order approximations
%MIFFLIN%of the constraint
must be the more feasible, the more we move away from $x_k$.
\cbendMIFFLIN
%%%%%%%%%%%%%%%%%%%%%%%%%%%%%%%%%%%%%%%%%%%%%%%%%%%%%%%%%%%%%%%%%%%%%%%%%%%%%%%%%%%%%%%%%%%%%%%%%%%%%%%%%%%%%%%%%%%%%%%%
\begin{example}
\label{BundleSQP:example:CauseToAcceptOnlyStrictlyFeasibleIterationsPoints}
%%%Dieses Beispiel ist im Mathematica-File ProblemQCQPfürRandpunkt01 (Non Smooth Formulierung).nb ausführlich
%%%dargestellt.
%%%Betrachte auch die problemlose glatte Formulierung dieses Beispiels im Mathematica-File
%%%ProblemQCQPfürRandpunkt01 (Smooth Formulierung).nb.
We consider the optimization problem (\ref{BundleSQP:OptProblem}) with
$f(x):=x_2$,
and
$F(x):=\max{(\min{(F_1(x),F_2(x))},F_3(x))}$,
where
$F_1(x):= x_1^2+x_2^2$,
$F_2(x):=-x_1+x_2^2$, and
$F_3(x):= x_1-2$,
and we assume that we are at the iteration point
$x_k:=\zeroVector{2}$.
%Paper%\begin{itemize}
%%%%%%%%%%%%%%%%%%%%%%%%%%%%%%
%Paper%\item

Since
$\hat{F}(x):=\max{\big(F_2(x),F_3(x)\big)}$
is convex, and since an easy examination yields that
$F(x)\leq0
\Longleftrightarrow
\hat{F}(x)\leq0$,
the feasible set of our optimization problem (\ref{BundleSQP:OptProblem}) is convex. Therefore, the linearity of $f$ implies that our optimization problem has the unique minimizer $\hat{x}:=(2,-\sqrt{2})$.
%Paper%These facts are summarized in Figure \ref{Figure:ProblemQCQPfuerRandpunkt2D}.
%Paper%\begin{center}
%Paper%~
%Paper%\captionsetup{type=figure}
%Paper%\includegraphics[width=8.5cm]{C:/Graphics/ProblemQCQPfürRandpunkt2D.eps}
%Paper%\captionof{figure}{Illustration of the optimization problem}
%Paper%\label{Figure:ProblemQCQPfuerRandpunkt2D}
%Paper%\end{center}
%%%%%%%%%%%%%%%%%%%%%%%%%%%%%%
%Paper%\item

The quadratic approximation of $F$ with respect to $x_k$ in the QCQP (\ref{Luksan:Alg:QCQPTeilproblem}) reads
%(s. Definition von unserem $F$ bzw. Mathematica, ProblemQCQPfürRandpunkt01 (Non Smooth Formulierung).nb)
$F_1(x_k+d)\leq0$, i.e.~$d=\zeroVector{2}$ is the only feasible point for the QCQP (\ref{Luksan:Alg:QCQPTeilproblem}) and therefore its solution, although $x_k=\zeroVector{2}$ is not a stationary point for our optimization problem (for this consider $f$), resp.~much less a minimizer (since $\hat{x}$ is the unique minimizer of our optimization problem).
As it can be seen, e.g., from considering the restriction of $F$ to $x_2=0$,
%Paper%in Figure \ref{Figure:ProblemQCQPfuerRandpunkt1D},
the reason for the occurrence of $d=\zeroVector{2}$ at $x_k$ is the nonconvexity of $F$ (which is a result of the presence of the $\min$-function in $F$), although the feasible set
%Paper%, which is colored orange like in Figure \ref{Figure:ProblemQCQPfuerRandpunkt2D},
is convex.
%AN%\hspace{2cm}
%neu%\begin{center}
%neu%\includegraphics[width=11cm]{C:/Graphics/ProblemQCQPfürRandpunkt3D.eps}
%neu%\end{center}
%Paper%\begin{center}
%Paper%\captionsetup{type=figure}
%Paper%\includegraphics[width=10cm]{C:/Graphics/ProblemQCQPfürRandpunkt1D.eps}
%Paper%\captionof{figure}{Sectional drawing of $F(x_1,0)$}
%Paper%\label{Figure:ProblemQCQPfuerRandpunkt1D}
%Paper%\end{center}
%Sind wir am Beginn des Verfahrens oder nach einem Serious-Step ($\Longrightarrow A_j^k=0$) im Iterationspunkt 
%%%%%%%%%%%%%%%%%%%%%%%%%%%%%%
%Paper%\item

Notice that if we substitute 
$F$
by
$\hat{F}$
in the constraint of our optimization problem, which yields the same feasible set,
the difficulty which we described above does not occur.
%is convex and this problem does not occur.
%%%(vgl. Mathematica, ProblemQCQPfürRandpunkt01 (Smooth Formulierung).nb).
%%%%%%%%%%%%%%%%%%%%%%%%%%%%%%
%Paper%\end{itemize}
%%%%%%%%%%%%%%%%%%%%%%%%%%%%%%%%%%%%%%%%%%%%%%%%%%%%%%%%%%%%%%%%%%%%%%%%%%%%%%%%%%%%%%%%%%%%%%%%%%%%%%%%%%%%%%%%%%%%%%%%
\end{example}
%%%%%%%%%%%%%%%%%%%%%%%%%%%%%%%%%%%%%%%%%%%%%%%%%%%%%%%%%%%%%%%%%%%%%%%%%%%%%%%%%%%%%%%%%%%%%%%%%%%%%%%%%%%%%%%%%%%%%%%%
\begin{remark}
If $F(x_k)\leq0$, ((\ref{BundleSQP:Satz:ZuSQPIterationsschrittAequivalentesProblem}) as well as) (\ref{Luksan:Alg:QCQPTeilproblem}) is always feasible and therefore we do not have to deal with infeasible search direction problems as they occur in infeasible SQP-methods (cf.~Remark \ref{SQP:Bem:LinearNBkonsistent}). Nevertheless, we have to demand $F(x_k)<0$, since otherwise it can happen that $d_k=\zeroVector{n}$ is the only feasible point and therefore the solution of (\ref{Luksan:Alg:QCQPTeilproblem}), but $x_k$ is not stationary for (\ref{BundleSQP:OptProblem}) as Example \ref{BundleSQP:example:CauseToAcceptOnlyStrictlyFeasibleIterationsPoints} showed. This is similar to difficulties arising in smooth problems at saddle points of the constraints.
\end{remark}
%%%%%%%%%%%%%%%%%%%%%%%%%%%%%%%%%%%%%%%%%%%%%%%%%%%%%%%%%%%%%%%%%%%%%%%%%%%%%%%%%%%%%%%%%%%%%%%%%%%%%%%%%%%%%%%%%%%%%%%%
Now we state the dual search direction problem which plays an important role for proving the global convergence of the method
%Paper%(cf.~Section \ref{section:GlobalConvergence}).
(cf.~Subsection \ref{section:GlobalConvergence}).
%%%%%%%%%%%%%%%%%%%%%%%%%%%%%%%%%%%%%%%%%%%%%%%%%%%%%%%%%%%%%%%%%%%%%%%%%%%%%%%%%%%%%%%%%%%%%%%%%%%%%%%%%%%%%%%%%%%%%%%%
\begin{proposition}
The dual problem of the QCQP \refh{Luksan:Alg:QCQPTeilproblem} is given by
\begin{equation}
\begin{split}
&f(x_k)-\min_{\lambda,\mu}
\tfrac{1}{2}
\Big\lvert H_k(\lambda,\mu)\Big(\sum_{j\in J_k}\lambda_jg _j^k+\mu_j\hat{g}_j^k\Big)\Big\rvert^2
+\sum_{j\in J_k}\lambda_j\alpha_j^k+\mu_jA_j^k\\
&\hspace{0.65\textwidth}-\big(\sum_{j\in J_k}\mu_j\big)F(x_k)
\\
&\hspace{37pt}\textnormal{ s.t. }
\lambda_j\geq0\textnormal{, }
\mu_j\geq0~~\forall j\in J_k\textnormal{, }
\sum_{j\in J_k}\lambda_j=1\textnormal{, }
\label{Luksan:Alg:QCQPTeilproblem:DualesProblem}
\end{split}
\end{equation}
where
\cbstartMIFFLIN
$
H_k(\lambda,\mu)
:=
\big(\widebar{W}_p^k+\sum_{j\in J_k}
%%%
\lambda_j\widebar{G}_j^k+
%%%
\mu_j\widebar{\hat{G}}_j^k\big)^{-\mathalf}
$.
\cbendMIFFLIN
If $F(x_k)<0$, then the duality gap is zero, and, furthermore, if we denote the minimizer of the dual problem (\refh{Luksan:Alg:QCQPTeilproblem:DualesProblem}) by $(\lambda^k,\mu^k)$, then the minimizer $(d_k,\hat{v}_k)$ of the primal QCQP \refh{Luksan:Alg:QCQPTeilproblem} satisfies
\cbstartMIFFLIN
\begin{align*}
d_k
&=
-(\widebar{W}_p^k+\sum\limits_{j\in J_k}
%%%
\lambda_j^k\widebar{G}_j^k+
%%%
\mu_j^k\widebar{\hat{G}}_j^k)^{-1}
\big(
\sum_{j\in J_k}\lambda_j^kg_j^k+\mu_j^k\hat{g}_j^k
\big)
\\
\hat{v}_k
&=
\big(\sum_{j\in J_k}\lambda_j^kg_j^k\big)^Td_k
-\sum_{j\in J_k}\lambda_j^k\alpha_j^k
%%%
+\tfrac{1}{2}d_k^T\big(\sum\limits_{j\in J_k}{\lambda_j^k\widebar{G}_j^k}\big)d_k
%%%
\\
&=
-d_k^T\widebar{W}_p^kd_k
-\tfrac{1}{2}d_k^T\big(\sum_{j\in J_k}
%%%
\lambda_j^k\widebar{G}_j^k+
%%%
\mu_j^k\widebar{\hat{G}}_j^k\big)d_k
%\\
%&\hspace{30pt}
-\sum_{j\in J_k}\lambda_j^k\alpha_j^k
+\mu_j^kA_j^k\\
&\hspace{0.6\textwidth}
-\big(\sum_{j\in J_k}\mu_j^k\big)\big(-F(x_k)\big)
\leq
0
\punkt
\end{align*}
\cbendMIFFLIN
\end{proposition}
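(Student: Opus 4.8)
The plan is to derive the dual by forming the Lagrangian of the primal QCQP \refh{Luksan:Alg:QCQPTeilproblem}, eliminating the primal variables analytically, and then invoking Slater's theorem to close the duality gap. I would introduce multipliers $\lambda_j\geq0$ for the objective-model constraints and $\mu_j\geq0$ for the constraint-model constraints, and (dropping the additive constant $f(x_k)$ for the moment) write the Lagrangian as
\begin{equation*}
\begin{split}
L(d,\hat{v},\lambda,\mu)&=\hat{v}+\tfrac{1}{2}d^T\widebar{W}_p^kd+\sum_{j\in J_k}\lambda_j\big(-\alpha_j^k+d^Tg_j^k+\tfrac{1}{2}d^T\widebar{G}_j^kd-\hat{v}\big)\\
&\quad+\sum_{j\in J_k}\mu_j\big(F(x_k)-A_j^k+d^T\hat{g}_j^k+\tfrac{1}{2}d^T\widebar{\hat{G}}_j^kd\big)\punkt
\end{split}
\end{equation*}
First I would minimize over $\hat{v}$: since $L$ is affine in $\hat{v}$ with slope $1-\sum_{j\in J_k}\lambda_j$, the infimum is finite only if $\sum_{j\in J_k}\lambda_j=1$, which produces the simplex constraint. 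Next I would collect the $d$-dependent part as $\tfrac{1}{2}d^TM_k(\lambda,\mu)d+b^Td$ with $M_k(\lambda,\mu):=\widebar{W}_p^k+\sum_{j\in J_k}(\lambda_j\widebar{G}_j^k+\mu_j\widebar{\hat{G}}_j^k)$ and $b:=\sum_{j\in J_k}(\lambda_jg_j^k+\mu_j\hat{g}_j^k)$. Because $\widebar{W}_p^k$ and every $\widebar{G}_j^k,\widebar{\hat{G}}_j^k$ are positive definite and $\lambda_j,\mu_j\geq0$, the matrix $M_k$ is positive definite, so the unconstrained minimizer is $d=-M_k^{-1}b$ with optimal value $-\tfrac{1}{2}b^TM_k^{-1}b=-\tfrac{1}{2}\lvert H_k(\lambda,\mu)b\rvert^2$, recalling $H_k=M_k^{-\mathalf}$. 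Assembling the remaining constant terms and rewriting the resulting $\max$ as $-\min$ after restoring $f(x_k)$ yields exactly \refh{Luksan:Alg:QCQPTeilproblem:DualesProblem}.

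To obtain the zero duality gap I would note that \refh{Luksan:Alg:QCQPTeilproblem} is a convex program (convex quadratic objective, convex quadratic inequality constraints by positive definiteness of the Hessians) and verify Slater's condition when $F(x_k)<0$: at $d=\zeroVector{n}$ the second family of constraints reads $F(x_k)-A_j^k\leq0$, which holds strictly since $A_j^k\geq0$ and $F(x_k)<0$, while choosing $\hat{v}$ large makes the first family strict; hence a strictly feasible point exists and strong duality applies. This is the step I expect to be the crux, since it is precisely here that the hypothesis $F(x_k)<0$ is indispensable --- without it $d=\zeroVector{n}$ may be the only feasible point (cf.~Example \ref{BundleSQP:example:CauseToAcceptOnlyStrictlyFeasibleIterationsPoints}) and no Slater point exists.

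For the primal recovery I would read off $d_k=-M_k(\lambda^k,\mu^k)^{-1}b^k$ directly from the stationarity relation $M_k(\lambda^k,\mu^k)d_k=-b^k$ established above, giving the displayed formula for $d_k$. The first expression for $\hat{v}_k$ would follow from complementary slackness: for each $j$ with $\lambda_j^k>0$ the first constraint is active, so multiplying $-\alpha_j^k+d_k^Tg_j^k+\tfrac{1}{2}d_k^T\widebar{G}_j^kd_k=\hat{v}_k$ by $\lambda_j^k$, summing over $j\in J_k$, and using $\sum_{j\in J_k}\lambda_j^k=1$ gives the claim (terms with $\lambda_j^k=0$ vanish). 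The second expression would then be obtained by substituting the stationarity identity $\sum_{j\in J_k}\lambda_j^kg_j^k=-\widebar{W}_p^kd_k-\sum_{j\in J_k}(\lambda_j^k\widebar{G}_j^k+\mu_j^k\widebar{\hat{G}}_j^k)d_k-\sum_{j\in J_k}\mu_j^k\hat{g}_j^k$ into the first expression and then eliminating the term $\big(\sum_{j\in J_k}\mu_j^k\hat{g}_j^k\big)^Td_k$ via complementary slackness for the second constraint family (active whenever $\mu_j^k>0$). Finally, the inequality $\hat{v}_k\leq0$ is immediate from the second expression: $d_k^T\widebar{W}_p^kd_k\geq0$, the quadratic term is $\geq0$ by positive definiteness, $\alpha_j^k,A_j^k,\lambda_j^k,\mu_j^k\geq0$, and $-\big(\sum_{j\in J_k}\mu_j^k\big)\big(-F(x_k)\big)\leq0$ because $F(x_k)<0$.
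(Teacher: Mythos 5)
Your proposal is correct and follows essentially the same route as the paper: form the Lagrangian, minimize out $\hat{v}$ and $d$ to obtain the simplex constraint and the dual objective, invoke Slater's condition (which is where $F(x_k)<0$ enters, exactly as you identify), and recover $d_k$ and $\hat{v}_k$ from the stationarity and complementarity conditions. The only cosmetic difference is that you derive the first formula for $\hat{v}_k$ via complementary slackness for the $\lambda$-constraints, whereas the paper obtains it by equating the optimal primal and dual objective values; both are equivalent consequences of strong duality.
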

%%%%%%%%%%%%%%%%%%%%%%%%%%%%%%%%%%%%%%%%%%%%%%%%%%%
\begin{proof}
\cbstartMIFFLIN
The Lagrangian of (\ref{Luksan:Alg:QCQPTeilproblem}) is given by $L(d,\hat{v},\lambda,\mu):=\hat{v}+\tfrac{1}{2}d^T\widebar{W}_p^kd+\sum_{j\in J_k}{\lambda_jF_j^1(d,\hat{v})}+\sum_{j\in J_k}{\mu_jF_j^2(d,\hat{v})}$, where
%MIFFLIN%$F_j^1(d,\hat{v}):=-\alpha_j^k+d^Tg_j^k-\hat{v}$
$F_j^1(d,\hat{v}):=-\alpha_j^k+d^Tg_j^k+\tfrac{1}{2}d^T\widebar{G}_j^kd-\hat{v}$
and $F_j^2(d,\hat{v}):=F(x_k)-A_j^k+d^T\hat{g}_j^k+\tfrac{1}{2}d^T\widebar{\hat{G}}_j^kd$. Consequently, the equality constraint of the dual problem reads
\begin{equation}
\Big(\widebar{W}_p^k+\sum_{j\in J_k}{
%%%
\lambda_j\widebar{G}_j^k+
%%%
\mu_j\widebar{\hat{G}}_j^k
}\Big)d
+\sum_{j\in J_k}{\lambda_jg_j^k+\mu_j\bar{\hat{g}}_j^k}
=0
\textnormal{ , }
\sum_{j\in J_k}{\lambda_j}=1
\punkt
\label{Luksan:Alg:QCQPTeilproblem:DualesProblem:Bew1}
\end{equation}
\cbendMIFFLIN
Rewriting $\tfrac{1}{2}d^T\widebar{W}_p^kd=-\tfrac{1}{2}d^T\widebar{W}_p^kd+d^T\widebar{W}_p^kd$ in $L$, scooping $d$ in the latter summand and $\hat{v}$, these terms vanish according to (\ref{Luksan:Alg:QCQPTeilproblem:DualesProblem:Bew1}). Now, expressing $d$ in (\ref{Luksan:Alg:QCQPTeilproblem:DualesProblem:Bew1}) and inserting it into $L$ yield the desired form of the dual objective function.

Since the primal problem is convex and (because of the assumption $F(x_k)<0$) strictly feasible, strong duality holds due to \citet[Section~5.2.3]{Boyd}. Therefore the optimal primal and dual objective function values coincide and we can express $\hat{v}_k$ using this equality. Using (\ref{Luksan:Alg:QCQPTeilproblem:DualesProblem:Bew1}), the optimality conditions for the QCQP (\ref{Luksan:Alg:QCQPTeilproblem}) and straightforward calculations yield the desired formulas for $\hat{v}_k$.
\qedhere
\end{proof}
%%%%%%%%%%%%%%%%%%%%%%%%%%%%%%%%%%%%%%%%%%%%%%%%%%%%%%%%%%%%%%%%%%%%%%%%%%%%%%%%%%%%%%%%%%%%%%%%%%%%%%%%%%%%%%%%%%%%%%%%

\subsection{Presentation of the algorithm}
%%%%%%%%%%%%%%%%%%%%%%%%%%%%%%%%%%%%%%%%%%%%%%%%%%%%%%%%%%%%%%%%%%%%%%%%%%%%%%%%%%%%%%%%%%%%%%%%%%%%%%%%%%%%%%%%%%%%%%%%
The method described in Algorithm \ref{BundleSQPmitQCQP:Alg:GesamtAlgMitQCQP} works according to the following scheme: After choosing a strictly feasible starting point $x_1\in\mathbb{R}^n$ and setting up a few positive definite matrices, we compute the
%neu%generalized
localized
approximation errors. Then we solve a convex QCQP to determine the search direction, where the
%AN%intention of the usage of the
quadratic constraints of the QCQP serve to obtain preferably feasible points that yield a good descent. After computing the aggregated data and the predicted descent as well as testing the termination criterion, we perform a line search (s.~Algorithm \ref{BundleSQP:AlgNB:LinesearchMitQCQP}) on the ray given by the search direction. This yields a trial point $y_{k+1}$ that has the following property: Either $y_{k+1}$ is strictly feasible and the objective function achieves sufficient descent (serious step) or $y_{k+1}$ is strictly feasible and the model of the objective function changes sufficiently (null step with respect to the objective function) or $y_{k+1}$ is not strictly feasible and the model of the constraint changes sufficiently (null step with respect to the constraint). Afterwards we update the iteration point $x_{k+1}$ and the information stored in the bundle. Now, we repeat this procedure until the termination criterion is satisfied.
%%%%%%%%%%%%%%%%%%%%%%%%%%%%%%%%%%%%%%%%%%%%%%%%%%%%%%%%%%%%%%%%%%%%%%%%%%%%%%%%%%%%%%%%%%%%%%%%%%%%%%%%%%%%%%%%%%%%%%%%
\begin{algorithm}
\label{BundleSQPmitQCQP:Alg:GesamtAlgMitQCQP}
\begin{enumerate}
\addtocounter{enumi}{-1}
\item\textit{Initialization:} Choose the following parameters, which will not be changed during the algorithm:
\begin{longtable}{l|l|p{6.6cm}}
\caption{Initial parameters}\\
\multicolumn{1}{c|}{\textnormal{General}}     & 
\multicolumn{1}{c|}{\textnormal{Default}}     & 
\multicolumn{1}{c }{\textnormal{Description}}  \\
\hline
\endfirsthead
\caption[]{Initial parameters (continued)}\\
\multicolumn{1}{c|}{\textnormal{General}}     & 
\multicolumn{1}{c|}{\textnormal{Default}}     & 
\multicolumn{1}{c }{\textnormal{Description}}  \\
\hline
\endhead
$x_1\in\mathbb{R}^n$       &                   & Strictly feasible initial point                    \\
$y_1=x_1$                  &                   & Initial trial point                                \\
$\varepsilon\geq0$         &                   & Final optimality tolerance                         \\
$M\geq2$                   & $M=n+3$           & Maximal bundle dimension                           \\
$t_0\in(0,1)$              & $t_0=0.001$       & Initial lower bound for step size                  \\
                           &                   & of serious step in line search                     \\
$\hat{t}_0\in(0,1)$        & $\hat{t}_0=0.001$ & Scaling parameter for $t_0$                        \\
$m_L\in(0,\tfrac{1}{2})$   & $m_L=0.01$        & Descent parameter for serious step in line search  \\
\cbstartMIFFLIN
$m_R\in(m_L,1)$,           & $m_R=0.5$    & Parameter for change of model of objective\\
$m_f\in[0,1]$              &                   & function for short serious and null steps in line search    \cbendMIFFLIN\\
$m_F\in(0,1)$              & $m_F=0.01$        & Parameter for change of model of constraint        \\
                           &                   & for short serious and null steps in line search    \\
$\zeta\in(0,\tfrac{1}{2})$ & $\zeta=0.01$      & Coefficient for interpolation in line search       \\
$\vartheta\geq1$           & $\vartheta=1$     & Exponent for interpolation in line search          \\
$C_S>0$                    & $C_S=10^{50}$     & Upper bound of the distance between $x_k$ and $y_k$\\
$C_G>0$                    & $C_G=10^{50}$     & Upper bound of the norm of the damped              \\
                           &                   & matrices $\lbrace\rho_jG_j\rbrace$ ($\lvert\rho_jG_j\rvert\leq C_G$)\\
$\hat{C}_G>0$              & $\hat{C}_G=C_G$   & Upper bound of the norm of the damped              \\
                           &                   & matrices $\lbrace\hat{\rho}_j\hat{G}_j\rbrace$ ($\lvert\hat{\rho}_j\hat{G}_j\rvert\leq \hat{C}_G$)\\
%%%
\cbstartMIFFLIN
$\bar{C}_G>0$              & $\bar{C}_G=C_G$       & Upper bound of the norm of the matrices\\
                           &                       & 
$\lbrace\bar{G}_j^k\rbrace$ and $\lbrace\bar{G}^k\rbrace$ ($\max{(\lvert\bar{G}_j^k\rvert,\lvert\bar{G}^k\rvert)}\leq\bar{C}_G$)
\cbendMIFFLIN\\
%%%
$\bar{\hat{C}}_G>0$        & $\bar{\hat{C}}_G=C_G$ & Upper bound of the norm of the matrices\\
                           &                       & 
$\lbrace\bar{\hat{G}}_j^k\rbrace$ and $\lbrace\bar{\hat{G}}^k\rbrace$ ($\max{(\lvert\bar{\hat{G}}_j^k\rvert,\lvert\bar{\hat{G}}^k\rvert)}\leq\bar{\hat{C}}_G$)\\
$i_{\rho}\geq0$            & $i_{\rho}=3$          & Selection parameter for $\rho_{k+1}$ (cf.~Remark \ref{remark:BundleSQPmitQCQP:Parametersimin})\\
$i_l\geq0$                 &                       & Line search selection parameter (cf.~Remark \ref{remark:BundleSQPmitQCQP:Parametersimin})\\
$i_m\geq0$                 &                       & Matrix selection parameter (cf.~Remark \ref{remark:BundleSQPmitQCQP:Parametersimin})\\
$i_r\geq0$                 &                       & Bundle reset parameter (cf.~Remark \ref{remark:BundleSQPmitQCQP:Parametersimin})\\
$\gamma_1>0$               & $\gamma_1=1$      & Coefficient for locality measure for objective function\\
$\gamma_2>0$               & $\gamma_2=1$      & Coefficient for locality measure for constraint        \\
$\omega_1\geq1$            & $\omega_1=2$      & Exponent for locality measure for objective function   \\
$\omega_2\geq1$            & $\omega_2=2$      & Exponent for locality measure for constraint
\end{longtable}
Set the initial values of the data which gets changed during the algorithm:
\begin{align*}
i_n&=\hphantom{\lbrace}0\hphantom{\rbrace}\textnormal{ (\# subsequent null and short steps)}\\
i_s&=\hphantom{\lbrace}0\hphantom{\rbrace}\textnormal{ (\# subsequent serious steps)}       \\
J_1&=          \lbrace 1          \rbrace \textnormal{ (set of bundle indices)}             \punkt
\end{align*}
Compute the following information at the initial trial point
\begin{align}
            f_p^1=f_1^1&=f(y_1)                                     \label{BundleSQPmitQCQP:Alg:fStartwert}\\
            g_p^1=g_1^1&=g(y_1)\in\partial f(y_1)                   \label{BundleSQPmitQCQP:Alg:gStartwert}\\
              G_p^1=G_1& \text{ approximating } G(y_1)\in\subhessian{f}{y_1}               \label{BundleSQPmitQCQP:Alg:GpStartwert}\\
            F_p^1=F_1^1&=F(y_1)<0~~~\textnormal{(}y_1\textnormal{ is strictly feasible according to assumption)}
\label{BundleSQPmitQCQP:Alg:FStartwert}\\
\hat{g}_p^1=\hat{g}_1^1&=\hat{g}(y_1)\in\partial F(y_1)             \label{BundleSQPmitQCQP:Alg:ghatStartwert}\\
  \hat{G}_p^1=\hat{G}_1& \text{ approximating } \hat{G}(y_1)\in\subhessian{F}{y_1}         \label{BundleSQPmitQCQP:Alg:GhatpStartwert}
\end{align}
and set
\begin{align}
%%%PAPER%%%\hat{s}_p^1=\hat{s}_1^1=s_p^1=s_1^1&=0\textnormal{ (locality measure)}
\hat{s}_p^1=s_p^1=s_1^1&=0\textnormal{ (locality measure)}
\label{BundleSQPmitQCQP:Alg:sStartwert}\\
                \hat{\rho}_1=\rho_1&=1\textnormal{ (damping parameter)}                                 \nonumber\\
                     \bar{\kappa}^1&=1\textnormal{ (Lagrange multiplier for optimality condition)}      \nonumber\\
                                  k&=1\textnormal{ (iterator)}                                    \punkt\nonumber
\end{align}
\item\textit{Determination of the matrices for the QCQP:}

\texttt{if} (step $k-1$ and $k-2$ were serious steps) $\wedge$ ($\lambda_{k-1}^{k-1}=1$ $\vee$ $\underset{\textnormal{bundle reset}}{\underbrace{i_s>i_r}}$)
\begin{equation}
W=G_k+\bar{\kappa}^k\hat{G}_k
\label{BundleSQPmitQCQP:Alg:WchoiceSuperlinear1}
\end{equation}
\texttt{else}
\begin{equation}
W=G_p^k+\bar{\kappa}^k\hat{G}_p^k
\label{BundleSQPmitQCQP:Alg:WchoiceSuperlinear2}
\end{equation}
\texttt{end}\\
\\
\texttt{if} $i_n\leq i_m+i_l$\\
\hphantom{aaa}$\widebar{W}_p^k=\textnormal{``positive definite modification of }W\textnormal{''}$\\
\texttt{else}\\
\begin{equation}
\widebar{W}_p^k      =\widebar{W}_p^{k-1}
\label{BundleSQPmitQCQP:Alg:KeineModifikationVonGpk}
\end{equation}
\texttt{end}\\
\\
%%%
\cbstartDVI
%%%
\texttt{if} $i_n<i_m+i_l$
%HannesShort%\\
(i.e.~\# of subsequent null and short steps $<$ the fixed number $i_m+i_l$)
\begin{equation}
\begin{split}
(\widebar{G}^k,\widebar{\hat{G}}^k)    &=\textnormal{``positive definite modification of }(G_p^k,\hat{G}_p^k)\textnormal{''}\\
(\widebar{G}_j^k,\widebar{\hat{G}}_j^k)&=\textnormal{``positive definite modification of }(G_j,\hat{G}_j)\textnormal{''}~\forall j\in J_k
\label{BundleSQPmitQCQP:Alg:ModifikationVonGhatbarjk}
%dieser Verweis wird genau einmal verwendet, nämlich in Bemerkung zu reduced problem
\end{split}
\end{equation}
\texttt{else if} $i_n=i_m+i_l$\\
\begin{equation}
\begin{split}
(\widebar{G}^k,\widebar{\hat{G}}^k)    &=\textnormal{``positive definite modification of }(G_p^k,\hat{G}_p^k)\textnormal{''}\\
(\widebar{G}_j^k,\widebar{\hat{G}}_j^k)&=(\widebar{G}^k,\widebar{\hat{G}}^k)~\forall j\in J_k
\label{BundleSQPmitQCQP:Alg:ModifikationVonGhatbarjkBeiGleichheitsCase}
\end{split}
\end{equation}
\texttt{else}
%HannesShort%\\
(i.e.~at least $i_m+i_l$ subsequent null and short steps were executed)\\
\begin{equation}
(\widebar{G}^k,\widebar{\hat{G}}^k)    =(\widebar{G}^{k-1},\widebar{\hat{G}}^{k-1})\komma\quad
(\widebar{G}_j^k,\widebar{\hat{G}}_j^k)=(\widebar{G}^{k-1},\widebar{\hat{G}}^{k-1})~\forall j\in J_k
\label{BundleSQPmitQCQP:Alg:KeineModifikationVonGhatbarjk}
\end{equation}
\texttt{end}
%%%
\cbendDVI
%%%
\item\textit{Computation of the
%neu%generalized
localized
approximation errors:}
\begin{align}
\alpha_j^k&:=\max{\big(\lvert f(x_k)-f_j^k\rvert,\gamma_1(s_j^k)^{\omega_1}\big)}
\komma\quad
\alpha_p^k:=\max\big(\lvert f(x_k) -f_p^k\rvert,\gamma_1(s_p^k)^{\omega_1}\big)
\label{BundleSQPmitQCQP:Alg:alphajkCOMPACT}
\\
A_j^k&:=\max{\big(\lvert F(x_k)-F_j^k\rvert,\gamma_2(s_j^k)^{\omega_2}\big)}
\komma\quad
A_p^k:=\max\big(\lvert F(x_k) -F_p^k\rvert,\gamma_2(\hat{s}_p^k)^{\omega_2}\big)\punkt
\label{BundleSQPmitQCQP:Alg:AjkCOMPACT}
\end{align}
\item\textit{Determination of the search direction:} Compute the solution $(d_k,\hat{v}_k)\in\mathbb{R}^{n+1}$ of the (convex) QCQP
\begin{equation}
\begin{split}
&\min_{d,\hat{v}}\hat{v}+\tfrac{1}{2}d^T\widebar{W}_p^kd\komma\\
&\textnormal{ s.t. }           -\alpha_j^k+d^Tg_j^k
+\tfrac{1}{2}d^T\widebar{G}_j^kd
\leq\hat{v}~~\hspace{82pt}\textnormal{for }j\in J_k
\cbmathMIFFLIN\\
&\hphantom{\textnormal{ s.t. }}-\alpha_p^k+d^Tg_p^k
+\tfrac{1}{2}d^T\widebar{G}^kd
\leq\hat{v}~~\hspace{82pt}\textnormal{if }i_s\leq i_r
\cbmathMIFFLIN\\
&\hphantom{\textnormal{ s.t. }}F(x_k)-A_j^k+d^T\hat{g}_j^k+\tfrac{1}{2}d^T\widebar{\hat{G}}_j^kd\leq0
~~\textnormal{for }j\in J_k\\
&\hphantom{\textnormal{ s.t. }}F(x_k)-A_p^k+d^T\hat{g}_p^k+\tfrac{1}{2}d^T\widebar{\hat{G}}^kd\leq0
~~\textnormal{if }i_s\leq i_r
\label{BundleSQPmitQCQP:Alg:QPTeilproblem}
\end{split}
\end{equation}
and its corresponding Lagrange multiplier $(\lambda^k,\lambda_p^k,\mu^k,\mu_p^k)\in\Rpos^{2(\lvert J_k\rvert+1)}$, i.e.
\begin{align}
d_k
&=
%-H_k(\mu^k,\mu_p^k)^2
-H_k^2
\Big(
\sum_{j\in J_k}\lambda_j^k     g _j^k+\lambda_p^k     g _p^k+
\sum_{j\in J_k}    \mu_j^k\hat{g}_j^k+\mu    _p^k\hat{g}_p^k
\Big)
\label{BundleSQPmitQCQP:Alg:dk}
\\
\hat{v}_k
&=
-d_k^T\widebar{W}_p^kd_k
-\tfrac{1}{2}d_k^T\big(\sum_{j\in J_k}
%%%
\lambda_j^k\widebar{G}_j^k+\lambda_p^k\widebar{G}^k+
%%%
\mu_j^k\widebar{\hat{G}}_j^k+\mu_p^k\widebar{\hat{G}}^k
\big)d_k
-\sum_{j\in J_k}\lambda_j^k\alpha_j^k
%%%PAPER%%%\dots
\cbmathMIFFLIN
\nonumber
\\
&\hphantom{=}\hspace{4pt}
-\lambda_p^k\alpha_p^k-\sum_{j\in J_k}\mu_j^kA_j^k-\mu_p^kA_p^k
-\big(\sum_{j\in J_k}\mu_j^k+\mu_p^k\big)\big(-F(x_k)\big)
\komma
\label{BundleSQPmitQCQP:Alg:vhatkNEW}
\end{align}
where
\begin{equation}
%H_k(\mu^k,\mu_p^k):=\big(\widebar{W}_p^k+\sum_{j=1}^k\mu_j^k\widebar{\hat{G}}_j^k+\mu_p^k\widebar{\hat{G}}^k\big)
%^{-\mathalf}
H_k:=
%H_k(\mu^k,\mu_p^k):=
\big(\widebar{W}_p^k+\sum_{j\in J_k}
%%%
\lambda_j^k\widebar{G}_j^k+\lambda_p^k\widebar{G}^k+
%%%
\mu_j^k\widebar{\hat{G}}_j^k+\mu_p^k\widebar{\hat{G}}^k
\big)
^{-\mathalf}
\punkt
\cbmathMIFFLIN
\label{BundleSQPmitQCQP:Alg:Hk}
\end{equation}
Set
\begin{equation}
\bar{\kappa}^{k+1}
:=
\sum_{j\in J_k}\mu_j^k+\mu_p^k
\label{BundleSQPmitQCQP:Alg:Defkappabark+1}
\komma\quad
(\kappa_j^k,\kappa_p^k)
:=
\left\lbrace
\begin{array}{ll}
\tfrac{(\mu_j^k,\mu_p^k)}{\bar{\kappa}^{k+1}} & \textnormal{for }\bar{\kappa}^{k+1}>0\\
0                                   & \textnormal{for }\bar{\kappa}^{k+1}=0
\end{array}
\right.
\end{equation}
\texttt{if} $i_s>i_r$\\
%%%PAPER%%%(i.e. \# subsequent serious steps $>$ the fixed parameter $i_r$)\\
\hphantom{aaa}$i_s=0$ (bundle reset)\\
\texttt{end}
\item\textit{Aggregation:} We set for the aggregation of information of the objective function
\begin{align}
(\tilde{g}_p^k,\tilde{f}_p^k,G_p^{k+1},\tilde{s}_p^k)&=\sum_{j\in J_k}\lambda_j^k(g_j^k,f_j^k,\rho_jG_j,s_j^k)+\lambda_p^k(g_p^k,f_p^k,G_p^k,s_p^k )          \label{BundleSQPmitQCQP:Alg:gtildepkCOMPACT}
\\
\tilde{\alpha}_p^k&=\max\big(\vert f(x_k)-\tilde{f}_p^k\vert,\gamma_1(\tilde{s}_p^k)^{\omega_1}\big)
\label{BundleSQPmitQCQP:Alg:alphatildepk}
\end{align}
and for the aggregation of information of the constraint
\begin{align}
(\tilde{\hat{g}}_p^k,\tilde{F}_p^k,\hat{G}_p^{k+1},\tilde{\hat{s}}_p^k)&=\sum_{j\in J_k}\kappa_j^k(\hat{g}_j^k,F_j^k,\hat{\rho}_j\hat{G}_j,s_j^k)+\kappa_p^k(\hat{g}_p^k,F_p^k,\hat{G}_p^k,\hat{s}_p^k)
\label{BundleSQPmitQCQP:Alg:ghattildepkCOMPACT}
\\
\tilde{A}_p^k&=\max\big(\vert F(x_k)-\tilde{F}_p^k\vert,\gamma_2(\tilde{\hat{s}}_p^k)^{\omega_2}\big)
\label{BundleSQPmitQCQP:Alg:Atildepk}
\end{align}
and we set
\begin{align}
v_k
&=
%%%-\vert H_k
%%%(
%%%\tilde{g}_p^k+\bar{\kappa}^{k+1}\tilde{\hat{g}}_p^k
%%%)
%%%\vert^2
-d_k^T\widebar{W}_p^kd_k
-\tfrac{1}{2}d_k^T\big(\sum_{j\in J_k}
%%%
\lambda_j^k\widebar{G}_j^k+\lambda_p^k\widebar{G}^k+
%%%
\mu_j^k\widebar{\hat{G}}_j^k+\mu_p^k\widebar{\hat{G}}^k
\big)d_k
%\nonumber
%\\
%&\hspace{12pt}
\nonumber
\\
&\hphantom{=}\hspace{4pt}
-\tilde{\alpha}_p^k-\bar{\kappa}^{k+1}\tilde{A}_p^k
-\bar{\kappa}^{k+1}\big(-F(x_k)\big)
\cbmathMIFFLIN
\label{BundleSQPmitQCQP:Alg:vk}\\
w_k
&=
\tfrac{1}{2}\vert H_k
(
\tilde{g}_p^k+\bar{\kappa}^{k+1}\tilde{\hat{g}}_p^k
)
\vert^2
%\nonumber
%\\
%&\hspace{12pt}
+\tilde{\alpha}_p^k+\bar{\kappa}^{k+1}\tilde{A}_p^k
+\bar{\kappa}^{k+1}\big(-F(x_k)\big)
\punkt
\label{BundleSQPmitQCQP:Alg:wk}
\end{align}
\item\textit{Termination criterion:}

\texttt{if} $w_k\leq\varepsilon$\\
\hphantom{aaa}\texttt{stop}\\
\texttt{end}

\item\textit{Line search:} We compute step sizes $0\leq t_L^k\leq t_R^k\leq1$ and $t_0^k\in(0,t_0]$ by using the line search described in Algorithm \ref{BundleSQP:AlgNB:LinesearchMitQCQP}
%%%COMPACT%%%\begin{equation*}
%%%COMPACT%%%[t_L^k,t_R^k,t_0^k]=\texttt{Linesearch(}v_k\texttt{,}\dots\texttt{)}
%%%COMPACT%%%\end{equation*}
and we set
\begin{align}
      x_{k+1}&=x_k+t_L^kd_k~~~\textnormal{(is created strictly feasible by the line search)}
                                                       \label{BundleSQPmitQCQP:Alg:xk+1Update}\\
      y_{k+1}&=x_k+t_R^kd_k                            \label{BundleSQPmitQCQP:Alg:yk+1Update}\\
      f_{k+1}&=f(y_{k+1}))\komma~  %Achtung: Für den Beweis wid nur f_{k+1}&=f(y_{k+1})) benötigt
      g_{k+1}=g(y_{k+1})\in\partial f(y_{k+1}))\komma\nonumber\\
      G_{k+1}&\text{ approximating }G(y_{k+1})\in\subhessian{f}{y_{k+1}}     \label{BundleSQPmitQCQP:Alg:fk+1Update}             \\
      F_{k+1}&=F(y_{k+1})\komma~
  \hat{g}_{k+1}=\hat{g}(y_{k+1})\in\partial F(y_{k+1})\komma\nonumber\\
  \hat{G}_{k+1}&\text{ approximating }\hat{G}(y_{k+1})\in\subhessian{F}{y_{k+1}}\punkt\nonumber
\end{align}
\item\textit{Update:}

\texttt{if} $i_n\leq i_{\rho}$\\
%%%PAPER%%%(i.e. \# of the subsequent null and short steps $\leq3$ (the number $3$ is chosen empirically))\\
\begin{equation}
\rho_{k+1}=\min(1,\tfrac{C_G}{\vert G_{k+1}\vert})
\label{BundleSQPmitQCQP:Alg:rhok+1}
\end{equation}
\texttt{else}\\
%%%PAPER%%%(i.e. more than $3$ subsequent null and short steps were executed $\rightsquigarrow$ linear term for %%%PAPER%%%measuring approximation error)\\
$\hphantom{aaa}\rho_{k+1}=0$\\
\texttt{end}\\
We set
\begin{equation}
\hat{\rho}_{k+1}=\min(1,\tfrac{\hat{C}_G}{\vert\hat{G}_{k+1}\vert})
\punkt
\label{BundleSQPmitQCQP:Alg:rhohatk+1}
\end{equation}
\texttt{if} $t_L^k\geq t_0^k$ (serious step)\\
\hphantom{aaa}$i_n=0$            \\
\hphantom{aaa}$i_s=i_s+1$        \\
\texttt{else} (no serious step, i.e.~null or short step)
\begin{equation}
i_n=i_n+1
\label{BundleSQPmitQCQP:Alg:inUpdate}
\end{equation}
\texttt{end}\\
\\
%\\
%$\widebar{\hat{G}}_{k+1}^{k+1}=$''Positiv definite Modifikation von $\hat{G}_{k+1}$''\\
%???$\widebar{\hat{G}}^{k+1}=\sum\limits_{j\in J_k}\kappa_j^k\widebar{\hat{G}}_j^k+\kappa_p^k\widebar{\hat{G}}^k$\\
%???$\widebar{\hat{G}}^{k+1}=$''Positiv definite Modifikation von $\hat{G}_p^{k+1}$''\\
%\\
Compute the updates of the locality measure
%%%PAPER%%%for the objective function
\begin{align}
    s_j^{k+1}&=s_j^k+\vert x_{k+1}-x_k\vert~~~\textnormal{for }j\in J_k\label{BundleSQPmitQCQP:Alg:sjk+1Update} \\
s_{k+1}^{k+1}&=\vert x_{k+1}-y_{k+1}\vert                              \label{BundleSQPmitQCQP:Alg:sk+1k+1Update}\\
    s_p^{k+1}&=\tilde{s}_p^k+\vert x_{k+1}-x_k\vert                    \label{BundleSQPmitQCQP:Alg:spk+1Update}
\\
%%%PAPER%%%\end{align}
%%%PAPER%%%and for the constraint
%%%PAPER%%%\begin{align}
%%%PAPER%%%    \hat{s}_j^{k+1}&=\hat{s}_j^k+\vert x_{k+1}-x_k\vert~~~\textnormal{for }j\in J_k
%%%PAPER%%%\label{BundleSQPmitQCQP:Alg:shatjk+1Update}\\
%%%PAPER%%%\hat{s}_{k+1}^{k+1}&=\vert x_{k+1}-y_{k+1}\vert
%%%PAPER%%%\label{BundleSQPmitQCQP:Alg:shatk+1k+1Update}\\
    \hat{s}_p^{k+1}&=\tilde{\hat{s}}_p^k+\vert x_{k+1}-x_k\vert\punkt
\label{BundleSQPmitQCQP:Alg:shatpk+1Update}
\end{align}
Compute the updates for the objective function approximation
\begin{align}
%%%%%Werte f_j^{\sharp}(x_{k+1}) aus:
f_j^{k+1}
&=
f_j^k+g_j^{k\,T}(x_{k+1}-x_k)+\tfrac{1}{2}\rho_j(x_{k+1}-x_k)^TG_j(x_{k+1}-x_k)~~~\textnormal{for }j\in J_k
\nonumber
\\
%%%%%Werte f_{k+1}^{\sharp}(x_{k+1}) aus:
f_{k+1}^{k+1}
&=
f_{k+1}+g_{k+1}^T(x_{k+1}-y_{k+1})+\tfrac{1}{2}\rho_{k+1}(x_{k+1}-y_{k+1})^TG_{k+1}(x_{k+1}-y_{k+1})
\label{BundleSQPmitQCQP:Alg:fk+1k+1Update}
\\
%%%%%Werte f_p^{\sharp}(x_{k+1}) aus:
f_p^{k+1}
&=
\tilde{f}_p^k+\tilde{g}_p^{k\,T}(x_{k+1}-x_k)+\tfrac{1}{2}(x_{k+1}-x_k)^TG_p^{k+1}(x_{k+1}-x_k)
\label{BundleSQPmitQCQP:Alg:fpk+1Update}
\end{align}
and for the constraint
\begin{align}
F_j^{k+1}
&=
F_j^k+\hat{g}_j^{k\,T}(x_{k+1}-x_k)+\tfrac{1}{2}\hat{\rho}_j(x_{k+1}-x_k)^T\hat{G}_j(x_{k+1}-x_k)
~~~\textnormal{for }j\in J_k
\nonumber\\
F_{k+1}^{k+1}
&=
F_{k+1}+\hat{g}_{k+1}^T(x_{k+1}-y_{k+1})+\tfrac{1}{2}\hat{\rho}_{k+1}(x_{k+1}-y_{k+1})^T\hat{G}_{k+1}(x_{k+1}-y_{k+1})
\label{BundleSQPmitQCQP:Alg:Fk+1k+1Update}\\
F_p^{k+1}
&=
\tilde{F}_p^k+\tilde{\hat{g}}_p^{k\,T}(x_{k+1}-x_k)+\tfrac{1}{2}(x_{k+1}-x_k)^T\hat{G}_p^{k+1}(x_{k+1}-x_k)
\punkt
\label{BundleSQPmitQCQP:Alg:Fpk+1Update}
\end{align}
Compute the updates for the subgradient of the objective function approximation
\begin{align}
%%%%%Werte g_j^{\sharp}(x_{k+1}) aus:
g_j^{k+1}
&=
g_j^k+\rho_jG_j(x_{k+1}-x_k)~~~\textnormal{for }j\in J_k
\nonumber
\\
%%%%%Werte g_{k+1}^{\sharp}(x_{k+1}) aus:
g_{k+1}^{k+1}
&=
g_{k+1}+\rho_{k+1}G_{k+1}(x_{k+1}-y_{k+1})
\label{BundleSQPmitQCQP:Alg:gk+1k+1Update}
\\
%%%%%Werte g_p^{\sharp}(x_{k+1}) aus:
g_p^{k+1}
&=
\tilde{g}_p^k+G_p^{k+1}(x_{k+1}-x_k)
\label{BundleSQPmitQCQP:Alg:gpk+1Update}
\end{align}
and for the constraint
\begin{align}
\hat{g}_j^{k+1}
&=
\hat{g}_j^k+\hat{\rho}_j\hat{G}_j(x_{k+1}-x_k)
~~~\textnormal{for }j\in J_k
\label{BundleSQPmitQCQP:Alg:ghatjk+1Update}
\\
\hat{g}_{k+1}^{k+1}
&=
\hat{g}_{k+1}+\hat{\rho}_{k+1}\hat{G}_{k+1}(x_{k+1}-y_{k+1})
\label{BundleSQPmitQCQP:Alg:ghatk+1k+1Update}
\\
\hat{g}_p^{k+1}
&=
\tilde{\hat{g}}_p^k+\hat{G}_p^{k+1}(x_{k+1}-x_k)
\punkt
\label{BundleSQPmitQCQP:Alg:ghatpk+1Update}
\end{align}
Choose $J_{k+1}\subseteq\lbrace k-M+2,\dots,k+1\rbrace\cap\lbrace1,2,\dots\rbrace$ with $k+1\in J_{k+1}$.\\
$k=k+1$\\
Go to 1
\end{enumerate}
\end{algorithm}
%%%%%%%%%%%%%%%%%%%%%%%%%%%%%%%%%%%%%%%%%%%%%%%%%%%%%%%%%%%%%%%%%%%%%%%%%%%%%%%%%%%%%%%%%%%%%%%%%%%%%%%%%%%%%%%%%%%%%%%%
\begin{remark}
\label{remark:BundleSQPmitQCQP:Parametersimin}
We will see later that for convergence the approximation of element in
$\subhessian f{y}$ and $\subhessian F{y}$ only needs to satisfy mild conditions. The speed of
convergence will, of course, be influenced by the quality of approximation. In our first
implementation of the method \citet{HannesPaperA} we have computed elements of the respective
sets, but update methods similar to L-BFGS are also conceivable.

%%%%%%%%%%%%%%%%%%%%%%%%%%%%%%
Like in the original unconstrained bundle-Newton method by \citet{Luksan}, the parameters $i_m$ and $i_r$ as well as the additional parameter $i_l$ are only needed for proving convergence. Since in practice we usually terminate an algorithm, if a maximal number of iterations $\texttt{Nit\_max}$ is exceeded, we always choose
$
i_m=i_n=i_l=\texttt{Nit\_max}+1
$
in our implementation of Algorithm \ref{BundleSQPmitQCQP:Alg:GesamtAlgMitQCQP}.
%%%%%%%%%%%%%%%%%%%%%%%%%%%%%%
The case distinction for the choice of $W$ according to (\ref{BundleSQPmitQCQP:Alg:WchoiceSuperlinear1}) resp.~(\ref{BundleSQPmitQCQP:Alg:WchoiceSuperlinear2}) is only necessary for showing the superlinear convergence of the 
original unconstrained bundle-Newton method for strongly convex, twice times continuously differentiable functions (cf.~\citet[p.~385,~Section~4]{Luksan}).
%%%%%%%%%%%%%%%%%%%%%%%%%%%%%%
As the choice $i_{\rho}=3$ (cf.~the initialization of Algorithm \ref{BundleSQPmitQCQP:Alg:GesamtAlgMitQCQP}) for the case distinction $i_n\leq i_{\rho}$ for $\rho_{k+1}$ from (\ref{BundleSQPmitQCQP:Alg:rhok+1}) is due to empirical observations in the original unconstrained bundle-Newton method (cf.~\citet[p.~378]{Luksan}), the fact that we make no case distinction for $\hat{\rho}_{k+1}$ from (\ref{BundleSQPmitQCQP:Alg:rhohatk+1}) was also found out numerically.
%%%%%%%%%%%%%%%%%%%%%%%%%%%%%%
A numerically meaningful choice of the matrices
\cbstartMIFFLIN
$\widebar{G}_j^k$,
\cbendMIFFLIN
$\widebar{\hat{G}}_j^k$ and $\widebar{\hat{G}}^k$ that occur in (\ref{BundleSQPmitQCQP:Alg:ModifikationVonGhatbarjk}) is discussed in \citet[\GenaueAngabeThree]{HannesPaperA}.
%%%%%%%%%%%%%%%%%%%%%%%%%%%%%%
\end{remark}
%%%%%%%%%%%%%%%%%%%%%%%%%%%%%%%%%%%%%%%%%%%%%%%%%%%%%%%%%%%%%%%%%%%%%%%%%%%%%%%%%%%%%%%%%%%%%%%%%%%%%%%%%%%%%%%%%%%%%%%%
\begin{proposition}
We have for all $k\geq0$
\begin{align}
\vert H_k
(
\tilde{g}_p^k+\bar{\kappa}^{k+1}\tilde{\hat{g}}_p^k
)
\vert^2
&=
d_k^T\big(
\widebar{W}_p^k
%%%
+\sum_{j\in J_k}\lambda_j^k\widebar{G}_j^k+\lambda_p^k\widebar{G}^k
%%%
+\sum_{j\in J_k}\mu_j^k\widebar{\hat{G}}_j^k+\mu_p^k\widebar{\hat{G}}^k
\big)d_k
\cbmathMIFFLIN
\label{proposition:ConnectionBetweenHkAndWpkNorm}
\\
w_k
&=
-\tfrac{1}{2}d_k^T\widebar{W}_p^kd_k
-v_k
\punkt
\label{proposition:wkvkZusammenhang}
\end{align}
\end{proposition}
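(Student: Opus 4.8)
The plan is to reduce both identities to a single symmetric positive definite matrix. I would set $M_k := \widebar{W}_p^k+\sum_{j\in J_k}(\lambda_j^k\widebar{G}_j^k+\mu_j^k\widebar{\hat{G}}_j^k)+\lambda_p^k\widebar{G}^k+\mu_p^k\widebar{\hat{G}}^k$, so that by \refh{BundleSQPmitQCQP:Alg:Hk} one has $H_k=M_k^{-\mathalf}$ and hence $H_k^2=M_k^{-1}$, while \refh{BundleSQPmitQCQP:Alg:dk} reads $d_k=-M_k^{-1}G_k$ with the abbreviation $G_k:=\sum_{j\in J_k}(\lambda_j^k g_j^k+\mu_j^k\hat{g}_j^k)+\lambda_p^k g_p^k+\mu_p^k\hat{g}_p^k$. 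Everything then reduces to relating $G_k$ to the aggregated subgradients.

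The first thing I would establish is the aggregation identity $\tilde{g}_p^k+\bar{\kappa}^{k+1}\tilde{\hat{g}}_p^k=G_k$, which is the bridge between the quantities in \refh{BundleSQPmitQCQP:Alg:wk} and those in \refh{BundleSQPmitQCQP:Alg:dk}. Reading off the first components of the aggregation formulas \refh{BundleSQPmitQCQP:Alg:gtildepkCOMPACT} and \refh{BundleSQPmitQCQP:Alg:ghattildepkCOMPACT} gives $\tilde{g}_p^k=\sum_{j\in J_k}\lambda_j^k g_j^k+\lambda_p^k g_p^k$ and $\tilde{\hat{g}}_p^k=\sum_{j\in J_k}\kappa_j^k\hat{g}_j^k+\kappa_p^k\hat{g}_p^k$; multiplying the latter by $\bar{\kappa}^{k+1}$ and invoking the definition $(\kappa_j^k,\kappa_p^k)=(\mu_j^k,\mu_p^k)/\bar{\kappa}^{k+1}$ recovers precisely the $\mu$-weighted terms of $G_k$. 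The degenerate case $\bar{\kappa}^{k+1}=0$ forces all $\mu_j^k=\mu_p^k=0$ by \refh{BundleSQPmitQCQP:Alg:Defkappabark+1} and nonnegativity, so both sides collapse to $\tilde{g}_p^k$ and the identity persists.

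With this in hand, \refh{proposition:ConnectionBetweenHkAndWpkNorm} is a short computation: since $M_k$ and therefore $H_k$ is symmetric, $\lvert H_k G_k\rvert^2=G_k^TH_k^2G_k=G_k^TM_k^{-1}G_k$, and substituting $G_k=-M_kd_k$ together with the symmetry of $M_k$ turns this into $d_k^TM_kd_k$, which is exactly the right-hand side once the definition of $M_k$ is expanded. For \refh{proposition:wkvkZusammenhang} I would simply insert this result into the definition of $w_k$ in \refh{BundleSQPmitQCQP:Alg:wk} and compare term by term with $v_k$ from \refh{BundleSQPmitQCQP:Alg:vk}: writing $M_k=\widebar{W}_p^k+S_k$ with $S_k:=M_k-\widebar{W}_p^k$ and splitting $d_k^TM_kd_k$ accordingly, the error terms $\tilde{\alpha}_p^k$, $\bar{\kappa}^{k+1}\tilde{A}_p^k$ and $\bar{\kappa}^{k+1}(-F(x_k))$ enter $w_k$ and $v_k$ with opposite signs and cancel in the sum $w_k+v_k$, leaving exactly $-\tfrac{1}{2}d_k^T\widebar{W}_p^kd_k$, which rearranges to the claim.

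I do not expect any step to present a genuine obstacle, since the work is entirely bookkeeping. The only point that deserves attention is consistently tracking the quadratic terms and the sign conventions across \refh{BundleSQPmitQCQP:Alg:vk}, \refh{BundleSQPmitQCQP:Alg:wk} and the definition of $M_k$, and verifying the $\bar{\kappa}^{k+1}=0$ case so that the aggregation identity holds unconditionally and the $\kappa$-to-$\mu$ conversion remains valid.
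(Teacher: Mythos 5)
Your proposal is correct and follows essentially the same route as the paper: identify $H_k^{-2}$ with the full multiplier-weighted matrix, use $d_k=-H_k^2(\tilde{g}_p^k+\bar{\kappa}^{k+1}\tilde{\hat{g}}_p^k)$ via the aggregation formulas and the $\kappa$-to-$\mu$ conversion from \refh{BundleSQPmitQCQP:Alg:Defkappabark+1}, and then compare the definitions of $w_k$ and $v_k$ term by term. The paper compresses this to ``easy calculations,'' so your explicit treatment (including the $\bar{\kappa}^{k+1}=0$ case) is just a more detailed write-up of the same argument.
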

%%%%%%%%%%%%%%%%%%%%%%%%%%%%%%%%%%%%%%%%%%%%%%%%%%%%%%%%%%%%
\begin{proof}
Because of
%MIFFLIN%$H_k^{-2}=\widebar{W}_p^k+\sum_{j\in J_k}\mu_j^k\widebar{\hat{G}}_j^k+\mu_p^k\widebar{\hat{G}}^k$
\cbstartMIFFLIN
$H_k^{-2}=\widebar{W}_p^k+\sum_{j\in J_k}
%%%
\lambda_j^k\widebar{G}_j^k+\lambda_p^k\widebar{G}^k
%%%
\mu_j^k\widebar{\hat{G}}_j^k+\mu_p^k\widebar{\hat{G}}^k$
\cbendMIFFLIN
%%%PAPER%%%\label{proposition:ConnectionBetweenHkAndWpkNorm:Proof1}
due to (\ref{BundleSQPmitQCQP:Alg:Hk}) and
$d_k=-H_k^2(\tilde{g}_p^k+\bar{\kappa}^{k+1}\tilde{\hat{g}}_p^k)$
%%%PAPER%%%\label{proposition:ConnectionBetweenHkAndWpkNorm:Proof2}
due to (\ref{BundleSQPmitQCQP:Alg:dk}),
(\ref{BundleSQPmitQCQP:Alg:Defkappabark+1}),
(\ref{BundleSQPmitQCQP:Alg:gtildepkCOMPACT}) and
(\ref{BundleSQPmitQCQP:Alg:ghattildepkCOMPACT}), easy calculations yield (\ref{proposition:ConnectionBetweenHkAndWpkNorm}).
Furthermore, (\ref{proposition:wkvkZusammenhang}) holds due to
(\ref{BundleSQPmitQCQP:Alg:wk}),
(\ref{proposition:ConnectionBetweenHkAndWpkNorm}), and
(\ref{BundleSQPmitQCQP:Alg:vk}).
\qedhere
\end{proof}
%%%%%%%%%%%%%%%%%%%%%%%%%%%%%%%%%%%%%%%%%%%%%%%%%%%%%%%%%%%%%%%%%%%%%%%%%%%%%%%%%%%%%%%%%%%%%%%%%%%%%%%%%%%%%%%%%%%%%%%%
\begin{remark}
If we consider a nonsmooth unconstrained optimization problem (i.e.~we drop the constraint $F(x)\leq0$ in optimization problem (\ref{BundleSQP:OptProblem}))
\cbstartMIFFLIN
and if we choose $\widebar{G}_j^k=\zeroMatrix{n}$,
\cbendMIFFLIN
then our formula for $v_k$ from (\ref{BundleSQPmitQCQP:Alg:vk}) reduces to the formula for $v_k$ in the unconstrained bundle-Newton method (cf.~\citet[p.~377,~formula~(13)]{Luksan}),
since
$
v_k
=
-\vert H_k
\tilde{g}_p^k
\vert^2
-\tilde{\alpha}_p^k
$
due to
(\ref{BundleSQPmitQCQP:Alg:vk}) and
(\ref{proposition:ConnectionBetweenHkAndWpkNorm}).
\end{remark}
%%%%%%%%%%%%%%%%%%%%%%%%%%%%%%%%%%%%%%%%%%%%%%%%%%%%%%%%%%%%%%%%%%%%%%%%%%%%%%%%%%%%%%%%%%%%%%%%%%%%%%%%%%%%%%%%%%%%%%%%

\subsection{Presentation of the line search}
%%%%%%%%%%%%%%%%%%%%%%%%%%%%%%%%%%%%%%%%%%%%%%%%%%%%%%%%%%%%%%%%%%%%%%%%%%%%%%%%%%%%%%%%%%%%%%%%%%%%%%%%%%%%%%%%%%%%%%%%
%%%PAPER%%%\begin{motivation}
We extend the line search of the bundle-Newton method for nonsmooth unconstrained minimization to the constrained case in the line search described in Algorithm \ref{BundleSQP:AlgNB:LinesearchMitQCQP}. For obtaining a clear arrangement of the line search, we compute data concerning the objective function in \texttt{ComputeObjectiveData} and data concerning the constraint in \texttt{ComputeConstraintData}. Before formulating the line search in detail, we give a brief overview of its functionality:
%%%PAPER%%%\begin{itemize}
%%%%%%%%%%%%%%%%%%%%%%%%%%%%%%
%%%PAPER%%%\item

Starting with the step size $t=1$, we check if the point $x_k+td_k$ is strictly feasible.
%%%%%%%%%%%%%%%%%%%%%%%%%%%%%%
%%%PAPER%%%\begin{itemize}
%%%%%%%%%%%%%%%
%%%PAPER%%%\item
If so and if additionally the objective function decreases sufficiently in this point and $t$ is not too small, then we take $x_k+td_k$ as new iteration point in Algorithm \ref{BundleSQPmitQCQP:Alg:GesamtAlgMitQCQP} (serious step).
%%%%%%%%%%%%%%%
%%%PAPER%%%\item
Otherwise, if the point $x_k+td_k$ is strictly feasible and the model of the objective function changes sufficiently, we take $x_k+td_k$ as new trial point (short/null step with respect to the objective function).
%%%%%%%%%%%%%%%
%%%PAPER%%%\end{itemize}
%%%%%%%%%%%%%%%%%%%%%%%%%%%%%%
%%%PAPER%%%\item
If $x_k+td_k$ is not strictly feasible, but the model of the constraint changes sufficiently (in particular here the quadratic approximation of the constraint comes into play), we take $x_k+td_k$ as new trial point (short/null step with respect to the constraint).
%AN%Note that the condition used for taking a point that is not strictly feasible as a trial point is different to that %AN%in, e.g., \citet{MifflinLaxenburg,MifflinNB}.
%%%%%%%%%%%%%%%%%%%%%%%%%%%%%%
%%%PAPER%%%\item
After choosing a new step size $t\in[0,1]$ by interpolation, we iterate this procedure.
%%%%%%%%%%%%%%%%%%%%%%%%%%%%%%
%%%PAPER%%%\end{itemize}
%%%PAPER%%%\end{motivation}
%%%%%%%%%%%%%%%%%%%%%%%%%%%%%%%%%%%%%%%%%%%%%%%%%%%%%%%%%%%%%%%%%%%%%%%%%%%%%%%%%%%%%%%%%%%%%%%%%%%%%%%%%%%%%%%%%%%%%%%%
\begin{algorithm}[Line search]
\label{BundleSQP:AlgNB:LinesearchMitQCQP}
%\texttt{function }$[t_L,t_R]=$\,\texttt{BundleSQPConstrainedLinesearchQCQP(}$v_k$\texttt{,\dots)}
%%%COMPACT%%%\texttt{function }$[t_L,t_R,t_0]=$\,\texttt{Linesearch(}$v_k$\texttt{,\dots)}
\begin{enumerate}
\addtocounter{enumi}{-1}
%%%%%%%%%%%%%%%%%%%%%%%%%%%%%%%%%%%%%%%%%%%%%%%%%%%
\item\textit{Initialization:} Choose $\zeta\in(0,\tfrac{1}{2})$ as well as $\vartheta\geq1$ and set $t_L=0$ as well as $t=t_U=1$.
%%%%%%%%%%%%%%%%%%%%%%%%%%%%%%%%%%%%%%%%%%%%%%%%%%%
\item\textit{Modification of either $t_L$ or $t_U$:}
\begin{align*}
%%%%%%%%%%%%%%%%%Zulässiger Fall: tL-/tU-Modifizierung
&\texttt{if }F(x_k+td_k)<0                                            \nonumber\\
&\hphantom{iii}\texttt{if }f(x_k+td_k)\leq f(x_k)+m_Lv_k\cdot t       \nonumber\\
\end{align*}
\begin{align}
&\hphantom{iiiiii}t_L=t                                                     \nonumber\\
&\hphantom{iii}\texttt{else if }f(x_k+td_k)>f(x_k)+m_Lv_k\cdot t       \nonumber\\
&\hphantom{iiiiii}t_U=t                                                     \nonumber\\
&\hphantom{iii}\texttt{end}                                                 \nonumber\\
%%%%%%%%%%%%%%%%%Unzulässiger Fall: tU-Modifizierung
&\texttt{else if }F(x_k+td_k)\geq0                                     \nonumber\\
&\hphantom{iii}t_U=t                                                        \nonumber\\
%%%&\hphantom{iii}\hat{t}_0=t_0t_U                                             \nonumber\\
&\hphantom{iii}t_0=\hat{t}_0t_U                                             \label{linesearch:t0Shrinking}\\
&\texttt{end}                                                               \nonumber\\
%%%&\texttt{if }t_L\geq\hat{t}_0                                               \nonumber\\
&\texttt{if }t_L\geq t_0                                               \nonumber\\
&\hphantom{iii}t_R=t_L                                                      \nonumber\\
&\hphantom{iii}\texttt{return}\textnormal{ (serious step)}                  \nonumber\\
&\texttt{end}                                                               \nonumber
\end{align}
%%%%%%%%%%%%%%%%%%%%%%%%%%%%%%%%%%%%%%%%%%%%%%%%%%%
\item\textit{Decision of return}
%%%%%%%%%%%%%%%%%Zulässiger Fall: return-Entscheidung
\begin{align*}
&\hphantom{\texttt{10 }}\texttt{if }i_n<i_l\nonumber\\
&\hphantom{\texttt{10 }}\hphantom{iii}\texttt{if }F(x_k+td_k)<0
\nonumber\\
&\hphantom{\texttt{10 }}\hphantom{iiiiii}[g,G,\dots]=\texttt{ComputeObjectiveData(}t\texttt{,\dots)}\nonumber\\
&\hphantom{\texttt{10 }}\hphantom{iiiiii}\texttt{if }Z=\texttt{true}\nonumber\\
&\hphantom{\texttt{10 }}\hphantom{iiiiiiiii}t_R=t
\nonumber\\
&\hphantom{\texttt{10 }}\hphantom{iiiiiiiii}\texttt{return}\textnormal{ (short/null step: change of model of the objective function)}
\nonumber\\
&\hphantom{\texttt{10 }}\hphantom{iiiiii}\texttt{end}
\nonumber\\
%%%%%%%%%%%%%%%%%Unzulässiger Fall: return-Entscheidung
&\hphantom{\texttt{10 }}\hphantom{iii}\texttt{else if }F(x_k+td_k)\geq0
\nonumber\\
&\hphantom{\texttt{10 }}\hphantom{iiiiii}
[\hat{g},\hat{G},\dots]=\texttt{ComputeConstraintData(}t\texttt{,\dots)}\nonumber\\
&\hphantom{\texttt{10 }}\hphantom{iiiiii}\texttt{if }\hat{Z}=\texttt{true}
\nonumber\\
&\hphantom{\texttt{10 }}\hphantom{iiiiiiiii}t_R=t
\nonumber\\
&\hphantom{\texttt{10 }}\hphantom{iiiiiiiii}\texttt{return}\textnormal{ (short/null step: change of model of the constraint)}
\nonumber\\
&\hphantom{\texttt{10 }}\hphantom{iiiiii}\texttt{end}
\nonumber\\
&\hphantom{\texttt{10 }}\hphantom{iii}\texttt{end}
\nonumber\\
%%%%%%%%%%%%%%%%%i_n>i_m Fall: return-Entscheidung
&\hphantom{\texttt{10 }}\texttt{else if }i_n\geq i_l\nonumber\\
&\hphantom{\texttt{10 }}\hphantom{iii}[g,G,\dots]=\texttt{ComputeObjectiveData(}t\texttt{,\dots)}\nonumber\\
&\hphantom{\texttt{10 }}\hphantom{iii}\texttt{if }F(x_k+td_k)<0~~\texttt{and}~~~Z=\texttt{true}\nonumber\\
&\hphantom{\texttt{10 }}\hphantom{iiiiii}t_R=t
\nonumber\\
&\hphantom{\texttt{10 }}\hphantom{iiiiii}\texttt{return}\textnormal{ (short/null step: change of model of the objective function)}
\nonumber\\
&\hphantom{\texttt{10 }}\hphantom{iii}\texttt{end}
\nonumber\\
&\hphantom{\texttt{10 }}\texttt{end}\nonumber
\end{align*}
%\begin{align}
%\end{align}
%%%%%%%%%%%%%%%%%%%%%%%%%%%%%%%%%%%%%%%%%%%%%%%%%%%
\item\textit{Interpolation:} Choose $t\in[t_L+\zeta(t_U-t_L)^{\vartheta},t_U-\zeta(t_U-t_L)^{\vartheta}]$.
%%%%%%%%%%%%%%%%%%%%%%%%%%%%%%%%%%%%%%%%%%%%%%%%%%%
\item\textit{Loop:} \texttt{Go to 1}
\end{enumerate}
\medskip
%%%%%%%%%%%%%%%%%%%%%%%%%%%%%%%%%%%%%%%%%%%%%%%%%%%
\noindent\texttt{function }$[g,G,\dots]=$\texttt{ComputeObjectiveData(}$t$\texttt{,\dots)}
\begin{align}
%%%&\hphantom{\texttt{10 }}\hphantom{iiiiii}
g&=g(x_k+td_k)\in\partial f(x_k+td_k)\nonumber\\%,
G&=\text{ approximation of }G(x_k+td_k)\in\subhessian{f}{x_k+td_k}\nonumber\\%,
\rho&=
\left\lbrace
\begin{array}{ll}
\min(1,\tfrac{C_G}{\lvert G\rvert}) & \textnormal{for }i_n\leq 3\\
0                                   & \textnormal{else}
\end{array}
\right.                                                                                                              
\nonumber\\
%%%&\hphantom{\texttt{10 }}\hphantom{iiiiii}
f&=f(x_k+td_k)+(t_L-t)g^Td_k+\tfrac{1}{2}\rho(t_L-t)^2d_k^TGd_k
\label{Luk:LineareLS1}\\
%%%&\hphantom{\texttt{10 }}\hphantom{iiiiii}
\beta&=
\max(\lvert f(x_k+t_Ld_k)-f\rvert,\gamma_1\lvert t_L-t\rvert^{\omega_1}\lvert d_k\rvert^{\omega_1})
\label{Luk:LineareLS2}\\
\widebar{G}&=\textnormal{``positive definite modification of }G\textnormal{''}
\cbmathMIFFLIN
%MIFFLIN%\nonumber
\label{Luk:LineareLS3:NEWfeasibleTrialPointPositiveDefiniteModification}
\\
Z&=
%%%\textnormal{``}
-\beta+d_k^T\big(g+\rho(t_L-t)Gd_k\big)\geq
m_Rv_k
+m_f\cdot(-\tfrac{1}{2}d_k^T\widebar{G}d_k)\nonumber\\
&\hspace{0.55\textwidth}\texttt{ and }(t-t_L)\lvert d_k\rvert\leq C_S
%%%\textnormal{''}
\cbmathMIFFLIN
\label{Luk:LineareLS3}
\end{align}
%%%%%%%%%%%%%%%%%%%%%%%%%%%%%%%%%%%%%%%%%%%%%%%%%%%
\texttt{function }$[\hat{g},\hat{G},\dots]=$\texttt{ComputeConstraintData(}$t$\texttt{,\dots)}
\begin{align}
%%%&\hphantom{\texttt{10 }}\hphantom{iiiiii}
\hat{g}&=\hat{g}(x_k+td_k)\in\partial F(x_k+td_k)\nonumber\\%,
%%%&\hphantom{\texttt{10 }}\hphantom{iiiiii}
\hat{G}&=\text{ approximation of }\hat{G}(x_k+td_k)\in\subhessian{F}{x_k+td_k}\nonumber\\%
%%%&\hphantom{\texttt{10 }}\hphantom{iiiiii}
\hat{\rho}&=
%%%\left\lbrace
%%%\begin{array}{ll}
%%%\min(1,\tfrac{\hat{C}_G}{\lvert\hat{G}\rvert}) & \textnormal{for }i_n\leq 3\\
%%%0                                              & \textnormal{else}
%%%\end{array}
%%%\right.
\min(1,\tfrac{\hat{C}_G}{\lvert\hat{G}\rvert})
\nonumber\\
%%%&\hphantom{\texttt{10 }}\hphantom{iiiiii}
F&=F(x_k+td_k)+(t_L-t)\hat{g}^Td_k+\tfrac{1}{2}\rho(t_L-t)^2d_k^T\hat{G}d_k
\label{Luk:LineareLS1CONSTRAINT}\\
%%%&\hphantom{\texttt{10 }}\hphantom{iiiiii}
\hat{\beta}&=
\max(\lvert F(x_k+t_Ld_k)-F\rvert,\gamma_2\lvert t_L-t\rvert^{\omega_2}\lvert d_k\rvert^{\omega_2})
\label{Luk:LineareLS2CONSTRAINT}\\
%%%&\hphantom{\texttt{10 }}\hphantom{iiiiii}
\widebar{\hat{G}}&=\textnormal{``positive definite modification of }\hat{G}\textnormal{''}
\label{Luk:LineareLS3:NEWinfeasibleTrialPointPositiveDefiniteModification}\\
%%%&\hphantom{\texttt{10 }}\hphantom{iiiiii}\texttt{if }
\hat{Z}&=
%%%\textnormal{``}
F(x_k+t_Ld_k)-\hat{\beta}+d_k^T\big(\hat{g}+\hat{\rho}(t_L-t)\hat{G}d_k\big)\geq m_F\cdot(-\tfrac{1}{2}d_k^T\widebar{\hat{G}}d_k)\nonumber\\
&\hspace{0.55\textwidth}\texttt{ and }(t-t_L)\lvert d_k\rvert\leq C_S
%%%\textnormal{''}
\label{Luk:LineareLS3:NEWinfeasibleTrialPoint}
\end{align}
\end{algorithm}
%%%%%%%%%%%%%%%%%%%%%%%%%%%%%%%%%%%%%%%%%%%%%%%%%%%%%%%%%%%%%%%%%%%%%%%%%%%%%%%%%%%%%%%%%%%%%%%%%%%%%%%%%%%%%%%%%%%%%%%%
\begin{remark}\label{remark:t0infWithGraphic}
%%%COMPACT%%%\begin{enumerate}
%%%%%%%%%%%%%%%%%%%%%%%%%%%%%%
%%%COMPACT%%%\item
The parameter $i_l$ is only necessary for proving global convergence of Algorithm \ref{BundleSQPmitQCQP:Alg:GesamtAlgMitQCQP} (to be more precise, it is only needed to show that a short or null step which changes the model of the objective function is executed in
Lemma \ref{BundleSQPmitQCQP:GlobaleKonvergenz:Satz:Thm3.8:Bew:wk+1LeqMinQCOMPACT}). If we choose $i_l=0$, then only a change of the model of the objective function yields a short or null step. In fact we have $i_l$ steps in Algorithm \ref{BundleSQPmitQCQP:Alg:GesamtAlgMitQCQP} in which we can use any meaningful criterion for terminating the line search (even for the unconstrained case as it is partially done in the implementation of the original unconstrained bundle-Newton method anyway).
%%%Further we can drop the feasibility condition in the case $i_n\geq i_l$. (NEIN, das dürfen wir nicht, da sonst der
%%%Beweis der Liniensuche --- so wie dieser hier ausgeführt ist --- nicht funktioniert.
%%%%%%%%%%%%%%%%%%%%%%%%%%%%%%
%%%COMPACT%%%\item

(\ref{linesearch:t0Shrinking}) is due to the following observation:
%%%COMPACT%%%\begin{enumerate}
%%%%%%%%%%%%%%%
%%%COMPACT%%%\item
Consider the line search (Algorithm \ref{BundleSQP:AlgNB:LinesearchMitQCQP}) without (\ref{linesearch:t0Shrinking}) (i.e.~$t_0$ is fixed, e.g., $t_0:=0.5\in(0,1)$, where this large, but legal value for $t_0$ is only chosen to obtain a better graphical illustration in Figure \ref{Figure:BSPt0k}). It can happen (in particular) at the beginning of Algorithm \ref{BundleSQPmitQCQP:Alg:GesamtAlgMitQCQP} that the search direction $d_k$ is bad as we have no knowledge on the behavior of $f$ and $F$ yet. Consequently, the following situation can occur:
%%%COMPACT%%%\begin{itemize}
%%%%%%%%
%%%COMPACT%%%\item
The model of the objective function $f$ does not change (e.g., if $f$ is linear on $x_k+td_k$ with $t\in[0,1]$),
%%%%%%%%
%%%COMPACT%%%\item
and there are no step sizes $t>t_0$ which yield feasible $x_k+td_k$
%%%that are colored orange in Figure \ref{Figure:BSPt0k}
(this is in particular possible, if we are near the boundary of the feasible set).
%%%%%%%%
%%%COMPACT%%%\end{itemize}
\begin{center}
%%%Graphik stammt aus: BSPt0k.nb
\captionsetup{type=figure}
\includegraphics[width=12cm,height=4cm]{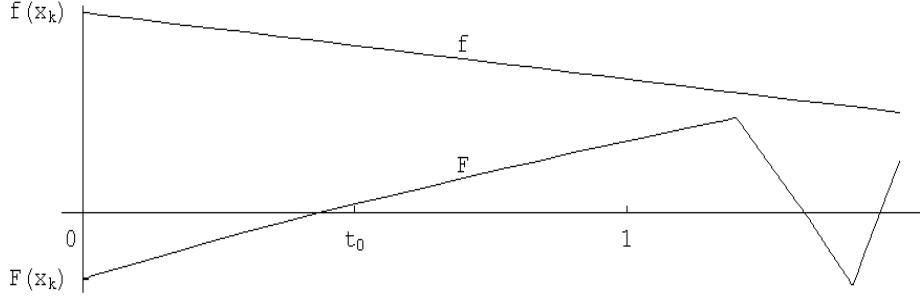}
\captionof{figure}{Line search with fixed $t_0$}
\label{Figure:BSPt0k}
\end{center}
In this situation the line search will not terminate for fixed $t_0$ (in particular in the case $i_n<i_l$ the model of $F$ does not need to even satisfy (\ref{Luk:LineareLS3:NEWinfeasibleTrialPoint}) for infeasible $x_k+td_k$).
%%%%%%%%%%%%%%%
%%%PAPER%%%\item Therefore, we need to decrease $t_0$ to have at least one feasible step in the line search for which a %%%PAPER%%%descent of $f$ is enough for terminating the line search (similar to the unconstrained case). As the %%%PAPER%%%convergence analysis will show, this must not be done too often (cf. (\ref{Theorem3.8:Presumptiont0infGT0}) %%%PAPER%%%and Remark \ref{BundleSQPmitQCQP:GlobaleKonvergenz:Bem:Thm3.8}). Because we use the quadratic terms in the %%%PAPER%%%constraint approximation to obtain as much feasibility as possible on the search path $t\mapsto x_k+td_k$ %%%PAPER%%%with $t\in[0,1]$ (cf. idea \ref{BundleSQP:Idea:QCQP}), we expect that this should be true. Indeed, in %%%PAPER%%%practice $t_0$ turns out to be only modified at the beginning of algorithm \ref{BundleSQPmitQCQP:Alg:GesamtAlgMitQCQP} (cf. chapter !!!HIER GEHOERT LINK HER!!!). %\ref{chapter:NumericalResults}).
%%%COMPACT%%%\item
Therefore, we need to decrease $t_0$ to have at least one feasible step in the line search for which a descent of $f$ is enough for terminating the line search (similar to the unconstrained case). As the convergence analysis will show, this must not be done too often (cf.~(\ref{Theorem3.8:Presumptiont0infGT0}) and Remark \ref{BundleSQPmitQCQP:GlobaleKonvergenz:Bem:Thm3.8}). Because we use the quadratic terms in the constraint approximation to obtain as much feasibility as possible on the search path $t\mapsto x_k+td_k$ with $t\in[0,1]$ (cf.~the idea that leads to the QCQP (\ref{Luksan:Alg:QCQPTeilproblem})), we expect that this should be true. Indeed, in practice $t_0$ turns out to be only modified at the beginning of Algorithm \ref{BundleSQPmitQCQP:Alg:GesamtAlgMitQCQP} at least many examples of the Hock-Schittkowski collection by \citet{Schittkowski,Schittkowski2} (cf.~\citet[\GenaueAngabeFour]{HannesPaperA}). %Section \ref{chapter:NumericalResults}).
%%%%%%%%%%%%%%%
%%%COMPACT%%%\item
In particular, if $F(x_k+td_k)<0$ for all $t\in[0,1]$ (e.g., if $F$ is constant and negative on $\mathbb{R}^n$ which in fact yields an unconstrained optimization problem), the case (\ref{linesearch:t0Shrinking}) will never occur and therefore $t_0$ will not get changed (this is the reason why $t_0$ is constant in the bundle-Newton method for nonsmooth unconstrained minimization).
%%%%%%%%%%%%%%%
%%%COMPACT%%%\end{enumerate}
%%%%%%%%%%%%%%%%%%%%%%%%%%%%%%
%%%COMPACT%%%\item

The step sizes which the line search returns correspond to the points
$
x_{k+1}=x_k+t_L^kd_k
$
and
$
y_{k+1}=x_k+t    d_k
       =x_k+t_R^kd_k
$.
%%%%%%%%%%%%%%%%%%%%%%%%%%%%%%
%%%COMPACT%%%\item

Only strictly feasible iteration points are accepted in the line search
\begin{equation}
F(x_k+t_L^kd_k)<0
\punkt
\label{BundleSQPmitQCQP:ZulaessigkeitsbedingungFuerIterationspunkt}
\end{equation}
Nevertheless, trial points may be infeasible (if $i_n<i_l$).
\end{remark}
%%%%%%%%%%%%%%%%%%%%%%%%%%%%%%%%%%%%%%%%%%%%%%%%%%%%%%%%%%%%%%%%%%%%%%%%%%%%%%%%%%%%%%%%%%%%%%%%%%%%%%%%%%%%%%%%%%%%%%%%
%%%COMPACT%%%We only need the $v_k$-estimate of (\ref{BundleSQPmitQCQP:GlobaleKonvergenz:Lemma3.5:DefalphahatCOMPACT}) of the following proposition in context with the line search (Algorithm \ref{BundleSQP:AlgNB:LinesearchMitQCQP}) (in the proof of Proposition \ref{AN:Proposition:SpiritVonBundleAlgorithmus}). The other properties are needed for showing global convergence of Algorithm \ref{BundleSQPmitQCQP:Alg:GesamtAlgMitQCQP} in Subsection \ref{section:GlobalConvergence}. Nevertheless, as the proofs of these properties are very similar, we already formulate them here.
\begin{proposition}
%%%Lemma3.5(i)NB
Let
\begin{align}
\hat{\alpha}_p^k
&:=
\sum_{j\in J_k}\lambda_j^k\alpha_j^k+\lambda_p^k\alpha_p^k
\komma\quad
\hat{A}_p^k
:=
\sum_{j\in J_k}\kappa_j^kA_j^k+\kappa_p^kA_p^k
\label{BundleSQPmitQCQP:GlobaleKonvergenz:Lemma3.5:DefalphahatCOMPACT}
\\
\hat{w}_k
&:=
\tfrac{1}{2}\lvert H_k(\tilde{g}_p^k+\bar{\kappa}^{k+1}\tilde{\hat{g}}_p^k)\rvert^2
+\hat{\alpha}_p^k+\bar{\kappa}^{k+1}\hat{A}_p^k+\bar{\kappa}^{k+1}\big(-F(x_k)\big)
\label{BundleSQPmitQCQP:GlobaleKonvergenz:Lemma3.5:Defwhat}
\end{align}
(Note: $\hat{w}_k$ is the optimal function value of the dual problem \refh{Luksan:Alg:QCQPTeilproblem:DualesProblem}). Then we have at iteration $k$ of Algorithm \refH{BundleSQPmitQCQP:Alg:GesamtAlgMitQCQP}
\begin{equation}
\hat{v}_k\leq v_k\leq0\leq w_k\leq\hat{w}_k
\punkt
\label{BundleSQPmitQCQP:Satz:GlobaleKonvergenz:Lemma3.5:wkCOMPACT}
\end{equation}
\end{proposition}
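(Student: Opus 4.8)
The plan is to read all four inequalities off the closed forms for $\hat{v}_k$, $v_k$, $w_k$ and $\hat{w}_k$ given in \refh{BundleSQPmitQCQP:Alg:vhatkNEW}, \refh{BundleSQPmitQCQP:Alg:vk}, \refh{BundleSQPmitQCQP:Alg:wk} and \refh{BundleSQPmitQCQP:GlobaleKonvergenz:Lemma3.5:Defwhat}, once a single auxiliary estimate linking the localized errors of the \emph{aggregated} data to the convex combinations of the individual localized errors is in place.

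The auxiliary estimate I would establish first is
\[
\tilde{\alpha}_p^k\leq\hat{\alpha}_p^k
\komma\quad
\bar{\kappa}^{k+1}\tilde{A}_p^k\leq\bar{\kappa}^{k+1}\hat{A}_p^k
\punkt
\]
For the first inequality note that, by \refh{BundleSQPmitQCQP:Alg:gtildepkCOMPACT}, the aggregated quantities $\tilde{f}_p^k$ and $\tilde{s}_p^k$ are \emph{convex} combinations of $\{f_j^k,f_p^k\}$ resp.\ $\{s_j^k,s_p^k\}$, since the weights $\{\lambda_j^k,\lambda_p^k\}$ are nonnegative and sum to one by the equality constraint of the dual of \refh{BundleSQPmitQCQP:Alg:QPTeilproblem}. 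Applying the triangle inequality to $\lvert f(x_k)-\tilde{f}_p^k\rvert$ and the convexity of $t\mapsto\gamma_1t^{\omega_1}$ (valid because $\omega_1\geq1$) to $\gamma_1(\tilde{s}_p^k)^{\omega_1}$ bounds \emph{both} arguments of the outer maximum in the definition \refh{BundleSQPmitQCQP:Alg:alphatildepk} of $\tilde{\alpha}_p^k$ from above by $\sum_{j\in J_k}\lambda_j^k\alpha_j^k+\lambda_p^k\alpha_p^k=\hat{\alpha}_p^k$, and taking the maximum preserves the bound. For the second inequality I distinguish two cases: if $\bar{\kappa}^{k+1}=0$ both sides vanish, whereas if $\bar{\kappa}^{k+1}>0$ the weights $\{\kappa_j^k,\kappa_p^k\}$ are nonnegative and sum to one by \refh{BundleSQPmitQCQP:Alg:Defkappabark+1}, so the identical argument applied to \refh{BundleSQPmitQCQP:Alg:ghattildepkCOMPACT} and \refh{BundleSQPmitQCQP:Alg:Atildepk} yields $\tilde{A}_p^k\leq\hat{A}_p^k$, and multiplying by $\bar{\kappa}^{k+1}>0$ keeps the inequality.

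With this in hand the chain splits into four elementary steps. For $0\leq w_k$ I would observe that each summand in \refh{BundleSQPmitQCQP:Alg:wk} is nonnegative: the term $\tfrac{1}{2}\lvert H_k(\tilde{g}_p^k+\bar{\kappa}^{k+1}\tilde{\hat{g}}_p^k)\rvert^2$ is a squared norm (here $H_k$ is well defined because $H_k^{-2}$ is $\widebar{W}_p^k$ plus a nonnegative combination of positive definite matrices, hence positive definite), $\tilde{\alpha}_p^k$ and $\tilde{A}_p^k$ are nonnegative as maxima over absolute values, and $\bar{\kappa}^{k+1}\big(-F(x_k)\big)\geq0$ since $\bar{\kappa}^{k+1}\geq0$ and $F(x_k)<0$ by strict feasibility \refh{BundleSQPmitQCQP:ZulaessigkeitsbedingungFuerIterationspunkt}. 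For $v_k\leq0$ I would use the already proven relation \refh{proposition:wkvkZusammenhang}, which rearranges to $v_k=-\tfrac{1}{2}d_k^T\widebar{W}_p^kd_k-w_k$; since $\widebar{W}_p^k$ is positive definite and $w_k\geq0$, both terms are nonpositive. Finally, for $\hat{v}_k\leq v_k$ and $w_k\leq\hat{w}_k$ I would subtract the respective formulas: in each case the quadratic terms and the $\bar{\kappa}^{k+1}\big(-F(x_k)\big)$ terms cancel identically (after rewriting $\sum_{j\in J_k}\mu_j^kA_j^k+\mu_p^kA_p^k=\bar{\kappa}^{k+1}\hat{A}_p^k$ via $\mu_j^k=\bar{\kappa}^{k+1}\kappa_j^k$), leaving exactly $v_k-\hat{v}_k=(\hat{\alpha}_p^k-\tilde{\alpha}_p^k)+\bar{\kappa}^{k+1}(\hat{A}_p^k-\tilde{A}_p^k)=\hat{w}_k-w_k\geq0$ by the auxiliary estimate.

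I expect the only genuine content to sit in the auxiliary step, i.e.\ in recognising that the localized error of the \emph{aggregated} bundle data never exceeds the convex combination of the individual localized errors --- a Jensen-type estimate resting on $\omega_i\geq1$ and on the multipliers summing to one. Everything after that is sign bookkeeping built on \refh{proposition:wkvkZusammenhang}, on positive definiteness of $\widebar{W}_p^k$, and on strict feasibility $F(x_k)<0$. The one place demanding care is the degenerate multiplier case $\bar{\kappa}^{k+1}=0$, where $\kappa_j^k,\kappa_p^k$ are set to zero and all constraint-error terms drop out of $v_k$, $w_k$, $\hat{v}_k$ and $\hat{w}_k$ simultaneously, so every inequality involving $\hat{A}_p^k$ or $\tilde{A}_p^k$ holds trivially.
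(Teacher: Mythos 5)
Your proof is correct and follows essentially the same route as the paper: the substance is the Jensen-type estimate $\tilde{\alpha}_p^k\leq\hat{\alpha}_p^k$ and $\bar{\kappa}^{k+1}\tilde{A}_p^k\leq\bar{\kappa}^{k+1}\hat{A}_p^k$ obtained from convexity of $\xi\mapsto\gamma\lvert\xi\rvert^{\omega}$ and of the maximum together with the multipliers summing to one, after which the four inequalities reduce to sign bookkeeping via \refh{proposition:wkvkZusammenhang}, positive definiteness of $\widebar{W}_p^k$, and strict feasibility $F(x_k)<0$. The only cosmetic difference is that you bound both arguments of the outer maximum separately rather than invoking convexity of $\max$ directly, which is equivalent.
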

%%%%%%%%%%%%%%%%%%%%%%%%%%%%%%%%%%%%%%%%%%%%%%%%%%%%%%%%%%%%
\begin{proof}
%%%%%%%%%%%%%%%%%%%%%%%%%%%%%%
For $\gamma>0$ and $\omega\geq1$ the functions $\xi\mapsto\gamma\lvert\xi\rvert^{\omega}$ and $(\xi_1,\xi_2)\mapsto\max{(\xi_1,\xi_2)}$ are convex and therefore we have
$
\gamma\big(\sum_{i=1}^k{t_i\lvert x_i\rvert}\big)^{\omega}
\leq
\sum_{i=1}^k{t_i(\gamma\lvert x_i\rvert^{\omega})}
$
and
$
\max{\big(\sum_{i=1}^k{t_ix_i},\sum_{i=1}^k{t_iy_i}\big)}
\leq
\sum_{i=1}^k{t_i\max{(x_i,y_i)}}
$.
%%%%%%%%%%%%%%%%%%%%%%%%%%%%%%
Since $\lambda_j^k\geq0$ for $j\in J_k$ and $\lambda_p^k\geq0$ holds for the solution of the dual problem (\ref{Luksan:Alg:QCQPTeilproblem:DualesProblem}) of the QCQP (\ref{BundleSQPmitQCQP:Alg:QPTeilproblem}), we have $1=\sum_{j\in J_k}\lambda_j^k+\lambda_p^k$ which implies
$f(x_k)=\sum_{j\in J_k}\lambda_j^kf(x_k)+\lambda_p^kf(x_k)$,
and hence
$
\tilde{\alpha}_p^k
\leq
\hat{\alpha}_p^k
$
follows from
(\ref{BundleSQPmitQCQP:Alg:alphatildepk}),
(\ref{BundleSQPmitQCQP:Alg:gtildepkCOMPACT}),
(\ref{BundleSQPmitQCQP:Alg:alphajkCOMPACT}) and
(\ref{BundleSQPmitQCQP:GlobaleKonvergenz:Lemma3.5:DefalphahatCOMPACT}).
%%%%%%%%%%%%%%%%%%%%%%%%%%%%%%
If $\bar{\kappa}^{k+1}>0$, we have $1=\sum_{j\in J_k}\kappa_j^k+\kappa_p^k$ due to (\ref{BundleSQPmitQCQP:Alg:Defkappabark+1}) which implies
$F(x_k)=\sum_{j\in J_k}\kappa_j^kF(x_k)+\kappa_p^kF(x_k)$,
and hence
$
\tilde{A}_p^k
\leq
\hat{A}_p^k
$
follows from
(\ref{BundleSQPmitQCQP:Alg:Atildepk}),
(\ref{BundleSQPmitQCQP:Alg:ghattildepkCOMPACT}),
(\ref{BundleSQPmitQCQP:Alg:AjkCOMPACT}) and
(\ref{BundleSQPmitQCQP:GlobaleKonvergenz:Lemma3.5:DefalphahatCOMPACT}).
%%%%%%%%%%%%%%%%%%%%%%%%%%%%%%
Consequently, we have
$\bar{\kappa}^{k+1}\tilde{A}_p^k
\leq
\bar{\kappa}^{k+1}\hat{A}_p^k$
for $\bar{\kappa}^{k+1}\geq0$,
which yields
$
0
\leq
\tilde{\alpha}_p^k
+\bar{\kappa}^{k+1}\tilde{A}_p^k
\leq
\hat{\alpha}_p^k
+\bar{\kappa}^{k+1}\hat{A}_p^k
$
due to
(\ref{BundleSQPmitQCQP:Alg:alphatildepk}),
(\ref{BundleSQPmitQCQP:Alg:Defkappabark+1}) and
(\ref{BundleSQPmitQCQP:Alg:Atildepk}).
%%%%%%%%%%%%%%%%%%%%%%%%%%%%%%
Now, we obtain the $w_k$-estimate of (\ref{BundleSQPmitQCQP:Satz:GlobaleKonvergenz:Lemma3.5:wkCOMPACT}) due to
(\ref{BundleSQPmitQCQP:Alg:wk}),
(\ref{BundleSQPmitQCQP:GlobaleKonvergenz:Lemma3.5:Defwhat}) and
(\ref{BundleSQPmitQCQP:Alg:Defkappabark+1}).
%%%%%%%%%%%%%%%%%%%%%%%%%%%%%%
Because of
(\ref{BundleSQPmitQCQP:GlobaleKonvergenz:Lemma3.5:DefalphahatCOMPACT}) and
(\ref{BundleSQPmitQCQP:Alg:Defkappabark+1})
we have
$0
\geq
-\tilde{\alpha}_p^k
-\bar{\kappa}^{k+1}\tilde{A}_p^k
\geq
-\sum_{j\in J_k}\lambda_j^k\alpha_j^k-\lambda_p^k\alpha_p^k
-\sum_{j\in J_k}\mu_j^kA_j^k-\mu_p^kA_p^k
$,
and, therefore, we obtain the $v_k$-estimate of (\ref{BundleSQPmitQCQP:Satz:GlobaleKonvergenz:Lemma3.5:wkCOMPACT}) by using
(\ref{BundleSQPmitQCQP:Alg:vk}),
(\ref{BundleSQPmitQCQP:Alg:Defkappabark+1}),
(\ref{BundleSQPmitQCQP:Alg:vhatkNEW}),
(\ref{proposition:wkvkZusammenhang}),
the positive definiteness of $\widebar{W}_p^k$ and
(\ref{BundleSQPmitQCQP:Satz:GlobaleKonvergenz:Lemma3.5:wkCOMPACT}).
\qedhere
%%%%%%%%%%%%%%%%%%%%%%%%%%%%%%
\end{proof}
%%%%%%%%%%%%%%%%%%%%%%%%%%%%%%%%%%%%%%%%%%%%%%%%%%%%%%%%%%%%%%%%%%%%%%%%%%%%%%%%%%%%%%%%%%%%%%%%%%%%%%%%%%%%%%%%%%%%%%%%
\begin{proposition}
If the line search is entered at iteration $k$ of Algorithm \refH{BundleSQPmitQCQP:Alg:GesamtAlgMitQCQP}, then
\begin{equation}
v_k<0
\punkt
\label{proposition:LSaddendum:vkAussage}
\end{equation}
Furthermore, if there occurs a step size $t$ with $F(x_k+td_k)\geq0$ in the line search, then
\begin{equation}
%d_k
%\not=
%\zeroVector{N}
-\tfrac{1}{2}d_k^T\widebar{\hat{G}}_{x_k+td_k}d_k<0
\punkt
\label{proposition:LSaddendum:dkAussage}
\end{equation}
\end{proposition}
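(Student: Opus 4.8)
The plan is to prove the two assertions separately, both resting on the positive definiteness of the matrices produced by the ``positive definite modification'' steps together with the relations already established in \refh{proposition:wkvkZusammenhang} and \refh{BundleSQPmitQCQP:Satz:GlobaleKonvergenz:Lemma3.5:wkCOMPACT}.

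First I would establish \refh{proposition:LSaddendum:vkAussage}. The crucial observation is that the line search is only entered when the termination criterion of step~6 of Algorithm \refH{BundleSQPmitQCQP:Alg:GesamtAlgMitQCQP} fails, i.e.~when $w_k>\varepsilon\geq0$, so that $w_k>0$. I would then invoke \refh{proposition:wkvkZusammenhang}, which reads $w_k=-\tfrac{1}{2}d_k^T\widebar{W}_p^kd_k-v_k$, and solve it for $v_k$ to obtain $v_k=-\tfrac{1}{2}d_k^T\widebar{W}_p^kd_k-w_k$. Since $\widebar{W}_p^k$ is positive definite, the quadratic term $\tfrac{1}{2}d_k^T\widebar{W}_p^kd_k$ is non-negative, whence $v_k\leq -w_k<0$. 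This sharpens the estimate $v_k\leq0$ from \refh{BundleSQPmitQCQP:Satz:GlobaleKonvergenz:Lemma3.5:wkCOMPACT} to a strict inequality.

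For \refh{proposition:LSaddendum:dkAussage} I would argue by contradiction on the search direction. Note first that every iteration point is strictly feasible, $F(x_k)<0$: this holds for the initial point $x_1$ by \refh{BundleSQPmitQCQP:Alg:FStartwert} and is preserved inductively, because the line search only accepts strictly feasible serious steps, cf.~\refh{BundleSQPmitQCQP:ZulaessigkeitsbedingungFuerIterationspunkt}. Now suppose, for the given step size $t$, that $d_k=\zeroVector{n}$. Then $x_k+td_k=x_k$, so that $F(x_k+td_k)=F(x_k)<0$, contradicting the hypothesis $F(x_k+td_k)\geq0$. Hence $d_k\not=\zeroVector{n}$. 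Since $\widebar{\hat{G}}_{x_k+td_k}$ is a positive definite modification, produced in \texttt{ComputeConstraintData} via \refh{Luk:LineareLS3:NEWinfeasibleTrialPointPositiveDefiniteModification}, we get $d_k^T\widebar{\hat{G}}_{x_k+td_k}d_k>0$, i.e.~$-\tfrac{1}{2}d_k^T\widebar{\hat{G}}_{x_k+td_k}d_k<0$, which is \refh{proposition:LSaddendum:dkAussage}.

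The only genuinely delicate point is the book-keeping behind $F(x_k)<0$: one must make sure that strict feasibility of the iterates is an invariant of the algorithm and not merely a property of the starting point, so that the contradiction in the second part is legitimate. The remaining ingredients---the sign of $w_k$ upon entering the line search, the positive definiteness of $\widebar{W}_p^k$ and of $\widebar{\hat{G}}_{x_k+td_k}$, and the trivial identity $x_k+0\cdot d_k=x_k$---are immediate once the correct relations are cited, so I do not expect any real computational obstacle.
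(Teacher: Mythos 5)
Your proposal is correct and follows essentially the same route as the paper: $v_k<0$ from $w_k>0$ together with \refh{proposition:wkvkZusammenhang} and the positive definiteness of $\widebar{W}_p^k$, and the second claim by reducing the failure of \refh{proposition:LSaddendum:dkAussage} to $d_k=\zeroVector{n}$ and contradicting the strict feasibility \refh{BundleSQPmitQCQP:ZulaessigkeitsbedingungFuerIterationspunkt} of $x_k$. The only blemish is a harmless label slip (the termination test is step~5 of Algorithm \refH{BundleSQPmitQCQP:Alg:GesamtAlgMitQCQP}, the line search is step~6).
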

%%%%%%%%%%%%%%%%%%%%%%%%%%%%%%%%%%%%%%%%%%%%%%%%%%%%%%%%%%%%
\begin{proof}
%%%%%%%%%%%%%%%%%%%%%%%%%%%%%%
%Paper%We show (\ref{proposition:LSaddendum:vkAussage}):
If the line search is entered at iteration $k$ (cf.~step 6 of Algorithm \ref{BundleSQPmitQCQP:Alg:GesamtAlgMitQCQP}), then no termination occurred at step 5 of Algorithm \ref{BundleSQPmitQCQP:Alg:GesamtAlgMitQCQP} at iteration $k$,
%Paper%i.e.
and therefore
%the initialization of Algorithm \ref{BundleSQPmitQCQP:Alg:GesamtAlgMitQCQP} implies
we have
$w_k>0$,
which yields
(\ref{proposition:LSaddendum:vkAussage}) due to
(\ref{proposition:wkvkZusammenhang})
and the positive definiteness of $\widebar{W}_p^k$.
%%%%%%%%%%%%%%%%%%%%%%%%%%%%%%

%Paper%(By contradiction)
Now we show (\ref{proposition:LSaddendum:dkAussage}) by deducing a contradiction:
Suppose (\ref{proposition:LSaddendum:dkAussage}) does not hold, i.e.
$d_k=\zeroVector{N}$
due to (\ref{Luk:LineareLS3:NEWinfeasibleTrialPointPositiveDefiniteModification}). Then, since all iteration points $x_k$ are strictly feasible due to (\ref{BundleSQPmitQCQP:ZulaessigkeitsbedingungFuerIterationspunkt}), we obtain
%Paper%the following contradiction
$
%Paper%0
%Paper%\leq
F(x_k+td_k)
=
F(x_k)
<
0
$,
which is a conradiction to the assumption $F(x_k+td_k)\geq0$.
\qedhere
%%%%%%%%%%%%%%%%%%%%%%%%%%%%%%
\end{proof}
%%%%%%%%%%%%%%%%%%%%%%%%%%%%%%%%%%%%%%%%%%%%%%%%%%%%%%%%%%%%%%%%%%%%%%%%%%%%%%%%%%%%%%%%%%%%%%%%%%%%%%%%%%%%%%%%%%%%%%%%
\begin{proposition}
%%%COMPACT%%%\label{AN:Proposition:SpiritVonBundleAlgorithmus}
\begin{enumerate}
%%%%%%%%%%%%%%%%%%%%%%%%%%%%%%
\item If the line search (Algorithm \refH{BundleSQP:AlgNB:LinesearchMitQCQP}) terminates with condition \refh{Luk:LineareLS3}, then the old search direction $d_k$ and the old predicted descent $v_k$ (of iteration $k$) are sufficiently infeasible for the new QCQP \refh{BundleSQPmitQCQP:Alg:QPTeilproblem} (at iteration $k+1$) in Algorithm \refH{BundleSQPmitQCQP:Alg:GesamtAlgMitQCQP} (i.e.~the old search direction $d_k$ cannot occur as search direction at iteration $k+1$ and therefore we obtain a different search direction at iteration $k+1$ and consequently a ``meaningful extension of the bundle''.
%%%PAPER%%%--- and this is the formalized version of the demand for a ``meaningful extension of the bundle'' 
%%%PAPER%%%as mentioned in Section (!!! HIER GEHÖRT LINK HER AUF ALLGEMEINE BUNDLE-ALG-VORGEHENSWEISE!!!). %\ref{section:BundleAlgorithms}).
%%%%%%%%%%%%%%%%%%%%%%%%%%%%%%
\item If the line search (Algorithm \refH{BundleSQP:AlgNB:LinesearchMitQCQP}) terminates with condition \refh{Luk:LineareLS3:NEWinfeasibleTrialPoint}, then the old search direction $d_k$ (of iteration $k$) is sufficiently infeasible for the new QCQP \refh{BundleSQPmitQCQP:Alg:QPTeilproblem} (at iteration $k+1$) in Algorithm \refH{BundleSQPmitQCQP:Alg:GesamtAlgMitQCQP} (i.e.~using a QCQP also yields a ``meaningful extension of the bundle'' in the constrained case).
%%%%%%%%%%%%%%%%%%%%%%%%%%%%%%
\item The condition $(t-t_L)\lvert d_k\rvert\leq C_S$ in \refh{Luk:LineareLS3:NEWinfeasibleTrialPoint} resp.~\refh{Luk:LineareLS3} corresponds to
\begin{equation}
\lvert y_{k+1}-x_{k+1}\rvert\leq C_S
\punkt
\label{Luksan:Liniensuche:PunktBeschraenktheit}
\end{equation}
%%%%%%%%%%%%%%%%%%%%%%%%%%%%%%
\end{enumerate}
\end{proposition}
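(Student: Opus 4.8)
The plan is to show that all three assertions follow once we identify the data produced by the line search functions \texttt{ComputeObjectiveData} resp.~\texttt{ComputeConstraintData} at the returned step size $t$ with the data that the algorithm stores into the bundle at the new index $j=k+1$ of iteration $k+1$ (recall $k+1\in J_{k+1}$, so a constraint for $j=k+1$ is present in the new QCQP). Since the line search returns $t_R^k=t$ and the algorithm sets $x_{k+1}=x_k+t_L^kd_k$ and $y_{k+1}=x_k+t_R^kd_k$, we have $x_{k+1}-y_{k+1}=(t_L-t)d_k$. First I would use this together with the update formulas \refh{BundleSQPmitQCQP:Alg:fk+1k+1Update}, \refh{BundleSQPmitQCQP:Alg:gk+1k+1Update}, \refh{BundleSQPmitQCQP:Alg:Fk+1k+1Update}, \refh{BundleSQPmitQCQP:Alg:ghatk+1k+1Update} and \refh{BundleSQPmitQCQP:Alg:sk+1k+1Update} to match the line search quantities with $f_{k+1}^{k+1}$, $g_{k+1}^{k+1}$, $F_{k+1}^{k+1}$, $\hat{g}_{k+1}^{k+1}$ and $s_{k+1}^{k+1}$. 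The point is that plugging $d=d_k$ into the constraint of the new QCQP \refh{BundleSQPmitQCQP:Alg:QPTeilproblem} indexed by $j=k+1$ reproduces exactly the left-hand side of the relevant termination condition, up to a manifestly nonnegative correction.

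For part~1, I would verify that $f$ from \refh{Luk:LineareLS1} equals $f_{k+1}^{k+1}$, that $g+\rho(t_L-t)Gd_k=g_{k+1}^{k+1}$, and that $\beta$ from \refh{Luk:LineareLS2} equals $\alpha_{k+1}^{k+1}$ (here $s_{k+1}^{k+1}=|t_L-t|\,|d_k|$ makes the locality terms coincide), while the modification $\widebar{G}$ from \refh{Luk:LineareLS3:NEWfeasibleTrialPointPositiveDefiniteModification} is the matrix $\widebar{G}_{k+1}^{k+1}$ entering the QCQP. Rewriting the condition $Z$ of \refh{Luk:LineareLS3} then gives $-\alpha_{k+1}^{k+1}+d_k^Tg_{k+1}^{k+1}+\tfrac{m_f}{2}d_k^T\widebar{G}_{k+1}^{k+1}d_k\geq m_Rv_k$. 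Adding $\tfrac{1-m_f}{2}d_k^T\widebar{G}_{k+1}^{k+1}d_k\geq0$ (which uses $m_f\leq1$ and positive semidefiniteness) to both sides shows that the $j=k+1$ objective constraint of the new QCQP, evaluated at $d=d_k$, is at least $m_Rv_k$. Since $v_k<0$ by \refh{proposition:LSaddendum:vkAussage} and $m_R<1$, we have $m_Rv_k>v_k$, so the pair $(d_k,v_k)$ violates this constraint; hence $d_k$ cannot reappear as search direction (with predicted descent $v_k$) at iteration $k+1$.

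For part~2, the analogous identifications $F$ from \refh{Luk:LineareLS1CONSTRAINT} equals $F_{k+1}^{k+1}$, $\hat{g}+\hat{\rho}(t_L-t)\hat{G}d_k=\hat{g}_{k+1}^{k+1}$, $\hat{\beta}$ from \refh{Luk:LineareLS2CONSTRAINT} equals $A_{k+1}^{k+1}$, and $\widebar{\hat{G}}$ from \refh{Luk:LineareLS3:NEWinfeasibleTrialPointPositiveDefiniteModification} equals $\widebar{\hat{G}}_{k+1}^{k+1}$ turn the condition $\hat{Z}$ of \refh{Luk:LineareLS3:NEWinfeasibleTrialPoint} into $F(x_{k+1})-A_{k+1}^{k+1}+d_k^T\hat{g}_{k+1}^{k+1}+\tfrac{m_F}{2}d_k^T\widebar{\hat{G}}_{k+1}^{k+1}d_k\geq0$ (using $F(x_k+t_Ld_k)=F(x_{k+1})$). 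Adding $\tfrac{1-m_F}{2}d_k^T\widebar{\hat{G}}_{k+1}^{k+1}d_k$ to both sides shows the $j=k+1$ constraint of the new QCQP, evaluated at $d=d_k$, is at least $\tfrac{1-m_F}{2}d_k^T\widebar{\hat{G}}_{k+1}^{k+1}d_k$. Because a step size with $F(x_k+td_k)\geq0$ occurred, \refh{proposition:LSaddendum:dkAussage} forces $d_k\neq\zeroVector{n}$, so $m_F<1$ together with positive definiteness makes this quantity strictly positive; thus $d_k$ violates the $\leq0$ constraint and is infeasible for the new QCQP.

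Finally, part~3 is immediate: $y_{k+1}-x_{k+1}=(t-t_L)d_k$ with $t\geq t_L$ gives $\lvert y_{k+1}-x_{k+1}\rvert=(t-t_L)\lvert d_k\rvert$, so the bound in \refh{Luk:LineareLS3} resp.~\refh{Luk:LineareLS3:NEWinfeasibleTrialPoint} is precisely \refh{Luksan:Liniensuche:PunktBeschraenktheit}. I expect the main obstacle to be the careful bookkeeping in the identification step — in particular confirming that the positive definite modifications $\widebar{G}$, $\widebar{\hat{G}}$ computed inside the line search are indeed the matrices $\widebar{G}_{k+1}^{k+1}$, $\widebar{\hat{G}}_{k+1}^{k+1}$ used by the QCQP at iteration $k+1$ under the case distinctions of step~1, and that the damping parameters $\rho,\hat{\rho}$ from the line search agree with $\rho_{k+1},\hat{\rho}_{k+1}$ from \refh{BundleSQPmitQCQP:Alg:rhok+1} and \refh{BundleSQPmitQCQP:Alg:rhohatk+1}.
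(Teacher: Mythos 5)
Your proposal is correct and follows essentially the same route as the paper's proof: identify the line-search quantities with the bundle data $f_{k+1}^{k+1},g_{k+1}^{k+1},\alpha_{k+1}^{k+1}$ resp.~$F_{k+1}^{k+1},\hat{g}_{k+1}^{k+1},A_{k+1}^{k+1}$ via the update formulas, rewrite the termination conditions \refh{Luk:LineareLS3} resp.~\refh{Luk:LineareLS3:NEWinfeasibleTrialPoint} as statements about the $j=k+1$ constraints of the new QCQP at $d=d_k$, and conclude infeasibility from $m_R<1$, $m_f\leq1$, $m_F<1$, $v_k<0$ and the positive definiteness of the modified matrices. Your bookkeeping of the $m_f$ and $m_F$ terms (moving them to the left and adding a nonnegative remainder) is just a trivially equivalent rearrangement of the paper's inequalities.
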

%%%%%%%%%%%%%%%%%%%%%%%%%%%%%%%%%%%%%%%%%%%%%%%%%%%%%%%%%%%%
\begin{proof}~
%%%%%%%%%%%%%%%%%%%%%%%%%%%%%%
Because of
$f=f_{k+1}^{k+1}=f_{x_k+td_k}^{\sharp}(x_k+t_Ld_k)$
due to (\ref{Luk:LineareLS1}),
(\ref{BundleSQPmitQCQP:Alg:fk+1k+1Update}) and
(\ref{Def:frauteCOMPACT})
as well as
$\beta=\alpha_{k+1}^{k+1}$
due to
(\ref{Luk:LineareLS2}),
(\ref{BundleSQPmitQCQP:Alg:sk+1k+1Update}) and
(\ref{BundleSQPmitQCQP:Alg:alphajkCOMPACT}),
we obtain
$
-\alpha_{x_k+td_k}^{x_k+t_Ld_k}+d_k^Tg_{x_k+td_k}^{x_k+t_Ld_k}\geq m_Rv_k
+m_f\cdot(-\tfrac{1}{2}d_k\widebar{G}_{x_k+td_k}d_k)
$
by using
(\ref{Luk:LineareLS3}),
(\ref{BundleSQPmitQCQP:Alg:gk+1k+1Update}) and
(\ref{Luksan:Satz:fjrauteGradientCOMPACT}).
%%%%%%%%%%%%%%%%%%%%%%%%%%%%%%
Due to the initialization of Algorithm \ref{BundleSQPmitQCQP:Alg:GesamtAlgMitQCQP}, we have $0<m_R<1$
\cbstartMIFFLIN and $0\leq m_f\leq1$. Now, %neu%(\ref{BundleSQPmitQCQP:Satz:GlobaleKonvergenz:Lemma3.5:wkCOMPACT})
(\ref{proposition:LSaddendum:vkAussage})
%MIFFLIN%implies
resp.~(\ref{Luk:LineareLS3:NEWfeasibleTrialPointPositiveDefiniteModification}) imply
$m_Rv_k>v_k$
and
$m_f\cdot(-\tfrac{1}{2}d_k\widebar{G}_{x_k+td_k}d_k)
\geq-\tfrac{1}{2}d_k\widebar{G}_{x_k+td_k}d_k$.
Since the line search (Algorithm \ref{BundleSQP:AlgNB:LinesearchMitQCQP}) terminates with condition (\ref{Luk:LineareLS3}) due to assumption, we obtain that $d_k$ is sufficiently infeasible for the new QCQP (\ref{BundleSQPmitQCQP:Alg:QPTeilproblem}) (with respect to the approximation of the objective function) at iteration $k+1$ due to (\ref{BundleSQPmitQCQP:Satz:GlobaleKonvergenz:Lemma3.5:wkCOMPACT}).
%%%%%%%%%%%%%%%%%%%%%%%%%%%%%%

Because of
$F=F_{k+1}^{k+1}=F_{x_k+td_k}^{\sharp}(x_k+t_Ld_k)$
due to
(\ref{Luk:LineareLS1CONSTRAINT}),
(\ref{BundleSQPmitQCQP:Alg:Fk+1k+1Update}) and
(\ref{Def:frauteCOMPACT})
as well as
$\hat{\beta}=A_{k+1}^{k+1}$
due to (\ref{Luk:LineareLS2CONSTRAINT}),
(\ref{BundleSQPmitQCQP:Alg:sk+1k+1Update}) and
(\ref{BundleSQPmitQCQP:Alg:AjkCOMPACT}),
we obtain
$
F(x_k+t_Ld_k)-A_{x_k+td_k}^{x_k+t_Ld_k}+d_k^Tg_{x_k+td_k}^{x_k+t_Ld_k}
\geq
m_F\cdot(-\tfrac{1}{2}d_k\widebar{\hat{G}}_{x_k+td_k}d_k)
$
by using
(\ref{Luk:LineareLS3:NEWinfeasibleTrialPoint}),
(\ref{BundleSQPmitQCQP:Alg:ghatk+1k+1Update}) and
(\ref{Luksan:Satz:fjrauteGradientCOMPACT}).
%%%%%%%%%%%%%%%%%%%%%%%%%%%%%%
Due to the initialization of Algorithm \ref{BundleSQPmitQCQP:Alg:GesamtAlgMitQCQP}, we have $0<m_F<1$. Now, %neu%(\ref{Luk:LineareLS3:NEWinfeasibleTrialPointPositiveDefiniteModification})
(\ref{proposition:LSaddendum:dkAussage})
implies
$m_F\cdot(-\tfrac{1}{2}d_k\widebar{\hat{G}}_{x_k+td_k}d_k)
>-\tfrac{1}{2}d_k\widebar{\hat{G}}_{x_k+td_k}d_k$.
Since the line search (Algorithm \ref{BundleSQP:AlgNB:LinesearchMitQCQP}) terminates with condition (\ref{Luk:LineareLS3:NEWinfeasibleTrialPoint}) due to assumption, we obtain that $d_k$ is sufficiently infeasible for the new QCQP (\ref{BundleSQPmitQCQP:Alg:QPTeilproblem}) (with respect to the approximation of the constraint) at iteration $k+1$.
%%%%%%%%%%%%%%%%%%%%%%%%%%%%%%

(\ref{Luksan:Liniensuche:PunktBeschraenktheit}) follows from
(\ref{Luk:LineareLS3}) and
(\ref{Luk:LineareLS3:NEWinfeasibleTrialPoint}).
\qedhere
%%%%%%%%%%%%%%%%%%%%%%%%%%%%%%
\end{proof}
%%%%%%%%%%%%%%%%%%%%%%%%%%%%%%%%%%%%%%%%%%%%%%%%%%%%%%%%%%%%%%%%%%%%%%%%%%%%%%%%%%%%%%%%%%%%%%%%%%%%%%%%%%%%%%%%%%%%%%%%

\section{Convergence}
\label{Paper:Convergence}
%%%%%%%%%%%%%%%%%%%%%%%%%%%%%%%%%%%%%%%%%%%%%%%%%%%%%%%%%%%%%%%%%%%%%%%%%%%%%%%%%%%%%%%%%%%%%%%%%%%%%%%%%%%%%%%%%%%%%%%%
In the following section we prove the convergence of the line search and we show the global convergence of the algorithm.
%%%%%%%%%%%%%%%%%%%%%%%%%%%%%%%%%%%%%%%%%%%%%%%%%%%%%%%%%%%%%%%%%%%%%%%%%%%%%%%%%%%%%%%%%%%%%%%%%%%%%%%%%%%%%%%%%%%%%%%%
\subsection{Convergence of the line search}
%\section{Convergence of the linesearch}
%%%%%%%%%%%%%%%%%%%%%%%%%%%%%%%%%%%%%%%%%%%%%%%%%%%%%%%%%%%%%%%%%%%%%%%%%%%%%%%%%%%%%%%%%%%%%%%%%%%%%%%%%%%%%%%%%%%%%%%%
For proving the convergence of the line search (Algorithm \ref{BundleSQP:AlgNB:LinesearchMitQCQP}) we have to identify a large subclass of locally Lipschitz continuous functions, which is the class of weakly upper semismooth functions (that contains, e.g., functions that are the pointwise maximum of finitely many continuously differentiable functions due to \citet[p.~963,~Theorem~2]{Mifflin}).
%%%%%%%%%%%%%%%%%%%%%%%%%%%%%%%%%%%%%%%%%%%%%%%%%%%%%%%%%%%%%%%%%%%%%%%%%%%%%%%%%%%%%%%%%%%%%%%%%%%%%%%%%%%%%%%%%%%%%%%%
\begin{definition}
A locally Lipschitz continuous function $f:\mathbb{R}^N\rightarrow\mathbb{R}$ is called weakly upper semismooth, if
\begin{equation}
\limsup_{i\rightarrow\infty}\bar{g}_i^Td
\geq
\liminf_{i\rightarrow\infty}\tfrac{f(x_k+t_id)-f(x_k)}{t_i}
\label{Luksan:Lemma2.3:Semismooth}
\end{equation}
holds for all $x\in\mathbb{R}^N$, $d\in\mathbb{R}^N$, $\lbrace\bar{g}_i\rbrace_i\subset\mathbb{R}^N$ with $\bar{g}_i\in\partial f(x+t_id)$ and $\lbrace t_i\rbrace_i\subset\mathbb{R}_+$ with $t_i\searrow0$.
\end{definition}
%%%%%%%%%%%%%%%%%%%%%%%%%%%%%%%%%%%%%%%%%%%%%%%%%%%%%%%%%%%%%%%%%%%%%%%%%%%%%%%%%%%%%%%%%%%%%%%%%%%%%%%%%%%%%%%%%%%%%%%%
\begin{proposition}
\label{proposition:ConvergenceOfLinesearch}
Let $f:\mathbb{R}^N\rightarrow\mathbb{R}$ be weakly upper semismooth, then the line search (Algorithm \refH{BundleSQP:AlgNB:LinesearchMitQCQP}) terminates after finitely many steps with $t_L^k=t_L$, $t_R^k=t$ and $t_0^k>0$.
\end{proposition}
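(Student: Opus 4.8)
The plan is to argue by contradiction: assume the line search (Algorithm \ref{BundleSQP:AlgNB:LinesearchMitQCQP}) never returns, so it produces an infinite sequence of trial step sizes together with the monotone brackets $t_L\uparrow$ and $t_U\downarrow$. First I would show that the bracket collapses to a point. The interpolation in step~3 always picks $t\in[t_L+\zeta(t_U-t_L)^\vartheta,t_U-\zeta(t_U-t_L)^\vartheta]$, and since $\zeta\in(0,\tfrac12)$, $\vartheta\geq1$ and $0\leq t_U-t_L\leq1$, this interval is nonempty and every trial point stays at distance at least $\zeta(t_U-t_L)^\vartheta$ from both endpoints. Hence each modification of $t_L$ or $t_U$ in step~1 shrinks the length $\ell:=t_U-t_L$ by at least $\zeta\ell^\vartheta$; as $\ell$ is nonincreasing and bounded below by $0$, it tends to $0$, so $t_L$, $t$ and $t_U$ all converge to a common limit $\bar t$, and I set $\bar y:=x_k+\bar t d_k$. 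Note that $t_0$ is only ever replaced by $\hat t_0 t_U$ with $\hat t_0\in(0,1)$ and $t_U>0$ (step~1, \eqref{linesearch:t0Shrinking}), at most once per trial point, so once finiteness is proved the assertion $t_0^k>0$ is immediate, and the returned values $t_L^k=t_L$, $t_R^k=t$ are read off the return statements.

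The structural observation that makes the proof work is that the infinite loop can be excluded \emph{without} any semismoothness hypothesis on the constraint $F$ (which is why only $f$ is assumed weakly upper semismooth, and why the constraint test $\hat Z$ plays no role here). I would dispose of every scenario in which infeasibility accumulates at $\bar t$, or in which $t_U$ is eventually frozen, purely through the serious-step test. Since $x_k$ is strictly feasible by \eqref{BundleSQPmitQCQP:ZulaessigkeitsbedingungFuerIterationspunkt}, infeasibility cannot accumulate at $\bar t=0$, so $\bar t>0$ in these cases; moreover $t_L$ must climb to $\bar t$ through strictly feasible sufficient-descent steps, for otherwise $t_L$ would freeze at a feasible point equal to $\bar t$ and $\bar y$ would be strictly feasible. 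Because each infeasible trial drives $t_0=\hat t_0 t_U$ strictly below $\bar t$ (and a $t_U$ frozen by an infeasible point, or never decreased, leaves $t_0$ strictly below $\bar t$ as well), the test $t_L\geq t_0$ in step~1 must eventually fire, contradicting non-termination. The one remaining frozen-$t_U$ possibility, that the point fixing $t_U$ was a feasible descent-failure, is ruled out by letting $t_L\uparrow\bar t$ in the accepted inequality $f(x_k+t_Ld_k)\leq f(x_k)+m_Lv_kt_L$ and comparing, by continuity, with the strict reversed inequality at that point.

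The only genuinely analytic case that survives is that infinitely many trial points $t^{(j)}\downarrow\bar t$ are strictly feasible but fail the descent test, i.e.\ $F(x_k+t^{(j)}d_k)<0$ and $f(x_k+t^{(j)}d_k)>f(x_k)+m_Lv_kt^{(j)}$. Combining this with $f(\bar y)\leq f(x_k)+m_Lv_k\bar t$ (the limit of the accepted descent steps, trivially valid when $\bar t=0$ since then no descent step was accepted) gives $\tfrac{f(x_k+t^{(j)}d_k)-f(\bar y)}{t^{(j)}-\bar t}>m_Lv_k$, and weak upper semismoothness of $f$ at $\bar y$ in direction $d_k$ yields $\limsup_j (g^{(j)})^Td_k\geq m_Lv_k$ for the selected subgradients $g^{(j)}\in\partial f(x_k+t^{(j)}d_k)$. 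Passing to a subsequence and invoking the local boundedness and upper semicontinuity of $\partial f$ (Proposition \ref{PAPER:LocallyBoundedUpperSemiContinous}) gives $\bar g\in\partial f(\bar y)$ with $\bar g^Td_k\geq m_Lv_k$. Along this subsequence the left-hand side of the model-change test $Z$ in \eqref{Luk:LineareLS3} converges to $\bar g^Td_k$, because the error term $\beta$ and the damped curvature term $\rho(t_L-t)Gd_k$ vanish as $t^{(j)},t_L\to\bar t$ (using $|\rho G|\leq C_G$), while its right-hand side stays $\leq m_Rv_k$ since $\widebar{G}$ is positive definite. As $v_k<0$ by \eqref{proposition:LSaddendum:vkAussage} and $0<m_L<m_R$ give $m_Lv_k>m_Rv_k$, the inequality in $Z$ holds for all large $j$; together with $(t^{(j)}-t_L)\lvert d_k\rvert\to0\leq C_S$ this forces a short/null-step return, the final contradiction.

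The main obstacle is organising the case distinction so that the feasibility logic is cleanly separated from the descent logic: one must verify that infeasible trial points can never sustain the loop using only the $t_0$-shrinking and the serious-step test, thereby avoiding any hypothesis on $F$, and then isolate the single case in which weak upper semismoothness of $f$ is actually used. The delicate points to check carefully are that the safeguarded interpolation really reduces $\ell$ by the claimed fraction $\zeta\ell^\vartheta$, that the limiting inequality $f(\bar y)\leq f(x_k)+m_Lv_k\bar t$ holds in each subcase, and that every auxiliary quantity entering $Z$ (namely $\beta$, $\rho(t_L-t)Gd_k$ and $-\tfrac12 d_k^T\widebar{G}d_k$) behaves as claimed in the limit, so that the strict gap $m_Lv_k>m_Rv_k$ can be exploited.
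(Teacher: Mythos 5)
Your argument is correct and uses the same ingredients as the paper's proof, but it is organized differently and is fully self-contained. The paper proves the proposition by reduction: if the whole segment is strictly feasible the situation is literally that of the unconstrained bundle--Newton line search, whose termination is cited from Luk\v{s}an--Vl\v{c}ek; otherwise it introduces the first zero $\tilde t$ of $t\mapsto F(x_k+td_k)$, claims that after finitely many iterations $t_L$, $t_0$ and $t_U$ all lie in $[0,\tilde t)$ so that $t_0$ is frozen and the cited unconstrained proof applies again, and records as the only new ingredient exactly the inequality you also use, namely that the negation of $Z$ still yields $-\beta+d_k^T\bigl(g+\rho(t_L-t)Gd_k\bigr)<m_Rv_k$ because $m_f\geq0$ and $\widebar G$ is positive definite. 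You instead run a single contradiction argument and re-derive the Mifflin-type weak-upper-semismoothness step rather than citing it; more importantly, you treat the feasibility logic differently: instead of asserting that the iterates eventually live strictly inside the feasible part of the segment, you show that an infinite tail of infeasible trials forces $t_0=\hat t_0t_U\rightarrow\hat t_0\bar t<\bar t$ while $t_L\uparrow\bar t>0$, so the serious-step test $t_L\geq t_0$ must fire. This is a genuine gain in rigor in the boundary case where infeasible trial points accumulate at the first infeasible parameter (there the paper's assertion that eventually $t_U<\tilde t$ does not literally hold, and termination instead occurs through the serious step, as you show). The analytic core --- interval collapse under the safeguarded interpolation, the limiting inequality $f(\bar y)\leq f(x_k)+m_Lv_k\bar t$, the bound $\limsup_j (g^{(j)})^Td_k\geq m_Lv_k$ from weak upper semismoothness, and the strict gap $m_Lv_k>m_Rv_k$ coming from $v_k<0$ --- coincides with the paper's (i.e.~Luk\v{s}an's) argument.
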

%%%%%%%%%%%%%%%%%%%%%%%%%%%%%%%%%%%%%%%%%%%%%%%%%%%%%%%%%%%%
\begin{proof}
If $F(x_k+td_k)<0$ for all $t\in[0,1]$, then this is exactly the same situation as in the line search of the unconstrained bundle-Newton method which terminates after finitely many iterations due to \citet[p.~379,~Proof~of~Lemma~2.3]{Luksan}.
Otherwise, since $F$ is continuous and $F(x_k)<0$, there exists a largest $\tilde{t}>0$ with $F(x_k+d_k\tilde{t})=0$ and $F(x_k+d_ks)<0$ for all $s<\tilde{t}$. Therefore, after sufficiently many iterations in the line search (Algorithm \ref{BundleSQP:AlgNB:LinesearchMitQCQP}) (Note that the interval $[t_L,t_U]$ is shrinking at each iteration of the line search), there only occur $t_L,t_0,t_U$ with $0\leq t_L<t_U<\tilde{t}$ and $0<t_0<t_U<\tilde{t}$ (i.e.~from now on all $x_k+td_k$ with $t\in\lbrace t_L,t_U\rbrace$ are feasible) and consequently $t_0$ (where $x_k+t_0d_k$ is also feasible,) does not change anymore (cf.~(\ref{linesearch:t0Shrinking})). Hence, here we also have exactly the same situation as in the line search of the unconstrained bundle-Newton method, which terminates after finitely many iterations due to \citet[Proof~of~Lemma~2.3]{Luksan},
%%%
\cbstartMIFFLIN
%%%
where the only difference in the proof is that we need to use the following additional argument to obtain the inequality at the bottom of \citet[p.~379]{Luksan}: Since $m_f\in[0,1]$ due to the initialization of Algorithm \ref{BundleSQPmitQCQP:Alg:GesamtAlgMitQCQP} and since $\widebar{G}$ is positive definite due to (\ref{Luk:LineareLS3:NEWfeasibleTrialPointPositiveDefiniteModification}), the negation of the condition in (\ref{Luk:LineareLS3}) that corresponds to the change of the model of the objective function yields
$
-\beta+d_k^T\big(g+\rho(t_L-t)Gd_k\big)
<
m_Rv_k
+
m_f\cdot(-\tfrac{1}{2}d_k^T\widebar{G}d_k)
\leq
m_Rv_k
$.
\qedhere
%%%
\cbendMIFFLIN
%%%
\end{proof}
%%%%%%%%%%%%%%%%%%%%%%%%%%%%%%%%%%%%%%%%%%%%%%%%%%%%%%%%%%%%%%%%%%%%%%%%%%%%%%%%%%%%%%%%%%%%%%%%%%%%%%%%%%%%%%%%%%%%%%%%
\begin{remark}
The proof of Proposition \ref{proposition:ConvergenceOfLinesearch} only relies on $f$ satisfying (\ref{Luksan:Lemma2.3:Semismooth}), the continuity of $F$ and the strict feasibility of $x_k$. In particular, $F$ does not need to be weakly upper semismooth.
%AN%(cf.~(\ref{Luksan:Lemma2.3:Semismooth})).
\end{remark}
%%%%%%%%%%%%%%%%%%%%%%%%%%%%%%%%%%%%%%%%%%%%%%%%%%%%%%%%%%%%%%%%%%%%%%%%%%%%%%%%%%%%%%%%%%%%%%%%%%%%%%%%%%%%%%%%%%%%%%%%
%%%Eventuell: if H_k+1=H_k and we did not need to change of model of f for global convergence,
%%%           then weakly upper semismooth condition F would be enough for showing
%%%           convergence of line search in a diferent way.
%%%%%%%%%%%%%%%%%%%%%%%%%%%%%%%%%%%%%%%%%%%%%%%%%%%%%%%%%%%%%%%%%%%%%%%%%%%%%%%%%%%%%%%%%%%%%%%%%%%%%%%%%%%%%%%%%%%%%%%%

\subsection{Global convergence}
%\section{Global convergence}
\label{section:GlobalConvergence}
%%%%%%%%%%%%%%%%%%%%%%%%%%%%%%%%%%%%%%%%%%%%%%%%%%%%%%%%%%%%%%%%%%%%%%%%%%%%%%%%%%%%%%%%%%%%%%%%%%%%%%%%%%%%%%%%%%%%%%%%
For investigating the global convergence of Algorithm \ref{BundleSQPmitQCQP:Alg:GesamtAlgMitQCQP} we will follow closely the proof of global convergence of the bundle-Newton method for nonsmooth unconstrained minimization in \citet[p.~380-385,~Section~3]{Luksan} with modifications which concern the constrained case and the use of determining the search direction by solving a QCQP, where we will work out everything in great detail so that it is easy to see which passages of the proof are similar to the unconstrained case resp.~which passages require a careful examination. Therefore, we assume
%(\ref{BundleSQPmitQCQP:Alg:QPTeilproblem})
\begin{equation}
\varepsilon
=
0
\komma\quad
\lambda_j^k
=
0~~~\forall j\not\in J_k
\komma\quad
\mu_j^k
=
0~~~\forall j\not\in J_k
\punkt
\label{BundleSQPmitQCQP:GlobaleKonvergenz:epsilon=0}
\end{equation}
%%%%%%%%%%%%%%%%%%%%%%%%%%%%%%%%%%%%%%%%%%%%%%%%%%%%%%%%%%%%%%%%%%%%%%%%%%%%%%%%%%%%%%%%%%%%%%%%%%%%%%%%%%%%%%%%%%%%%%%%
A main difference to the proof of convergence of the unconstrained bundle-Newton method is that here $H_k$ from (\ref{BundleSQPmitQCQP:Alg:Hk}) depends on the Lagrange multipliers
%MIFFLIN%$(\mu^k,\mu_p^k)$
\cbstartMIFFLIN
$(\lambda^k,\lambda_p^k,\mu^k,\mu_p^k)$
\cbendMIFFLIN
of the QCQP (\ref{BundleSQPmitQCQP:Alg:QPTeilproblem}), which implies that so do the search direction $d_k$ from (\ref{BundleSQPmitQCQP:Alg:dk}) (and consequently the new iteration point $x_{k+1}$ from (\ref{BundleSQPmitQCQP:Alg:xk+1Update}) as well as the new trial point $y_{k+1}$ from (\ref{BundleSQPmitQCQP:Alg:yk+1Update})) and
%$\hat{v}_k$ from (\ref{BundleSQPmitQCQP:Alg:vhatkNEW})
the termination criterion $w_k$ from (\ref{BundleSQPmitQCQP:Alg:wk}) in particular. Furthermore, this dependence does not allow us to achieve the equality $H_{k+1}=H_k$ in the proof of Theorem \ref{BundleSQPmitQCQP:GlobaleKonvergenz:Satz:Thm3.8:Bew:Theorem} in contrast to \citet[top~of~page~385,~Proof~of~Theorem~3.8]{Luksan}, which extends the complexity of the already quite involved proof of the unconstrained bundle-Newton method.

Hence we give a brief overview of the main steps of the proof:
%%%PAPER%%%\begin{enumerate}
%%%%%%%%%%%%%%%%%%%%%%%%%%%%%%
%%%PAPER%%%\item
%AN%Proposition \ref{BundleSQPmitQCQP:GlobaleKonvergenzSatz:Lemma3.1} serves as an extension of %AN%\citet[Lemma~3.1]{Luksan} in the constrained case for expressing the aggregate data (as, e.g., $\tilde{g}_p^k$, %AN%$\tilde{\hat{g}}_p^k$,\dots) as convex combinations in which no $p$-data (as, e.g., $g_p^k$, $\hat{g}_p^k$,\dots) occurs.
In Proposition \ref{BundleSQPmitQCQP:GlobaleKonvergenzSatz:Lemma3.1} we express the $p$-tilde data (as, e.g., $\tilde{g}_p^k$, $\tilde{\hat{g}}_p^k$,\dots) as convex combinations in which no $p$-data (as, e.g., $g_p^k$, $\hat{g}_p^k$,\dots) occurs.
%%%%%%%%%%%%%%%%%%%%%%%%%%%%%%
%%%PAPER%%%\item
%AN%In Proposition \ref{Luksan:GlobaleKonvergenzSatz:Lemma3.2} we recall \citet[Lemma~3.2]{Luksan}, which offers a %AN%sufficient condition to identify a vector as an element of the subdifferential.
Afterwards we recall a sufficient condition to identify a vector as an element of the subdifferential in Proposition \ref{Luksan:GlobaleKonvergenzSatz:Lemma3.2}.
%%%%%%%%%%%%%%%%%%%%%%%%%%%%%%
%%%PAPER%%%\item
In Theorem \ref{AN:Theorem:Lemma3.3NB} we show that if Algorithm \ref{BundleSQPmitQCQP:Alg:GesamtAlgMitQCQP} stops at iteration $k$, then the current iteration point $x_k$ is stationary for the optimization problem (\ref{BundleSQP:OptProblem}).
%AN%(cf.~\citet[Lemma~3.3]{Luksan}).
%%%%%%%%%%%%%%%%%%%%%%%%%%%%%%
%%%PAPER%%%\item
From then on on we assume that the algorithm does not terminate (cf.~(\ref{BundleSQPmitQCQP:GlobaleKonvergenz:Vor:AlgTerminiertNicht})).
%%%%%%%%%%%%%%%%%%%%%%%%%%%%%%
%%%PAPER%%%\item
After summarizing some properties of positive definite matrices, we deduce bounds for $\lbrace(\widebar{W}_p^k)^{-1}\rbrace$ and
%MIFFLIN%$\lbrace\widebar{W}_p^k+\bar{\kappa}^{k+1}\widebar{\hat{G}}^k\rbrace$
\cbstartMIFFLIN
$\lbrace\widebar{W}_p^k+\widebar{G}^k+\bar{\kappa}^{k+1}\widebar{\hat{G}}^k\rbrace$
\cbendMIFFLIN
in Corollary \ref{ADDON:Korollar:WpkUndHkBeschraenktheitUndPositivDefinitheit}, which will be essential in the following.
%%%%%%%%%%%%%%%%%%%%%%%%%%%%%%
%%%PAPER%%%\item
Then, in Proposition \ref{BundleSQPmitQCQP:GlobaleKonvergenz:Satz:Lemma3.6}, we show that if some boundedness assumptions are satisfied and the limit inferior of the sequence $\lbrace\max{(w_k,\lvert x_k-\bar{x}\rvert)}\rbrace$ is zero, where $\bar{x}$ denotes any accumulation point of the sequence of iteration points $\lbrace x_k\rbrace$, then $\bar{x}$ is stationary for the optimization problem (\ref{BundleSQP:OptProblem}), where the proof relies on Carath\'eodory's theorem %%%(cf., e.g., \citet[p.~33,~Theorem~2.4]{SchichlHabilitation})
%%%PAPER%%%(cf., e.g., \citet{NeumaierIV})
as well as on the local boundedness
%%%PAPER%%%(\ref{Satz:SubdifferentialLokalBeschraenkt})
and the upper semicontinuity
%%%PAPER%%%(\ref{Satz:SubdifferentialOberhalbstetig})
of the subdifferentials $\partial f$ and $\partial F$.
%AN%(cf.~Proposition \ref{PAPER:LocallyBoundedUpperSemiContinous}).
%AN%(cf.~\citet[Lemma~3.6]{Luksan}).
%%%%%%%%%%%%%%%%%%%%%%%%%%%%%%
%%%PAPER%%%\item
%AN%The statement $t_L^kv_k\longrightarrow0$ in Proposition \ref{AN:proposition:Lemma3.5(ii)} is taken without any %AN%modification from \citet[Lemma~3.5(ii)]{Luksan}, since that proof only depends on the negativity of $v_k$,
%AN%%from (\ref{BundleSQPmitQCQP:Alg:vk})
%AN%which holds due to (\ref{BundleSQPmitQCQP:Satz:GlobaleKonvergenz:Lemma3.5:wkCOMPACT}).
Due to the negativity of $v_k$,
%from (\ref{BundleSQPmitQCQP:Alg:vk})
which holds due to (\ref{BundleSQPmitQCQP:Satz:GlobaleKonvergenz:Lemma3.5:wkCOMPACT}),
we obtain the statement $t_L^kv_k\longrightarrow0$ in Proposition \ref{AN:proposition:Lemma3.5(ii)}.
%%%%%%%%%%%%%%%%%%%%%%%%%%%%%%
%%%PAPER%%%\item
%AN%Although the properties of the shifted sequences $\lbrace x_{k+i}\rbrace$, $\lbrace w_{k+i}\rbrace$ and $\lbrace %AN%t_L^{k+i}\rbrace$ in Proposition \ref{BundleSQPmitQCQP:GlobaleKonvergenz:Lemma3.7} look very similar to those in %AN%\citet[Lemma~3.7]{Luksan}, we have to take care of the dependence of $(\mu^k,\mu_p^k)$, which we noticed before, in %AN%the proof.
In Proposition \ref{BundleSQPmitQCQP:GlobaleKonvergenz:Lemma3.7} we show some properties of the shifted sequences $\lbrace x_{k+i}\rbrace$, $\lbrace w_{k+i}\rbrace$ and $\lbrace t_L^{k+i}\rbrace$, where we have to take care of the dependence of
%MIFFLIN%$(\mu^k,\mu_p^k)$,
\cbstartMIFFLIN
$(\lambda^k,\lambda_p^k,\mu^k,\mu_p^k)$,
\cbendMIFFLIN
which we noticed before, in the proof.
%%%%%%%%%%%%%%%%%%%%%%%%%%%%%%
%%%PAPER%%%\item
%AN%The estimation of a certain quadratic function on the interval $[0,1]$ in Proposition %AN%\ref{Luksan:GlobaleKonvergenz:Lemma3.4} is directly taken from \citet[p.~381,~Lemma~3.4]{Luksan}.
Then we recall an estimation of a certain quadratic function on the interval $[0,1]$ in Proposition \ref{Luksan:GlobaleKonvergenz:Lemma3.4}.
%%%%%%%%%%%%%%%%%%%%%%%%%%%%%%
%%%PAPER%%%\item
After recalling the differentiability of matrix valued functions to give a formula for the derivative of the matrix square root in Proposition \ref{Magnus:Satz:InverseMatrixAbleitung}
and after formulating the mean value theorem for vector valued functions on a convex set in
%%%Paper%%%Theorem
Proposition
\ref{AN:proposition:MeanValueTheoremForVectorValuedFunctions}, we combine these two results to obtain a Lipschitz estimate for the inverse matrix square root in Proposition \ref{KonvergenzErweiterung:Satz:MatrixAbschaetzungMitHigham:ZkAbschaetzungSatz}, which serves as replacement for the property $H_{k+1}=H_k$ of the proof of the unconstrained bundle-Newton method as mentioned above.
%%%%%%%%%%%%%%%%%%%%%%%%%%%%%%
%%%PAPER%%%\item
Finally, we prove that under some additional boundedness assumptions the limit inferior of the sequence $\lbrace\max{(w_k,\lvert x_k-\bar{x}\rvert)}\rbrace$ is always zero and therefore Proposition \ref{BundleSQPmitQCQP:GlobaleKonvergenz:Satz:Lemma3.6} yields Theorem \ref{BundleSQPmitQCQP:GlobaleKonvergenz:Satz:Thm3.8:Bew:Theorem}
%AN%(cf.~\citet[Theorem~3.8]{Luksan})
which states that each accumulation point $\bar{x}$ of the sequence of iteration points $\lbrace x_k\rbrace$ is stationary for the optimization problem (\ref{BundleSQP:OptProblem}).
%%%%%%%%%%%%%%%%%%%%%%%%%%%%%%
%%%PAPER%%%\end{enumerate}
%%%%%%%%%%%%%%%%%%%%%%%%%%%%%%%%%%%%%%%%%%%%%%%%%%%%%%%%%%%%%%%%%%%%%%%%%%%%%%%%%%%%%%%%%%%%%%%%%%%%%%%%%%%%%%%%%%%%%%%%
%%%?????????? ERSETZE IM NAECHSTEN SATZ $\hat{s}_j^k$ durch $s_j^k$ ??????????
\begin{proposition}
%%%Lemma3.1NB
\label{BundleSQPmitQCQP:GlobaleKonvergenzSatz:Lemma3.1}
Assume that Algorithm \refH{BundleSQPmitQCQP:Alg:GesamtAlgMitQCQP} has not stopped before iteration $k$ with $k\geq1$.
%%%%%%%%%%%%%%%%%%%%%%%%%%%%%%
Then there exists $\hat{\lambda}_j^k\in\mathbb{R}$ for $j=1,\dots,k$ with
\begin{equation}
\hat{\lambda}_j^k\geq0
\komma\quad
1=\sum_{j=1}^k\hat{\lambda}_j^k
\komma\quad
(G_p^{k+1},\tilde{g}_p^k,\tilde{s}_p^k)=\sum_{j=1}^k\hat{\lambda}_j^k(\rho_jG_j,g_j^k,s_j^k)
\punkt
\label{BundleSQPmitQCQP:GlobaleKonvergenzSatz:Lemma3.1:22:1COMPACT}
\end{equation}
%%%%%%%%%%%%%%%%%%%%%%%%%%%%%%
If $\bar{\kappa}^{k+1}>0$, then there exists $\hat{\kappa}_j^k\in\mathbb{R}$ for $j=1,\dots,k$ with
\begin{equation}
\hat{\kappa}_j^k\geq0
\komma\quad
1=\sum\limits_{j=1}^k\hat{\kappa}_j^k
\komma\quad
(\hat{G}_p^{k+1},\tilde{\hat{g}}_p^k,\tilde{\hat{s}}_p^k)=\sum_{j=1}^k\hat{\kappa}_j^k(\hat{\rho}_j\hat{G}_j,\hat{g}_j^k,s_j^k)
\punkt
\label{BundleSQPmitQCQP:GlobaleKonvergenzSatz:Lemma3.1:22:1NBCOMPACT}
\end{equation}
%%%%%%%%%%%%%%%%%%%%%%%%%%%%%%
If $\bar{\kappa}^{k+1}=0$, then \refh{BundleSQPmitQCQP:GlobaleKonvergenzSatz:Lemma3.1:22:1NBCOMPACT} holds with
\begin{equation}
\hat{\kappa}_j^k:=0~~~\forall j=1,\dots,k
\label{BundleSQPmitQCQP:GlobaleKonvergenzSatz:Lemma3.1:22:1NB:AllZero}
\punkt
\end{equation}
%%%%%%%%%%%%%%%%%%%%%%%%%%%%%%
\end{proposition}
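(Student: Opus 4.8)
The plan is to prove all three assertions simultaneously by induction on $k$, reducing the aggregated ``$p$-tilde'' quantities to pure bundle quantities by substituting the $p$-update rules. The engine of the argument is that the model gradients and the locality measures are affine in the current iterate: from \refh{Luksan:Satz:fjrauteGradientCOMPACT} one has $g_j^{k}=g_j^{k-1}+\rho_jG_j(x_k-x_{k-1})$ and $\hat g_j^{k}=\hat g_j^{k-1}+\hat\rho_j\hat G_j(x_k-x_{k-1})$ for \emph{every} index $j\le k-1$ (not only for $j\in J_{k-1}$), since $g_j^k=g_j^{\sharp}(x_k)$ and $g_j^{\sharp}$ is affine, and from the definition \refh{Luksan:Def:Lokalitaetsmass} one has $s_j^{k}=s_j^{k-1}+\lvert x_k-x_{k-1}\rvert$ for every $j\le k-1$. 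These identities are exactly what convert a representation valid at iteration $k-1$ into one valid at iteration $k$.

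For the objective part \refh{BundleSQPmitQCQP:GlobaleKonvergenzSatz:Lemma3.1:22:1COMPACT} the base case $k=1$ is immediate: the initialization \refh{BundleSQPmitQCQP:Alg:gStartwert}, \refh{BundleSQPmitQCQP:Alg:GpStartwert}, \refh{BundleSQPmitQCQP:Alg:sStartwert} (with $\rho_1=1$) makes the $p$-data coincide with the bundle data of index $1$, so $\hat\lambda_1^1=1$ works. For the step I insert the induction hypothesis into the $p$-updates \refh{BundleSQPmitQCQP:Alg:gpk+1Update} and \refh{BundleSQPmitQCQP:Alg:spk+1Update}: from $G_p^{k}=\sum_{j=1}^{k-1}\hat\lambda_j^{k-1}\rho_jG_j$ and $\tilde g_p^{k-1}=\sum_{j=1}^{k-1}\hat\lambda_j^{k-1}g_j^{k-1}$ together with the shift identity I get $g_p^{k}=\sum_{j=1}^{k-1}\hat\lambda_j^{k-1}g_j^{k}$, and since $\sum_j\hat\lambda_j^{k-1}=1$ the step term $\lvert x_k-x_{k-1}\rvert$ is absorbed, giving $s_p^{k}=\sum_{j=1}^{k-1}\hat\lambda_j^{k-1}s_j^{k}$. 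Feeding these into the aggregation \refh{BundleSQPmitQCQP:Alg:gtildepkCOMPACT} and collecting coefficients, I set
\[
\hat\lambda_j^k:=\lambda_j^k+\lambda_p^k\hat\lambda_j^{k-1}
\]
(with $\lambda_j^k:=0$ for $j\notin J_k$ and $\hat\lambda_k^{k-1}:=0$). Non-negativity is clear, and the normalization follows from the dual feasibility $\sum_{j\in J_k}\lambda_j^k+\lambda_p^k=1$ and $\sum_j\hat\lambda_j^{k-1}=1$; crucially all three components match with the \emph{same} weights.

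The constraint part \refh{BundleSQPmitQCQP:GlobaleKonvergenzSatz:Lemma3.1:22:1NBCOMPACT} is driven by the identical mechanism applied to \refh{BundleSQPmitQCQP:Alg:ghattildepkCOMPACT}, \refh{BundleSQPmitQCQP:Alg:ghatpk+1Update} and \refh{BundleSQPmitQCQP:Alg:shatpk+1Update} with $\kappa$ in place of $\lambda$. When $\bar\kappa^{k+1}=0$ the assertion is trivial, because $\kappa_j^k=\kappa_p^k=0$ by \refh{BundleSQPmitQCQP:Alg:Defkappabark+1} forces the whole constraint triple to vanish, so \refh{BundleSQPmitQCQP:GlobaleKonvergenzSatz:Lemma3.1:22:1NB:AllZero} holds. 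When $\bar\kappa^{k+1}>0$ I use the normalization $\sum_{j\in J_k}\kappa_j^k+\kappa_p^k=1$ built into \refh{BundleSQPmitQCQP:Alg:Defkappabark+1} and set $\hat\kappa_j^k:=\kappa_j^k+\kappa_p^k\hat\kappa_j^{k-1}$, provided the $p$-data admits a representation with unit-sum weights.

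The step I expect to be the main obstacle is precisely this last proviso, at the interface $\bar\kappa^{k}=0$ but $\bar\kappa^{k+1}>0$. There the hypothesis at $k-1$ only yields the zero representation, so $\hat g_p^{k}=0$ and $\hat G_p^{k}=0$, yet the constraint locality measure has nevertheless accumulated a step, $\hat s_p^{k}=\lvert x_k-x_{k-1}\rvert\neq0$, because an all-zero weight vector cannot absorb it. A positive multiplier $\kappa_p^k$ would then contribute $\kappa_p^k\lvert x_k-x_{k-1}\rvert$ to $\tilde{\hat s}_p^k$ while contributing nothing to $\tilde{\hat g}_p^k$ or $\hat G_p^{k+1}$, so the naive choice $\hat\kappa_j^k=\kappa_j^k$ matches the gradient and Hessian components but violates both the unit-sum condition and the locality component. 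Resolving this is the delicate point: I would first check, using $F_p^k=0$ (forced by \refh{BundleSQPmitQCQP:Alg:Fpk+1Update} when $\bar\kappa^k=0$) together with $A_p^k\ge -F(x_k)>0$, whether the degenerate $p$-constraint of \refh{BundleSQPmitQCQP:Alg:QPTeilproblem} is strictly slack at the solution so that complementary slackness gives $\mu_p^k=0$, hence $\kappa_p^k=0$ and the clean choice settles all three components; and, if slackness cannot be guaranteed in general, I would instead have to argue that the surplus locality mass $\kappa_p^k\lvert x_k-x_{k-1}\rvert$ can be redistributed onto admissible bundle indices without disturbing the (vanishing) gradient and Hessian contributions. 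Making one of these two routes rigorous is where the real work of the proof lies.
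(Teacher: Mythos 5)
Your proposal follows the paper's proof essentially step for step: the same induction, the same affine shift identities for $g_j^k$, $\hat g_j^k$ and $s_j^k$, and the same recursive weights $\hat\lambda_j^k=\lambda_j^k+\lambda_p^k\hat\lambda_j^{k-1}$ and $\hat\kappa_j^k=\kappa_j^k+\kappa_p^k\hat\kappa_j^{k-1}$. You have also correctly isolated the only genuinely delicate step, the transition $\bar\kappa^{k}=0$, $\bar\kappa^{k+1}>0$, where the all-zero representation cannot absorb the locality increment $\lvert x_k-x_{k-1}\rvert$ sitting in $\hat s_p^{k}$. The step you leave open is closed in the paper by exactly your \emph{first} route: $\bar\kappa^{k}=0$ forces $\kappa_j^{k-1}=\kappa_p^{k-1}=0$, hence $\tilde{\hat g}_p^{k-1}=\zeroVector{N}$, $\hat G_p^{k}=\zeroMatrix{N}$ and, by \refh{BundleSQPmitQCQP:Alg:ghatpk+1Update}, $\hat g_p^{k}=\zeroVector{N}$; moreover $\tilde F_p^{k-1}=0$ gives $F_p^{k}=0$ via \refh{BundleSQPmitQCQP:Alg:Fpk+1Update}, so $A_p^{k}\geq\lvert F(x_k)\rvert=-F(x_k)>0$ by \refh{BundleSQPmitQCQP:Alg:AjkCOMPACT} and \refh{BundleSQPmitQCQP:ZulaessigkeitsbedingungFuerIterationspunkt}; the paper concludes that the aggregated $F$-constraint in \refh{BundleSQPmitQCQP:Alg:QPTeilproblem} is inactive, so complementary slackness yields $\mu_p^{k}=0$ and hence $\kappa_p^{k}=0$ whenever $\bar\kappa^{k+1}>0$. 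In the bundle-reset case $i_s>i_r$ the aggregated constraint is simply absent from the QCQP and the multiplier may be set to zero outright. So no redistribution of locality mass is needed, and your second route never has to be invoked.

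Two smaller points. First, the paper's slackness argument only inspects the constant part $F(x_k)-A_p^{k}<0$ and silently drops the quadratic term $\tfrac{1}{2}d^T\widebar{\hat G}^{k}d$ of the aggregated constraint; your hesitation about whether slackness ``can be guaranteed in general'' is therefore not unfounded, and a fully rigorous version would also have to control that term at $d=d_k$. Second, for the constraint part your base case should read $\hat\kappa_1^1=1$ if $\bar\kappa^2>0$ and $\hat\kappa_1^1=0$ if $\bar\kappa^2=0$; the paper obtains this by observing that at $k=1$ the aggregated constraint coincides with the bundle constraint, so one may take $\lambda_p^1=\mu_p^1=0$ without altering the dual solution.
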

%%%%%%%%%%%%%%%%%%%%%%%%%%%%%%%%%%%%%%%%%%%%%%%%%%%%%%%%%%%%
\begin{proof}(by induction)
%%%%%%%%%%%%%%%%%%%%%%%%%%%%%%
Since $g_p^1=g_1^1$ due to (\ref{BundleSQPmitQCQP:Alg:gStartwert}) as well as $\alpha_p^1=\alpha_1^1$ due to
(\ref{BundleSQPmitQCQP:Alg:alphajkCOMPACT}),
(\ref{BundleSQPmitQCQP:Alg:fStartwert}) and
(\ref{BundleSQPmitQCQP:Alg:sStartwert}),
as well as $\hat{g}_p^1=\hat{g}_1^1$ due to (\ref{BundleSQPmitQCQP:Alg:ghatStartwert})
as well as $A_p^1=A_1^1$ due to
(\ref{BundleSQPmitQCQP:Alg:AjkCOMPACT}),
(\ref{BundleSQPmitQCQP:Alg:FStartwert}) and
(\ref{BundleSQPmitQCQP:Alg:sStartwert}),
as well as
\cbstartMIFFLIN
$\widebar{G}^1=\widebar{G}_1^1$
\cbendMIFFLIN
and
$\widebar{\hat{G}}^1=\widebar{\hat{G}}_1^1$ due to
(\ref{BundleSQPmitQCQP:Alg:ModifikationVonGhatbarjk}),
(\ref{BundleSQPmitQCQP:Alg:ModifikationVonGhatbarjkBeiGleichheitsCase}) and
\cbstartMIFFLIN
(\ref{BundleSQPmitQCQP:Alg:GpStartwert})
\cbendMIFFLIN
resp.~(\ref{BundleSQPmitQCQP:Alg:GhatpStartwert}),
the aggregated (p-)constraint of the QCQP (\ref{BundleSQPmitQCQP:Alg:QPTeilproblem}) at iteration $k=1$ of Algorithm \ref{BundleSQPmitQCQP:Alg:GesamtAlgMitQCQP} coincides with the corresponding bundle constraint, and therefore we can drop the aggregated (p-)constraint and consequently the dual problem (\ref{Luksan:Alg:QCQPTeilproblem:DualesProblem}) has only two variables $\lambda_1^1$ and $\mu_1^1$, where $\lambda_1^1=1$ must hold, so that the equality constraint of the dual problem (\ref{Luksan:Alg:QCQPTeilproblem:DualesProblem}) is satisfied. Now, if we set $\lambda_p^1=0$ and $\mu_p^1=0$, then the dual solution does not change.

Consequently,
(\ref{BundleSQPmitQCQP:GlobaleKonvergenzSatz:Lemma3.1:22:1COMPACT})
holds due to the same calculations which are performed in \citet[Lemma~3.1]{Luksan}.

Furthermore, we obtain $\bar{\kappa}^2=\mu_1^1$ due to (\ref{BundleSQPmitQCQP:Alg:Defkappabark+1}) and therefore we get $\kappa_j^1=1$ for $\bar{\kappa}^2>0$ and $\kappa_j^1=0$ for $\bar{\kappa}^2=0$ as well as $\kappa_p^1=0$. Summarizing these facts yield that we have at iteration $k=1$ of Algorithm \ref{BundleSQPmitQCQP:Alg:GesamtAlgMitQCQP}
that
$\bar{\kappa}^{k+1}=\mu_1^k$,
$
\kappa_1^k
=
\left\lbrace
\begin{array}{ll}
1 & \textnormal{for }\bar{\kappa}^{k+1}>0\\
0 & \textnormal{for }\bar{\kappa}^{k+1}=0
\end{array}
\right\rbrace
$
and
$\kappa_p^k=0$.
%%%%%%%%%%%%%%%%%%%%%%%%%%%%%%

Therefore, the base case is satisfied for $k=1$ with
$
\hat{\kappa}_1^k:=
\left\lbrace
\begin{array}{ll}
1 & \textnormal{for }\bar{\kappa}^{k+1}>0\\
0 & \textnormal{for }\bar{\kappa}^{k+1}=0
\end{array}
\right\rbrace\komma
$
since
%%%%%%%%%%%%%%%
(\ref{BundleSQPmitQCQP:GlobaleKonvergenzSatz:Lemma3.1:22:1NBCOMPACT})
holds due to
(\ref{BundleSQPmitQCQP:Alg:ghattildepkCOMPACT}).
%2.EgsNB
%3.EgsNB
%%%%%%%%%%%%%%%
%%%%%%%%%%%%%%%%%%%%%%%%%%%%%%

Let the induction hypothesis be satisfied (i.e.~we have $\bar{\kappa}^{k+1}>0$ in particular) and define
\begin{equation}
\hat{\kappa}_j^{k+1}
:=
\left\lbrace
\begin{array}{ll}
\kappa_j^{k+1}+\kappa_p^{k+1}\hat{\kappa}_j^k & \textnormal{for }\bar{\kappa}^{k+1}>0\\
\kappa_j^{k+1}                                & \textnormal{for }\bar{\kappa}^{k+1}=0
\end{array}
\right\rbrace
~\textnormal{for }j=1,\dots,k
\komma\quad
\hat{\kappa}_{k+1}^{k+1}:=\kappa_{k+1}^{k+1}
\komma
\label{BundleSQPmitQCQP:GlobaleKonvergenzSatz:Lemma3.1:Defkappahatjk+1COMPACT}
\end{equation}
where $(\mu_j^{k+1},\mu_p^{k+1})$ is
part of
the solution of the dual problem (\ref{Luksan:Alg:QCQPTeilproblem:DualesProblem}) (including the aggregated terms)
%%%COMPACT%%%(this is well defined, as the inductive assumption tells us that Algorithm \ref{BundleSQPmitQCQP:Alg:GesamtAlgMitQCQP} has not stopped before iteration $k+1$ and the search direction as well as the Lagrange multipliers have already been computed at step 3, but the termination criterion is first checked at step 5 (at each iteration))
and $\kappa_j^{k+1}$ resp.~$\kappa_p^{k+1}$ are set according to (\ref{BundleSQPmitQCQP:Alg:Defkappabark+1}).
%%%%%%%%%%%%%%%%%%%%%%%%%%%%%%
The case $\bar{\kappa}^{k+1}=0$ is equivalent to $\kappa_j^k=\kappa_p^k=0$ for all $j=1,\dots,k$
due to (\ref{BundleSQPmitQCQP:Alg:Defkappabark+1}) and therefore we obtain $\tilde{\hat{g}}_p^k=\zeroVector{N}$ and $\hat{G}_p^{k+1}=\zeroMatrix{N}$ due to (\ref{BundleSQPmitQCQP:Alg:ghattildepkCOMPACT}), which implies $\hat{g}_p^{k+1}=\zeroVector{N}$ due to (\ref{BundleSQPmitQCQP:Alg:ghatpk+1Update}). Hence, at iteration $k+1$ in the QCQP (\ref{BundleSQPmitQCQP:Alg:QPTeilproblem}) the aggregated constraint for $F$ reads in the case $i_s\leq i_r$ $F(x_{k+1})-A_p^{k+1}\leq0$. Since this inequality is sharp due to (\ref{BundleSQPmitQCQP:ZulaessigkeitsbedingungFuerIterationspunkt}) and (\ref{BundleSQPmitQCQP:Alg:AjkCOMPACT}), the aggregated constraint for $F$ is inactive at iteration $k+1$. Since Lagrange multipliers for inactive constraints vanish, we obtain at iteration $k+1$ (Note that $\mu_p^{k+1}$ is the Lagrange multiplier corresponding to the aggregated constraint for $F$ at iteration $k+1$ and note that $\bar{\kappa}^{k+2}>0$ is the assumption for what we want to show by the inductive step $k\mapsto k+1$) $\mu_p^{k+1}=0$ which implies
\begin{equation}
\kappa_p^{k+1}
=
0
\komma\quad
\big(
\sum_{j=1}^{k+1}\kappa_j^{k+1}=1
~~~\wedge~~~
(\kappa_j^{k+1}\geq0~~\forall j=1,\dots,k+1)
\big)
\label{BundleSQPmitQCQP:GlobaleKonvergenz:Satz:BEWEISkappapkPLUS1IS0}
\end{equation}
due to $\bar{\kappa}^{k+2}>0$ and (\ref{BundleSQPmitQCQP:Alg:Defkappabark+1}). In the case $i_s>i_r$ (\ref{BundleSQPmitQCQP:GlobaleKonvergenz:Satz:BEWEISkappapkPLUS1IS0}) holds anyway, since then in the dual problem (\ref{Luksan:Alg:QCQPTeilproblem:DualesProblem}) for the QCQP (\ref{BundleSQPmitQCQP:Alg:QPTeilproblem}) the aggregated constraints do not occur and therefore the corresponding Lagrange multiplier can be set to zero.
%%%%%%%%%%%%%%%%%%%%%%%%%%%%%%
So, the inductive step $k\mapsto k+1$ for
the first two properties of
(\ref{BundleSQPmitQCQP:GlobaleKonvergenzSatz:Lemma3.1:22:1NBCOMPACT})
holds in the case $\bar{\kappa}^{k+1}>0$ due to
(\ref{BundleSQPmitQCQP:GlobaleKonvergenzSatz:Lemma3.1:Defkappahatjk+1COMPACT}),
(\ref{BundleSQPmitQCQP:Alg:Defkappabark+1}) and
(\ref{Luksan:Alg:QCQPTeilproblem:DualesProblem})
(Note that we assumed that we consider the case $\bar{\kappa}^{k+1}>0$ which implies that we can use the induction hypothesis for
the first two properties of
(\ref{BundleSQPmitQCQP:GlobaleKonvergenzSatz:Lemma3.1:22:1NBCOMPACT}) and note that we have $\bar{\kappa}^{k+2}>0$, since this is the assumption for what we want to show by the inductive step $k\mapsto k+1$) and in the case $\bar{\kappa}^{k+1}=0$ due to
(\ref{BundleSQPmitQCQP:GlobaleKonvergenzSatz:Lemma3.1:Defkappahatjk+1COMPACT}) and
(\ref{BundleSQPmitQCQP:GlobaleKonvergenz:Satz:BEWEISkappapkPLUS1IS0}).
%%%%%%%%%%%%%%%%%%%%%%%%%%%%%%
The inductive step for
the third property of
(\ref{BundleSQPmitQCQP:GlobaleKonvergenzSatz:Lemma3.1:22:1NBCOMPACT}) holds in the case $\bar{\kappa}^{k+1}>0$ due to
(\ref{BundleSQPmitQCQP:Alg:ghattildepkCOMPACT}) and
(\ref{BundleSQPmitQCQP:GlobaleKonvergenzSatz:Lemma3.1:Defkappahatjk+1COMPACT}),
and in the case $\bar{\kappa}^{k+1}=0$ due to
(\ref{BundleSQPmitQCQP:Alg:ghattildepkCOMPACT}),
(\ref{BundleSQPmitQCQP:GlobaleKonvergenz:Satz:BEWEISkappapkPLUS1IS0}) and
(\ref{BundleSQPmitQCQP:GlobaleKonvergenzSatz:Lemma3.1:Defkappahatjk+1COMPACT}).
%%%%%%%%%%%%%%%%%%%%%%%%%%%%%%
The inductive step for
the fourth property of
(\ref{BundleSQPmitQCQP:GlobaleKonvergenzSatz:Lemma3.1:22:1NBCOMPACT}) holds in the case $\bar{\kappa}^{k+1}>0$ due to
(\ref{BundleSQPmitQCQP:Alg:ghattildepkCOMPACT}),
(\ref{BundleSQPmitQCQP:Alg:ghatpk+1Update}),
(\ref{BundleSQPmitQCQP:Alg:ghatjk+1Update}) and
(\ref{BundleSQPmitQCQP:GlobaleKonvergenzSatz:Lemma3.1:Defkappahatjk+1COMPACT})
and in the case $\bar{\kappa}^{k+1}=0$ due to
(\ref{BundleSQPmitQCQP:Alg:ghattildepkCOMPACT}),
(\ref{BundleSQPmitQCQP:GlobaleKonvergenz:Satz:BEWEISkappapkPLUS1IS0}) and
(\ref{BundleSQPmitQCQP:GlobaleKonvergenzSatz:Lemma3.1:Defkappahatjk+1COMPACT}).
%%%%%%%%%%%%%%%%%%%%%%%%%%%%%%
The inductive step for
the fifth property of
(\ref{BundleSQPmitQCQP:GlobaleKonvergenzSatz:Lemma3.1:22:1NBCOMPACT}) holds in the case $\bar{\kappa}^{k+1}>0$ due to
(\ref{BundleSQPmitQCQP:Alg:ghattildepkCOMPACT}),
(\ref{BundleSQPmitQCQP:Alg:shatpk+1Update}),
%%%PAPER%%%(\ref{BundleSQPmitQCQP:Alg:shatjk+1Update}),
(\ref{BundleSQPmitQCQP:Alg:sjk+1Update}) and
(\ref{BundleSQPmitQCQP:GlobaleKonvergenzSatz:Lemma3.1:Defkappahatjk+1COMPACT}),
and in the case $\bar{\kappa}^{k+1}=0$ due to
(\ref{BundleSQPmitQCQP:Alg:ghattildepkCOMPACT}),
(\ref{BundleSQPmitQCQP:GlobaleKonvergenz:Satz:BEWEISkappapkPLUS1IS0}) and
(\ref{BundleSQPmitQCQP:GlobaleKonvergenzSatz:Lemma3.1:Defkappahatjk+1COMPACT}).
%%%%%%%%%%%%%%%%%%%%%%%%%%%%%%
%%%%%%%%%%%%%%%%%%%%%%%%%%%%%%

In the case $\bar{\kappa}^{k+1}=0$ we obtain $\kappa_j^k=0$ for all $j=1,\dots,k$ and $\kappa_p^k=0$ due to
(\ref{BundleSQPmitQCQP:Alg:Defkappabark+1}) and therefore (\ref{BundleSQPmitQCQP:GlobaleKonvergenzSatz:Lemma3.1:22:1NBCOMPACT}) holds due to (\ref{BundleSQPmitQCQP:Alg:ghattildepkCOMPACT}) and (\ref{BundleSQPmitQCQP:GlobaleKonvergenzSatz:Lemma3.1:22:1NB:AllZero}).
\qedhere
%%%%%%%%%%%%%%%%%%%%%%%%%%%%%%
\end{proof}
%%%%%%%%%%%%%%%%%%%%%%%%%%%%%%%%%%%%%%%%%%%%%%%%%%%%%%%%%%%%%%%%%%%%%%%%%%%%%%%%%%%%%%%%%%%%%%%%%%%%%%%%%%%%%%%%%%%%%%%%
\begin{proposition}
%Habe hier N zu n verändert
\label{Luksan:GlobaleKonvergenzSatz:Lemma3.2}
If $\bar{x}\in\mathbb{R}^N$ and there exists $\widebar{G}_j\in\Sym{N},~\bar{q},\bar{y}_j\in\mathbb{R}^n,~\bar{g}_j\in\partial f(y_j),~\bar{s}_j,\bar{\lambda}_j\in\mathbb{R}$ for $j=1,\dots,L$, where $L\geq1$, with
\begin{align*}
(\bar{q},0)&=\sum_{j=1}^L\big(\bar{g}_j+\widebar{G}_j(\bar{x}-\bar{y}_j),\bar{s}_j\big)\bar{\lambda}_j
\komma\quad
1=\sum_{j=1}^L\bar{\lambda}_j
\komma\quad
\bar{\lambda}_j\geq0
\komma\quad
\lvert\bar{y}_j-\bar{x}\rvert\leq\bar{s}_j
\komma
\end{align*}
for all $j=1,\dots,L$, then $\bar{q}\in\partial f(\bar{x})$.
\end{proposition}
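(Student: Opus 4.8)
The plan is to read the two components of the vector equation $(\bar{q},0)=\sum_{j=1}^L\big(\bar{g}_j+\widebar{G}_j(\bar{x}-\bar{y}_j),\bar{s}_j\big)\bar{\lambda}_j$ separately and to exploit the scalar equation $0=\sum_{j=1}^L\bar{s}_j\bar{\lambda}_j$ as a complementarity condition. First I would note that $\bar{s}_j\geq\lvert\bar{y}_j-\bar{x}\rvert\geq0$ and $\bar{\lambda}_j\geq0$, so each summand $\bar{s}_j\bar{\lambda}_j$ is non-negative; since they sum to zero, every product vanishes, $\bar{s}_j\bar{\lambda}_j=0$. Consequently, for each index $j$ with $\bar{\lambda}_j>0$ we must have $\bar{s}_j=0$, and then $\lvert\bar{y}_j-\bar{x}\rvert\leq\bar{s}_j=0$ forces $\bar{y}_j=\bar{x}$. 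In other words, the locality measure ties the active base points exactly to $\bar{x}$.

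Next I would substitute this back into the first component. At every active index the Hessian-substitute correction disappears, $\widebar{G}_j(\bar{x}-\bar{y}_j)=\zeroVector{N}$, so that
\begin{equation*}
\bar{q}
=
\sum_{j=1}^L\bar{\lambda}_j\big(\bar{g}_j+\widebar{G}_j(\bar{x}-\bar{y}_j)\big)
=
\sum_{j:\,\bar{\lambda}_j>0}\bar{\lambda}_j\bar{g}_j
\punkt
\end{equation*}
Moreover, for each such active index the hypothesis $\bar{g}_j\in\partial f(\bar{y}_j)$ now reads $\bar{g}_j\in\partial f(\bar{x})$, because $\bar{y}_j=\bar{x}$. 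Thus $\bar{q}$ is displayed as a convex combination of elements of $\partial f(\bar{x})$: the weights are non-negative, and after discarding the vanishing weights they still satisfy $\sum_{j:\,\bar{\lambda}_j>0}\bar{\lambda}_j=\sum_{j=1}^L\bar{\lambda}_j=1$.

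Finally I would invoke the convexity of the subdifferential from Proposition \ref{PAPER:LocallyBoundedUpperSemiContinous}: since $\partial f(\bar{x})$ is a convex set and $\bar{q}$ is a convex combination of its members, we conclude $\bar{q}\in\partial f(\bar{x})$, as claimed. The argument is short, and I do not expect any genuine obstacle; the one step carrying all the weight is the initial observation that $\sum_j\bar{s}_j\bar{\lambda}_j=0$ behaves as a complementarity relation annihilating both the curvature corrections $\widebar{G}_j(\bar{x}-\bar{y}_j)$ and the base-point mismatch at the active indices simultaneously. The only point requiring care is interpreting the hypothesis $\bar{g}_j\in\partial f(\bar{y}_j)$ precisely at the collapsed points (the $y_j$ in the statement should be read as $\bar{y}_j$), where it delivers exactly the membership $\bar{g}_j\in\partial f(\bar{x})$ needed for the convex-hull conclusion.
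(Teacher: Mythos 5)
Your proof is correct and is essentially the argument behind the result the paper cites (Luk\v{s}an--Vl\v{c}ek, Lemma~3.2): the zero second component acts as a complementarity condition forcing $\bar{s}_j\bar{\lambda}_j=0$, hence $\bar{y}_j=\bar{x}$ at all active indices, after which $\bar{q}$ is a convex combination of elements of the convex set $\partial f(\bar{x})$. Your reading of the hypothesis $\bar{g}_j\in\partial f(y_j)$ as $\bar{g}_j\in\partial f(\bar{y}_j)$ is the intended one.
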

%%%%%%%%%%%%%%%%%%%%%%%%%%%%%%%%%%%%%%%%%%%%%%%%%%%%%%%%%%%%
\begin{proof}
\citet[p.~381,~Proof~of~Lemma~3.2]{Luksan}.
\qedhere
\end{proof}
%%%%%%%%%%%%%%%%%%%%%%%%%%%%%%%%%%%%%%%%%%%%%%%%%%%%%%%%%%%%%%%%%%%%%%%%%%%%%%%%%%%%%%%%%%%%%%%%%%%%%%%%%%%%%%%%%%%%%%%%
\begin{theorem}
%%%Lemma3.3NB
\label{AN:Theorem:Lemma3.3NB}
If Algorithm \refH{BundleSQPmitQCQP:Alg:GesamtAlgMitQCQP} stops at iteration $k$, then there exists $\bar{\kappa}^{k+1}\geq0$
%%%COMPACT%%%with
%%%COMPACT%%%\begin{equation*}
%%%COMPACT%%%          \zeroVector{N}\in\partial f(x_k)+\bar{\kappa}^{k+1}\partial F(x_k)\komma\quad
%%%COMPACT%%%\bar{\kappa}^{k+1}F(x_k)=0                                                  \komma\quad
%%%COMPACT%%%                  F(x_k)\leq0                                               \komma
%%%COMPACT%%%\end{equation*}
such that \refh{AOB:Satz:KarushJohnUnglgsNBNichtGlattAlternativformGlgsSystem} holds for $(x_k,\bar{\kappa}^{k+1})$,
i.e.~$x_k$ is stationary for the optimization problem \refh{BundleSQP:OptProblem}.
%%%COMPACT%%%according to \refh{AOB:Satz:KarushJohnUnglgsNBNichtGlattAlternativformGlgsSystem}.
\end{theorem}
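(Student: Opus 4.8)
The plan is to start from the termination test in step 5 of Algorithm \ref{BundleSQPmitQCQP:Alg:GesamtAlgMitQCQP}: stopping at iteration $k$ means $w_k\le\varepsilon=0$ (recall $\varepsilon=0$ by \refh{BundleSQPmitQCQP:GlobaleKonvergenz:epsilon=0}), while the chain \refh{BundleSQPmitQCQP:Satz:GlobaleKonvergenz:Lemma3.5:wkCOMPACT} already gives $w_k\ge0$, so $w_k=0$. The decisive observation is that the explicit formula \refh{BundleSQPmitQCQP:Alg:wk} presents $w_k$ as a sum of four individually non-negative terms: $\tfrac{1}{2}\lvert H_k(\tilde{g}_p^k+\bar{\kappa}^{k+1}\tilde{\hat{g}}_p^k)\rvert^2\ge0$ because $H_k$ from \refh{BundleSQPmitQCQP:Alg:Hk} is positive definite, hence invertible; $\tilde{\alpha}_p^k\ge0$ by its definition \refh{BundleSQPmitQCQP:Alg:alphatildepk} as a maximum with $\gamma_1(\tilde{s}_p^k)^{\omega_1}\ge0$; $\bar{\kappa}^{k+1}\tilde{A}_p^k\ge0$ since $\bar{\kappa}^{k+1}\ge0$ by \refh{BundleSQPmitQCQP:Alg:Defkappabark+1} and $\tilde{A}_p^k\ge0$ by \refh{BundleSQPmitQCQP:Alg:Atildepk}; and $\bar{\kappa}^{k+1}\big(-F(x_k)\big)\ge0$ because every accepted iterate is strictly feasible, $F(x_k)<0$, by \refh{BundleSQPmitQCQP:ZulaessigkeitsbedingungFuerIterationspunkt}. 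Thus $w_k=0$ forces each of the four terms to vanish.

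From the vanishing of the fourth term together with $F(x_k)<0$ I conclude immediately that $\bar{\kappa}^{k+1}=0$; this makes the complementarity $\bar{\kappa}^{k+1}F(x_k)=0$ and the feasibility $F(x_k)\le0$ required in \refh{AOB:Satz:KarushJohnUnglgsNBNichtGlattAlternativformGlgsSystem} hold trivially. Inserting $\bar{\kappa}^{k+1}=0$ into the (vanishing) first term and using invertibility of $H_k$ gives $\tilde{g}_p^k=\zeroVector{n}$, while the vanishing of $\tilde{\alpha}_p^k$ and its definition \refh{BundleSQPmitQCQP:Alg:alphatildepk} yield $\tilde{s}_p^k=0$ (as $\gamma_1>0$). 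It therefore only remains to verify the first KKT relation in the reduced form $\zeroVector{n}\in\partial f(x_k)$.

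For this I would invoke the representation \refh{BundleSQPmitQCQP:GlobaleKonvergenzSatz:Lemma3.1:22:1COMPACT} of Proposition \ref{BundleSQPmitQCQP:GlobaleKonvergenzSatz:Lemma3.1}, which expresses $(\tilde{g}_p^k,\tilde{s}_p^k)=\sum_{j=1}^k\hat{\lambda}_j^k(g_j^k,s_j^k)$ with $\hat{\lambda}_j^k\ge0$ and $\sum_j\hat{\lambda}_j^k=1$, recalling from \refh{BundleSQP:Def:gjkCOMPACT} that $g_j^k=g_j+\rho_jG_j(x_k-y_j)$. I would then apply Proposition \ref{Luksan:GlobaleKonvergenzSatz:Lemma3.2} with the identifications $\bar{x}=x_k$, $\bar{q}=\tilde{g}_p^k=\zeroVector{n}$, $\bar{y}_j=y_j$, $\bar{g}_j=g_j\in\partial f(y_j)$, $\widebar{G}_j=\rho_jG_j$, $\bar{s}_j=s_j^k$ and $\bar{\lambda}_j=\hat{\lambda}_j^k$: the first component of its hypothesis is then $\sum_j\hat{\lambda}_j^kg_j^k=\tilde{g}_p^k$, the second component is $\sum_j\hat{\lambda}_j^ks_j^k=\tilde{s}_p^k=0$, and the bound $\lvert\bar{y}_j-\bar{x}\rvert\le\bar{s}_j$ is exactly the locality-measure inequality $\lvert y_j-x_k\rvert\le s_j^k$ from \refh{Luksan:Satz:LokalitaetsmassMonotonCOMPACT}. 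Proposition \ref{Luksan:GlobaleKonvergenzSatz:Lemma3.2} then delivers $\zeroVector{n}=\tilde{g}_p^k\in\partial f(x_k)$, so all three conditions of \refh{AOB:Satz:KarushJohnUnglgsNBNichtGlattAlternativformGlgsSystem} hold at $(x_k,\bar{\kappa}^{k+1})$ with $\bar{\kappa}^{k+1}=0$.

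The only genuinely delicate point I anticipate is recognizing that strict feasibility of the accepted iterates collapses the constraint multiplier to $\bar{\kappa}^{k+1}=0$, so that stationarity for \refh{BundleSQP:OptProblem} reduces to the unconstrained inclusion $\zeroVector{n}\in\partial f(x_k)$; everything else is bookkeeping, namely splitting $w_k$ into its non-negative pieces and matching the convex-combination structure produced by Proposition \ref{BundleSQPmitQCQP:GlobaleKonvergenzSatz:Lemma3.1} to the hypotheses of Proposition \ref{Luksan:GlobaleKonvergenzSatz:Lemma3.2} via the locality bound.
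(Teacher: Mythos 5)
Your proof is correct, and it follows the paper's overall skeleton (termination at step 5 plus \refh{BundleSQPmitQCQP:Satz:GlobaleKonvergenz:Lemma3.5:wkCOMPACT} gives $w_k=0$; the four non-negative summands of \refh{BundleSQPmitQCQP:Alg:wk} each vanish; Proposition \ref{BundleSQPmitQCQP:GlobaleKonvergenzSatz:Lemma3.1} supplies the convex combination fed into Proposition \ref{Luksan:GlobaleKonvergenzSatz:Lemma3.2} via the locality bound \refh{Luksan:Satz:LokalitaetsmassMonotonCOMPACT}). The one genuine divergence is your treatment of the multiplier: you use the strict feasibility $F(x_k)<0$ of every accepted iterate, guaranteed by \refh{BundleSQPmitQCQP:ZulaessigkeitsbedingungFuerIterationspunkt}, to conclude from $\bar{\kappa}^{k+1}\big(-F(x_k)\big)=0$ that $\bar{\kappa}^{k+1}=0$, which collapses the stationarity condition to the unconstrained inclusion $\zeroVector{n}\in\partial f(x_k)$ and lets you skip the second application of Proposition \ref{Luksan:GlobaleKonvergenzSatz:Lemma3.2} to $F$. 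The paper instead only invokes $F(x_k)\leq0$ at this point and carries both cases $\bar{\kappa}^{k+1}=0$ and $\bar{\kappa}^{k+1}>0$ (deriving $\tilde{\hat{s}}_p^k=0$ and $\tilde{\hat{g}}_p^k\in\partial F(x_k)$ in the latter), even though under the algorithm's strict-feasibility policy that branch is vacuous at a finite iteration. Your shortcut is valid here, and it is worth being aware that it buys nothing in the companion result, Proposition \ref{BundleSQPmitQCQP:GlobaleKonvergenz:Satz:Lemma3.6}: an accumulation point $\bar{x}$ of strictly feasible iterates may satisfy $F(\bar{x})=0$, so there the two-case analysis that the paper rehearses in Theorem \ref{AN:Theorem:Lemma3.3NB} becomes essential.
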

%%%%%%%%%%%%%%%%%%%%%%%%%%%%%%%%%%%%%%%%%%%%%%%%%%%%%%%%%%%%
\begin{proof}
%%%%%%%%%%%%%%%%%%%%%%%%%%%%%%
Since Algorithm \ref{BundleSQPmitQCQP:Alg:GesamtAlgMitQCQP} stops at iteration $k$, step 5 of the algorithm, (\ref{BundleSQPmitQCQP:GlobaleKonvergenz:epsilon=0}) and (\ref{BundleSQPmitQCQP:Satz:GlobaleKonvergenz:Lemma3.5:wkCOMPACT}) imply $w_k=0$ which is equivalent to
\begin{equation}
\tfrac{1}{2}\lvert H_k(\tilde{g}_p^k+\bar{\kappa}^{k+1}\tilde{\hat{g}}_p^k)\rvert^2=0
~~\wedge~~\tilde{\alpha}_p^k=0
~~\wedge~~\bar{\kappa}^{k+1}\tilde{A}_p^k=0
~~\wedge~~\bar{\kappa}^{k+1}\big(-F(x_k)\big)=0
\label{BundleSQPmitQCQP:GlobaleKonvergenz:Satz:Lemma3.3:Bew1shatGlg3}
\end{equation}
due to (\ref{BundleSQPmitQCQP:Alg:wk}), (\ref{BundleSQPmitQCQP:Alg:alphatildepk}), $\bar{\kappa}^{k+1}\geq0$ and $F(x_k)\leq0$. Using the regularity of $H_k$, (\ref{BundleSQPmitQCQP:Alg:alphatildepk}) and (\ref{BundleSQPmitQCQP:Alg:gtildepkCOMPACT}), we obtain from (\ref{BundleSQPmitQCQP:GlobaleKonvergenz:Satz:Lemma3.3:Bew1shatGlg3})
\begin{equation}
\tilde{g}_p^k+\bar{\kappa}^{k+1}\tilde{\hat{g}}_p^k=\zeroVector{N}
\label{BundleSQPmitQCQP:GlobaleKonvergenz:Satz:Lemma3.3:Bew2}
\komma\quad
\tilde{s}_p^k=0\punkt
\end{equation}
Furthermore, for $\bar{\kappa}^{k+1}>0$ we obtain from (\ref{BundleSQPmitQCQP:GlobaleKonvergenz:Satz:Lemma3.3:Bew1shatGlg3}), (\ref{BundleSQPmitQCQP:Alg:Atildepk}) and
(\ref{BundleSQPmitQCQP:Alg:ghattildepkCOMPACT}) that $\tilde{\hat{s}}_p^k=0$ and hence we have
either $\bar{\kappa}^{k+1}=0$ or $\bar{\kappa}^{k+1}>0~\wedge~\tilde{\hat{s}}_p^k=0$.
%%%%%%%%%%%%%%%%%%%%%%%%%%%%%%

We set $\bar{x}:=x_k$, $L:=k$, $\bar{y}_j:=y_j$, $\bar{s}_j:=s_j^k$. Then for $\widebar{G}_j:=\rho_jG_j$,  $\bar{g}_j:=g_j$, $\bar{\lambda}_j:=\hat{\lambda}_j^k$, and $\bar{q}:=\tilde{g}_p^k$ resp.~for $\bar{\kappa}^{k+1}>0$, $\widebar{G}_j':=\hat{\rho}_j\hat{G}_j$, $\bar{g}_j':=\hat{g}_j$, $\bar{\lambda}_j':=\hat{\kappa}_j^k$, and $\bar{q}':=\tilde{\hat{g}}_p^k$ the assumptions of Proposition \ref{Luksan:GlobaleKonvergenzSatz:Lemma3.2} are satisfied (by using roposition \ref{BundleSQPmitQCQP:GlobaleKonvergenzSatz:Lemma3.1}), and therefore we obtain
$\tilde{g}_p^k\in\partial f(x_k)$
and
$\tilde{\hat{g}}_p^k\in\partial F(x_k)$.
%%%%%%%%%%%%%%%%%%%%%%%%%%%%%%
Now, using (\ref{BundleSQPmitQCQP:GlobaleKonvergenz:Satz:Lemma3.3:Bew2}) we calculate $\zeroVector{N}\in\partial f(x_k)+\bar{\kappa}^{k+1}\partial F(x_k)$.
\qedhere
%%%%%%%%%%%%%%%%%%%%%%%%%%%%%%
\end{proof}
%%%%%%%%%%%%%%%%%%%%%%%%%%%%%%%%%%%%%%%%%%%%%%%%%%%%%%%%%%%%%%%%%%%%%%%%%%%%%%%%%%%%%%%%%%%%%%%%%%%%%%%%%%%%%%%%%%%%%%%%
From now on, we demand that Algorithm \ref{BundleSQPmitQCQP:Alg:GesamtAlgMitQCQP} does not stop, i.e.~according to step 5 of Algorithm \ref{BundleSQPmitQCQP:Alg:GesamtAlgMitQCQP} and (\ref{BundleSQPmitQCQP:GlobaleKonvergenz:epsilon=0}) we have for all $k$
\begin{equation}
w_k>0
\punkt
\label{BundleSQPmitQCQP:GlobaleKonvergenz:Vor:AlgTerminiertNicht}
\end{equation}
%%%%%%%%%%%%%%%%%%%%%%%%%%%%%%%%%%%%%%%%%%%%%%%%%%%%%%%%%%%%%%%%%%%%%%%%%%%%%%%%%%%%%%%%%%%%%%%%%%%%%%%%%%%%%%%%%%%%%%%%
\cbstartDVI
%%%%%%%%%%%%%%%%%%%%%%%%%%%%%%%%%%%%%%%%%%%%%%%%%%%%%%%%%%%%%%%%%%%%%%%%%%%%%%%%%%%%%%%%%%%%%%%%%%%%%%%%%%%%%%%%%%%%%%%%
We summarize some properties of positive (semi)definite matrices.
%%%%%%%%%%%%%%%%%%%%%%%%%%%%%%%%%%%%%%%%%%%%%%%%%%%%%%%%%%%%%%%%%%%%%%%%%%%%%%%%%%%%%%%%%%%%%%%%%%%%%%%%%%%%%%%%%%%%%%%%
\begin{proposition}
Let $A,B\in\Sym{N}$ with $B$ positive semidefinite, then
\begin{equation}
A\preceq A+B
\punkt
\label{HornJohnson:Erweiterung:PositivDefinitSummeAbschaetzung}
\end{equation}
If $A$ and $B$ are even positive definite,
%%%%%%%%%%%%%%%%%%%%%%%%%%%%%%
then
\begin{equation}
\lvert A^{\mathalf}-B^{\mathalf}\rvert
\leq
\tfrac{1}{
(\lambdamin(A))^{\mathalf}+(\lambdamin(B))^{\mathalf}
}
\lvert A-B\rvert
\komma
\label{KonvergenzErweiterung:Satz:Higham}
\end{equation}
%%%%%%%%%%%%%%%%%%%%%%%%%%%%%%
and if additionally $A\preceq B$ holds, then
\begin{equation}
\lvert B^{-1}\rvert
\leq
\lvert A^{-1}\rvert
\punkt
\label{HornJohnson:Erweiterung:InverseSpektralNormAbschaetzung}
\end{equation}
%%%%%%%%%%%%%%%%%%%%%%%%%%%%%%
\end{proposition}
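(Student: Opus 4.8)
The plan is to treat the three assertions separately, since only the middle one requires genuine work. For \refh{HornJohnson:Erweiterung:PositivDefinitSummeAbschaetzung} I would simply unwind the definition of the Loewner order: $A\preceq A+B$ means exactly that $(A+B)-A=B$ is positive semidefinite, which is the hypothesis, so there is nothing further to prove.

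For the square-root estimate \refh{KonvergenzErweiterung:Satz:Higham} I would follow Higham's argument via a Sylvester equation. Put $X:=A^{\mathalf}$ and $Y:=B^{\mathalf}$ (the unique positive definite square roots) and set $E:=X-Y$. From $X^2=A$, $Y^2=B$ and the identity $X^2-Y^2=XE+EY$ one sees that $E$ solves $XE+EY=A-B$. Because $X$ and $Y$ are positive definite, their spectra lie in $(0,\infty)$, so the Sylvester operator $E\mapsto XE+EY$ is invertible and its solution has the integral representation $E=\int_0^\infty e^{-tX}(A-B)e^{-tY}\,\diff t$, which one verifies by differentiating $t\mapsto e^{-tX}(A-B)e^{-tY}$ and integrating over $[0,\infty)$, the boundary term at infinity vanishing by positive definiteness. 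Taking spectral norms under the integral and using $\lvert e^{-tX}\rvert=e^{-t\lambdamin(X)}$ for symmetric positive definite $X$ (and likewise for $Y$), together with $\lambdamin(X)=\sqrt{\lambdamin(A)}$ and $\lambdamin(Y)=\sqrt{\lambdamin(B)}$, reduces the estimate to the scalar integral $\int_0^\infty e^{-t(\sqrt{\lambdamin(A)}+\sqrt{\lambdamin(B)})}\,\diff t=1/\big(\sqrt{\lambdamin(A)}+\sqrt{\lambdamin(B)}\big)$, which is exactly \refh{KonvergenzErweiterung:Satz:Higham}.

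For \refh{HornJohnson:Erweiterung:InverseSpektralNormAbschaetzung}, assuming additionally $A\preceq B$, I would argue through eigenvalues. By the Rayleigh-quotient characterization, $A\preceq B$ yields $\lambdamin(A)=\min_{\lvert x\rvert=1}x^TAx\leq\min_{\lvert x\rvert=1}x^TBx=\lambdamin(B)$. Since $A^{-1}$ and $B^{-1}$ are positive definite with largest eigenvalues $1/\lambdamin(A)$ and $1/\lambdamin(B)$ respectively, and since the spectral norm of a positive definite matrix is monotone increasing in its eigenvalues, the inequality $\lambdamin(A)\leq\lambdamin(B)$ gives $\lvert B^{-1}\rvert\leq\lvert A^{-1}\rvert$.

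The main obstacle is the middle estimate: the delicate points are justifying the integral representation of the Sylvester-equation solution and invoking the sharp operator-exponential bound $\lvert e^{-tX}\rvert=e^{-t\lambdamin(X)}$, which together produce precisely the denominator $\sqrt{\lambdamin(A)}+\sqrt{\lambdamin(B)}$ rather than a coarser constant. The first and third parts are essentially definitional once the monotonicity facts are recorded.
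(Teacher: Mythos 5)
Your proposal is correct, but it takes a more self-contained route than the paper. The paper disposes of the first claim as immediate, quotes the square-root perturbation bound directly from Higham (\emph{Functions of Matrices}, Theorem~6.2), and derives the inverse-norm estimate from the anti-monotonicity of matrix inversion ($A\preceq B$ if and only if $B^{-1}\preceq A^{-1}$) together with the eigenvalue monotonicity of the Loewner order, both cited from Horn--Johnson. You instead reprove the Higham bound from scratch: the identity $XE+EY=A-B$ for $X=A^{\mathalf}$, $Y=B^{\mathalf}$, $E=X-Y$, the integral representation $E=\int_0^\infty e^{-tX}(A-B)e^{-tY}\,\diff t$ of the Sylvester solution, and the exact bound $\lvert e^{-tX}\rvert=e^{-t\lambdamin(X)}$ give precisely the denominator $(\lambdamin(A))^{\mathalf}+(\lambdamin(B))^{\mathalf}$; this is in substance the standard proof of the cited theorem, and the steps you flag as delicate (invertibility of the Sylvester operator for positive definite coefficients, vanishing of the boundary term at infinity, $\lambdamin(A^{\mathalf})=(\lambdamin(A))^{\mathalf}$) are all sound. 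For the third claim your Rayleigh-quotient argument $\lambdamin(A)\le\lambdamin(B)$ combined with $\lvert B^{-1}\rvert=1/\lambdamin(B)$ is slightly more elementary than the paper's, since it avoids the deeper fact that inversion reverses the Loewner order. What the paper's approach buys is brevity; what yours buys is independence from the two external references.
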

%%%%%%%%%%%%%%%%%%%%%%%%%%%%%%%%%%%%%%%%%%%%%%%%%%%%%%%%%%%%
\begin{proof}
%%%%%%%%%%%%%%%%%%%%%%%%%%%%%%
(\ref{HornJohnson:Erweiterung:PositivDefinitSummeAbschaetzung}) is clear.
%%%%%%%%%%%%%%%%%%%%%%%%%%%%%%
(\ref{KonvergenzErweiterung:Satz:Higham}) holds due to \citet[p.~135,~Theorem~6.2]{HighamFunctionsOfMatrices}.
%im djvu-File zum direkten Hinspringen: S.156
%%%%%%%%%%%%%%%%%%%%%%%%%%%%%%
Since $B$ is positive definite due to assumption, $B^{-1}$ is positive definite and since all eigenvalues of a positive definite matrix are positive,
%%%PaperB%%%(cf.~Proposition \ref{GolubVanLoan:Satz:PositivDefiniteMatrixEigenwerte}),
we obtain (\ref{HornJohnson:Erweiterung:InverseSpektralNormAbschaetzung}) due to
the fact that $A\preceq B~\Longleftrightarrow~B^{-1}\preceq A^{-1}$
(cf.~\citet[p.~471,~Corollary~7.7.4(a)]{HornJohnsonMatrixAnalysis}),
%im djvu-File zum direkten Hinspringen: S.242
the fact that
$A\preceq B$ implies
$\lambda_i(A)
\leq
\lambda_i(B)$
for all $i=1,\dots,N$
(cf.~\citet[p.~471,~Corollary~7.7.4(c)]{HornJohnsonMatrixAnalysis})
%im djvu-File zum direkten Hinspringen: S.242
and
(\ref{GolubVanLoan:Satz:SpektralNormVonPositivDefiniterMatrix}).
\qedhere
%%%%%%%%%%%%%%%%%%%%%%%%%%%%%%
\end{proof}
%%%%%%%%%%%%%%%%%%%%%%%%%%%%%%%%%%%%%%%%%%%%%%%%%%%%%%%%%%%%%%%%%%%%%%%%%%%%%%%%%%%%%%%%%%%%%%%%%%%%%%%%%%%%%%%%%%%%%%%%
\begin{proposition}
Let $\lbrace A_k\rbrace$ be a sequence of positive definite matrices $A_k\in\Sym{N}$. Then
\begin{equation}
\lbrace A_k\rbrace\textnormal{ is bounded}
~\Longleftrightarrow~
\lbrace A_k^{\mathalf}\rbrace\textnormal{ is bounded}
\label{ADDON:Satz:AkBeschraenktGdwAkRootBeschraenkt}
\end{equation}
and
\begin{equation}
\lbrace A_k\rbrace\textnormal{ is uniformly positive definite}
~\Longleftrightarrow~
\lbrace A_k^{-1}\rbrace\textnormal{ is bounded}
\punkt
\label{ADDON:Satz:AkGlmpdGdwAkInvereseBeschraenkt}
\end{equation}
\end{proposition}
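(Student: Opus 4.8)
The plan is to reduce both equivalences to elementary statements about the extreme eigenvalues of the symmetric positive definite matrices $A_k$, using the spectral decomposition together with (\ref{GolubVanLoan:Satz:SpektralNormVonPositivDefiniterMatrix}). For each $k$ I would write $A_k=Q_k\Lambda_kQ_k^T$ with $Q_k$ orthogonal and $\Lambda_k$ the positive diagonal matrix of eigenvalues $\lambda_i(A_k)$. Then $A_k^{\mathalf}=Q_k\Lambda_k^{\mathalf}Q_k^T$ and $A_k^{-1}=Q_k\Lambda_k^{-1}Q_k^T$ are diagonalised by the same $Q_k$, with eigenvalues $\lambda_i(A_k)^{\mathalf}$ and $\lambda_i(A_k)^{-1}$, respectively. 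In particular $\lambdamax(A_k^{\mathalf})=\lambdamax(A_k)^{\mathalf}$ and $\lambdamax(A_k^{-1})=\lambdamin(A_k)^{-1}$, and all three matrices are positive definite, so (\ref{GolubVanLoan:Satz:SpektralNormVonPositivDefiniterMatrix}) applies to each of them.

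For (\ref{ADDON:Satz:AkBeschraenktGdwAkRootBeschraenkt}) I would first derive the identity $\lvert A_k^{\mathalf}\rvert=\lvert A_k\rvert^{\mathalf}$ by applying (\ref{GolubVanLoan:Satz:SpektralNormVonPositivDefiniterMatrix}) both to $A_k$ and to $A_k^{\mathalf}$ and inserting $\lambdamax(A_k^{\mathalf})=\lambdamax(A_k)^{\mathalf}$. Since $t\mapsto t^{\mathalf}$ and $t\mapsto t^2$ are increasing bijections of $\Rpos$, taking the supremum over $k$ on both sides gives $\sup_k\lvert A_k\rvert<\infty$ if and only if $\sup_k\lvert A_k^{\mathalf}\rvert<\infty$, which is exactly the asserted equivalence.

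For (\ref{ADDON:Satz:AkGlmpdGdwAkInvereseBeschraenkt}) I would unfold uniform positive definiteness as the existence of $c>0$ with $A_k\succeq cI$ for all $k$, i.e. $\inf_k\lambdamin(A_k)>0$. Combining $\lambdamax(A_k^{-1})=\lambdamin(A_k)^{-1}$ with (\ref{GolubVanLoan:Satz:SpektralNormVonPositivDefiniterMatrix}) shows that $\lvert A_k^{-1}\rvert$ is a decreasing function of $\lambdamin(A_k)$ that tends to $+\infty$ as $\lambdamin(A_k)\downarrow0$. Concretely, for the direction $\Rightarrow$ a uniform lower bound $\lambdamin(A_k)\geq c$ produces a uniform upper bound on $\lvert A_k^{-1}\rvert$; for $\Leftarrow$ I would argue by contraposition, noting that if $\lambdamin(A_{k_i})\to0$ along some subsequence, then $\lvert A_{k_i}^{-1}\rvert\to\infty$, contradicting boundedness of $\lbrace A_k^{-1}\rbrace$.

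The argument is essentially bookkeeping, so I do not expect a genuine obstacle; the one point requiring care is that the sequence carries no convergent subsequence and no common eigenbasis, so every estimate must be an eigenvalue bound uniform in $k$ rather than a limiting argument for a single fixed matrix. A minor robustness check is that the relations $\lvert A_k^{\mathalf}\rvert=\lvert A_k\rvert^{\mathalf}$ and the monotone dependence of $\lvert A_k^{-1}\rvert$ on $\lambdamin(A_k)$ are obtained by applying (\ref{GolubVanLoan:Satz:SpektralNormVonPositivDefiniterMatrix}) to the positive definite matrices $A_k^{\mathalf}$ and $A_k^{-1}$ themselves, so the passage from $A_k$ to its square root and its inverse is fully justified.
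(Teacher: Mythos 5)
Your proposal is correct and follows essentially the same route as the paper: an eigenvalue decomposition $A_k=Q_k^T\Xi_kQ_k$, the observation that $A_k^{\mathalf}$ and $A_k^{-1}$ share the eigenbasis with eigenvalues $\lambda_i(A_k)^{\mathalf}$ and $\lambda_i(A_k)^{-1}$, and the norm formula \refh{GolubVanLoan:Satz:SpektralNormVonPositivDefiniterMatrix} to turn uniform eigenvalue bounds into uniform norm bounds. The only difference is that you spell out both directions of \refh{ADDON:Satz:AkGlmpdGdwAkInvereseBeschraenkt} explicitly, which the paper leaves as ``follows directly''.
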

%%%%%%%%%%%%%%%%%%%%%%%%%%%%%%%%%%%%%%%%%%%%%%%%%%%%%%%%%%%%
\begin{proof}
%%%%%%%%%%%%%%%%%%%%%%%%%%%%%%
Since $A_k\in\Sym{N}$ is positive definite due to assumption, there exists an eigenvalue decomposition $A_k=Q_k^T\Xi_kQ_k$
with $Q_k\in\mathbb{R}^{N\times N}$ orthogonal and a diagonal matrix $\Xi_k\in\mathbb{R}^{N\times N}$ with positive diagonal elements %%%PaperB%%%(cf.~Proposition \ref{GolubVanLoan:Satz:PositivDefiniteMatrixEigenwerte})
and we define $\mu_k:=\lambdamax(\Xi_k)$.
Since (\ref{GolubVanLoan:Satz:SpektralNormVonPositivDefiniterMatrix}) implies $\lvert A_k\rvert=\mu_k^{\mathalf}$
and
since $A_k^{\mathalf}=Q_k^T\Xi_k^{\mathalf}Q_k$ implies $\lvert A_k^{\mathalf}\rvert=\mu_k^{\matquarter}$, we obtain (\ref{ADDON:Satz:AkBeschraenktGdwAkRootBeschraenkt}).
%%%%%%%%%%%%%%%%%%%%%%%%%%%%%%

(\ref{ADDON:Satz:AkGlmpdGdwAkInvereseBeschraenkt}) follows directly from the assumption of the uniform positive definiteness of $\lbrace A_k\rbrace$ and (\ref{GolubVanLoan:Satz:SpektralNormVonPositivDefiniterMatrix}).
\qedhere
%%%%%%%%%%%%%%%%%%%%%%%%%%%%%%
\end{proof}
%%%%%%%%%%%%%%%%%%%%%%%%%%%%%%%%%%%%%%%%%%%%%%%%%%%%%%%%%%%%%%%%%%%%%%%%%%%%%%%%%%%%%%%%%%%%%%%%%%%%%%%%%%%%%%%%%%%%%%%%
\begin{corollary}
\label{ADDON:Korollar:WpkUndHkBeschraenktheitUndPositivDefinitheit}
%%%PAPER%%%\begin{itemize}
%%%%%%%%%%%%%%%%%%%%%%%%%%%%%%
%%%PAPER%%%\item
If $\lbrace(\widebar{W}_p^k)^{-\mathalf}\rbrace$ is bounded, then $\lbrace(\widebar{W}_p^k)^{-1}\rbrace$ and $\lbrace H_k\rbrace$ are bounded
\begin{equation}
\lvert(\widebar{W}_p^k)^{-1}\rvert
\leq
%\eta_0^{-\mathalf}
\WpInverseBound
\label{KonvergenzErweiterung:Satz:InverseVonWkBeschraenkt}
\end{equation}
for all $k\geq1$ with some positive constant $\WpInverseBound>0$.
%%%%%%%%%%%%%%%%%%%%%%%%%%%%%%
%%%PAPER%%%\item

If $\lbrace\bar{\kappa}^{k+1}\rbrace$ is bounded and $\lbrace(\widebar{W}_p^k)^{-\mathalf}\rbrace$ is uniformly positive definite, then $\lbrace H_k^{-1}\rbrace$ is bounded and
\cbstartMIFFLIN
\begin{equation}
%MIFFLIN%\lvert\widebar{W}_p^k+\bar{\kappa}^{k+1}\widebar{\hat{G}}^k\rvert
\lvert\widebar{W}_p^k
+\widebar{G}^k
+\bar{\kappa}^{k+1}\widebar{\hat{G}}^k\rvert
\leq
C_1
\komma
\label{KonvergenzErweiterung:Satz:MatrixAbschaetzungMitHigham:MatritzenBeschraenktheit}
\end{equation}
\cbendMIFFLIN
for all $k\geq1$ with some positive constant $C_1>0$.
%%%%%%%%%%%%%%%%%%%%%%%%%%%%%%
%%%PAPER%%%\end{itemize}
\end{corollary}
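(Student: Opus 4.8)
The plan is to treat the two assertions separately, exploiting the asymmetry that the first bounds $H_k$ from above with no control on $\bar{\kappa}^{k+1}$, whereas the second bounds $H_k^{-1}$ from above and for that uses that $\bar{\kappa}^{k+1}$ stays bounded. Throughout I abbreviate the correction term $S_k := \sum_{j\in J_k}\lambda_j^k\widebar{G}_j^k + \lambda_p^k\widebar{G}^k + \mu_j^k\widebar{\hat{G}}_j^k + \mu_p^k\widebar{\hat{G}}^k$, so that $H_k^{-2} = \widebar{W}_p^k + S_k$ by (\ref{BundleSQPmitQCQP:Alg:Hk}); here $S_k \succeq 0$ since each modified Hessian substitute is positive definite and each multiplier is nonnegative, and $\widebar{W}_p^k$ is positive definite so $H_k^{-2}$ is positive definite and $H_k$ is its positive definite inverse square root.

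For the first assertion I would first read off the bound on $\{(\widebar{W}_p^k)^{-1}\}$ directly: the matrix $(\widebar{W}_p^k)^{-1}$ is positive definite with square root $(\widebar{W}_p^k)^{-1/2}$, so the assumed boundedness of $\{(\widebar{W}_p^k)^{-1/2}\}$ gives boundedness of $\{(\widebar{W}_p^k)^{-1}\}$ by (\ref{ADDON:Satz:AkBeschraenktGdwAkRootBeschraenkt}), yielding (\ref{KonvergenzErweiterung:Satz:InverseVonWkBeschraenkt}) for a suitable $\WpInverseBound$. For $\{H_k\}$ the point is that $H_k^{-2}$ itself cannot be bounded above, because the $\mu$-part of $S_k$ is not controlled without a bound on $\bar{\kappa}^{k+1}$; instead I use the Loewner order. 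From $S_k \succeq 0$ and (\ref{HornJohnson:Erweiterung:PositivDefinitSummeAbschaetzung}) we obtain $\widebar{W}_p^k \preceq \widebar{W}_p^k + S_k = H_k^{-2}$, and the order-reversing inverse inequality (\ref{HornJohnson:Erweiterung:InverseSpektralNormAbschaetzung}) gives $\lvert H_k^2\rvert = \lvert(H_k^{-2})^{-1}\rvert \leq \lvert(\widebar{W}_p^k)^{-1}\rvert \leq \WpInverseBound$. Boundedness of $\{H_k^2\}$ then transfers to $\{H_k\}$ once more via (\ref{ADDON:Satz:AkBeschraenktGdwAkRootBeschraenkt}).

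For the second assertion I would first reinterpret the hypothesis: $\{(\widebar{W}_p^k)^{-1/2}\}$ uniformly positive definite is, by (\ref{ADDON:Satz:AkGlmpdGdwAkInvereseBeschraenkt}), the same as boundedness of $\{(\widebar{W}_p^k)^{1/2}\}$, which by (\ref{ADDON:Satz:AkBeschraenktGdwAkRootBeschraenkt}) is the same as boundedness of $\{\widebar{W}_p^k\}$. Then (\ref{KonvergenzErweiterung:Satz:MatrixAbschaetzungMitHigham:MatritzenBeschraenktheit}) follows from the triangle inequality together with $\lvert\widebar{G}^k\rvert \leq \bar{C}_G$, $\lvert\widebar{\hat{G}}^k\rvert \leq \bar{\hat{C}}_G$ (initialization) and the boundedness of $\{\bar{\kappa}^{k+1}\}$. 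For $\{H_k^{-1}\}$ I would bound $H_k^{-2} = \widebar{W}_p^k + S_k$ from above: using the same per-matrix bounds on $\widebar{G}_j^k,\widebar{G}^k,\widebar{\hat{G}}_j^k,\widebar{\hat{G}}^k$ and the normalizations $\sum_{j\in J_k}\lambda_j^k + \lambda_p^k = 1$ and $\sum_{j\in J_k}\mu_j^k + \mu_p^k = \bar{\kappa}^{k+1}$ (cf.~(\ref{BundleSQPmitQCQP:Alg:Defkappabark+1})), one gets $\lvert S_k\rvert \leq \bar{C}_G + \bar{\hat{C}}_G\,\bar{\kappa}^{k+1}$, so $\{H_k^{-2}\}$ is bounded and $\{H_k^{-1}\}$ follows again by (\ref{ADDON:Satz:AkBeschraenktGdwAkRootBeschraenkt}).

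The only genuinely delicate point is recognizing the asymmetric handling of the two directions: in the first part the upper bound on $H_k$ must be routed through the order-reversing inverse inequality, precisely because $H_k^{-2}$ is uncontrolled when $\bar{\kappa}^{k+1}$ may diverge, while in the second part the direct triangle-inequality bound on $H_k^{-2}$ succeeds exactly because the multiplier sums are normalized to $1$ and to $\bar{\kappa}^{k+1}$. The remaining steps are routine invocations of the cited boundedness and square-root equivalences.
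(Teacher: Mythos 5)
Your proposal is correct and follows essentially the same route as the paper's own proof: the same square-root/inverse equivalences (\ref{ADDON:Satz:AkBeschraenktGdwAkRootBeschraenkt}) and (\ref{ADDON:Satz:AkGlmpdGdwAkInvereseBeschraenkt}), the Loewner comparison $\widebar{W}_p^k\preceq H_k^{-2}$ with (\ref{HornJohnson:Erweiterung:InverseSpektralNormAbschaetzung}) for the first part, and the triangle-inequality bound on $H_k^{-2}$ using the multiplier normalizations and the constants $\bar{C}_G$, $\bar{\hat{C}}_G$, $\kappaBound$ for the second. Your explicit remark on why the two parts must be handled asymmetrically is a nice clarification, but the underlying argument is the paper's.
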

%%%%%%%%%%%%%%%%%%%%%%%%%%%%%%%%%%%%%%%%%%%%%%%%%%%%%%%%%%%%
\begin{proof}
%%%%%%%%%%%%%%%%%%%%%%%%%%%%%%
Since $(\widebar{W}_p^k)^{-\mathalf}=\big((\widebar{W}_p^k)^{-1}\big)^{\mathalf}$ is bounded due to assumption, $\lbrace(\widebar{W}_p^k)^{-1}\rbrace$ is bounded due to (\ref{ADDON:Satz:AkBeschraenktGdwAkRootBeschraenkt}) and therefore (\ref{KonvergenzErweiterung:Satz:InverseVonWkBeschraenkt}) holds with some positive constant $\WpInverseBound>0$, which is equivalent to the uniform positive definiteness of $\lbrace\widebar{W}_p^k\rbrace$ due to (\ref{ADDON:Satz:AkGlmpdGdwAkInvereseBeschraenkt}). Since $\widebar{W}_p^k\preceq H_k^{-2}$ for all
%MIFFLIN%$\mu^k\geq\zeroVector{\lvert J_k\rvert+1}$
\cbstartMIFFLIN
$(\lambda^k,\mu^k)\geq\zeroVector{2(\lvert J_k\rvert+1)}$ with $\sum_{j\in J_k}{\lambda_j^k}+\lambda_p^k=1$
\cbendMIFFLIN
due to (\ref{BundleSQPmitQCQP:Alg:Hk}), we obtain $\lvert H_k^{2}\rvert\leq\WpInverseBound$ due to (\ref{HornJohnson:Erweiterung:InverseSpektralNormAbschaetzung}) and (\ref{KonvergenzErweiterung:Satz:InverseVonWkBeschraenkt}), which is equivalent that $\lbrace H_k\rbrace$ is bounded due to (\ref{ADDON:Satz:AkBeschraenktGdwAkRootBeschraenkt}).
%%%%%%%%%%%%%%%%%%%%%%%%%%%%%%

Since $\lbrace\bar{\kappa}^{k+1}\rbrace$ is bounded due to assumption, there exists a positive constant $\kappaBound>0$ with $\bar{\kappa}^{k+1}\leq\kappaBound$ for all $k\geq1$ (note that $\bar{\kappa}^{k+1}\geq0$ due to (\ref{BundleSQPmitQCQP:Alg:Defkappabark+1})). Since $\lbrace(\widebar{W}_p^k)^{-\mathalf}\rbrace$ is uniformly positive definite due to assumption, $\lbrace(\widebar{W}_p^k)^{\mathalf}\rbrace$ is bounded due to (\ref{ADDON:Satz:AkGlmpdGdwAkInvereseBeschraenkt}), which is equivalent to $\lbrace\widebar{W}_p^k\rbrace$ being bounded due to (\ref{ADDON:Satz:AkBeschraenktGdwAkRootBeschraenkt}), i.e.~$\lvert\widebar{W}_p^k\rvert\leq\WpBound$ for some positive constant $\WpBound>0$ and for all $k\geq1$. Therefore, we obtain the boundedness of
%MIFFLIN%$\lvert H_k^{-2}\rvert\leq\WpBound+\kappaBound\bar{\hat{C}}_G$
\cbstartMIFFLIN
$\lvert H_k^{-2}\rvert\leq\WpBound+\bar{C}_G+\kappaBound\bar{\hat{C}}_G$
\cbendMIFFLIN
due to (\ref{BundleSQPmitQCQP:Alg:Hk}), (\ref{BundleSQPmitQCQP:Alg:Defkappabark+1}) and the initialization of Algorithm \ref{BundleSQPmitQCQP:Alg:GesamtAlgMitQCQP}, which is equivalent to $\lbrace H_k^{-1}\rbrace$ being bounded due to (\ref{ADDON:Satz:AkBeschraenktGdwAkRootBeschraenkt}). Furthermore, setting
%MIFFLIN%$C_1:=\WpBound+\kappaBound\bar{\hat{C}}_G$
$C_1:=\WpBound+\bar{C}_G+\kappaBound\bar{\hat{C}}_G$
yields (\ref{KonvergenzErweiterung:Satz:MatrixAbschaetzungMitHigham:MatritzenBeschraenktheit}) due to (\ref{BundleSQPmitQCQP:Alg:Defkappabark+1}) and the initialization of Algorithm \ref{BundleSQPmitQCQP:Alg:GesamtAlgMitQCQP}.
\qedhere
%%%%%%%%%%%%%%%%%%%%%%%%%%%%%%
\end{proof}
%%%%%%%%%%%%%%%%%%%%%%%%%%%%%%%%%%%%%%%%%%%%%%%%%%%%%%%%%%%%%%%%%%%%%%%%%%%%%%%%%%%%%%%%%%%%%%%%%%%%%%%%%%%%%%%%%%%%%%%%
\cbendDVI
%%%%%%%%%%%%%%%%%%%%%%%%%%%%%%%%%%%%%%%%%%%%%%%%%%%%%%%%%%%%%%%%%%%%%%%%%%%%%%%%%%%%%%%%%%%%%%%%%%%%%%%%%%%%%%%%%%%%%%%%
From now on let the following assumption be satisfied.
%%%%%%%%%%%%%%%%%%%%%%%%%%%%%%%%%%%%%%%%%%%%%%%%%%%%%%%%%%%%%%%%%%%%%%%%%%%%%%%%%%%%%%%%%%%%%%%%%%%%%%%%%%%%%%%%%%%%%%%%
\begin{presumption}
\label{COMPACT:presumptionForAuxiliaryTheorem:Assumption}
Let \refh{BundleSQPmitQCQP:GlobaleKonvergenz:Vor:AlgTerminiertNicht} be satisfied. Furthermore, let $\lbrace(x_k,\bar{\kappa}^{k+1})\rbrace$ be bounded and assume there exists $\bar{x}\in\mathbb{R}^N$ with $\sigma(\bar{x})=0$, where $\sigma:\mathbb{R}^N\longrightarrow\mathbb{R}$
\begin{equation}
\sigma(x):=\liminf_{k\rightarrow\infty}\max{(w_k,\lvert x_k-x\rvert)}
\punkt
\label{BundleSQPmitQCQP:GlobaleKonvergenz:Def:sigma}
\end{equation}
Moreover, let $\lbrace(\widebar{W}_p^k)^{-\mathalf}\rbrace$ be uniformly positive definite.
\end{presumption}
%%%%%%%%%%%%%%%%%%%%%%%%%%%%%%%%%%%%%%%%%%%%%%%%%%%%%%%%%%%%%%%%%%%%%%%%%%%%%%%%%%%%%%%%%%%%%%%%%%%%%%%%%%%%%%%%%%%%%%%%
Next, we present Lemma \ref{COMPACT:lemmaForAuxiliaryTheorem:ConvergenceOfBasicSequences}--\ref{BundleSQPmitQCQP:GlobaleKonvergenz:Satz:Lemma3.6:BewqinSubdifferentialVonf}, which we need for proving Proposition \ref{AN:proposition:Lemma3.5(ii)}.
%%%%%%%%%%%%%%%%%%%%%%%%%%%%%%%%%%%%%%%%%%%%%%%%%%%%%%%%%%%%%%%%%%%%%%%%%%%%%%%%%%%%%%%%%%%%%%%%%%%%%%%%%%%%%%%%%%%%%%%%
\begin{lemma}[Convergence of basic sequences]
\label{COMPACT:lemmaForAuxiliaryTheorem:ConvergenceOfBasicSequences}
There exist $K\subset\hat{K}\subset\lbrace1,2,\dots,\rbrace$ and $\bar{\kappa}\in\mathbb{R}$ such that
\begin{align}
x_k&\xrightarrow{\hat{K}}\bar{x}\komma\quad
w_k\xrightarrow{\hat{K}}0      \komma
\label{BundleSQPmitQCQP:GlobaleKonvergenz:Satz:Lemma3.6:BewxkANDwkKhat}
\\
\bar{\kappa}^{k+1}&\xrightarrow{K}\bar{\kappa}\komma
\label{BundleSQPmitQCQP:GlobaleKonvergenz:Satz:Lemma3.6:Bewkappabark+1}
\\
x_k&\xrightarrow{K}\bar{x}\komma\quad
w_k\xrightarrow{K}0       \punkt
\label{BundleSQPmitQCQP:GlobaleKonvergenz:Satz:Lemma3.6:BewxkCOMPACT}
\end{align}
\end{lemma}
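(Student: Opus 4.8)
The plan is to read off the required subsequences directly from the hypothesis $\sigma(\bar x)=0$ and then thin them out once more using boundedness. First I would unwind the definition \refh{BundleSQPmitQCQP:GlobaleKonvergenz:Def:sigma}: since
\[
\sigma(\bar x)=\liminf_{k\rightarrow\infty}\max{(w_k,\lvert x_k-\bar x\rvert)}=0
\]
and the sequence $\lbrace\max{(w_k,\lvert x_k-\bar x\rvert)}\rbrace$ is non-negative, the value $0$ is actually attained as a limit inferior, so there is an infinite index set $\hat K\subseteq\lbrace1,2,\dots\rbrace$ along which $\max{(w_k,\lvert x_k-\bar x\rvert)}\xrightarrow{\hat K}0$. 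Because $0\leq w_k\leq\max{(w_k,\lvert x_k-\bar x\rvert)}$ and $0\leq\lvert x_k-\bar x\rvert\leq\max{(w_k,\lvert x_k-\bar x\rvert)}$, a squeeze argument immediately yields both $w_k\xrightarrow{\hat K}0$ and $x_k\xrightarrow{\hat K}\bar x$, which is exactly \refh{BundleSQPmitQCQP:GlobaleKonvergenz:Satz:Lemma3.6:BewxkANDwkKhat}.

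Next I would pass to a further subsequence to force $\bar\kappa^{k+1}$ to converge. By Assumption \refH{COMPACT:presumptionForAuxiliaryTheorem:Assumption} the sequence $\lbrace(x_k,\bar\kappa^{k+1})\rbrace$ is bounded, so in particular $\lbrace\bar\kappa^{k+1}\rbrace$ is a bounded sequence of reals; restricting it to the indices in $\hat K$ keeps it bounded. Hence the Bolzano--Weierstrass theorem supplies an infinite subset $K\subseteq\hat K$ together with a limit $\bar\kappa\in\mathbb R$ such that $\bar\kappa^{k+1}\xrightarrow{K}\bar\kappa$, establishing \refh{BundleSQPmitQCQP:GlobaleKonvergenz:Satz:Lemma3.6:Bewkappabark+1}. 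Finally, since $K\subseteq\hat K$, the two convergences obtained in the first step persist along the thinner index set $K$, giving $x_k\xrightarrow{K}\bar x$ and $w_k\xrightarrow{K}0$, i.e.~\refh{BundleSQPmitQCQP:GlobaleKonvergenz:Satz:Lemma3.6:BewxkCOMPACT}.

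This is a routine nested subsequence extraction and I do not anticipate a genuine obstacle; the only two points that require a little care are reading the $\liminf$ hypothesis as producing a \emph{realizing} subsequence (and not merely a lower bound), and applying Bolzano--Weierstrass to the already-restricted sequence $\lbrace\bar\kappa^{k+1}\rbrace_{k\in\hat K}$ so that the resulting $K$ is automatically a subset of $\hat K$ --- which is precisely what makes the last inclusion $K\subseteq\hat K$, and hence the third display, cost nothing. Note that the uniform positive definiteness of $\lbrace(\widebar{W}_p^k)^{-\mathalf}\rbrace$ from Assumption \refH{COMPACT:presumptionForAuxiliaryTheorem:Assumption} plays no role in this particular lemma; it is only recorded here for use in the subsequent results.
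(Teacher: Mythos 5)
Your proposal is correct and follows essentially the same route as the paper's own proof: realize the $\liminf$ hypothesis as a subsequence $\hat{K}$ along which $\max(w_k,\lvert x_k-\bar{x}\rvert)\rightarrow 0$ (using non-negativity of $w_k$ to split this into the two stated convergences), then apply Bolzano--Weierstrass to the bounded sequence $\lbrace\bar{\kappa}^{k+1}\rbrace_{k\in\hat{K}}$ to extract $K\subset\hat{K}$, and observe that the first two convergences persist on $K$. Your closing remark that the uniform positive definiteness of $\lbrace(\widebar{W}_p^k)^{-\mathalf}\rbrace$ is not needed here is also accurate.
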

%%%%%%%%%%%%%%%%%%%%%%%%%%%%%%%%%%%%%%%%%%%%%%%%%%%%%%%%%%%%
\begin{proof}
Since we have $0=\sigma(\bar{x})=\liminf_{k\rightarrow\infty}\max{(w_k,\lvert x_k-\bar{x}\rvert)}$ due to assumption and (\ref{BundleSQPmitQCQP:GlobaleKonvergenz:Def:sigma}) and since $w_k\geq0$ due to (\ref{BundleSQPmitQCQP:Satz:GlobaleKonvergenz:Lemma3.5:wkCOMPACT}), there exist convergent subsequences of $\lbrace w_k\rbrace_{k\geq1}$ and $\lbrace x_k-\bar{x}\rbrace_{k\geq1}$, i.e.~there exists (an infinite set) $\hat{K}\subset\lbrace1,2,\dots,\rbrace$ such that
(\ref{BundleSQPmitQCQP:GlobaleKonvergenz:Satz:Lemma3.6:BewxkANDwkKhat}) holds.
%%%%%%%%%%%%%%%%%%%%%%%%%%%%%%
Since $\lbrace\bar{\kappa}^{k+1}\rbrace_k$ is bounded by assumption, all its subsequences are also bounded. Therefore, in particular, its subsequence $\lbrace\bar{\kappa}^{k+1}\rbrace_{k\in\hat{K}}$ is bounded. Consequently, $\lbrace\bar{\kappa}^{k+1}\rbrace_{k\in\hat{K}}$ has an accumulation point, i.e.~there exists (an infinite set) $K\subset\hat{K}$ and $\bar{\kappa}\in\mathbb{R}$ such that
(\ref{BundleSQPmitQCQP:GlobaleKonvergenz:Satz:Lemma3.6:Bewkappabark+1}) holds.
%%%%%%%%%%%%%%%%%%%%%%%%%%%%%%
Since $\bar{\kappa}^{k+1}\geq0$ for $k=1,2,\dots$ due to (\ref{BundleSQPmitQCQP:Alg:Defkappabark+1}), we have $\bar{\kappa}\in\Rpos$. Since $K\subset\hat{K}$ and a sequence is convergent, if and only if all of its subsequences converge towards the same limit, (\ref{BundleSQPmitQCQP:GlobaleKonvergenz:Satz:Lemma3.6:BewxkANDwkKhat}) yields
(\ref{BundleSQPmitQCQP:GlobaleKonvergenz:Satz:Lemma3.6:BewxkCOMPACT}).
%HannesCQ
\qedhere
\end{proof}
%%%%%%%%%%%%%%%%%%%%%%%%%%%%%%%%%%%%%%%%%%%%%%%%%%%%%%%%%%%%%%%%%%%%%%%%%%%%%%%%%%%%%%%%%%%%%%%%%%%%%%%%%%%%%%%%%%%%%%%%
\begin{lemma}[Lagrange multipliers]
\label{COMPACT:lemmaForAuxiliaryTheorem:LagrangeMulitpliers}
Let $I:=\lbrace1,2,\dots,N+2\rbrace$ (Note: $\mathrm{card}(I)=n+2$),
$S:=\lbrace (g_j^k,s_j^k):j=1,\dots,k\rbrace\subseteq\mathbb{R}^{N+1}$, and
$\hat{S}:=\lbrace(\hat{g}_j^k,s_j^k):j=1,\dots,k\rbrace\subseteq\mathbb{R}^{N+1}$.
Then for $i\in I$ and $k\geq1$ there exist $\lambda^{k,i},\kappa^{k,i}\in\mathbb{R}$
and $(g^{k,i},s^{k,i})\in S,(\hat{g}^{k,i},\hat{s}^{k,i})\in\hat{S}$ such that
\begin{align}
\tilde{g}_p^k&=    \sum_{i\in I}\lambda^{k,i}g^{k,i}\komma\quad
\tilde{s}_p^k=    \sum_{i\in I}\lambda^{k,i}s^{k,i}\komma\quad
            1=    \sum_{i\in I}\lambda^{k,i}       \komma\quad
\lambda^{k,i}\geq0                                 \punkt
\label{BundleSQPmitQCQP:GlobaleKonvergenz:Satz:Lemma3.6:BewlambdaikCOMPACT}
\\
\tilde{\hat{g}}_p^k&=\sum_{i\in I}\kappa^{k,i}\hat{g}^{k,i}                                   \komma\quad
\tilde{\hat{s}}_p^k=\sum_{i\in I}\kappa^{k,i}\hat{s}^{k,i}                                   \komma\nonumber\\
&\hspace{0.10\textwidth}(1=\sum_{i\in I}\kappa^{k,i}~\wedge~\kappa^{k,i}\geq0)~\xor~(\kappa^{k,i}=0~~~\forall i\in I)\punkt
\label{BundleSQPmitQCQP:GlobaleKonvergenz:Satz:Lemma3.6:BewlambdaikNBCOMPACT}
\end{align}
In particular, we have
\begin{equation}
\sum_{i\in I}{\kappa^{k,i}}
=
\left\lbrace
\begin{array}{ll}
1 & \textnormal{if }\bar{\kappa}^{k+1}>0\\
0 & \textnormal{if }\bar{\kappa}^{k+1}=0\punkt
\end{array}
\right.
\label{BundleSQPmitQCQP:GlobaleKonvergenz:Satz:Lemma3.6:BewlambdaikNB:SummeNachtrag}
\end{equation}
\end{lemma}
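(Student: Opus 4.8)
The plan is to combine Proposition \refH{BundleSQPmitQCQP:GlobaleKonvergenzSatz:Lemma3.1} with Carath\'eodory's theorem: the proposition expresses the aggregated data as a convex combination of the (possibly many) bundle elements, and Carath\'eodory then compresses this representation to at most $N+2$ terms. First I would invoke the first part of Proposition \refH{BundleSQPmitQCQP:GlobaleKonvergenzSatz:Lemma3.1}, which yields coefficients $\hat{\lambda}_j^k\geq0$ with $\sum_{j=1}^k\hat{\lambda}_j^k=1$ and $(\tilde{g}_p^k,\tilde{s}_p^k)=\sum_{j=1}^k\hat{\lambda}_j^k(g_j^k,s_j^k)$. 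This exhibits $(\tilde{g}_p^k,\tilde{s}_p^k)$ as a point in the convex hull of the finite set $S\subseteq\mathbb{R}^{N+1}$. Since $S$ lies in the $(N+1)$-dimensional space $\mathbb{R}^{N+1}$, Carath\'eodory's theorem guarantees a representation of $(\tilde{g}_p^k,\tilde{s}_p^k)$ as a convex combination of at most $(N+1)+1=N+2$ elements of $S$. Labelling these elements $(g^{k,i},s^{k,i})\in S$ and their weights $\lambda^{k,i}\geq0$ for $i\in I=\lbrace1,\dots,N+2\rbrace$, and padding with zero weights (and arbitrary elements of $S$) if strictly fewer than $N+2$ points are actually needed, gives \refh{BundleSQPmitQCQP:GlobaleKonvergenz:Satz:Lemma3.6:BewlambdaikCOMPACT}; the normalisation $\sum_{i\in I}\lambda^{k,i}=1$ is inherited because the discarded weights vanish.

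For the constraint data I would split according to the sign of $\bar{\kappa}^{k+1}$. If $\bar{\kappa}^{k+1}>0$, the second part of Proposition \refH{BundleSQPmitQCQP:GlobaleKonvergenzSatz:Lemma3.1} supplies $\hat{\kappa}_j^k\geq0$ with $\sum_{j=1}^k\hat{\kappa}_j^k=1$ and $(\tilde{\hat{g}}_p^k,\tilde{\hat{s}}_p^k)=\sum_{j=1}^k\hat{\kappa}_j^k(\hat{g}_j^k,s_j^k)$, so that $(\tilde{\hat{g}}_p^k,\tilde{\hat{s}}_p^k)$ lies in the convex hull of $\hat{S}\subseteq\mathbb{R}^{N+1}$; the identical Carath\'eodory reduction produces weights $\kappa^{k,i}\geq0$ with $\sum_{i\in I}\kappa^{k,i}=1$ and elements $(\hat{g}^{k,i},\hat{s}^{k,i})\in\hat{S}$, which is the first alternative in \refh{BundleSQPmitQCQP:GlobaleKonvergenz:Satz:Lemma3.6:BewlambdaikNBCOMPACT}. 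If $\bar{\kappa}^{k+1}=0$, then $\hat{\kappa}_j^k=0$ for all $j$ by \refh{BundleSQPmitQCQP:GlobaleKonvergenzSatz:Lemma3.1:22:1NB:AllZero}, hence $\tilde{\hat{g}}_p^k=\zeroVector{N}$ and $\tilde{\hat{s}}_p^k=0$; here I would simply set $\kappa^{k,i}=0$ for every $i\in I$ and assign arbitrary fixed elements of $\hat{S}$ to the $(\hat{g}^{k,i},\hat{s}^{k,i})$, realising the second alternative in \refh{BundleSQPmitQCQP:GlobaleKonvergenz:Satz:Lemma3.6:BewlambdaikNBCOMPACT}. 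Reading off $\sum_{i\in I}\kappa^{k,i}$ in the two cases then gives \refh{BundleSQPmitQCQP:GlobaleKonvergenz:Satz:Lemma3.6:BewlambdaikNB:SummeNachtrag}.

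This argument is essentially bookkeeping once Proposition \refH{BundleSQPmitQCQP:GlobaleKonvergenzSatz:Lemma3.1} is available, so I do not expect a serious obstacle. The only point requiring care is the dimension count: because $S$ and $\hat{S}$ live in $\mathbb{R}^{N+1}$ (the subgradient paired with the scalar locality measure), Carath\'eodory yields $N+2$ rather than $N+1$ points, which is precisely why $\mathrm{card}(I)=N+2$. A second, minor subtlety is that $\tilde{g}_p^k$ and $\tilde{s}_p^k$ must be forced to share the same weights $\lambda^{k,i}$; this is automatic since Carath\'eodory is applied to the single vector $(g_j^k,s_j^k)\in\mathbb{R}^{N+1}$, and likewise for the constraint data. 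The purpose of this reduction is to bound the number of active bundle elements by the constant $N+2$ independently of $k$, so that the subsequent compactness arguments around the accumulation point $\bar{x}$ can extract convergent subsequences of the finitely many $(g^{k,i},s^{k,i})$ and $(\hat{g}^{k,i},\hat{s}^{k,i})$.
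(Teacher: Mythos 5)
Your proposal is correct and follows essentially the same route as the paper: the paper likewise invokes Proposition \ref{BundleSQPmitQCQP:GlobaleKonvergenzSatz:Lemma3.1} to place $(\tilde{g}_p^k,\tilde{s}_p^k)$ in $\mathrm{ch}(S)$ (and $(\tilde{\hat{g}}_p^k,\tilde{\hat{s}}_p^k)$ in $\mathrm{ch}(\hat{S})$ when $\bar{\kappa}^{k+1}>0$), applies Carath\'eodory's theorem in $\mathbb{R}^{N+1}$ to obtain at most $N+2$ terms, and sets $\kappa^{k,i}=0$ in the case $\bar{\kappa}^{k+1}=0$. Your added remarks on padding with zero weights and on the dimension count are just explicit versions of what the paper leaves implicit.
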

%%%%%%%%%%%%%%%%%%%%%%%%%%%%%%%%%%%%%%%%%%%%%%%%%%%%%%%%%%%%
\begin{proof}
%%%%%%%%%%%%%%%
We have $(\tilde{g}_p^k,\tilde{s}_p^k)\in\mathrm{ch}(S)$ due to (\ref{BundleSQPmitQCQP:GlobaleKonvergenz:Vor:AlgTerminiertNicht}) and
(\ref{BundleSQPmitQCQP:GlobaleKonvergenzSatz:Lemma3.1:22:1COMPACT}). Due to Carath\'eodory's theorem %%%(cf., e.g., \citet[p.~33,~Theorem~2.4]{SchichlHabilitation})
(cf., e.g., \citet{NeumaierIV}), for $i\in I$ and $k\geq1$ there exist $(g^{k,i},s^{k,i})\in S$
and $\lambda^{k,i}\in\mathbb{R}$ such that
(\ref{BundleSQPmitQCQP:GlobaleKonvergenz:Satz:Lemma3.6:BewlambdaikCOMPACT}) holds.
%%%%%%%%%%%%%%%
Furthermore, we have $(\tilde{\hat{g}}_p^k,\tilde{\hat{s}}_p^k)\in\mathrm{ch}(\hat{S})$ for $\bar{\kappa}^{k+1}>0$ and %AN%$(\tilde{\hat{g}}_p^k,\tilde{\hat{s}}_p^k)=(o_n,0)$
$(\tilde{\hat{g}}_p^k,\tilde{\hat{s}}_p^k)=\zeroVector{n+1}$
for $\bar{\kappa}^{k+1}=0$ due to (\ref{BundleSQPmitQCQP:GlobaleKonvergenz:Vor:AlgTerminiertNicht}) and (\ref{BundleSQPmitQCQP:GlobaleKonvergenzSatz:Lemma3.1:22:1NBCOMPACT}). In the case $\bar{\kappa}^{k+1}>0$ there exist $(\hat{g}^{k,i},\hat{s}^{k,i})\in\hat{S}$, $\kappa^{k,i}\in\mathbb{R}$ for $i\in I$ with $1=\sum_{i\in I}\kappa^{k,i}$, $\kappa^{k,i}\geq0$ and $(\tilde{\hat{g}}_p^k,\tilde{\hat{s}}_p^k)=\sum_{i\in I}\kappa^{k,i}(\hat{g}^{k,i},\hat{s}^{k,i})$ due to Carath\'eodory's theorem %%%(cf., e.g., \citet[p.~33,~Theorem~2.4]{SchichlHabilitation})
(cf., e.g., \citet{NeumaierIV}). In the case $\bar{\kappa}^{k+1}=0$ choosing $\kappa^{k,i}:=0$ for all $i\in I$ yields %AN%$(\tilde{\hat{g}}_p^k,\tilde{\hat{s}}_p^k)=(o_n,0)=\sum_{i\in I}{\kappa^{k,i}(\hat{g}^{k,i},\hat{s}^{k,i})}$.
$(\tilde{\hat{g}}_p^k,\tilde{\hat{s}}_p^k)=\zeroVector{n+1}=\sum_{i\in I}{\kappa^{k,i}(\hat{g}^{k,i},\hat{s}^{k,i})}$.
Hence,
(\ref{BundleSQPmitQCQP:GlobaleKonvergenz:Satz:Lemma3.6:BewlambdaikNBCOMPACT}) holds,
which immediately implies
(\ref{BundleSQPmitQCQP:GlobaleKonvergenz:Satz:Lemma3.6:BewlambdaikNB:SummeNachtrag}).
%%%%%%%%%%%%%%%
%%%%%%%%%%%%%%%%%%%%%%%%%%%%%%
\qedhere
\end{proof}
%%%%%%%%%%%%%%%%%%%%%%%%%%%%%%%%%%%%%%%%%%%%%%%%%%%%%%%%%%%%%%%%%%%%%%%%%%%%%%%%%%%%%%%%%%%%%%%%%%%%%%%%%%%%%%%%%%%%%%%%
\begin{lemma}[Assignment]
\label{BundleSQPmitQCQP:GlobaleKonvergenz:Satz:Lemma3.6:BewgkiCOMPACT}
There exists $j(k,i)\in\lbrace1,\dots,k\rbrace$ (i.e.~a function $j:\lbrace k\in\mathbb{N}:~k\geq1\rbrace\times I\longrightarrow\lbrace1,\dots,k\rbrace$) with
$g^{k,i}=g_{j(k,i)}^k$,
$s^{k,i}=s_{j(k,i)}^k$,
$\hat{g}^{k,i}=\hat{g}_{j(k,i)}^k$, and
$\hat{s}^{k,i}=\hat{s}_{j(k,i)}^k$.
%%%PAPER%%%(i.e.~we can assign every $k\geq1$ and every $i\in I$ an index $j(k,i)$, such that the above properties %%%PAPER%%%hold).
\end{lemma}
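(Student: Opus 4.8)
The plan is to treat this as a co-indexing statement rather than a new analytic fact: both $S=\{(g_j^k,s_j^k):j=1,\dots,k\}$ and $\hat S=\{(\hat g_j^k,s_j^k):j=1,\dots,k\}$ are projections of the single family of bundle entries $\{(g_j^k,\hat g_j^k,s_j^k):j=1,\dots,k\}$, all carried by the common index set $\{1,\dots,k\}$. Each entry $j$ simultaneously stores a subgradient $g_j^k$ of $f$, a subgradient $\hat g_j^k$ of $F$, and one shared locality measure $s_j^k=\hat s_j^k$. Hence a membership statement such as $(g^{k,i},s^{k,i})\in S$ already pins down a bundle entry, and the content of the lemma is only to show that the Carath\'eodory atoms extracted for $f$ and those extracted for $F$ in Lemma \ref{COMPACT:lemmaForAuxiliaryTheorem:LagrangeMulitpliers} can be arranged to sit on the same entries, so that one index function $j(k,i)$ serves all four identities.

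First I would dispose of the trivial case $\bar{\kappa}^{k+1}=0$: there $\tilde{\hat{g}}_p^k=\zeroVector{N}$ and all constraint weights vanish by \refh{BundleSQPmitQCQP:GlobaleKonvergenz:Satz:Lemma3.6:BewlambdaikNB:SummeNachtrag}, so any index realizing the objective membership $(g^{k,i},s^{k,i})=(g_{j(k,i)}^k,s_{j(k,i)}^k)$ automatically fulfils the constraint identities after reading off $\hat{g}_{j(k,i)}^k$ and $\hat{s}_{j(k,i)}^k=s_{j(k,i)}^k$. For $\bar{\kappa}^{k+1}>0$ I would not run the two reductions of Lemma \ref{COMPACT:lemmaForAuxiliaryTheorem:LagrangeMulitpliers} independently; instead I would build one common list of bundle indices and distribute the $N+2$ slots of $I$ between them. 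Concretely, each atom carrying positive objective weight is placed in a slot $i$ with $j(k,i)$ equal to its bundle index, $\lambda^{k,i}$ equal to its weight and $\kappa^{k,i}=0$; each atom carrying positive constraint weight is placed in a separate slot with the roles of $\lambda^{k,i}$ and $\kappa^{k,i}$ interchanged; the remaining slots are padded with an arbitrary index and both weights zero. On every slot the bundle entry then provides both $g_{j(k,i)}^k$ and $\hat{g}_{j(k,i)}^k$, so the four identities hold by construction, while \refh{BundleSQPmitQCQP:GlobaleKonvergenz:Satz:Lemma3.6:BewlambdaikCOMPACT} and \refh{BundleSQPmitQCQP:GlobaleKonvergenz:Satz:Lemma3.6:BewlambdaikNBCOMPACT} are preserved because the two weight systems are permitted to differ.

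The main obstacle is the slot budget: the construction above needs one slot per bundle index that is active for $f$ or for $F$, whereas $I$ offers only $N+2$ of them. I therefore expect the decisive step to be a genuinely joint Carath\'eodory reduction that controls the number of distinct active indices for the two aggregates at once, rather than two separate reductions each of which may produce up to $N+2$ indices. The structural feature to exploit is precisely the shared locality coordinate $s_j^k$ in $S$ and $\hat S$: the points $(\tilde{g}_p^k,\tilde{s}_p^k)$ and $(\tilde{\hat{g}}_p^k,\tilde{\hat{s}}_p^k)$ live over the same labelled entries and differ only in which subgradient is recorded, so that the $(N+1)$-dimensional Carath\'eodory bound should allow both to be reduced over a common support, with the freedom to assign zero weights tying the two decompositions to a single index function. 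Establishing that this joint reduction really stays within $N+2$ entries is the technical heart of the argument; everything else is bookkeeping with the definitions of $S$, $\hat S$ and the shared locality measure.
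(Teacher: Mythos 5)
You have correctly put your finger on the real content of this lemma: the two Carath\'eodory reductions in the preceding lemma are performed independently for $(\tilde{g}_p^k,\tilde{s}_p^k)\in\mathrm{ch}(S)$ and $(\tilde{\hat{g}}_p^k,\tilde{\hat{s}}_p^k)\in\mathrm{ch}(\hat{S})$, so nothing forces the atom selected for $f$ in slot $i$ and the atom selected for $F$ in slot $i$ to sit on the same bundle index. The paper does not engage with this at all: its proof is the one-line observation that $(g^{k,i},s^{k,i})\in S$ and $(\hat{g}^{k,i},\hat{s}^{k,i})\in\hat{S}$ each pin down \emph{some} index in $\lbrace1,\dots,k\rbrace$, and it silently merges the two resulting index functions into a single $j(k,i)$. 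That abuse is harmless for the remainder of the convergence proof, because the decompositions of $\tilde{g}_p^k$ and $\tilde{\hat{g}}_p^k$ are processed separately downstream (separate limits in Lemma \ref{COMPACT:lemmaForAuxiliaryTheorem:TrialPointConvergenceAndImplications} and Lemma \ref{COMPACT:lemmaForAuxiliaryTheorem:ComplementarityCondition}, and two separate applications of Proposition \ref{Luksan:GlobaleKonvergenzSatz:Lemma3.2} in Lemma \ref{BundleSQPmitQCQP:GlobaleKonvergenz:Satz:Lemma3.6:BewqinSubdifferentialVonf}), so one could just as well carry two index functions $j(k,i)$ and $\hat{\jmath}(k,i)$ with two families of limit points.

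The gap in your proposal is precisely the step you yourself flag as the technical heart: a joint Carath\'eodory reduction of both aggregates over a \emph{common} support of at most $N+2$ bundle indices. This is not merely left unproven (``I expect'', ``should allow'') --- it is false in general. The two aggregates are convex combinations of the same labelled family but with \emph{different} weight vectors $\hat{\lambda}_j^k$ and $\hat{\kappa}_j^k$, so their minimal supports are unrelated; already for $N=1$ one can arrange that $(\tilde{g}_p^k,\tilde{s}_p^k)$ is representable only over labels $\lbrace1,2,3\rbrace$ while $(\tilde{\hat{g}}_p^k,\tilde{\hat{s}}_p^k)$ requires labels $\lbrace4,5,6\rbrace$, and the shared locality coordinate does not help because the weights differ. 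Your disjoint-slot construction is sound but needs up to $2(N+2)$ slots, exceeding $\mathrm{card}(I)$. The two workable repairs are therefore: (i) keep two index functions, which is what the paper's one-line proof actually delivers and all that the subsequent lemmas use; or (ii) enlarge $I$ to $2(N+2)$ elements and pad with zero weights exactly as you describe --- harmless, since only the finiteness of $I$ and the resulting compactness extractions are used later. Either repair is routine; the joint reduction within $N+2$ slots is a dead end.
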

%%%%%%%%%%%%%%%%%%%%%%%%%%%%%%%%%%%%%%%%%%%%%%%%%%%%%%%%%%%%
\begin{proof}
Use $(g^{k,i},s^{k,i})\in S$ and $(\hat{g}^{k,i},\hat{s}^{k,i})\in\hat{S}$ for $i\in I$ and $k\geq1$
from Lemma \ref{COMPACT:lemmaForAuxiliaryTheorem:LagrangeMulitpliers}.
\qedhere
\end{proof}
%%%%%%%%%%%%%%%%%%%%%%%%%%%%%%%%%%%%%%%%%%%%%%%%%%%%%%%%%%%%%%%%%%%%%%%%%%%%%%%%%%%%%%%%%%%%%%%%%%%%%%%%%%%%%%%%%%%%%%%%
\begin{lemma}[Trial point convergence \& implications]
\label{COMPACT:lemmaForAuxiliaryTheorem:TrialPointConvergenceAndImplications}
For all $i\in I$ there exist $\bar{y}_i\in\mathbb{R}^N$ and (an infinite set) $K_3\subset K_2\subset K_1\subset K$ with
\begin{align}
y_{j(k,i)}
&\xrightarrow{K_1}\bar{y}_i
\punkt
\label{BundleSQPmitQCQP:GlobaleKonvergenz:Satz:Lemma3.6:Bewyki}
\\
(g_{j(k,i)},\hat{g}_{j(k,i)})
&\xrightarrow{K_2}(\bar{g}_i,\bar{\hat{g}}_i)
\in\partial f(\bar{y}_i)\times\partial F(\bar{y}_i)
\label{BundleSQPmitQCQP:GlobaleKonvergenz:Satz:Lemma3.6:BewgjkiCOMPACT}
\\
(\rho_{j(k,i)}G_{j(k,i)},\lambda^{k,i},\hat{\rho}_{j(k,i)}\hat{G}_{j(k,i)},\kappa^{k,i})
&\xrightarrow{K_3}
(\widebar{G}_i,\bar{\lambda}_i,\widebar{\hat{G}}_i,\bar{\kappa}_i)
\punkt
\label{BundleSQPmitQCQP:GlobaleKonvergenz:Satz:Lemma3.6:BewGkiCOMPACT}
\end{align}
\end{lemma}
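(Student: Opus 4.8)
The plan is to obtain the three nested infinite index sets $K_3\subset K_2\subset K_1\subset K$ by three successive applications of the Bolzano--Weierstrass theorem, one per displayed line, and to read off the subdifferential memberships in \refh{BundleSQPmitQCQP:GlobaleKonvergenz:Satz:Lemma3.6:BewgjkiCOMPACT} from upper semicontinuity. Since $\mathrm{card}(I)=n+2$ is finite, each extraction can be performed successively over all $i\in I$ while keeping the resulting index set infinite, so the finitely many $i$ may be treated simultaneously.

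The one genuinely non-routine ingredient, which I would establish first, is that the whole family of trial points $\lbrace y_{j(k,i)}\rbrace$ lies in a fixed bounded set. The key is the line search bound \refh{Luksan:Liniensuche:PunktBeschraenktheit}: for every index $j$ one has $y_j=x_{j-1}+t_R^{j-1}d_{j-1}$ and $x_j=x_{j-1}+t_L^{j-1}d_{j-1}$ by \refh{BundleSQPmitQCQP:Alg:yk+1Update} and \refh{BundleSQPmitQCQP:Alg:xk+1Update}, whence $\lvert y_j-x_j\rvert\leq C_S$ (trivially for a serious step, where $t_R^{j-1}=t_L^{j-1}$, and by the terminating conditions \refh{Luk:LineareLS3}, \refh{Luk:LineareLS3:NEWinfeasibleTrialPoint} otherwise). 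Combined with the boundedness of $\lbrace x_k\rbrace$ from Assumption \refH{COMPACT:presumptionForAuxiliaryTheorem:Assumption}, this confines every $y_{j(k,i)}$ to a bounded set independent of $k$ and $i$. Note that this route avoids controlling $\lvert y_{j(k,i)}-x_k\rvert$ through the locality measure $s_{j(k,i)}^k$, which may be large for old bundle indices.

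With boundedness secured, I would first extract an infinite $K_1\subset K$ along which $y_{j(k,i)}\to\bar{y}_i$ for all $i$, giving \refh{BundleSQPmitQCQP:GlobaleKonvergenz:Satz:Lemma3.6:Bewyki}. Since the trial points stay bounded, local boundedness of $\partial f$ and $\partial F$ (Proposition \refH{PAPER:LocallyBoundedUpperSemiContinous}) renders $\lbrace(g_{j(k,i)},\hat{g}_{j(k,i)})\rbrace$ bounded, so a further extraction $K_2\subset K_1$ yields $(g_{j(k,i)},\hat{g}_{j(k,i)})\to(\bar{g}_i,\bar{\hat{g}}_i)$; the memberships $\bar{g}_i\in\partial f(\bar{y}_i)$ and $\bar{\hat{g}}_i\in\partial F(\bar{y}_i)$ then follow from $y_{j(k,i)}\to\bar{y}_i$ and the upper semicontinuity of $\partial f$, $\partial F$ (again Proposition \refH{PAPER:LocallyBoundedUpperSemiContinous}), which is \refh{BundleSQPmitQCQP:GlobaleKonvergenz:Satz:Lemma3.6:BewgjkiCOMPACT}. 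Finally, the damped matrices obey $\lvert\rho_jG_j\rvert\leq C_G$ and $\lvert\hat{\rho}_j\hat{G}_j\rvert\leq\hat{C}_G$ by the initialization of Algorithm \refH{BundleSQPmitQCQP:Alg:GesamtAlgMitQCQP}, and the Carath\'eodory coefficients satisfy $\lambda^{k,i},\kappa^{k,i}\in[0,1]$ by Lemma \refH{COMPACT:lemmaForAuxiliaryTheorem:LagrangeMulitpliers}; a last extraction $K_3\subset K_2$ produces the limits $(\widebar{G}_i,\bar{\lambda}_i,\widebar{\hat{G}}_i,\bar{\kappa}_i)$ and establishes \refh{BundleSQPmitQCQP:GlobaleKonvergenz:Satz:Lemma3.6:BewGkiCOMPACT}.

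The main obstacle is thus entirely the boundedness of the trial points; once that is in place, everything reduces to a mechanical nested-subsequence argument powered by Proposition \refH{PAPER:LocallyBoundedUpperSemiContinous}. I would also flag that this lemma does not yet identify the limits $\bar{y}_i$ with $\bar{x}$: pinning down $\bar{y}_i=\bar{x}$ for the active indices requires $\tilde{s}_p^k\to0$ (which follows from $w_k\xrightarrow{K}0$ together with \refh{BundleSQPmitQCQP:Alg:wk}) and is deferred to the subsequent lemma.
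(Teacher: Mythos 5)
Your proposal is correct and follows essentially the same route as the paper's own proof: boundedness of the trial points via $\lvert y_{j(k,i)}\rvert\leq\lvert x_{j(k,i)}\rvert+C_S$ from \refh{Luksan:Liniensuche:PunktBeschraenktheit} together with the boundedness of $\lbrace x_k\rbrace$, then three nested subsequence extractions using local boundedness and upper semicontinuity of $\partial f$, $\partial F$ (Proposition \refH{PAPER:LocallyBoundedUpperSemiContinous}) and the bounds $\lvert\rho_jG_j\rvert\leq C_G$, $\lvert\hat{\rho}_j\hat{G}_j\rvert\leq\hat{C}_G$, $\lambda^{k,i},\kappa^{k,i}\in[0,1]$. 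Your closing remark that the identification $\bar{y}_i=\bar{x}$ is deferred to the subsequent lemma is also consistent with the paper's structure.
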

%%%%%%%%%%%%%%%%%%%%%%%%%%%%%%%%%%%%%%%%%%%%%%%%%%%%%%%%%%%%
\begin{proof}
Since $\lvert y_{j(k,i)}\rvert\leq\lvert x_{j(k,i)}\rvert+C_S$ holds for all $i\in I$ and for all $k\geq1$ due to (\ref{Luksan:Liniensuche:PunktBeschraenktheit}), the assumption of the boundedness of $\lbrace x_k\rbrace$ yields that
$
\lbrace y_{j(k,i)}\rbrace_{k\geq1,i\in I}
$ is bounde
%%%PAPER%%%(Note: The index $j$ of the members of the sequence depends on $i$ and $k$!).
%%%%%%%%%%%%%%%%%%%%%%%%%%%%%%
and therefore it has a convergent subsequence, i.e.~(\ref{BundleSQPmitQCQP:GlobaleKonvergenz:Satz:Lemma3.6:Bewyki}) holds.
%%%%%%%%%%%%%%%%%%%%%%%%%%%%%%
%%%%%%%%%%%%%%%
Furthermore, the local boundedness of $\partial f$ resp.~$\partial F$ (cf.~Proposition \ref{PAPER:LocallyBoundedUpperSemiContinous}) %(cf. (\ref{Satz:SubdifferentialLokalBeschraenkt}))
imply that the sets $B_1:=\lbrace g\in\partial f(y_{j(k,i)}):~y_{j(k,i)}\in\mathbb{R}^N,~k\geq1,~k\in K_1,~i\in I\rbrace$ and $B_2:=\lbrace\hat{g}\in\partial F(y_{j(k,i)}):~y_{j(k,i)}\in\mathbb{R}^N,~k\geq1,~k\in K_1,~i\in I\rbrace$ are bounded. Therefore, $B_1\times B_2$ is bounded and consequently there exists a convergent subsequence $(g_{j(k,i)},\hat{g}_{j(k,i)})\in\partial f(y_{j(k,i)})\times\partial F(y_{j(k,i)})$, i.e.~there exists $(\bar{g}_i,\bar{\hat{g}}_i)\in\mathbb{R}^N\times\mathbb{R}^N$ and (an infinite set) $K_2\subset K_1$ with $(g_{j(k,i)},\hat{g}_{j(k,i)})\xrightarrow{K_2}(\bar{g}_i,\bar{\hat{g}}_i)$. The upper semicontinuity of $\partial f$ resp.~$\partial F$ (cf.~Proposition \ref{PAPER:LocallyBoundedUpperSemiContinous})
%(cf. (\ref{Satz:SubdifferentialOberhalbstetig}))
and (\ref{BundleSQPmitQCQP:GlobaleKonvergenz:Satz:Lemma3.6:Bewyki}) imply that for all $i\in I$
(\ref{BundleSQPmitQCQP:GlobaleKonvergenz:Satz:Lemma3.6:BewgjkiCOMPACT}) holds.
%%%%%%%%%%%%%%%
%%%%%%%%%%%%%%%%%%%%%%%%%%%%%%

Since $\rho_{j(k,i)}\in(0,1]$ due to (\ref{BundleSQPmitQCQP:Alg:rhok+1}) and $C_G>0$, we obtain
$\rho_{j(k,i)}\lvert G_{j(k,i)}\rvert\leq C_G$,
which yields the boundedness of the sequence $\lbrace\rho_{j(k,i)}\lvert G_{j(k,i)}\rvert\rbrace$. Due to (\ref{BundleSQPmitQCQP:GlobaleKonvergenz:Satz:Lemma3.6:BewlambdaikCOMPACT}), the sequence $\lbrace\lambda^{k,i}\rbrace$ is bounded. Since $\hat{\rho}_{j(k,i)}\in(0,1]$ due to (\ref{BundleSQPmitQCQP:Alg:rhohatk+1}) and $\hat{C}_G>0$, we obtain
$
\hat{\rho}_{j(k,i)}\lvert\hat{G}_{j(k,i)}\rvert\leq\hat{C}_G
$,
which yields the boundedness of the sequence $\lbrace\hat{\rho}_{j(k,i)}\lvert\hat{G}_{j(k,i)}\rvert\rbrace$. Due to (\ref{BundleSQPmitQCQP:GlobaleKonvergenz:Satz:Lemma3.6:BewlambdaikNBCOMPACT}), the sequence $\lbrace\kappa^{k,i}\rbrace$ is bounded. Therefore, the sequence $\lbrace\rho_{j(k,i)}\lvert G_{j(k,i)}\rvert,\lambda^{k,i},\hat{\rho}_{j(k,i)}\lvert \hat{G}_{j(k,i)}\rvert,\kappa^{k,i}\rbrace$ is bounded. Consequently, there exists a convergent subsequence of $\lbrace\rho_{j(k,i)}\lvert G_{j(k,i)}\rvert,\lambda^{k,i},\hat{\rho}_{j(k,i)}\lvert G_{j(k,i)}\rvert,\kappa^{k,i}\rbrace$, i.e.~for all $i\in I$ there exist $\widebar{G}_i,\widebar{\hat{G}}_i\in\mathbb{R}^{N\times N}$, $\bar{\lambda}_i,\bar{\kappa}_i\in\mathbb{R}$ and (an infinite set) $K_3\subset K_2$ such that
(\ref{BundleSQPmitQCQP:GlobaleKonvergenz:Satz:Lemma3.6:BewGkiCOMPACT}) holds.
%%%%%%%%%%%%%%%%%%%%%%%%%%%%%%
\qedhere
\end{proof}
%%%%%%%%%%%%%%%%%%%%%%%%%%%%%%%%%%%%%%%%%%%%%%%%%%%%%%%%%%%%%%%%%%%%%%%%%%%%%%%%%%%%%%%%%%%%%%%%%%%%%%%%%%%%%%%%%%%%%%%%
\begin{lemma}[Complementarity condition]
\label{COMPACT:lemmaForAuxiliaryTheorem:ComplementarityCondition}
We have
\begin{align}
\sum_{i\in I}\bar{\lambda}_i\big(\bar{g}_i+\widebar{G}_i(\bar{x}-\bar{y}_i)\big)
+
\bar{\kappa}
\sum_{i\in I}\bar{\kappa}_i\big(\bar{\hat{g}}_i+\widebar{\hat{G}}_i(\bar{x}-\bar{y}_i)\big)
&=\zeroVector{N}
\label{BundleSQPmitQCQP:GlobaleKonvergenz:Satz:Lemma3.6:BewSum=o}
\\
\lambda^{k,i}s^{k,i}&\xrightarrow{K_3}0
\label{BundleSQPmitQCQP:GlobaleKonvergenz:Satz:Lemma3.6:Bewlambdaikski0}
\\
\kappa^{k,i}\hat{s}^{k,i}
&\xrightarrow{K_3}0\textnormal{ if }\bar{\kappa}>0
\punkt
\label{BundleSQPmitQCQP:GlobaleKonvergenz:Satz:Lemma3.6:Bewlambdaikski0NB}
\end{align}
Furthermore, the complementarity condition $\bar{\kappa}F(\bar{x})=0$ holds.
\end{lemma}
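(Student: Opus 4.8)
The plan is to use that $w_k\xrightarrow{K_3}0$ and that $w_k$ in \refh{BundleSQPmitQCQP:Alg:wk} is a sum of four nonnegative terms, so that each term must vanish separately in the limit; all four assertions then drop out by passing to the limit in the convex representations of Lemma \ref{COMPACT:lemmaForAuxiliaryTheorem:LagrangeMulitpliers}, Lemma \ref{BundleSQPmitQCQP:GlobaleKonvergenz:Satz:Lemma3.6:BewgkiCOMPACT}, and Lemma \ref{COMPACT:lemmaForAuxiliaryTheorem:TrialPointConvergenceAndImplications} along the finest subsequence $K_3\subset K$.

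First I would note that $\bar{\kappa}^{k+1}\geq0$, that $-F(x_k)\geq0$ because every $x_k$ is strictly feasible, and that $\tilde{\alpha}_p^k,\tilde{A}_p^k\geq0$; hence all four summands of \refh{BundleSQPmitQCQP:Alg:wk} are nonnegative, and since $w_k\xrightarrow{K_3}0$ each of them tends to $0$ along $K_3$. This yields the four facts $\lvert H_k(\tilde{g}_p^k+\bar{\kappa}^{k+1}\tilde{\hat{g}}_p^k)\rvert\xrightarrow{K_3}0$, $\tilde{\alpha}_p^k\xrightarrow{K_3}0$, $\bar{\kappa}^{k+1}\tilde{A}_p^k\xrightarrow{K_3}0$, and $\bar{\kappa}^{k+1}(-F(x_k))\xrightarrow{K_3}0$, on which everything rests. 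The complementarity condition $\bar{\kappa}F(\bar{x})=0$ follows at once from the last of these by letting $\bar{\kappa}^{k+1}\xrightarrow{K_3}\bar{\kappa}$ and using the continuity of $F$ together with $x_k\xrightarrow{K_3}\bar{x}$.

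For \refh{BundleSQPmitQCQP:GlobaleKonvergenz:Satz:Lemma3.6:BewSum=o} the decisive step is to upgrade $\lvert H_k(\tilde{g}_p^k+\bar{\kappa}^{k+1}\tilde{\hat{g}}_p^k)\rvert\xrightarrow{K_3}0$ to $\tilde{g}_p^k+\bar{\kappa}^{k+1}\tilde{\hat{g}}_p^k\xrightarrow{K_3}\zeroVector{N}$, which is legitimate precisely because $\lbrace H_k^{-1}\rbrace$ is bounded under Assumption \ref{COMPACT:presumptionForAuxiliaryTheorem:Assumption} by Corollary \ref{ADDON:Korollar:WpkUndHkBeschraenktheitUndPositivDefinitheit}. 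I would then insert $\tilde{g}_p^k=\sum_{i\in I}\lambda^{k,i}g^{k,i}$ and $\tilde{\hat{g}}_p^k=\sum_{i\in I}\kappa^{k,i}\hat{g}^{k,i}$ from Lemma \ref{COMPACT:lemmaForAuxiliaryTheorem:LagrangeMulitpliers}, replace $g^{k,i}=g_{j(k,i)}^k$ and $\hat{g}^{k,i}=\hat{g}_{j(k,i)}^k$ by Lemma \ref{BundleSQPmitQCQP:GlobaleKonvergenz:Satz:Lemma3.6:BewgkiCOMPACT}, and expand $g_{j(k,i)}^k=g_{j(k,i)}+\rho_{j(k,i)}G_{j(k,i)}(x_k-y_{j(k,i)})$ together with the analogous expansion of $\hat{g}_{j(k,i)}^k$ according to \refh{Luksan:Satz:fjrauteGradientCOMPACT}. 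Passing to the limit over the finite index set $I$ along $K_3$, using \refh{BundleSQPmitQCQP:GlobaleKonvergenz:Satz:Lemma3.6:Bewyki}, \refh{BundleSQPmitQCQP:GlobaleKonvergenz:Satz:Lemma3.6:BewgjkiCOMPACT}, \refh{BundleSQPmitQCQP:GlobaleKonvergenz:Satz:Lemma3.6:BewGkiCOMPACT} and $x_k\xrightarrow{K_3}\bar{x}$, $\bar{\kappa}^{k+1}\xrightarrow{K_3}\bar{\kappa}$, reproduces exactly \refh{BundleSQPmitQCQP:GlobaleKonvergenz:Satz:Lemma3.6:BewSum=o}; the finiteness of $I$ makes the interchange of limit and sum trivial.

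Finally, for the two locality assertions I would exploit the nonnegativity of the locality measures. Since $\tilde{\alpha}_p^k\geq\gamma_1(\tilde{s}_p^k)^{\omega_1}$ by \refh{BundleSQPmitQCQP:Alg:alphatildepk}, the fact $\tilde{\alpha}_p^k\xrightarrow{K_3}0$ gives $\tilde{s}_p^k\xrightarrow{K_3}0$; writing $\tilde{s}_p^k=\sum_{i\in I}\lambda^{k,i}s^{k,i}$ as a finite sum of nonnegative terms ($\lambda^{k,i}\geq0$, $s^{k,i}\geq0$) forces each summand to vanish, giving \refh{BundleSQPmitQCQP:GlobaleKonvergenz:Satz:Lemma3.6:Bewlambdaikski0}. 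When $\bar{\kappa}>0$ the sequence $\bar{\kappa}^{k+1}$ is eventually bounded away from $0$ along $K_3$, so $\bar{\kappa}^{k+1}\tilde{A}_p^k\xrightarrow{K_3}0$ forces $\tilde{A}_p^k\xrightarrow{K_3}0$, hence $\tilde{\hat{s}}_p^k\xrightarrow{K_3}0$ by \refh{BundleSQPmitQCQP:Alg:Atildepk}, and the same nonnegative-sum argument applied to $\tilde{\hat{s}}_p^k=\sum_{i\in I}\kappa^{k,i}\hat{s}^{k,i}$ delivers \refh{BundleSQPmitQCQP:GlobaleKonvergenz:Satz:Lemma3.6:Bewlambdaikski0NB}. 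The main obstacle I anticipate is the implication $\lvert H_k(\cdot)\rvert\to0\Rightarrow(\cdot)\to0$, which genuinely needs the uniform positive definiteness of $\lbrace(\widebar{W}_p^k)^{-\mathalf}\rbrace$ via Corollary \ref{ADDON:Korollar:WpkUndHkBeschraenktheitUndPositivDefinitheit}; the remaining work is only the careful bookkeeping that guarantees all convergences are invoked on the common subsequence $K_3$.
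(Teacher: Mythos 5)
Your proposal is correct and follows essentially the same route as the paper: decompose $w_k\xrightarrow{K_3}0$ into its four nonnegative summands, use the boundedness of $\lbrace H_k^{-1}\rbrace$ from Corollary \ref{ADDON:Korollar:WpkUndHkBeschraenktheitUndPositivDefinitheit} to pass from $\lvert H_k(\tilde{g}_p^k+\bar{\kappa}^{k+1}\tilde{\hat{g}}_p^k)\rvert\to0$ to the vanishing of the argument, and identify the limit via the Carath\'eodory representations of Lemmas \ref{COMPACT:lemmaForAuxiliaryTheorem:LagrangeMulitpliers}--\ref{COMPACT:lemmaForAuxiliaryTheorem:TrialPointConvergenceAndImplications} together with the nonnegative-sum argument for the locality terms. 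No gaps.
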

%%%%%%%%%%%%%%%%%%%%%%%%%%%%%%%%%%%%%%%%%%%%%%%%%%%%%%%%%%%%
\begin{proof}
We calculate
$
\tilde{g}_p^k
\xrightarrow{K_3}
\sum_{i\in I}\bar{\lambda}_i\big(\bar{g}_i+\widebar{G}_i(\bar{x}-\bar{y}_i)\big)
$
and
$
\tilde{\hat{g}}_p^k
\xrightarrow{K_3}
\sum_{i\in I}\bar{\kappa}_i\big(\bar{\hat{g}}_i+\widebar{\hat{G}}_i(\bar{x}-\bar{y}_i)\big)
$
by using
(\ref{BundleSQPmitQCQP:GlobaleKonvergenz:Satz:Lemma3.6:BewlambdaikCOMPACT}),
(\ref{BundleSQPmitQCQP:GlobaleKonvergenz:Satz:Lemma3.6:BewlambdaikNBCOMPACT}),
Lemma \ref{BundleSQPmitQCQP:GlobaleKonvergenz:Satz:Lemma3.6:BewgkiCOMPACT},
(\ref{BundleSQP:Def:gjkCOMPACT}),
(\ref{BundleSQPmitQCQP:GlobaleKonvergenz:Satz:Lemma3.6:BewGkiCOMPACT}),
(\ref{BundleSQPmitQCQP:GlobaleKonvergenz:Satz:Lemma3.6:BewgjkiCOMPACT}),
(\ref{BundleSQPmitQCQP:GlobaleKonvergenz:Satz:Lemma3.6:BewxkCOMPACT}) and
(\ref{BundleSQPmitQCQP:GlobaleKonvergenz:Satz:Lemma3.6:Bewyki})
%%%%%%%%%%%%%%%%%%%%%%%%%%%%%%
\cbstartDVI
Since $\lbrace\bar{\kappa}^{k+1}\rbrace$ is bounded and $\lbrace(\widebar{W}_p^k)^{-\mathalf}\rbrace$ is uniformly positive definite (both due to assumption), Corollary \ref{ADDON:Korollar:WpkUndHkBeschraenktheitUndPositivDefinitheit} implies
\cbendDVI
%%%Since (\ref{BundleSQPmitQCQP:Alg:Hk}), (\ref{BundleSQPmitQCQP:GlobaleKonvergenz:Satz:Lemma3.6:BewMatrixBeschraenkt}), %%%(\ref{BundleSQPmitQCQP:GlobaleKonvergenz:Satz:Lemma3.6:BewMatrixBeschraenktNB}), the parameters $C_G$, $\hat{C}_G$ %%%and $\bar{\hat{C}}_G$ from the initialization of Algorithm \ref{BundleSQPmitQCQP:Alg:GesamtAlgMitQCQP} and since %%%$\lbrace\bar{\kappa}^{k+1}\rbrace$ is bounded due to assumption,we obtain
the boundedness of $\lbrace H_k^{-1}\rbrace$. Because of (\ref{BundleSQPmitQCQP:GlobaleKonvergenz:Satz:Lemma3.6:BewxkCOMPACT}), (\ref{BundleSQPmitQCQP:Alg:wk}) and (\ref{BundleSQPmitQCQP:Satz:GlobaleKonvergenz:Lemma3.5:wkCOMPACT}), we have $\lvert H_k(\tilde{g}_p^k+\bar{\kappa}^{k+1}\tilde{\hat{g}}_p^k)\rvert\xrightarrow{K_3}0$, which implies
(\ref{BundleSQPmitQCQP:GlobaleKonvergenz:Satz:Lemma3.6:BewSum=o})
due to the regularity of $H_k$,
(\ref{BundleSQPmitQCQP:GlobaleKonvergenz:Satz:Lemma3.6:Bewkappabark+1}) and
the uniqueness of a limit and $\tilde{\alpha}_p^k\xrightarrow{K_3}0$, which implies
(\ref{BundleSQPmitQCQP:GlobaleKonvergenz:Satz:Lemma3.6:Bewlambdaikski0})
due to
(\ref{BundleSQPmitQCQP:Alg:alphatildepk}),
(\ref{BundleSQPmitQCQP:GlobaleKonvergenz:Satz:Lemma3.6:BewlambdaikCOMPACT}),
Lemma \ref{BundleSQPmitQCQP:GlobaleKonvergenz:Satz:Lemma3.6:BewgkiCOMPACT},
(\ref{BundleSQPmitQCQP:GlobaleKonvergenz:Satz:Lemma3.6:BewlambdaikCOMPACT}) and
(\ref{Luksan:Def:Lokalitaetsmass}),
as well as $\bar{\kappa}^{k+1}F(x_k)\xrightarrow{K_3}0$, which implies $0=\bar{\kappa}F(\bar{x})$ due to (\ref{BundleSQPmitQCQP:GlobaleKonvergenz:Satz:Lemma3.6:Bewkappabark+1}), the continuity of $F$ and (\ref{BundleSQPmitQCQP:GlobaleKonvergenz:Satz:Lemma3.6:BewxkCOMPACT}), as well as $\bar{\kappa}^{k+1}\tilde{A}_p^k\xrightarrow{K_3}0$ which implies for $\bar{\kappa}>0$
that (\ref{BundleSQPmitQCQP:GlobaleKonvergenz:Satz:Lemma3.6:Bewlambdaikski0NB}) holds
due to
(\ref{BundleSQPmitQCQP:GlobaleKonvergenz:Satz:Lemma3.6:Bewkappabark+1}),
(\ref{BundleSQPmitQCQP:Alg:Atildepk}),
(\ref{BundleSQPmitQCQP:GlobaleKonvergenz:Satz:Lemma3.6:BewlambdaikNBCOMPACT}),
Lemma \ref{BundleSQPmitQCQP:GlobaleKonvergenz:Satz:Lemma3.6:BewgkiCOMPACT},
(\ref{BundleSQPmitQCQP:GlobaleKonvergenz:Satz:Lemma3.6:BewlambdaikNBCOMPACT}) and
%%%PAPER%%%(\ref{BundleSQP:Def:LokalitaetsmassNB})
(\ref{Luksan:Def:Lokalitaetsmass}).
%%%%%%%%%%%%%%%
%%%%%%%%%%%%%%%%%%%%%%%%%%%%%%
\qedhere
\end{proof}
%%%%%%%%%%%%%%%%%%%%%%%%%%%%%%%%%%%%%%%%%%%%%%%%%%%%%%%%%%%%%%%%%%%%%%%%%%%%%%%%%%%%%%%%%%%%%%%%%%%%%%%%%%%%%%%%%%%%%%%%
\begin{lemma}[Subdifferential elements]
\label{BundleSQPmitQCQP:GlobaleKonvergenz:Satz:Lemma3.6:BewqinSubdifferentialVonf}
We have
\begin{equation*}
\sum_{i\in I}\bar{\lambda}_i\big(\bar{g}_i+\widebar{G}_i(\bar{x}-\bar{y}_i)\big)\in\partial f(\bar{x})
\komma
\quad
\left\lbrace
\begin{array}{ll}
\sum\limits_{i\in I}\bar{\kappa}_i\big(\bar{\hat{g}}_i+\widebar{\hat{G}}_i(\bar{x}-\bar{y}_i)\big)\in\partial F(\bar{x}) & \textnormal{if }\bar{\kappa}>0\\
\lbrace\zeroVector{N}\rbrace=\bar{\kappa}\partial F(\bar{x})                                                             & \textnormal{if }\bar{\kappa}=0\punkt
\end{array}
\right.
\end{equation*}
\end{lemma}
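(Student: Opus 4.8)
The plan is to invoke the sufficient subdifferential-membership criterion of Proposition~\ref{Luksan:GlobaleKonvergenzSatz:Lemma3.2} twice, once for $f$ and once for $F$, feeding it the limits collected in Lemma~\ref{COMPACT:lemmaForAuxiliaryTheorem:TrialPointConvergenceAndImplications} and Lemma~\ref{COMPACT:lemmaForAuxiliaryTheorem:ComplementarityCondition}. For the objective function I would set $L:=\mathrm{card}(I)$ and $\bar{q}:=\sum_{i\in I}\bar{\lambda}_i\big(\bar{g}_i+\widebar{G}_i(\bar{x}-\bar{y}_i)\big)$, using the quantities $\bar{g}_i\in\partial f(\bar{y}_i)$, $\widebar{G}_i$, $\bar{\lambda}_i$ and $\bar{y}_i$ supplied by \refh{BundleSQPmitQCQP:GlobaleKonvergenz:Satz:Lemma3.6:BewgjkiCOMPACT} and \refh{BundleSQPmitQCQP:GlobaleKonvergenz:Satz:Lemma3.6:BewGkiCOMPACT}. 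The normalisation $1=\sum_{i\in I}\bar{\lambda}_i$ and the nonnegativity $\bar{\lambda}_i\geq0$ follow by passing to the limit along $K_3$ in \refh{BundleSQPmitQCQP:GlobaleKonvergenz:Satz:Lemma3.6:BewlambdaikCOMPACT}, so that every hypothesis of Proposition~\ref{Luksan:GlobaleKonvergenzSatz:Lemma3.2} except the locality condition is immediate.

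The crux is to exhibit locality measures $\bar{s}_i$ satisfying $\sum_{i\in I}\bar{\lambda}_i\bar{s}_i=0$ and $\lvert\bar{y}_i-\bar{x}\rvert\leq\bar{s}_i$ simultaneously, and here I would argue by cases on $\bar{\lambda}_i$. If $\bar{\lambda}_i>0$, then $\lambda^{k,i}$ stays bounded away from zero along $K_3$, so the complementarity limit \refh{BundleSQPmitQCQP:GlobaleKonvergenz:Satz:Lemma3.6:Bewlambdaikski0} forces $s^{k,i}\xrightarrow{K_3}0$; combined with the locality bound $s^{k,i}\geq\lvert y_{j(k,i)}-x_k\rvert$ from \refh{Luksan:Satz:LokalitaetsmassMonotonCOMPACT}, together with \refh{BundleSQPmitQCQP:GlobaleKonvergenz:Satz:Lemma3.6:Bewyki} and $x_k\xrightarrow{K}\bar{x}$, this yields $\bar{y}_i=\bar{x}$, and I set $\bar{s}_i:=0$. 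If $\bar{\lambda}_i=0$, the $i$-th summand of $\bar{q}$ vanishes and I simply set $\bar{s}_i:=\lvert\bar{y}_i-\bar{x}\rvert$. In either case $\bar{\lambda}_i\bar{s}_i=0$ and $\lvert\bar{y}_i-\bar{x}\rvert\leq\bar{s}_i$, so Proposition~\ref{Luksan:GlobaleKonvergenzSatz:Lemma3.2} applies and delivers $\bar{q}\in\partial f(\bar{x})$, which is the first assertion.

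For the constraint in the case $\bar{\kappa}>0$ I would repeat the argument verbatim with $\partial F$ in place of $\partial f$ (Proposition~\ref{Luksan:GlobaleKonvergenzSatz:Lemma3.2} being valid for every locally Lipschitz continuous function), now using $\bar{\hat{g}}_i\in\partial F(\bar{y}_i)$, $\widebar{\hat{G}}_i$ and $\bar{\kappa}_i$ from \refh{BundleSQPmitQCQP:GlobaleKonvergenz:Satz:Lemma3.6:BewgjkiCOMPACT} and \refh{BundleSQPmitQCQP:GlobaleKonvergenz:Satz:Lemma3.6:BewGkiCOMPACT}. Since $\bar{\kappa}^{k+1}\xrightarrow{K}\bar{\kappa}>0$ by \refh{BundleSQPmitQCQP:GlobaleKonvergenz:Satz:Lemma3.6:Bewkappabark+1}, for large $k\in K_3$ the ``$\bar{\kappa}^{k+1}>0$'' branch of \refh{BundleSQPmitQCQP:GlobaleKonvergenz:Satz:Lemma3.6:BewlambdaikNB:SummeNachtrag} is active, so that $1=\sum_{i\in I}\kappa^{k,i}$ and $\kappa^{k,i}\geq0$; passing to the limit gives $1=\sum_{i\in I}\bar{\kappa}_i$ and $\bar{\kappa}_i\geq0$. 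The locality hypothesis is handled exactly as above, but via the complementarity limit \refh{BundleSQPmitQCQP:GlobaleKonvergenz:Satz:Lemma3.6:Bewlambdaikski0NB}, which is available precisely because $\bar{\kappa}>0$. This yields $\sum_{i\in I}\bar{\kappa}_i\big(\bar{\hat{g}}_i+\widebar{\hat{G}}_i(\bar{x}-\bar{y}_i)\big)\in\partial F(\bar{x})$. Finally, in the case $\bar{\kappa}=0$ there is nothing left to do beyond observing $\partial F(\bar{x})\neq\emptyset$ (Proposition~\ref{PAPER:LocallyBoundedUpperSemiContinous}), whence $\bar{\kappa}\partial F(\bar{x})=0\cdot\partial F(\bar{x})=\lbrace\zeroVector{N}\rbrace$. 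The only genuinely delicate point is the simultaneous construction of the $\bar{s}_i$, i.e.~balancing the complementarity limit against the locality bound; the rest is a routine passage to the limit.
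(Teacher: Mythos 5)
Your proposal is correct and follows essentially the same route as the paper: both apply Proposition~\ref{Luksan:GlobaleKonvergenzSatz:Lemma3.2} twice with the limits from Lemmas~\ref{COMPACT:lemmaForAuxiliaryTheorem:TrialPointConvergenceAndImplications} and \ref{COMPACT:lemmaForAuxiliaryTheorem:ComplementarityCondition}, construct the $\bar{s}_i$ by the same case distinction ($\bar{s}_i=0$ when the multiplier is nonzero, $\bar{s}_i=\lvert\bar{y}_i-\bar{x}\rvert$ otherwise), and dispose of the case $\bar{\kappa}=0$ via Proposition~\ref{PAPER:LocallyBoundedUpperSemiContinous}. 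Your derivation of $s^{k,i}\xrightarrow{K_3}0$ for $\bar{\lambda}_i>0$ (dividing by a multiplier bounded away from zero) is in fact a little more direct than the paper's detour through boundedness and monotonicity of $\lbrace s^{k,i}\rbrace_{k\in K_3}$, but the argument is otherwise identical.
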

%%%%%%%%%%%%%%%%%%%%%%%%%%%%%%%%%%%%%%%%%%%%%%%%%%%%%%%%%%%%
\begin{proof}
%HannesCQ
%%%%%%%%%%%%%%%
Since (\ref{BundleSQPmitQCQP:GlobaleKonvergenz:Satz:Lemma3.6:BewlambdaikCOMPACT}) holds for all $k\in K_3$, (\ref{BundleSQPmitQCQP:GlobaleKonvergenz:Satz:Lemma3.6:BewGkiCOMPACT}) implies
$
\sum_{i\in I}\bar{\lambda}_i
=
1
$.
%%%%%%%%%%%%%%%
Due to (\ref{BundleSQPmitQCQP:GlobaleKonvergenz:Satz:Lemma3.6:BewGkiCOMPACT}),
we have
%Paper%\begin{equation}
$
\lim_{K_3}\sum_{i\in I}{\kappa^{k,i}}
=
\sum_{i\in I}\bar{\kappa}_i
$.
%Paper%\label{BundleSQPmitQCQP:GlobaleKonvergenz:Satz:Lemma3.6:BewqinSubdifferentialVonF:CaseThatCannotOccur:Bew0}
%Paper%\punkt
%Paper%\end{equation}
%%%%%%%%%%%%%%%
%\item
If $\bar{\kappa}>0$, then --- because of (\ref{BundleSQPmitQCQP:GlobaleKonvergenz:Satz:Lemma3.6:Bewkappabark+1})
and since $K_3(\subset K)$ is an infinite set --- there exists $\hat{k}\in K_3$ such that
%Paper%\begin{align}
$
\lvert\bar{\kappa}^{k+1}-\bar{\kappa}\rvert
%Paper%%HannesCQpage%&
<
\tfrac{\bar{\kappa}}{2}
$, which implies
%Paper%%HannesCQpage%\nonumber
%Paper%%HannesCQpage%\\
%Paper%~
%Paper%\Longrightarrow~
%Paper%-\bar{\kappa}^{k+1}+\bar{\kappa}
%Paper%&
%Paper%<
%Paper%\tfrac{\bar{\kappa}}{2}
%Paper%\nonumber
%Paper%\\
%Paper%\Longrightarrow~
$
0
%Paper%&
<
\tfrac{\bar{\kappa}}{2}
<
\bar{\kappa}^{k+1}
$
%Paper%\label{BundleSQPmitQCQP:GlobaleKonvergenz:Satz:Lemma3.6:BewqinSubdifferentialVonF:CaseThatCannotOccur:Bew2}
%Paper%\end{align}
for all $k\in\hat{K}_3$, where
%Paper%\begin{equation*}
$\hat{K}_3
:=
\left\lbrace
k\in K_3:~k\geq\hat{k}
\right\rbrace
\subseteq
K_3$
%Paper%\end{equation*}
is an infinite set. Therefore, we obtain
%Paper%for all $k\in\hat{K}_3$
%Paper%\begin{equation*}
$
\sum_{i\in I}{\kappa^{k,i}}
=1
$
%Paper%\end{equation*}
for all $k\in\hat{K}_3$
due to (\ref{BundleSQPmitQCQP:GlobaleKonvergenz:Satz:Lemma3.6:BewlambdaikNB:SummeNachtrag}),
%Paper%and (\ref{BundleSQPmitQCQP:GlobaleKonvergenz:Satz:Lemma3.6:BewqinSubdifferentialVonF:CaseThatCannotOccur:Bew2}),
i.e.~$\lbrace\sum_{i\in I}{\kappa^{k,i}}\rbrace_{k\in\hat{K}_3}$ is constant on $\hat{K}_3$ and hence we have
%Paper%\begin{equation}
$
\lim_{\hat{K}_3}\sum_{i\in I}{\kappa^{k,i}}
=
1
\punkt
$
%Paper%\label{BundleSQPmitQCQP:GlobaleKonvergenz:Satz:Lemma3.6:BewqinSubdifferentialVonF:CaseThatCannotOccur:Bew3}
%Paper%\end{equation}
Since
%Paper%\begin{itemize}
%Paper%\item
the sequence $\lbrace\sum_{i\in I}{\kappa^{k,i}}\rbrace_{k\in K_3}$ is convergent,
%Paper%due to %Paper%(\ref{BundleSQPmitQCQP:GlobaleKonvergenz:Satz:Lemma3.6:BewqinSubdifferentialVonF:CaseThatCannotOccur:Bew0}),
%Paper%\item
the (infinite) subsequence $\lbrace\sum_{i\in I}{\kappa^{k,i}}\rbrace_{k\in\hat{K}_3}$ (of the sequence $\lbrace\sum_{i\in I}{\kappa^{k,i}}\rbrace_{k\in K_3}$) converges towards $1$
%Paper%due to (\ref{BundleSQPmitQCQP:GlobaleKonvergenz:Satz:Lemma3.6:BewqinSubdifferentialVonF:CaseThatCannotOccur:Bew3})
and
%Paper%\item
a sequence is convergent if and only if all its subsequences converge towards the same limit,
%Paper%\end{itemize}
the limit of the sequence $\lbrace\sum_{i\in I}{\kappa^{k,i}}\rbrace_{k\in K_3}$ must be $1$.
%Paper%\begin{equation*}
%Paper%%HannesCQpage%\begin{align*}
%Paper%1
%Paper%%HannesCQpage%&\hspace{8.7pt}
%Paper%=
%Paper%\lim_{K_3}\sum_{i\in I}{\kappa^{k,i}}
%Paper%%HannesCQpage%\\
%Paper%%HannesCQpage%&
%Paper%\overset{(\ref{BundleSQPmitQCQP:GlobaleKonvergenz:Satz:Lemma3.6:BewqinSubdifferentialVonF:CaseThatCannotOccur:Bew0%Paper%})}{=}
%Paper%\sum_{i\in I}\bar{\kappa}_i
%Paper%\komma
%Paper%%HannesCQpage%\end{align*}
%Paper%\end{equation*}
Consequently,
%Paper%due to (\ref{BundleSQPmitQCQP:GlobaleKonvergenz:Satz:Lemma3.6:BewqinSubdifferentialVonF:CaseThatCannotOccur:Bew0})
we obtatin for $\bar{\kappa}>0$ that
$
\sum_{i\in I}\bar{\kappa}_i
=
1
$.
%%%%%%%%%%%%%%%
%%%%%%%%%%%%%%%%%%%%%%%%%%%%%%
%%%%%%%%%%%%%%%

Due to (\ref{BundleSQPmitQCQP:GlobaleKonvergenz:Satz:Lemma3.6:Bewlambdaikski0}) the sequence $\lbrace\lambda^{k,i}s^{k,i}\rbrace_{k\in K_3}$ is convergent and therefore necessarily bounded, i.e.~there exists $C>0$ with $0\leq s^{k,i}\leq\tfrac{C}{\lambda^{k,i}}$ due to
Lemma \ref{BundleSQPmitQCQP:GlobaleKonvergenz:Satz:Lemma3.6:BewgkiCOMPACT}
%%%PAPER%%%(\ref{BundleSQPmitQCQP:GlobaleKonvergenz:Satz:Lemma3.6:BewlambdaikCOMPACT}),
as well as (\ref{Luksan:Def:Lokalitaetsmass}) and therefore $\lbrace s^{k,i}\rbrace_{k\in K_3}$ is bounded due to (\ref{BundleSQPmitQCQP:GlobaleKonvergenz:Satz:Lemma3.6:BewGkiCOMPACT})
%HannesCQ%and
for
$\bar{\lambda}_i\not=0$,
%HannesCQ.
where at least one such $\bar{\lambda}_i$ exists
because $\sum_{i\in I}\bar{\lambda}_i=1$.
Since the locality measure is monotone due to (\ref{Luksan:Satz:LokalitaetsmassMonotonCOMPACT}), $\lbrace s^{k,i}\rbrace_{k\in K_3}$ is monotone. Consequently, $\lbrace s^{k,i}\rbrace_{k\in K_3}$ is convergent for $\bar{\lambda}_i\not=0$, i.e.~there exists $s_i:=\lim_{K_3}s^{k,i}$. Therefore, (\ref{BundleSQPmitQCQP:GlobaleKonvergenz:Satz:Lemma3.6:Bewlambdaikski0}),
(\ref{BundleSQPmitQCQP:GlobaleKonvergenz:Satz:Lemma3.6:BewGkiCOMPACT}) and $\bar{\lambda}_i\not=0$ imply $s_i=0$. Hence, we obtain for $\bar{\lambda}_i\not=0$
that
%LastHS%\begin{equation}
$\lvert\bar{x}-\bar{y}_i\rvert=0$
%LastHS%\label{BundleSQPmitQCQP:GlobaleKonvergenz:Satz:Lemma3.6:Bewlambdaski0Abschaetzung}
%LastHS%\end{equation}
due to
Lemma \ref{BundleSQPmitQCQP:GlobaleKonvergenz:Satz:Lemma3.6:BewgkiCOMPACT},
(\ref{Luksan:Satz:LokalitaetsmassMonotonCOMPACT}),
(\ref{BundleSQPmitQCQP:GlobaleKonvergenz:Satz:Lemma3.6:Bewyki}) and
(\ref{BundleSQPmitQCQP:GlobaleKonvergenz:Satz:Lemma3.6:BewxkCOMPACT}).
%%%%%%%%%%%%%%%
%HannesCQ%
For $\bar{\kappa}>0$
the sequence $\lbrace\kappa^{k,i}s^{k,i}\rbrace_{k\in K_3}$ is convergent
due to (\ref{BundleSQPmitQCQP:GlobaleKonvergenz:Satz:Lemma3.6:Bewlambdaikski0NB})
and therefore necessarily bounded, i.e.~there exists $\hat{C}>0$ with $0\leq\hat{s}^{k,i}\leq\tfrac{\hat{C}}{\kappa^{k,i}}$ due to
Lemma \ref{BundleSQPmitQCQP:GlobaleKonvergenz:Satz:Lemma3.6:BewgkiCOMPACT}
%%%PAPER%%%(\ref{BundleSQPmitQCQP:GlobaleKonvergenz:Satz:Lemma3.6:BewlambdaikNBCOMPACT}),
%%%PAPER%%%(\ref{BundleSQP:Def:LokalitaetsmassNB})
as well as (\ref{Luksan:Def:Lokalitaetsmass}) and therefore $\lbrace\hat{s}^{k,i}\rbrace_{k\in K_3}$ is bounded due to (\ref{BundleSQPmitQCQP:GlobaleKonvergenz:Satz:Lemma3.6:BewGkiCOMPACT})
%HannesCQ%and
for
$\bar{\kappa}_i\not=0$,
%HannesCQ%.
where at least one such $\bar{\kappa}_i$ exists
because $\sum_{i\in I}\bar{\kappa}_i=1$.
Since the locality measure is monotone due to
%%%PAPER%%%(\ref{BundleSQP:Def:LokalitaetsmassNB})
(\ref{Luksan:Def:Lokalitaetsmass})
and (\ref{Luksan:Satz:LokalitaetsmassMonotonCOMPACT}), $\lbrace\hat{s}^{k,i}\rbrace_{k\in K_3}$ is monotone. Consequently, $\lbrace \hat{s}^{k,i}\rbrace_{k\in K_3}$ is convergent for $\bar{\kappa}_i\not=0$, i.e.~there exists $\hat{s}_i:=\lim_{K_3}\hat{s}^{k,i}$. Therefore, (\ref{BundleSQPmitQCQP:GlobaleKonvergenz:Satz:Lemma3.6:Bewlambdaikski0NB}), (\ref{BundleSQPmitQCQP:GlobaleKonvergenz:Satz:Lemma3.6:BewGkiCOMPACT}) and $\bar{\kappa}_i\not=0$ imply $\hat{s}_i=0$. Hence, we obtain
%HannesCQ%
in the case $\bar{\kappa}>0$
for $\bar{\kappa}_i\not=0$
that
%LastHS%\begin{equation}
$\lvert\bar{x}-\bar{y}_i\rvert=0$
%LastHS%\label{BundleSQPmitQCQP:GlobaleKonvergenz:Satz:Lemma3.6:Bewlambdaski0AbschaetzungNB}
%LastHS%\end{equation}
due to
Lemma \ref{BundleSQPmitQCQP:GlobaleKonvergenz:Satz:Lemma3.6:BewgkiCOMPACT},
%%%PAPER%%%(\ref{BundleSQP:Def:LokalitaetsmassNB}),
(\ref{Luksan:Def:Lokalitaetsmass}),
(\ref{Luksan:Satz:LokalitaetsmassMonotonCOMPACT}), (\ref{BundleSQPmitQCQP:GlobaleKonvergenz:Satz:Lemma3.6:Bewyki}) and (\ref{BundleSQPmitQCQP:GlobaleKonvergenz:Satz:Lemma3.6:BewxkCOMPACT}).
%%%%%%%%%%%%%%%
Therefore, if $\bar{\lambda}_i\not=0$ resp.~if $\bar{\kappa}>0$ and $\bar{\kappa}_i\not=0$, then
$
\lvert\bar{x}-\bar{y}_i\rvert=0
$.
%%%%%%%%%%%%%%%
%%%%%%%%%%%%%%%%%%%%%%%%%%%%%%
%%%%%%%%%%%%%%%

If we set
%%%COMPACT%%%\begin{equation*}
$
\bar{q}
:=
\sum_{i\in I}\bar{\lambda}_i\big(\bar{g}_i+\widebar{G}_i(\bar{x}-\bar{y}_i)\big)
$,
%%%COMPACT%%%\komma\quad
$
\bar{s}_i
:=
\left\lbrace
\begin{array}{ll}
\lvert\bar{x}-\bar{y}_i\rvert & \textnormal{for }\bar{\lambda}_i    =0      \\
0                             & \textnormal{for }\bar{\lambda}_i\not=0
\end{array}
\right\rbrace%%%COMPACT%%%.
$
%%%COMPACT%%%\end{equation*}
resp.~if we set
%HannesCQ%there exists $i\in I$ with $\bar{\kappa}_i>0$,
%%%COMPACT%%%\begin{equation*}
$
\bar{q}'
:=
\sum_{i\in I}\bar{\kappa}_i\big(\bar{\hat{g}}_i+\widebar{\hat{G}}_i(\bar{x}-\bar{y}_i)\big)
$,
%%%COMPACT%%%\komma\quad
$
\bar{s}_i'
:=
\left\lbrace
\begin{array}{ll}
\lvert\bar{x}-\bar{y}_i\rvert & \textnormal{for }\bar{\kappa}_i    =0      \\
0                             & \textnormal{for }\bar{\kappa}_i\not=0%%%COMPACT%%%\komma
\end{array}
\right\rbrace%%%COMPACT%%%.
$ in the case $\bar{\kappa}>0$,
%%%COMPACT%%%\end{equation*}
then the assumptions of Proposition \ref{Luksan:GlobaleKonvergenzSatz:Lemma3.2} are satisfied 
and therefore we obtain
the first two desired results.
%HannesCQ%where the second case $\bar{\kappa}_i=0$ for all $i\in I$ holds trivially.
%%%%%%%%%%%%%%%
%HannesCQ%
Since $F$ is locally Lipschitz continuous, $\partial F(\bar{x})$ is in particular bounded due to %Paper%(\ref{AOB:Satz:SubdifferentialKonvexKompakt})
Proposition \ref{PAPER:LocallyBoundedUpperSemiContinous}
and consequently  we obtain
$\bar{\kappa}\partial F(\bar{x})=\lbrace\zeroVector{N}\rbrace$
in the case $\bar{\kappa}=0$.
%%%%%%%%%%%%%%%
%%%%%%%%%%%%%%%%%%%%%%%%%%%%%%
\qedhere
\end{proof}
%%%%%%%%%%%%%%%%%%%%%%%%%%%%%%%%%%%%%%%%%%%%%%%%%%%%%%%%%%%%%%%%%%%%%%%%%%%%%%%%%%%%%%%%%%%%%%%%%%%%%%%%%%%%%%%%%%%%%%%%
\begin{proposition}
%%%Lemma3.6NB
\label{BundleSQPmitQCQP:GlobaleKonvergenz:Satz:Lemma3.6}
Let Assumption \refH{COMPACT:presumptionForAuxiliaryTheorem:Assumption} be satisfied.
Then there exists $\bar{\kappa}\in\Rpos$
%%%COMPACT%%%with
%%%then there $\exists$ an accumulation point $\bar{\kappa}\in\Rpos$ of $\lbrace\bar{\kappa}^{k+1}\rbrace$ with
%%%COMPACT%%%\begin{equation*}
%%%COMPACT%%%        \zeroVector{N}\in\partial f(\bar{x})+\bar{\kappa}\partial F(\bar{x})\komma\quad
%%%COMPACT%%%\bar{\kappa}F(\bar{x})=0                                                    \komma\quad
%%%COMPACT%%%            F(\bar{x})\leq0                                                 \komma
%%%COMPACT%%%\end{equation*}
such that \refh{AOB:Satz:KarushJohnUnglgsNBNichtGlattAlternativformGlgsSystem} holds for $(\bar{x},\bar{\kappa})$,
i.e.~if the sequence of iteration points and (single) Lagrange multipliers is bounded and the sequence of iteration points has an accumulation point with $\sigma(\bar{x})=0$, then this accumulation point is stationary for the optimization problem \refh{BundleSQP:OptProblem}.
\end{proposition}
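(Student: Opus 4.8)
The plan is to assemble the conclusions of Lemmas \refH{COMPACT:lemmaForAuxiliaryTheorem:ConvergenceOfBasicSequences}--\refH{BundleSQPmitQCQP:GlobaleKonvergenz:Satz:Lemma3.6:BewqinSubdifferentialVonf}, all of which hold along the nested subsequence $K_3\subset K_2\subset K_1\subset K\subset\hat{K}$ constructed there, and to verify the three parts of the stationarity system \refh{AOB:Satz:KarushJohnUnglgsNBNichtGlattAlternativformGlgsSystem} for the pair $(\bar{x},\bar{\kappa})$, where $\bar{\kappa}\in\Rpos$ is the limit from \refh{BundleSQPmitQCQP:GlobaleKonvergenz:Satz:Lemma3.6:Bewkappabark+1}. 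Since the analytically substantial work (Carath\'eodory reduction, extraction of convergent subsequences, upper semicontinuity of $\partial f$ and $\partial F$, and the crucial passage $\lvert H_k(\tilde{g}_p^k+\bar{\kappa}^{k+1}\tilde{\hat{g}}_p^k)\rvert\to0$ resting on the boundedness of $\lbrace H_k^{-1}\rbrace$ from Corollary \refH{ADDON:Korollar:WpkUndHkBeschraenktheitUndPositivDefinitheit}) has already been absorbed into those lemmas, the proof itself is essentially a combination step.

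First I would establish the subdifferential inclusion. Abbreviate $\bar{q}:=\sum_{i\in I}\bar{\lambda}_i\big(\bar{g}_i+\widebar{G}_i(\bar{x}-\bar{y}_i)\big)$ and $\bar{q}':=\sum_{i\in I}\bar{\kappa}_i\big(\bar{\hat{g}}_i+\widebar{\hat{G}}_i(\bar{x}-\bar{y}_i)\big)$. Lemma \refH{COMPACT:lemmaForAuxiliaryTheorem:ComplementarityCondition} (via \refh{BundleSQPmitQCQP:GlobaleKonvergenz:Satz:Lemma3.6:BewSum=o}) gives $\bar{q}+\bar{\kappa}\bar{q}'=\zeroVector{N}$, while Lemma \refH{BundleSQPmitQCQP:GlobaleKonvergenz:Satz:Lemma3.6:BewqinSubdifferentialVonf} identifies $\bar{q}\in\partial f(\bar{x})$ and, for $\bar{\kappa}>0$, also $\bar{q}'\in\partial F(\bar{x})$. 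In the case $\bar{\kappa}>0$ these combine at once to $\zeroVector{N}=\bar{q}+\bar{\kappa}\bar{q}'\in\partial f(\bar{x})+\bar{\kappa}\partial F(\bar{x})$.

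The one point needing real care is the case $\bar{\kappa}=0$: here the second summand in \refh{BundleSQPmitQCQP:GlobaleKonvergenz:Satz:Lemma3.6:BewSum=o} drops out, so the equation collapses to $\bar{q}=\zeroVector{N}$, giving $\zeroVector{N}=\bar{q}\in\partial f(\bar{x})$; and since Lemma \refH{BundleSQPmitQCQP:GlobaleKonvergenz:Satz:Lemma3.6:BewqinSubdifferentialVonf} supplies the degenerate identity $\bar{\kappa}\partial F(\bar{x})=\lbrace\zeroVector{N}\rbrace$, one still concludes $\zeroVector{N}\in\partial f(\bar{x})+\bar{\kappa}\partial F(\bar{x})$, with no spurious multiplier being forced on $F$. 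Thus the inclusion in \refh{AOB:Satz:KarushJohnUnglgsNBNichtGlattAlternativformGlgsSystem} holds in both cases, and I would phrase the argument as a short case split so that the vanishing of the constraint term is visibly compensated by the degenerate identity.

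It then remains to verify the other two conditions. The complementarity $\bar{\kappa}F(\bar{x})=0$ is exactly the last assertion of Lemma \refH{COMPACT:lemmaForAuxiliaryTheorem:ComplementarityCondition}. For feasibility, I would use that every iteration point is strictly feasible, $F(x_k)<0$ by \refh{BundleSQPmitQCQP:ZulaessigkeitsbedingungFuerIterationspunkt}; since $x_k\xrightarrow{K}\bar{x}$ by \refh{BundleSQPmitQCQP:GlobaleKonvergenz:Satz:Lemma3.6:BewxkCOMPACT} and $F$ is continuous (being locally Lipschitz), passing to the limit yields $F(\bar{x})=\lim_{K}F(x_k)\leq0$. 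Hence all three conditions of \refh{AOB:Satz:KarushJohnUnglgsNBNichtGlattAlternativformGlgsSystem} are met by $(\bar{x},\bar{\kappa})$, and $\bar{x}$ is stationary for \refh{BundleSQP:OptProblem}.
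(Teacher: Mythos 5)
Your proposal is correct and follows essentially the same route as the paper: the paper's proof likewise verifies $F(\bar{x})\leq0$ from strict feasibility and continuity, takes $\bar{\kappa}F(\bar{x})=0$ from Lemma \refH{COMPACT:lemmaForAuxiliaryTheorem:ComplementarityCondition}, and obtains $\zeroVector{N}\in\partial f(\bar{x})+\bar{\kappa}\partial F(\bar{x})$ by combining \refh{BundleSQPmitQCQP:GlobaleKonvergenz:Satz:Lemma3.6:BewSum=o} with Lemma \refH{BundleSQPmitQCQP:GlobaleKonvergenz:Satz:Lemma3.6:BewqinSubdifferentialVonf}. Your explicit case split for $\bar{\kappa}=0$ versus $\bar{\kappa}>0$ merely spells out what the paper compresses into ``we calculate''.
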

%%%%%%%%%%%%%%%%%%%%%%%%%%%%%%%%%%%%%%%%%%%%%%%%%%%%%%%%%%%%
\begin{proof}
Due to (\ref{BundleSQPmitQCQP:ZulaessigkeitsbedingungFuerIterationspunkt}),
the continuity of $F$ and
(\ref{BundleSQPmitQCQP:GlobaleKonvergenz:Satz:Lemma3.6:BewxkCOMPACT}),
we obtain $F(\bar{x})\leq0$.
Due to Lemma \ref{COMPACT:lemmaForAuxiliaryTheorem:ComplementarityCondition}, the complementarity condition $\bar{\kappa}F(\bar{x})=0$ holds.
Using (\ref{BundleSQPmitQCQP:GlobaleKonvergenz:Satz:Lemma3.6:BewSum=o}) and
Lemma \ref{BundleSQPmitQCQP:GlobaleKonvergenz:Satz:Lemma3.6:BewqinSubdifferentialVonf},
%HannesCQ%
we calculate $\zeroVector{N}\in\partial f(\bar{x})+\bar{\kappa}\partial F(\bar{x})$.
%%%%%%%%%%%%%%%%%%%%%%%%%%%%%%
\qedhere
\end{proof}
%%%%%%%%%%%%%%%%%%%%%%%%%%%%%%%%%%%%%%%%%%%%%%%%%%%%%%%%%%%%%%%%%%%%%%%%%%%%%%%%%%%%%%%%%%%%%%%%%%%%%%%%%%%%%%%%%%%%%%%%
\begin{proposition}
%%%Lemma3.5(ii)NB
\label{AN:proposition:Lemma3.5(ii)}
Let \refh{BundleSQPmitQCQP:GlobaleKonvergenz:Vor:AlgTerminiertNicht} be satisfied. If there exist $\bar{x}\in\mathbb{R}^N$ and $K\subset\lbrace1,2,\dots\rbrace$ with $x\xrightarrow{K}\bar{x}$, then
\begin{equation}
%%%\begin{align}
%%%  f(x_k)&\searrow f(\bar{x})      \label{BundleSQPmitQCQP:Satz:GlobaleKonvergenz:Lemma3.5:f}\\
t_L^kv_k
%%%&
\xrightarrow{K}0\punkt\label{BundleSQPmitQCQP:Satz:GlobaleKonvergenz:Lemma3.5:tLkvk0}
%%%\end{align}
\end{equation}
\end{proposition}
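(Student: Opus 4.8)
The plan is to derive the claim from the monotone decrease of the objective values enforced by the line search. First I would observe that, by the acceptance test in step~1 of the line search (Algorithm \ref{BundleSQP:AlgNB:LinesearchMitQCQP}), the returned step size $t_L^k$ is either $0$ or a value $t$ at which $f(x_k+td_k)\leq f(x_k)+m_Lv_k\cdot t$ was verified; combining this with the update $x_{k+1}=x_k+t_L^kd_k$ from \refh{BundleSQPmitQCQP:Alg:xk+1Update} gives, in every case,
\begin{equation*}
f(x_{k+1})\leq f(x_k)+m_Lt_L^kv_k\punkt
\end{equation*}
Since $v_k\leq0$ by \refh{BundleSQPmitQCQP:Satz:GlobaleKonvergenz:Lemma3.5:wkCOMPACT}, and since $t_L^k\geq0$ and $m_L>0$ by the initialization of Algorithm \ref{BundleSQPmitQCQP:Alg:GesamtAlgMitQCQP}, the summand $m_Lt_L^kv_k$ is non-positive, so $\lbrace f(x_k)\rbrace$ is non-increasing.

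Next I would exploit the hypothesis $x_k\xrightarrow{K}\bar{x}$ together with the continuity of $f$ to obtain $f(x_k)\xrightarrow{K}f(\bar{x})$. Because a non-increasing real sequence that admits a convergent subsequence necessarily converges to the same finite limit, the full sequence $\lbrace f(x_k)\rbrace$ converges (to $f(\bar{x})$); in particular the consecutive differences satisfy $f(x_k)-f(x_{k+1})\longrightarrow0$ as $k\rightarrow\infty$. The displayed inequality rearranges to
\begin{equation*}
0\leq m_Lt_L^k(-v_k)\leq f(x_k)-f(x_{k+1})\komma
\end{equation*}
and letting $k\rightarrow\infty$ while dividing by the fixed constant $m_L>0$ yields $t_L^k(-v_k)\longrightarrow0$, i.e.~$t_L^kv_k\longrightarrow0$ over all $k$. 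This is in fact stronger than the claimed convergence on the subsequence $K$ in \refh{BundleSQPmitQCQP:Satz:GlobaleKonvergenz:Lemma3.5:tLkvk0}.

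The main obstacle is justifying that the \emph{whole} sequence $\lbrace f(x_k)\rbrace$ converges, not merely the subsequence indexed by $K$; this rests entirely on the monotonicity established in the first step. Securing that monotonicity requires checking that the acceptance inequality of the line search holds at every iteration type---serious, short and null steps alike, including the degenerate case $t_L^k=0$, where it reduces to the trivial equality $f(x_{k+1})=f(x_k)$---and that the line search actually terminates, so that $t_L^k$ is well defined (Proposition \ref{proposition:ConvergenceOfLinesearch}). Once these points are in place, the remaining estimates are routine.
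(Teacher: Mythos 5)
Your argument is correct and is essentially the paper's own proof, which simply cites \citet[Proof~of~Lemma~3.5(ii)]{Luksan}: there, too, one uses the line-search descent inequality $f(x_{k+1})\leq f(x_k)+m_Lt_L^kv_k$ (trivially valid when $t_L^k=0$, verified by the acceptance test when $t_L^k>0$), the resulting monotonicity of $\lbrace f(x_k)\rbrace$, the fact that a monotone sequence with a convergent subsequence converges entirely, and the sandwich $0\leq -m_Lt_L^kv_k\leq f(x_k)-f(x_{k+1})\rightarrow0$. No gaps; your observation that the conclusion holds over the whole sequence, not just over $K$, is also consistent with the cited source.
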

%%%%%%%%%%%%%%%%%%%%%%%%%%%%%%%%%%%%%%%%%%%%%%%%%%%%%%%%%%%%
\begin{proof}
\citet[Proof~of~Lemma~3.5(ii)]{Luksan}.
\begin{comment}
Es gilt
\begin{equation*}
x_k\xrightarrow{K}\bar{x}
\overset{\overset{\textnormal{f stetig}}{\downarrow}}{\Longrightarrow}
f(x_k)\longrightarrow f(\bar{x})
\overset{(\ref{Luksan:Abstiegsbedingung})}{\Longrightarrow}
f(x_k)\searrow f(\bar{x})
\punkt
\end{equation*}
Weiters ist
\begin{align*}
(\ref{BundleSQPmitQCQP:Abstiegsbedingung})
~\overset{:m_L\in(0,\tfrac{1}{2})}{\Longleftrightarrow}~
&-t_L^kv_k\leq\tfrac{f(x_k)-f(x_{k+1})}{m_L}\\
\overset{(\ref{BundleSQPmitQCQP:Satz:GlobaleKonvergenz:Lemma3.5:f})}{\Longrightarrow}
0
\hspace{-15pt}
\overset{\overset{t_L\in[0,1]\textnormal{,}v_k<0}{\downarrow}}{\leq}
\hspace{-15pt}
&-t_L^kv_k\leq\tfrac{f(x_k)-f(x_{k+1})}{m_L}
\searrow
\tfrac{f(\bar{x})-f(\bar{x})}{m_L}=0\\
\Longrightarrow~
&\hspace{13pt}t_L^kv_k\longrightarrow0
\punkt
\end{align*}
\end{comment}
\qedhere
\end{proof}
%%%%%%%%%%%%%%%%%%%%%%%%%%%%%%%%%%%%%%%%%%%%%%%%%%%%%%%%%%%%%%%%%%%%%%%%%%%%%%%%%%%%%%%%%%%%%%%%%%%%%%%%%%%%%%%%%%%%%%%%
\begin{proposition}
%%%Lemma3.7NB
\label{BundleSQPmitQCQP:GlobaleKonvergenz:Lemma3.7}
Let \refh{BundleSQPmitQCQP:GlobaleKonvergenz:Vor:AlgTerminiertNicht} be satisfied, let the sequence of (symmetric, positive definite matrices) $\lbrace H_k\rbrace$ be bounded and assume that there exists an infinite subset $K\subset\lbrace1,2,\dots\rbrace$ and $\bar{x}\in\mathbb{R}^N$ with
\begin{equation}
x_k
\xrightarrow{K}\bar{x}
\punkt
\label{BundleSQPmitQCQP:GlobaleKonvergenz:Lemma3.7:Vorx}
\end{equation}
%%%COMPACT%%%\begin{enumerate}
%%%%%%%%%%%%%%%%%%%%%%%%%%%%%%
%%%COMPACT%%%\item
Then we have for all $i\geq0$
\begin{equation}
x_{k+i}\xrightarrow{k\xrightarrow{K}\infty}\bar{x}\punkt
\label{BundleSQPmitQCQP:GlobaleKonvergenz:Lemma3.7:Aussagexk+i}
\end{equation}
%%%%%%%%%%%%%%%%%%%%%%%%%%%%%%
%%%COMPACT%%%\item
If additionally $\sigma(\bar{x})>0$ holds, then
%%%COMPACT%%%the following two properties are satisfied:
%%%COMPACT%%%\begin{enumerate}
%%%%%%%%%%%%%%%
%%%COMPACT%%%\item
we have for all $i\geq0$
\begin{equation}
t_L^{k+i}\xrightarrow{k\xrightarrow{K}\infty}0
\komma
\label{BundleSQPmitQCQP:GlobaleKonvergenz:Lemma3.7:AussagetLk+i}
\end{equation}
%%%%%%%%%%%%%%%
%%%COMPACT%%%\item
and for fixed $\varepsilon_0>0$ and for all fixed $r\geq0$ there exists $\tilde{k}\geq0$ such that
\begin{equation}
w_{k+i}\geq\tfrac{\sigma(\bar{x})}{2}
\komma\quad
t_L^{k+i}<\varepsilon_0
\label{BundleSQPmitQCQP:GlobaleKonvergenz:Lemma3.7:AussagetLk+it0COMPACT}
\end{equation}
for all $k>\tilde{k}$, $k\in K$ and $0\leq i\leq r$.
%%%%%%%%%%%%%%%
%%%COMPACT%%%\end{enumerate}
%%%%%%%%%%%%%%%%%%%%%%%%%%%%%%
%%%COMPACT%%%\end{enumerate}
\end{proposition}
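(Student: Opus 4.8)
The plan is to proceed in two stages, first establishing \refh{BundleSQPmitQCQP:GlobaleKonvergenz:Lemma3.7:Aussagexk+i} unconditionally and then exploiting $\sigma(\bar x)>0$ for the two remaining claims. The engine of the whole argument is that the predicted descents are summable against the step lengths. Every step accepted by the line search (Algorithm \refH{BundleSQP:AlgNB:LinesearchMitQCQP}) satisfies $f(x_{k+1})\leq f(x_k)+m_Lv_kt_L^k$ with $v_k\leq0$ by \refh{BundleSQPmitQCQP:Satz:GlobaleKonvergenz:Lemma3.5:wkCOMPACT}, so $\{f(x_k)\}$ is nonincreasing; since $x_k\xrightarrow{K}\bar x$ and $f$ is continuous, a monotone sequence possessing a convergent subsequence converges to the same limit, whence $f(x_k)\to f(\bar x)$ over \emph{all} $k$. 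Telescoping then yields $\sum_k t_L^k(-v_k)<\infty$, so---exactly as in the proof of Proposition \ref{AN:proposition:Lemma3.5(ii)}, but now for the full sequence rather than merely along $K$---I obtain $t_L^k(-v_k)\to0$ as $k\to\infty$.

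Next I would control the increment $|x_{k+1}-x_k|=t_L^k|d_k|$ by this quantity. Writing $\xi_k:=\tilde g_p^k+\bar\kappa^{k+1}\tilde{\hat g}_p^k$, equation \refh{BundleSQPmitQCQP:Alg:dk} gives $d_k=-H_k^2\xi_k$, while \refh{BundleSQPmitQCQP:Alg:wk} together with the nonnegativity of $\tilde\alpha_p^k$, of $\bar\kappa^{k+1}\tilde A_p^k$, and (by strict feasibility \refh{BundleSQPmitQCQP:ZulaessigkeitsbedingungFuerIterationspunkt}) of $\bar\kappa^{k+1}(-F(x_k))$ shows $|H_k\xi_k|^2\leq2w_k$. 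Hence $|d_k|\leq|H_k|\,|H_k\xi_k|\leq|H_k|\sqrt{2w_k}$, and since $w_k\leq-v_k$ by \refh{proposition:wkvkZusammenhang} and the positive definiteness of $\widebar W_p^k$, I get $|d_k|\leq|H_k|\sqrt{-2v_k}$. Using $t_L^k\in[0,1]$ and the boundedness of $\{H_k\}$, this bounds $|x_{k+1}-x_k|\leq|H_k|\sqrt{2}\sqrt{t_L^k(-v_k)}\to0$. A straightforward induction on $i$ (base case $i=0$ being the hypothesis $x_k\xrightarrow{K}\bar x$, the step adding one more vanishing increment) then delivers \refh{BundleSQPmitQCQP:GlobaleKonvergenz:Lemma3.7:Aussagexk+i}.

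For the second stage I would unfold the definition \refh{BundleSQPmitQCQP:GlobaleKonvergenz:Def:sigma}: $\sigma(\bar x)>0$ forces the existence of $M$ with $\max(w_m,|x_m-\bar x|)>\tfrac{\sigma(\bar x)}{2}$ for all $m\geq M$. Fixing $i$ and applying \refh{BundleSQPmitQCQP:GlobaleKonvergenz:Lemma3.7:Aussagexk+i}, for $k\in K$ large enough $|x_{k+i}-\bar x|\leq\tfrac{\sigma(\bar x)}{2}$, so the maximum above is attained by $w_{k+i}$, i.e.\ $w_{k+i}\geq\tfrac{\sigma(\bar x)}{2}$. Combining $-v_{k+i}\geq w_{k+i}\geq\tfrac{\sigma(\bar x)}{2}$ with $t_L^{k+i}(-v_{k+i})\to0$ gives $t_L^{k+i}\leq\tfrac{2}{\sigma(\bar x)}t_L^{k+i}(-v_{k+i})\to0$, which is \refh{BundleSQPmitQCQP:GlobaleKonvergenz:Lemma3.7:AussagetLk+i}. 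Finally, \refh{BundleSQPmitQCQP:GlobaleKonvergenz:Lemma3.7:AussagetLk+it0COMPACT} is just the uniform version over the finite index set $\{0,\dots,r\}$: for each of the finitely many $i$ the two convergences above hold from some index on, and taking the largest of these indices produces a single $\tilde k$ valid for all $0\leq i\leq r$ simultaneously.

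I expect the main obstacle to be the bookkeeping between statements that hold along the subsequence $K$ and statements that must hold for the full sequence: the induction in the first stage only closes because $t_L^k(-v_k)\to0$ globally, so that each shifted increment $|x_{k+i+1}-x_{k+i}|$ vanishes even though $k+i\notin K$ in general; this is why I prefer to re-extract the summability telescopically rather than quote Proposition \ref{AN:proposition:Lemma3.5(ii)} in its stated $K$-restricted form. The accompanying estimate $|d_k|\leq|H_k|\sqrt{-2v_k}$ is the one genuinely quantitative ingredient, and it is precisely here that the boundedness hypothesis on $\{H_k\}$ is used.
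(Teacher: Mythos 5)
Your proposal is correct and follows essentially the same route as the paper: the bound $\lvert d_k\rvert\leq\lvert H_k\rvert\sqrt{2w_k}\leq\lvert H_k\rvert\sqrt{-2v_k}$ combined with $t_L^kv_k\to0$ (from the monotone descent of $f$) drives the induction for \refh{BundleSQPmitQCQP:GlobaleKonvergenz:Lemma3.7:Aussagexk+i}, and the $\liminf$ definition of $\sigma$ yields the remaining two claims. The only cosmetic difference is that you prove $t_L^{k+i}\to0$ directly from $w_{k+i}\geq\sigma(\bar{x})/2$ whereas the paper argues by contradiction, and your observation that $t_L^k(-v_k)\to0$ holds for the full sequence (not just along $K$) is a legitimate tidying of the bookkeeping the paper handles by re-applying its Lemma 3.5(ii) to each shifted subsequence.
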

%%%%%%%%%%%%%%%%%%%%%%%%%%%%%%%%%%%%%%%%%%%%%%%%%%%%%%%%%%%%
\begin{proof}
%%%%%%%%%%%%%%%%%%%%%%%%%%%%%%
We show (\ref{BundleSQPmitQCQP:GlobaleKonvergenz:Lemma3.7:Aussagexk+i}) by induction: The base case holds for $i=0$ due to assumption (\ref{BundleSQPmitQCQP:GlobaleKonvergenz:Lemma3.7:Vorx}). Now, let the induction hypothesis be satisfied for $i\geq0$. We have
\begin{equation}
d_{k+i}
=
H_{k+i}^2(\tilde{g}_p^{k+i}+\bar{\kappa}^{k+i+1}\tilde{\hat{g}}_p^{k+i})
\label{BundleSQPmitQCQP:GlobaleKonvergenz:Satz:Lemma3.7:Bewxk+1:ProofNEW1}
\end{equation}
due to (\ref{BundleSQPmitQCQP:Alg:dk}), (\ref{BundleSQPmitQCQP:Alg:Defkappabark+1}), (\ref{BundleSQPmitQCQP:Alg:gtildepkCOMPACT}) and (\ref{BundleSQPmitQCQP:Alg:ghattildepkCOMPACT}) as well as
\begin{multline}
\tfrac{1}{2}
\vert H_{k+i}
(
\tilde{g}_p^{k+i}+\bar{\kappa}^{k+i+1}\tilde{\hat{g}}_p^{k+i}
)
\vert^2
\leq\\
d_{k+i}^T\widebar{W}_p^{k+i}d_{k+i}
+\tfrac{1}{2}d_{k+i}^T\big(
\!\!\sum_{j\in J_{k+i}}\!\!
%%%
\lambda_j^{k+i}\widebar{G}_j^{k+i}+\lambda_p^{k+i}\widebar{G}^{k+i}
%%%
\mu_j^{k+i}\widebar{\hat{G}}_j^{k+i}+\mu_p^{k+i}\widebar{\hat{G}}^{k+i}
\big)d_{k+i}
\cbmathMIFFLIN
\label{BundleSQPmitQCQP:GlobaleKonvergenz:Satz:Lemma3.7:Bewxk+1:ProofNEW2}
\end{multline}
due to (\ref{proposition:ConnectionBetweenHkAndWpkNorm}) and the positive definiteness of $\widebar{W}_p^{k+i}$ as well as
\begin{equation}
\alpha_p^{k+i}+\bar{\kappa}^{k+i+1}A_p^{k+i}+\bar{\kappa}^{k+i+1}\big(-F(x_{k+i})\big)
\geq0
\label{BundleSQPmitQCQP:GlobaleKonvergenz:Satz:Lemma3.7:Bewxk+1:ProofNEW2:PAPER}
\end{equation}
due to (\ref{BundleSQPmitQCQP:Alg:alphatildepk}), (\ref{BundleSQPmitQCQP:Alg:Defkappabark+1}), (\ref{BundleSQPmitQCQP:Alg:Atildepk}) and (\ref{BundleSQPmitQCQP:ZulaessigkeitsbedingungFuerIterationspunkt}). Now, using (\ref{BundleSQPmitQCQP:Alg:xk+1Update}), (\ref{BundleSQPmitQCQP:GlobaleKonvergenz:Satz:Lemma3.7:Bewxk+1:ProofNEW1}), (\ref{BundleSQPmitQCQP:GlobaleKonvergenz:Satz:Lemma3.7:Bewxk+1:ProofNEW2}), adding (\ref{BundleSQPmitQCQP:GlobaleKonvergenz:Satz:Lemma3.7:Bewxk+1:ProofNEW2:PAPER}), using
(\ref{BundleSQPmitQCQP:Alg:vk}), the boundedness of $\lbrace H_k\rbrace$ (by assumption), $t_L^{k+i}\in[0,1]$ and (\ref{BundleSQPmitQCQP:Satz:GlobaleKonvergenz:Lemma3.5:tLkvk0}) yields
$
\lvert x_{k+(i+1)}-x_{k+i}\rvert\xrightarrow{K}0
$,
and therefore $\lvert x_{k+(i+1)}-\bar{x}\rvert\xrightarrow{K}0$ follows from the induction hypothesis.
%%%%%%%%%%%%%%%%%%%%%%%%%%%%%%
%%%%%%%%%%%%%%%

We show (\ref{BundleSQPmitQCQP:GlobaleKonvergenz:Lemma3.7:AussagetLk+i}) by contradiction: Suppose (\ref{BundleSQPmitQCQP:GlobaleKonvergenz:Lemma3.7:AussagetLk+i}) is false, i.e.~there exists $i\geq0$, $\bar{t}>0$, $\bar{K}\subset K$: $t_L^{k+i}\geq\bar{t}$ for all $k\in\bar{K}$. Since $0\leq\bar{t}w_{k+i}\leq-t_L^{k+i}v_{k+i}\xrightarrow{\bar{K}}0$ due to
(\ref{BundleSQPmitQCQP:Satz:GlobaleKonvergenz:Lemma3.5:wkCOMPACT}),
(\ref{BundleSQPmitQCQP:Alg:wk}), (\ref{proposition:ConnectionBetweenHkAndWpkNorm}),
$t_L^{k+i}\in[0,1]$, (\ref{BundleSQPmitQCQP:Alg:vk}) and
(\ref{BundleSQPmitQCQP:Satz:GlobaleKonvergenz:Lemma3.5:tLkvk0}),
we have $w_{k+i}\xrightarrow{\bar{K}}0$ and therefore we obtain $\sigma(\bar{x})=0$ due to
(\ref{BundleSQPmitQCQP:GlobaleKonvergenz:Def:sigma}) and
(\ref{BundleSQPmitQCQP:GlobaleKonvergenz:Lemma3.7:Aussagexk+i}),
which is a contradiction to the assumption $\sigma(\bar{x})>0$.
%%%%%%%%%%%%%%%

We show (\ref{BundleSQPmitQCQP:GlobaleKonvergenz:Lemma3.7:AussagetLk+it0COMPACT}): Let $r\geq0$ be fixed and $0\leq i\leq r$. Since we have $\tfrac{\sigma(\bar{x})}{2}<\lim_{K}w_{k+i}$ due to the assumption $\sigma(\bar{x})>0$, (\ref{BundleSQPmitQCQP:GlobaleKonvergenz:Def:sigma}) and (\ref{BundleSQPmitQCQP:GlobaleKonvergenz:Lemma3.7:Aussagexk+i}), because of (\ref{BundleSQPmitQCQP:GlobaleKonvergenz:Lemma3.7:AussagetLk+i}) and because $\varepsilon_0>0$ is a fixed number by assumption, there exist $k_i\geq0$ with $\tfrac{\sigma(\bar{x})}{2}\leq w_{k+i}$ and $t_L^{k+i}<\varepsilon_0$ for all $k>k_i$ with $k\in K$. Now, setting $\tilde{k}:=\max{\lbrace k_i:0\leq i\leq r\rbrace}$ yields (\ref{BundleSQPmitQCQP:GlobaleKonvergenz:Lemma3.7:AussagetLk+it0COMPACT}).
\qedhere
%%%%%%%%%%%%%%%
%%%%%%%%%%%%%%%%%%%%%%%%%%%%%%
\end{proof}
%%%%%%%%%%%%%%%%%%%%%%%%%%%%%%%%%%%%%%%%%%%%%%%%%%%%%%%%%%%%%%%%%%%%%%%%%%%%%%%%%%%%%%%%%%%%%%%%%%%%%%%%%%%%%%%%%%%%%%%%
\begin{proposition}
\label{Luksan:GlobaleKonvergenz:Lemma3.4}
Let $p,g,\Delta\in\mathbb{R}^N$ and $c,u,w,\beta\in\mathbb{R}$, $m\in(0,1)$, $\alpha\geq0$ with
\begin{equation*}
          w=   \tfrac{1}{2}\lvert p\rvert^2+\alpha             \komma\quad
          v=   -(\lvert p\rvert^2+\alpha)                      \komma\quad
-\beta-g^Tp\geq mv                                             \komma\quad
          c=\max{(\lvert g\rvert,\lvert p\rvert,\sqrt{\alpha})}
\end{equation*}
and define $Q:\mathbb{R}\longrightarrow\mathbb{R}$ by
\begin{equation*}
Q(\nu):=\tfrac{1}{2}\lvert\nu g+(1-\nu)(p+\Delta)\rvert^2+\nu\beta+(1-\nu)\alpha
\komma
\end{equation*}
then
\begin{equation*}
\min_{\nu\in[0,1]}{Q(\nu)}
\leq w-w^2\tfrac{(1-m)^2}{8c^2}+4c\lvert\Delta\rvert+\tfrac{1}{2}\lvert\Delta\rvert^2
\punkt
\end{equation*}
\end{proposition}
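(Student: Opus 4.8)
The plan is to treat the $\Delta$-dependent part of $Q$ as a perturbation of the quadratic obtained for $\Delta=0$. Setting $h(\nu):=\nu g+(1-\nu)p$, so that $\nu g+(1-\nu)(p+\Delta)=h(\nu)+(1-\nu)\Delta$, and writing $Q_0(\nu):=\tfrac12\lvert h(\nu)\rvert^2+\nu\beta+(1-\nu)\alpha$, I would expand the square to obtain
\[
Q(\nu)=Q_0(\nu)+(1-\nu)h(\nu)^T\Delta+\tfrac12(1-\nu)^2\lvert\Delta\rvert^2\punkt
\]
Since $\min_{\nu\in[0,1]}Q(\nu)\le Q(\nu_0)$ for any fixed $\nu_0\in[0,1]$, it suffices to exhibit a single good test point $\nu_0$, bound $Q_0(\nu_0)$ by the main term, and control the two remaining $\Delta$-terms separately.

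For the unperturbed part I would regard $Q_0$ as a scalar quadratic $Q_0(\nu)=w+b\nu+a\nu^2$. Evaluating at $\nu=0$ gives $Q_0(0)=\tfrac12\lvert p\rvert^2+\alpha=w$, and differentiating gives the slope $b=Q_0'(0)=p^Tg+\beta-\lvert p\rvert^2-\alpha$. The hypothesis $-\beta-g^Tp\ge mv=-m(\lvert p\rvert^2+\alpha)$ rearranges to $p^Tg+\beta\le m(\lvert p\rvert^2+\alpha)$, whence $b\le-(1-m)(\lvert p\rvert^2+\alpha)\le-(1-m)w$, using $\lvert p\rvert^2+\alpha\ge w$. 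For the curvature, $a=\tfrac12\lvert g-p\rvert^2\le\tfrac12(\lvert g\rvert+\lvert p\rvert)^2\le 2c^2$. I would then plug in the explicit test point $\nu_0:=\tfrac{(1-m)w}{4c^2}$. Its admissibility $\nu_0\in[0,1]$ is the point where the definition of $c$ enters: from $c^2\ge\lvert p\rvert^2$ and $c^2\ge\alpha$ one gets $w=\tfrac12\lvert p\rvert^2+\alpha\le\tfrac32 c^2$, so $\nu_0\le\tfrac38(1-m)<1$, and $\nu_0\ge0$ is clear. Substituting and using $b\le-(1-m)w$, $a\le 2c^2$ yields
\[
Q_0(\nu_0)\le w-\frac{(1-m)^2w^2}{4c^2}+2c^2\cdot\frac{(1-m)^2w^2}{16c^4}=w-\frac{(1-m)^2w^2}{8c^2}\punkt
\]

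Finally I would bound the perturbation at the same point $\nu_0$. Because $h(\nu_0)$ is a convex combination of $g$ and $p$, one has $\lvert h(\nu_0)\rvert\le\max(\lvert g\rvert,\lvert p\rvert)\le c$, and with $0\le1-\nu_0\le1$ this gives $(1-\nu_0)h(\nu_0)^T\Delta\le c\lvert\Delta\rvert$ and $\tfrac12(1-\nu_0)^2\lvert\Delta\rvert^2\le\tfrac12\lvert\Delta\rvert^2$. Adding these to the estimate for $Q_0(\nu_0)$ gives
\[
\min_{\nu\in[0,1]}Q(\nu)\le Q(\nu_0)\le w-\frac{(1-m)^2w^2}{8c^2}+c\lvert\Delta\rvert+\tfrac12\lvert\Delta\rvert^2\komma
\]
and since $c\lvert\Delta\rvert\le 4c\lvert\Delta\rvert$ the claimed inequality follows, the factor $4$ being mere slack.

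The only genuinely delicate step is the choice of $\nu_0$: committing to this explicit value rather than to the true minimizer of $Q_0$ is what lets one avoid all case distinctions, in particular the degenerate linear case $g=p$ (where $a=0$), and the feasibility check $\nu_0\le1$ is exactly where $c=\max(\lvert g\rvert,\lvert p\rvert,\sqrt\alpha)$ must dominate both $\lvert p\rvert$ and $\sqrt\alpha$. In the trivial case $c=0$ all of $p,g,\alpha$ vanish, so $w=0$ and $\min_\nu Q(\nu)=0$, whence the bound holds; hence one may assume $c>0$ throughout so that the division by $c^2$ is legitimate.
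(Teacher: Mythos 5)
Your proof is correct. Every step checks out: the decomposition $Q(\nu)=Q_0(\nu)+(1-\nu)h(\nu)^T\Delta+\tfrac12(1-\nu)^2\lvert\Delta\rvert^2$, the identities $Q_0(0)=w$, $Q_0'(0)=g^Tp+\beta-\lvert p\rvert^2-\alpha\le-(1-m)(\lvert p\rvert^2+\alpha)\le-(1-m)w$, the curvature bound $\tfrac12\lvert g-p\rvert^2\le2c^2$, the admissibility $\nu_0=\tfrac{(1-m)w}{4c^2}\le\tfrac38(1-m)<1$ via $w\le\tfrac32c^2$, and the resulting estimate $Q(\nu_0)\le w-\tfrac{(1-m)^2w^2}{8c^2}+c\lvert\Delta\rvert+\tfrac12\lvert\Delta\rvert^2$, which is even sharper than the claim. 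Note, however, that the paper does not prove this proposition at all: it simply cites Lemma~3.4 of Luk\v{s}an--Vl\v{c}ek, so your argument is a genuinely self-contained substitute. The classical proof there minimizes the unperturbed quadratic exactly and distinguishes whether the unconstrained minimizer falls inside $[0,1]$; your fixed test point $\nu_0$ sidesteps that case analysis (including the degenerate linear case $g=p$) at no cost, which is a real simplification. One small imprecision: in the case $c=0$ you assert $\min_\nu Q(\nu)=0$, but in fact $Q(1)=\beta\le0$ (the hypothesis forces $\beta\le0$ there since $v=0$), so the minimum need not equal zero; the conclusion still holds (the minimum is $\le0\le$ the right-hand side with $w^2/c^2$ read as $0$), and in the paper's application $c>0$ is established separately, so nothing is lost.
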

%%%%%%%%%%%%%%%%%%%%%%%%%%%%%%%%%%%%%%%%%%%%%%%%%%%%%%%%%%%%
\begin{proof}
\citet[Lemma~3.4]{Luksan}.
\qedhere
\end{proof}
%%%%%%%%%%%%%%%%%%%%%%%%%%%%%%%%%%%%%%%%%%%%%%%%%%%%%%%%%%%%%%%%%%%%%%%%%%%%%%%%%%%%%%%%%%%%%%%%%%%%%%%%%%%%%%%%%%%%%%%%
\cbstartDVI
%%%%%%%%%%%%%%%%%%%%%%%%%%%%%%%%%%%%%%%%%%%%%%%%%%%%%%%%%%%%%%%%%%%%%%%%%%%%%%%%%%%%%%%%%%%%%%%%%%%%%%%%%%%%%%%%%%%%%%%%
We introduce the following notation (cf.~\citet[p.~31,~Section~2~resp.~p.~34,~Section~4]{MagnusNeudecker}).
%im pdf-File zum direkten Hinspringen: S.49 bzw.
%im pdf-File zum direkten Hinspringen: S.52
%%%%%%%%%%%%%%%%%%%%%%%%%%%%%%%%%%%%%%%%%%%%%%%%%%%%%%%%%%%%%%%%%%%%%%%%%%%%%%%%%%%%%%%%%%%%%%%%%%%%%%%%%%%%%%%%%%%%%%%%
\begin{definition}
Let $A,B\in\mathbb{R}^{N\times N}$.
%%%%%%%%%%%%%%%%%%%%%%%%%%%%%%
We define the Frobenius norm of $A$ by
$
\lvert A\rvert_{_F}
:=
\big(\!\sum_{i,j=1}^NA_{ij}^2\big)^{\frac{1}{2}}
$
%%%%%%%%%%%%%%%%%%%%%%%%%%%%%%
and we define the vectorization $A_{(:)}$ of $A$
as well as the Kronecker product $A\otimes B$ of $A$ and $B$ by
%%%%%%%%%%%%%%%%%%%%%%%%%%%%%%
%%%COMPACT%%%with
%%%COMPACT%%%\begin{equation*}
%%%COMPACT%%%A
%%%COMPACT%%%:=
%%%COMPACT%%%\left(
%%%COMPACT%%%\begin{smallmatrix}
%%%COMPACT%%%A_{:1} & \dots & A_{:N}
%%%COMPACT%%%\end{smallmatrix}
%%%COMPACT%%%\right)
%%%COMPACT%%%=
%%%COMPACT%%%\left(
%%%COMPACT%%%\begin{smallmatrix}
%%%COMPACT%%%A_{11} & \dots & A_{1N}\\
%%%COMPACT%%%\vdots &       & \vdots\\
%%%COMPACT%%%A_{N1} & \dots & A_{NN}
%%%COMPACT%%%\end{smallmatrix}
%%%COMPACT%%%\right)
%%%COMPACT%%%\punkt
%%%COMPACT%%%\end{equation*}
%%%%%%%%%%%%%%%%%%%%%%%%%%%%%%
\begin{equation}
A_{(:)}
:=
\left(
\begin{smallmatrix}
A_{:1}\\
\vdots\\
A_{:N}
\end{smallmatrix}
\right)
\in\mathbb{R}^{N^2}
\komma\quad
A\otimes B
:=
\left(
\begin{smallmatrix}
A_{11}B & \dots & A_{1N}B\\
\vdots  &       & \vdots \\
A_{N1}B & \dots & A_{NN}B
\end{smallmatrix}
\right)
\in\mathbb{R}^{N^2\times N^2}
\punkt
\label{Magnus:Def:KroneckerProdukt}
\end{equation}
%%%%%%%%%%%%%%%%%%%%%%%%%%%%%%
\end{definition}
%%%%%%%%%%%%%%%%%%%%%%%%%%%%%%%%%%%%%%%%%%%%%%%%%%%%%%%%%%%%%%%%%%%%%%%%%%%%%%%%%%%%%%%%%%%%%%%%%%%%%%%%%%%%%%%%%%%%%%%%
\begin{proposition}
Let
%%%$x\in\mathbb{R}^N$ and
$A,B,C\in\mathbb{R}^{N\times N}$. Then
\begin{equation}
\lvert A\rvert
\leq
\lvert A\rvert_{_F}
\leq
\sqrt{N}\lvert A\rvert
\komma\quad
(ABC)_{(:)}
=
(C^T\otimes A)B_{(:)}
\komma\quad
\lvert A\otimes A\rvert
\leq
N\lvert A\rvert^2
\punkt
\label{KonvergenzErweiterung:Satz:SpektralFrobeniusAbschaetzungCOMPACT}
\end{equation}
\end{proposition}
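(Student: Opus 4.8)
The plan is to treat the three assertions separately, deriving the third from the first so that only two genuinely distinct arguments are needed; all three are classical facts about matrix norms and the Kronecker product, and the computations are routine.

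First I would prove the norm equivalence by passing to singular values. Let $\sigma_1\geq\dots\geq\sigma_N\geq0$ denote the singular values of $A$, so that $\lvert A\rvert=\sigma_1$, while $\lvert A\rvert_{_F}^2=\sum_{i,j=1}^N A_{ij}^2=\mathrm{tr}(A^TA)=\sum_{i=1}^N\sigma_i^2$, the last equality holding because $\mathrm{tr}(A^TA)$ is the sum of the eigenvalues of the positive semidefinite matrix $A^TA$. From the elementary chain $\sigma_1^2\leq\sum_{i=1}^N\sigma_i^2\leq N\sigma_1^2$ both inequalities $\lvert A\rvert\leq\lvert A\rvert_{_F}\leq\sqrt{N}\lvert A\rvert$ follow upon taking square roots; alternatively one may simply invoke the statement recorded in \citet{MagnusNeudecker}.

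For the vectorization identity I would verify it column by column. Writing $e_r$ for the $r$-th standard basis vector of $\mathbb{R}^N$, the $r$-th column of $ABC$ equals $AB(Ce_r)=\sum_{s=1}^N C_{sr}\,AB_{:s}$, and stacking these $N$ columns exhibits $(ABC)_{(:)}$ precisely as the block action of $C^T\otimes A$ on $B_{(:)}$ according to (\ref{Magnus:Def:KroneckerProdukt}); this is the standard identity of \citet{MagnusNeudecker}. Finally, the Kronecker bound follows from the first assertion together with multiplicativity of the Frobenius norm under $\otimes$: applying the left inequality to the $N^2\times N^2$ matrix $A\otimes A$ gives $\lvert A\otimes A\rvert\leq\lvert A\otimes A\rvert_{_F}$, and since the entries of $A\otimes A$ are the products $A_{ij}A_{kl}$ we get $\lvert A\otimes A\rvert_{_F}^2=\sum_{i,j,k,l} A_{ij}^2 A_{kl}^2=\lvert A\rvert_{_F}^4$, i.e. $\lvert A\otimes A\rvert_{_F}=\lvert A\rvert_{_F}^2$. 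Combining this with the right inequality $\lvert A\rvert_{_F}\leq\sqrt{N}\lvert A\rvert$ applied to the $N\times N$ matrix $A$ yields $\lvert A\otimes A\rvert\leq\lvert A\rvert_{_F}^2\leq N\lvert A\rvert^2$, as claimed.

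There is no substantial obstacle here, as each step is a direct computation. The only points that demand minor care are the index bookkeeping in the column-wise check of the vectorization identity, and the correct assignment of matrices in the last step: the dimension-free left half of the equivalence is applied to the large matrix $A\otimes A$, whereas the $\sqrt{N}$ factor in the right half is applied only to the $N\times N$ matrix $A$, so that the final constant comes out as $N$ rather than $N^2$.
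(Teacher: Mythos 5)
Your proposal is correct, and in substance it follows the same standard route as the paper, which simply cites \citet{GolubVanLoanMatrixCompuations} for the norm equivalence, \citet{MagnusNeudecker} for the vectorization identity, and the definition \refh{Magnus:Def:KroneckerProdukt} for the Kronecker bound; you merely supply the computations the paper delegates to references. The singular-value argument for $\lvert A\rvert\leq\lvert A\rvert_{_F}\leq\sqrt{N}\lvert A\rvert$ and the column-by-column check of $(ABC)_{(:)}=(C^T\otimes A)B_{(:)}$ are exactly the textbook proofs. For the third assertion your derivation via $\lvert A\otimes A\rvert\leq\lvert A\otimes A\rvert_{_F}=\lvert A\rvert_{_F}^2\leq N\lvert A\rvert^2$ is a clean and correct alternative to arguing directly from the block structure; it is worth noting that the spectral norm is in fact exactly multiplicative under Kronecker products (the singular values of $A\otimes A$ are the pairwise products of those of $A$), so $\lvert A\otimes A\rvert=\lvert A\rvert^2$ and the factor $N$ is not needed --- but the weaker bound is all the paper uses, and your careful bookkeeping of which matrix each half of the norm equivalence is applied to is exactly the point that keeps the constant at $N$ rather than $N^2$.
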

%%%%%%%%%%%%%%%%%%%%%%%%%%%%%%%%%%%%%%%%%%%%%%%%%%%%%%%%%%%%
\begin{proof}
%%%%%%%%%%%%%%%%%%%%%%%%%%%%%%
%%%\item For (\ref{InfNormKleinerGleichZweiNorm}) s.~\citet[p.~53,~Section~2.2.2]{GolubVanLoanMatrixCompuations}.
%im djvu-File zum direkten Hinspringen: S.80
%%%%%%%%%%%%%%%%%%%%%%%%%%%%%%
The first property of (\ref{KonvergenzErweiterung:Satz:SpektralFrobeniusAbschaetzungCOMPACT}) holds due to \citet[p.~56,~Section~2.3.2]{GolubVanLoanMatrixCompuations},
%im djvu-File zum direkten Hinspringen: S.83
%%%%%%%%%%%%%%%%%%%%%%%%%%%%%%
the second holds due to \citet[p.~35,~Theorem~2]{MagnusNeudecker},
%im pdf-File zum direkten Hinspringen: S.53
%%%%%%%%%%%%%%%%%%%%%%%%%%%%%%
and the third holds due to (\ref{Magnus:Def:KroneckerProdukt}).
\qedhere
%%%%%%%%%%%%%%%%%%%%%%%%%%%%%%
\end{proof}
%%%%%%%%%%%%%%%%%%%%%%%%%%%%%%%%%%%%%%%%%%%%%%%%%%%%%%%%%%%%%%%%%%%%%%%%%%%%%%%%%%%%%%%%%%%%%%%%%%%%%%%%%%%%%%%%%%%%%%%%
Now, we introduce differentiability of matrix valued functions (cf.~\citet[p.~107,~Definition~3]{MagnusNeudecker}).
%im pdf-File zum direkten Hinspringen: S.125
\begin{definition}
Let $A:\mathbb{R}^p\longrightarrow\mathbb{R}^{N\times N}$ and $\mu_0\in\mathbb{R}^p$ be fixed. If there exists $B(\mu_0)\in\mathbb{R}^{N^2\times p}$ with
\begin{equation}
A_{(:)}(\mu_0+\mu)=A_{(:)}(\mu_0)+B(\mu_0)\mu+R_{(:)}(\mu_0,\mu)
\label{Magnus:Def:MatrixFunktionDifferenzierbarkeit}
\end{equation}
for all $\mu\in\mathbb{R}^p$ in a neighborhood of $\mu_0$ and
$
\lim_{\mu\rightarrow0}\tfrac{R_{(:)}(\mu_0,\mu)}{\lvert\mu\rvert}=0
$,
then $A$ is said to be differentiable at $\mu_0$. Furthermore, the $N\times N$-matrix $\diff A(\mu_0,\mu)$ defined by
\begin{equation}
\diff A_{(:)}(\mu_0,\mu):=B(\mu_0)\mu
\in\mathbb{R}^{N^2}
\label{Magnus:Def:MatrixFunktionDifferential}
\end{equation}
is called the (first) differential of $A$ at $\mu_0$ with increment $\mu$ and $B(\mu_0)$ is called the first derivative of $A$ at $\mu_0$.
\end{definition}
%%%%%%%%%%%%%%%%%%%%%%%%%%%%%%%%%%%%%%%%%%%%%%%%%%%%%%%%%%%%%%%%%%%%%%%%%%%%%%%%%%%%%%%%%%%%%%%%%%%%%%%%%%%%%%%%%%%%%%%%
\begin{proposition}
\label{Magnus:Satz:InverseMatrixAbleitung}
Let
$
T:=\lbrace Y:~Y\in\mathbb{R}^{N\times N},~\det Y\not=0\rbrace
$
be the set of non-singular $N\times N$-matrices. If $A:\mathbb{R}^p\longrightarrow T$ is $k$ times (continuously) differentiable, then so is $B:\mathbb{R}^p\longrightarrow T$ defined by
$
B(\mu):=A(\mu)^{-1}
$
and
\begin{equation}
\diff B(\mu_0,\mu)
=
-B(\mu_0)\diff A(\mu_0,\mu)B(\mu_0)
\label{Magnus:Satz:InverseMatrixAbleitungAussage}
\punkt
\end{equation}
\end{proposition}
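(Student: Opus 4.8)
The plan is to reduce everything to differentiating the defining identity $A(\mu)B(\mu)=I$, once the differentiability of $B$ and a product rule for matrix differentials are in hand. First I would settle the differentiability of $B$: by Cramer's rule every entry of $Y^{-1}$, for $Y$ in the open set $T$, is $(-1)^{i+j}$ times a minor of $Y$ divided by $\det Y$; since minors and $\det Y$ are polynomials in the entries of $Y$ and $\det Y\not=0$ on $T$, the inversion map $\iota\colon T\longrightarrow T$, $Y\mapsto Y^{-1}$, is a rational and hence infinitely (continuously) differentiable function of the matrix entries. Because $A$ maps into $T$, the composition $B=\iota\circ A$ is well defined and maps into $T$ as well (the inverse of a nonsingular matrix is nonsingular), and since differentiability in the sense of \refh{Magnus:Def:MatrixFunktionDifferenzierbarkeit} is precisely ordinary differentiability of the vectorized map $A_{(:)}\colon\mathbb{R}^p\longrightarrow\mathbb{R}^{N^2}$, the chain rule shows that $B$ inherits from $A$ the property of being $k$ times (continuously) differentiable.

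Next I would record the product rule: if $P,Q\colon\mathbb{R}^p\longrightarrow\mathbb{R}^{N\times N}$ are differentiable at $\mu_0$, then so is $PQ$ and $\diff(PQ)(\mu_0,\mu)=\diff P(\mu_0,\mu)Q(\mu_0)+P(\mu_0)\diff Q(\mu_0,\mu)$. This follows from \refh{Magnus:Def:MatrixFunktionDifferenzierbarkeit} by writing $P(\mu_0+\mu)Q(\mu_0+\mu)-P(\mu_0)Q(\mu_0)$ as the sum of the two increments weighted by the opposite factor plus the cross term, and checking, via submultiplicativity of the spectral norm together with the first estimate of \refh{KonvergenzErweiterung:Satz:SpektralFrobeniusAbschaetzungCOMPACT}, that the collected remainder is $o(\lvert\mu\rvert)$ while the part linear in $\mu$ is exactly the claimed expression.

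Finally, applying the product rule to the constant map $A(\mu)B(\mu)=I$ and using that the differential of a constant matrix function vanishes gives $\diff A(\mu_0,\mu)B(\mu_0)+A(\mu_0)\diff B(\mu_0,\mu)=\zeroMatrix{N}$. Left-multiplying by $A(\mu_0)^{-1}=B(\mu_0)$ and solving for $\diff B$ yields $\diff B(\mu_0,\mu)=-B(\mu_0)\diff A(\mu_0,\mu)B(\mu_0)$, which is \refh{Magnus:Satz:InverseMatrixAbleitungAussage}. I expect the only genuinely analytic obstacle to be the remainder bookkeeping in the product rule --- in particular showing that the cross term $\bigl(P(\mu_0+\mu)-P(\mu_0)\bigr)\bigl(Q(\mu_0+\mu)-Q(\mu_0)\bigr)$ is $o(\lvert\mu\rvert)$ --- whereas the smoothness of $\iota$ and the final algebraic manipulation are routine.
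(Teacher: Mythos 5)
Your argument is correct: the smoothness of matrix inversion via Cramer's rule, the product rule for matrix differentials (with the cross term $O(\lvert\mu\rvert^2)=o(\lvert\mu\rvert)$), and differentiating $A(\mu)B(\mu)=I$ together give exactly \refh{Magnus:Satz:InverseMatrixAbleitungAussage}. The paper offers no proof of its own but simply cites \citet[p.~156,~Theorem~3]{MagnusNeudecker}, and your derivation is essentially the standard argument given there, so there is nothing to compare beyond noting that you have made the citation self-contained.
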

%%%%%%%%%%%%%%%%%%%%%%%%%%%%%%%%%%%%%%%%%%%%%%%%%%%%%%%%%%%%
\begin{proof}
\citet[p.~156,~Theorem~3]{MagnusNeudecker}.
%im pdf-File zum direkten Hinspringen: S.189
\qedhere
\end{proof}
%%%%%%%%%%%%%%%%%%%%%%%%%%%%%%%%%%%%%%%%%%%%%%%%%%%%%%%%%%%%%%%%%%%%%%%%%%%%%%%%%%%%%%%%%%%%%%%%%%%%%%%%%%%%%%%%%%%%%%%%
\begin{proposition}
\label{AN:proposition:MeanValueTheoremForVectorValuedFunctions}
Let $f:\Omega\subseteq\mathbb{R}\longrightarrow\mathbb{R}^q$ (with an open interval $\Omega$) be continuously differentiable
%%%%%%%%%%%%%%%%%%%%%%%%%%%%%%
and let
$
\omega
:=
\sup_{z\in\Omega}{\lvert f'(z)\rvert}
<
\infty
$,
then
\begin{equation}
\lvert f(y)-f(x)\rvert
\leq
\omega\lvert y-x\rvert
\label{Heuser:Satz:MeanValueVectorValuedFolgerung}
\end{equation}
for all $x,y\in\Omega$ (i.e.~$f$ is Lipschitz continuous on $\Omega$).
%%%PaperB%%%due to Definition \ref{AOB:Def:LipschitzstetigkeitUndCo}).
%%%%%%%%%%%%%%%%%%%%%%%%%%%%%%
\end{proposition}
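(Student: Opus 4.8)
The plan is to reduce the vector-valued statement to the ordinary scalar mean value theorem by pairing $f$ against a fixed direction. First I would dispose of the trivial cases: if $x=y$ or if $f(y)=f(x)$, then (\ref{Heuser:Satz:MeanValueVectorValuedFolgerung}) holds with the right side (respectively the left side) equal to zero. Hence I may assume $x\neq y$, and since the claimed inequality is symmetric in $x$ and $y$, without loss of generality $x<y$, as well as $f(y)\neq f(x)$.

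Next I would introduce the scalar auxiliary function $\phi:[x,y]\longrightarrow\mathbb{R}$ defined by $\phi(t):=\big(f(y)-f(x)\big)^Tf(t)$. Since $[x,y]\subseteq\Omega$ and $f$ is continuously differentiable while the vector $f(y)-f(x)$ is constant, $\phi$ is continuously differentiable with $\phi'(t)=\big(f(y)-f(x)\big)^Tf'(t)$. Applying the classical (scalar) mean value theorem to $\phi$ on $[x,y]$ then yields some $\xi\in(x,y)$ with $\phi(y)-\phi(x)=\phi'(\xi)(y-x)$.

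It then remains to evaluate both sides. On the left, $\phi(y)-\phi(x)=\big(f(y)-f(x)\big)^T\big(f(y)-f(x)\big)=\lvert f(y)-f(x)\rvert^2$. On the right, the Cauchy--Schwarz inequality together with the definition of $\omega$ gives $\phi'(\xi)=\big(f(y)-f(x)\big)^Tf'(\xi)\leq\lvert f(y)-f(x)\rvert\cdot\lvert f'(\xi)\rvert\leq\lvert f(y)-f(x)\rvert\cdot\omega$. Combining these with $y-x=\lvert y-x\rvert>0$ produces $\lvert f(y)-f(x)\rvert^2\leq\omega\lvert f(y)-f(x)\rvert\lvert y-x\rvert$, and dividing by $\lvert f(y)-f(x)\rvert>0$ gives the assertion.

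The only conceptual point — and hence what I would flag as the crux — is that the mean value theorem in equality form fails for vector-valued functions (there is in general no single $\xi$ with $f(y)-f(x)=f'(\xi)(y-x)$), so one cannot simply differentiate $f$ directly. The projection onto the fixed vector $f(y)-f(x)$ circumvents this by turning the problem scalar; an equivalent route would be to write $f(y)-f(x)=\int_x^yf'(t)\,\diff t$ componentwise and then estimate $\lvert\int_x^yf'(t)\,\diff t\rvert\leq\int_x^y\lvert f'(t)\rvert\,\diff t\leq\omega\lvert y-x\rvert$ via the triangle inequality for vector-valued integrals. Both are routine, and no earlier result from the paper is needed.
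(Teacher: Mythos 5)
Your proof is correct. The paper itself does not argue this proposition at all --- its ``proof'' is a one-line citation of the mean value theorem for vector-valued functions in Heuser --- and your projection argument with $\phi(t):=\big(f(y)-f(x)\big)^Tf(t)$, the scalar mean value theorem, and Cauchy--Schwarz is precisely the standard derivation of that cited inequality (your integral alternative works equally well), so you have in effect supplied the details the paper outsources.
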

%%%%%%%%%%%%%%%%%%%%%%%%%%%%%%%%%%%%%%%%%%%%%%%%%%%%%%%%%%%%
\begin{proof}
This is a direct consequence of the mean value theorem for vector valued functions (cf., e.g., \citet[p.~278,~167.4~Mittelwertsatz~f\"ur~vektorwertige~Funktionen]{Heuser2}).
\qedhere
\end{proof}
%%%%%%%%%%%%%%%%%%%%%%%%%%%%%%%%%%%%%%%%%%%%%%%%%%%%%%%%%%%%%%%%%%%%%%%%%%%%%%%%%%%%%%%%%%%%%%%%%%%%%%%%%%%%%%%%%%%%%%%%
\begin{proposition}
\label{KonvergenzErweiterung:Satz:MatrixAbschaetzungMitHigham:ZkAbschaetzungSatz}
Let $\lbrace\bar{\kappa}^{k+1}\rbrace$ be bounded and let $\lbrace(\widebar{W}_p^k)^{-\mathalf}\rbrace$ be bounded and uniformly positive definite. For $k\geq1$ we define $Z_k:\Rpos\longrightarrow\mathbb{R}^{N\times N}$
\begin{equation}
Z_k(s)
:=
%MIFFLIN%\big(\widebar{W}_p^k+s\widebar{\hat{G}}^k\big)
%%%
\big(\widebar{W}_p^k
+\widebar{G}^k
+s\widebar{\hat{G}}^k\big)
%%%
^{-\mathalf}
\punkt
\cbmathMIFFLIN
\label{KonvergenzErweiterung:Satz:MatrixAbschaetzungMitHigham:DefZk}
\end{equation}
Then we have for all $k\geq1$
\begin{equation}
\lvert Z_k(\bar{\kappa}^{k+2})-Z_k(\bar{\kappa}^{k+1})\rvert
\leq
C_5\lvert\bar{\kappa}^{k+2}-\bar{\kappa}^{k+1}\rvert
\komma\quad
0
\leq
C_5<\infty
\komma
\label{KonvergenzErweiterung:Satz:MatrixAbschaetzungMitHigham:ZkAbschaetzungCOMPACT}
\end{equation}
where
$C_5:=C_2C_4$,
$C_4:=N\WpInverseBound^2C_3$,
$C_3:=N^{\mathalf}\bar{\hat{C}}_G$
and $C_2$ is a positive constant.
%LastHS%\begin{equation}
%LastHS%0<C_2<\infty
%LastHS%\punkt
%LastHS%\label{KonvergenzErweiterung:Satz:MatrixAbschaetzungMitHigham:Bew:DefC2}
%LastHS%\end{equation}
\end{proposition}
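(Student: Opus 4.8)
The plan is to decompose the estimate into a ``square-root part'', controlled by the Higham inequality \refh{KonvergenzErweiterung:Satz:Higham}, and an ``inverse part'', controlled by the mean value theorem together with the explicit derivative of the matrix inverse. Write $M_k(s):=\widebar{W}_p^k+\widebar{G}^k+s\widebar{\hat{G}}^k$, so that $Z_k(s)=M_k(s)^{-\mathalf}=\big(M_k(s)^{-1}\big)^{\mathalf}$ by \refh{KonvergenzErweiterung:Satz:MatrixAbschaetzungMitHigham:DefZk}. Since $\widebar{W}_p^k$ is positive definite and $\widebar{G}^k,\widebar{\hat{G}}^k$ are positive definite, \refh{HornJohnson:Erweiterung:PositivDefinitSummeAbschaetzung} shows that $M_k(s)$ is symmetric positive definite for every $s\in\Rpos$, so all objects below are well defined and symmetric, and $M_k$ is affine in $s$ with constant differential $\diff M_k=\widebar{\hat{G}}^k$.

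First I would apply \refh{KonvergenzErweiterung:Satz:Higham} with $A:=M_k(\bar{\kappa}^{k+2})^{-1}$ and $B:=M_k(\bar{\kappa}^{k+1})^{-1}$, which gives
$$
\lvert Z_k(\bar{\kappa}^{k+2})-Z_k(\bar{\kappa}^{k+1})\rvert
\leq
\tfrac{1}{(\lambdamin(A))^{\mathalf}+(\lambdamin(B))^{\mathalf}}\lvert A-B\rvert.
$$
To bound the prefactor by a constant $C_2$, I would use that $\lvert M_k(\bar{\kappa}^{k+1})\rvert\leq C_1$ by \refh{KonvergenzErweiterung:Satz:MatrixAbschaetzungMitHigham:MatritzenBeschraenktheit} (and the analogous bound for $M_k(\bar{\kappa}^{k+2})$, which follows from the triangle inequality since $\lbrace\bar{\kappa}^{k+1}\rbrace$, hence $\lbrace\bar{\kappa}^{k+2}\rbrace$, is bounded and $\widebar{W}_p^k,\widebar{G}^k,\widebar{\hat{G}}^k$ obey the initialization bounds). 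Because $\lambdamin(A)$ and $\lambdamin(B)$ are the reciprocals of the largest eigenvalues of $M_k(\cdot)$, they are bounded away from zero uniformly in $k$ via \refh{GolubVanLoan:Satz:SpektralNormVonPositivDefiniterMatrix}, so the prefactor is at most some positive constant $C_2$.

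The core is the bound on $\lvert A-B\rvert=\lvert M_k(\bar{\kappa}^{k+2})^{-1}-M_k(\bar{\kappa}^{k+1})^{-1}\rvert$. Here I would invoke the Lipschitz estimate \refh{Heuser:Satz:MeanValueVectorValuedFolgerung} of Proposition \refH{AN:proposition:MeanValueTheoremForVectorValuedFunctions} for the continuously differentiable (by Proposition \refH{Magnus:Satz:InverseMatrixAbleitung}) vectorized map $s\mapsto\big(M_k(s)^{-1}\big)_{(:)}$ on an open interval containing $\bar{\kappa}^{k+1}$ and $\bar{\kappa}^{k+2}$. Since $M_k$ is affine, \refh{Magnus:Satz:InverseMatrixAbleitungAussage} yields
$$
\tfrac{\diff}{\diff s}M_k(s)^{-1}=-M_k(s)^{-1}\widebar{\hat{G}}^kM_k(s)^{-1},
$$
so that $\lvert A-B\rvert\leq\lvert A-B\rvert_{_F}\leq\omega_k\lvert\bar{\kappa}^{k+2}-\bar{\kappa}^{k+1}\rvert$ with $\omega_k:=\sup_s\lvert M_k(s)^{-1}\widebar{\hat{G}}^kM_k(s)^{-1}\rvert_{_F}$, using the first estimate in \refh{KonvergenzErweiterung:Satz:SpektralFrobeniusAbschaetzungCOMPACT} to pass between spectral and Frobenius norms.

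Finally I would show $\omega_k\leq C_4$. By symmetry of $M_k(s)^{-1}$ and the second identity in \refh{KonvergenzErweiterung:Satz:SpektralFrobeniusAbschaetzungCOMPACT}, $\big(M_k(s)^{-1}\widebar{\hat{G}}^kM_k(s)^{-1}\big)_{(:)}=\big(M_k(s)^{-1}\otimes M_k(s)^{-1}\big)(\widebar{\hat{G}}^k)_{(:)}$, whose Euclidean norm is at most $\lvert M_k(s)^{-1}\otimes M_k(s)^{-1}\rvert\cdot\lvert\widebar{\hat{G}}^k\rvert_{_F}$. The third estimate in \refh{KonvergenzErweiterung:Satz:SpektralFrobeniusAbschaetzungCOMPACT} bounds the Kronecker factor by $N\lvert M_k(s)^{-1}\rvert^2$, and since $M_k(s)\succeq\widebar{W}_p^k$ by \refh{HornJohnson:Erweiterung:PositivDefinitSummeAbschaetzung}, estimates \refh{HornJohnson:Erweiterung:InverseSpektralNormAbschaetzung} and \refh{KonvergenzErweiterung:Satz:InverseVonWkBeschraenkt} give $\lvert M_k(s)^{-1}\rvert\leq\WpInverseBound$, so this factor is $\leq N\WpInverseBound^2$; the remaining factor is $\lvert\widebar{\hat{G}}^k\rvert_{_F}\leq\sqrt{N}\bar{\hat{C}}_G=C_3$ by the first estimate in \refh{KonvergenzErweiterung:Satz:SpektralFrobeniusAbschaetzungCOMPACT} and the initialization bound. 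Multiplying out gives $\omega_k\leq N\WpInverseBound^2C_3=C_4$, and combining with the prefactor bound yields \refh{KonvergenzErweiterung:Satz:MatrixAbschaetzungMitHigham:ZkAbschaetzungCOMPACT} with $C_5=C_2C_4$. I expect the main obstacle to be the spectral/Frobenius bookkeeping: because the mean value theorem must be applied to the vectorized map, the derivative has to be measured in the Frobenius norm, and the norm conversions (each contributing a factor $\sqrt{N}$ or $N$) are precisely what produce the dimensional constants $C_3$ and $C_4$. The clean separation is obtained by peeling off the square root with \refh{KonvergenzErweiterung:Satz:Higham} so that only the matrix inverse, whose derivative is explicit, needs to be differentiated.
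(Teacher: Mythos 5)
Your proposal is correct and follows essentially the same route as the paper's proof: peel off the matrix square root with the Higham perturbation bound \refh{KonvergenzErweiterung:Satz:Higham} (yielding the constant $C_2$ from the uniform lower eigenvalue bound supplied by \refh{KonvergenzErweiterung:Satz:MatrixAbschaetzungMitHigham:MatritzenBeschraenktheit}), then Lipschitz-estimate the inverse via the Magnus--Neudecker derivative formula, the Kronecker-product norm bound, and $\lvert M_k(s)^{-1}\rvert\leq\WpInverseBound$ from \refh{HornJohnson:Erweiterung:InverseSpektralNormAbschaetzung} and \refh{KonvergenzErweiterung:Satz:InverseVonWkBeschraenkt}, producing exactly the constants $C_3$, $C_4$, $C_5$. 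Your explicit remark that the eigenvalue bound is also needed at $\bar{\kappa}^{k+2}$ is a small point the paper leaves implicit, but otherwise the two arguments coincide.
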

%%%%%%%%%%%%%%%%%%%%%%%%%%%%%%%%%%%%%%%%%%%%%%%%%%%%%%%%%%%%
\begin{proof}
%%%%%%%%%%%%%%%%%%%%%%%%%%%%%%
We define for all $k\geq1$
\begin{equation}
Y_k(s)
:=
%MIFFLIN%\big(\widebar{W}_p^k+s\widebar{\hat{G}}^k\big)
%%%
\big(\widebar{W}_p^k
+\widebar{G}^k
+s\widebar{\hat{G}}^k\big)
%%%
^{-1}
\cbmathMIFFLIN
\label{KonvergenzErweiterung:Satz:MatrixAbschaetzungMitHigham:Bew:DefYk}
\end{equation}
and therefore we have $\lvert Y_k(\bar{\kappa}^{k+1})^{-1}\rvert\leq C_1$ for all $k\geq1$ due to (\ref{KonvergenzErweiterung:Satz:MatrixAbschaetzungMitHigham:Bew:DefYk}) and (\ref{KonvergenzErweiterung:Satz:MatrixAbschaetzungMitHigham:MatritzenBeschraenktheit}), which is equivalent to $\lbrace Y_k(\bar{\kappa}^{k+1})\rbrace$ being uniformly positive definite due to (\ref{ADDON:Satz:AkGlmpdGdwAkInvereseBeschraenkt}), i.e.~there exists $\tilde{C}_2>0$ with $\lambdamin(Y_{k}(\bar{\kappa}^{k+1}))\geq\tilde{C}_2$. Consequently, we obtain for all $k\geq1$
\begin{equation*}
\tfrac{1}{
(\lambdamin(Y_{k}(\bar{\kappa}^{k+2})))^{\mathalf}
+
(\lambdamin(Y_{k}(\bar{\kappa}^{k+1})))^{\mathalf}
}
\leq
\tfrac{1}{2}\tilde{C}_2^{-\mathalf}
\end{equation*}
and hence we estimate for all $k\geq1$
\begin{equation}
\lvert Y_k(\bar{\kappa}^{k+2})^{\mathalf}-Y_k(\bar{\kappa}^{k+1})^{\mathalf}\rvert
\leq
C_2\lvert Y_k(\bar{\kappa}^{k+2})-Y_k(\bar{\kappa}^{k+1})\rvert
\komma
\label{KonvergenzErweiterung:Satz:MatrixAbschaetzungMitHigham:Bew:YkAussage}
\end{equation}
due to (\ref{KonvergenzErweiterung:Satz:Higham}),
where we set $C_2:=\tfrac{1}{2}\tilde{C}_2^{-\mathalf}>0$.
%%%%%%%%%%%%%%%%%%%%%%%%%%%%%%

Defining
\begin{equation}
X_k(s):=\widebar{W}_p^k+\widebar{G}^k+U_k(s)
\komma\quad
U_k(s):=s\widebar{\hat{G}}^k
\komma\quad
\hat{U}_k:=\widebarMatrix{\hat{G}}_{(:)}^k
\label{KonvergenzErweiterung:Satz:MatrixAbschaetzungMitHigham:Bew:DefXkCOMPACT}
\end{equation}
for $k\geq1$, we calculate $U_{k,(:)}(t-s)=\hat{U}_k(t-s)$ due to (\ref{KonvergenzErweiterung:Satz:MatrixAbschaetzungMitHigham:Bew:DefXkCOMPACT}). Therefore, we have $X_k(t)=X_k(s)+U_k(t-s)$ for all $k\geq1$ and for all $s,t\in\mathbb{R}$ due to (\ref{KonvergenzErweiterung:Satz:MatrixAbschaetzungMitHigham:Bew:DefXkCOMPACT}), which is equivalent to $X_{k,(:)}(t)=X_{k,(:)}(s)+\hat{U}_k(t-s)$. Consequently, (\ref{Magnus:Def:MatrixFunktionDifferenzierbarkeit}) and (\ref{Magnus:Def:MatrixFunktionDifferential}) imply that the differential of $X_k$ at $s$ is given by
\begin{equation}
\diff X_{k,(:)}(s,t-s)=\hat{U}_k(t-s)
\label{KonvergenzErweiterung:Satz:MatrixAbschaetzungMitHigham:Bew:XkDifferential}
\end{equation}
(with $R_{k}(s,t-s)\equiv\zeroMatrix{N}$) and that the derivative of $X_k$ at $s$ is constant, which implies that $X_k$ is continuously differentiable. Furthermore, we estimate for all $k\geq1$
\begin{equation}
\lvert\hat{U}_k\rvert
\leq
C_3
\komma
\label{KonvergenzErweiterung:Satz:MatrixAbschaetzungMitHigham:Bew:C3Abschaetzung}
\end{equation}
due to
(\ref{KonvergenzErweiterung:Satz:MatrixAbschaetzungMitHigham:Bew:DefXkCOMPACT}),
(\ref{KonvergenzErweiterung:Satz:SpektralFrobeniusAbschaetzungCOMPACT}) and
the initialization of Algorithm \ref{BundleSQPmitQCQP:Alg:GesamtAlgMitQCQP}.
%%%%%%%%%%%%%%%%%%%%%%%%%%%%%%

Since $\widebar{\hat{G}}^k$ is symmetric and positive definite, we obtain that $U_k(s)$ is symmetric and positive semidefinite for all $s\geq0$ (cf.~(\ref{KonvergenzErweiterung:Satz:MatrixAbschaetzungMitHigham:Bew:DefXkCOMPACT})). Consequently, we have $\widebar{W}_p^k\preceq X_k(s)$ due to
\cbstartMIFFLIN
the symmetry and the positive definiteness of $\widebar{G}^k$,
\cbendMIFFLIN
(\ref{HornJohnson:Erweiterung:PositivDefinitSummeAbschaetzung}) and (\ref{KonvergenzErweiterung:Satz:MatrixAbschaetzungMitHigham:Bew:DefXkCOMPACT}). Therefore, we estimate for all $k\geq1$ and for all $s\geq0$
\begin{equation}
\lvert Y_k(s)\rvert
\leq
%\eta_0^{-\mathalf}
\WpInverseBound
\punkt
\label{KonvergenzErweiterung:Satz:MatrixAbschaetzungMitHigham:Bew:SupYkAbschaetzung}
\end{equation}
due to (\ref{KonvergenzErweiterung:Satz:MatrixAbschaetzungMitHigham:Bew:DefYk}),
(\ref{KonvergenzErweiterung:Satz:MatrixAbschaetzungMitHigham:Bew:DefXkCOMPACT}),
(\ref{HornJohnson:Erweiterung:InverseSpektralNormAbschaetzung}) and
(\ref{KonvergenzErweiterung:Satz:InverseVonWkBeschraenkt}).
%%%%%%%%%%%%%%%%%%%%%%%%%%%%%%

For $k\geq1$ $Y_k$ we define
\begin{equation}
V_k(s)
:=
\big(Y_k(s)\otimes Y_k(s)\big)\hat{U}_k
%%%PaperB%%%\in\mathbb{R}^{N^2}
\punkt
\label{KonvergenzErweiterung:Satz:MatrixAbschaetzungMitHigham:Bew:DefVk}
\end{equation}
Since $X_k$ is continuously differentiable (cf.~(\ref{KonvergenzErweiterung:Satz:MatrixAbschaetzungMitHigham:Bew:XkDifferential})), Proposition \ref{Magnus:Satz:InverseMatrixAbleitung} yields the continuous differentiability of $Y_k(s)=X_k(s)^{-1}$ due to (\ref{KonvergenzErweiterung:Satz:MatrixAbschaetzungMitHigham:Bew:DefYk}) and
(\ref{KonvergenzErweiterung:Satz:MatrixAbschaetzungMitHigham:Bew:DefXkCOMPACT}),
as well as $\diff Y_{k,(:)}(s,t-s)=V_k(s)(t-s)$ due to
(\ref{Magnus:Satz:InverseMatrixAbleitungAussage}),
(\ref{KonvergenzErweiterung:Satz:SpektralFrobeniusAbschaetzungCOMPACT}),
$Y_k(s)\in\Sym{N}$,
(\ref{KonvergenzErweiterung:Satz:MatrixAbschaetzungMitHigham:Bew:XkDifferential}) and
(\ref{KonvergenzErweiterung:Satz:MatrixAbschaetzungMitHigham:Bew:DefVk})
and therefore (\ref{Magnus:Def:MatrixFunktionDifferential}) implies that $V_k(s)$ is the derivative of $Y_k$ at $s$. Furthermore, we estimate for all $k\geq1$
\begin{equation}
\sup_{s\geq0}{\lvert V_k(s)\rvert}
\leq
C_4
\punkt
\label{KonvergenzErweiterung:Satz:MatrixAbschaetzungMitHigham:Bew:supNormVonVkAbschaetzung}
\end{equation}
due to (\ref{KonvergenzErweiterung:Satz:MatrixAbschaetzungMitHigham:Bew:DefVk}), (\ref{KonvergenzErweiterung:Satz:SpektralFrobeniusAbschaetzungCOMPACT}), (\ref{KonvergenzErweiterung:Satz:MatrixAbschaetzungMitHigham:Bew:C3Abschaetzung}) and (\ref{KonvergenzErweiterung:Satz:MatrixAbschaetzungMitHigham:Bew:SupYkAbschaetzung}).
%%%%%%%%%%%%%%%%%%%%%%%%%%%%%%

Since $Y_k$ is continuously differentiable for all $s,t\in S:=\lbrace\xi\in\mathbb{R}:\xi\geq0\rbrace$
(note that $S$ is an interval) and since the derivative of $Y_k$ at $s$ is given by $V_k(s)$ (cf.~(\ref{KonvergenzErweiterung:Satz:MatrixAbschaetzungMitHigham:Bew:DefVk})) and since the norm of the derivative $\lvert V_k(s)\rvert$ is bounded on $S$ due to (\ref{KonvergenzErweiterung:Satz:MatrixAbschaetzungMitHigham:Bew:supNormVonVkAbschaetzung}), we obtain
\begin{equation}
\lvert Y_{k,(:)}(t)-Y_{k,(:)}(s)\rvert
\leq
C_4\lvert t-s\rvert
\label{KonvergenzErweiterung:Satz:MatrixAbschaetzungMitHigham:Bew:YkLipschitzAbschaetzung}
\end{equation}
for all $s,t\in S$ and for all $k\geq1$ due to (\ref{Heuser:Satz:MeanValueVectorValuedFolgerung}).
%%%%%%%%%%%%%%%%%%%%%%%%%%%%%%

Now, we estimate for all $k\geq1$
\begin{equation*}
\lvert Z_k(\bar{\kappa}^{k+2})-Z_k(\bar{\kappa}^{k+1})\rvert
\leq
C_5\lvert\bar{\kappa}^{k+2}-\bar{\kappa}^{k+1}\rvert
\end{equation*}
due to
(\ref{KonvergenzErweiterung:Satz:MatrixAbschaetzungMitHigham:DefZk}),
(\ref{KonvergenzErweiterung:Satz:MatrixAbschaetzungMitHigham:Bew:DefYk}),
(\ref{KonvergenzErweiterung:Satz:MatrixAbschaetzungMitHigham:Bew:YkAussage}),
(\ref{KonvergenzErweiterung:Satz:SpektralFrobeniusAbschaetzungCOMPACT}) and
(\ref{KonvergenzErweiterung:Satz:MatrixAbschaetzungMitHigham:Bew:YkLipschitzAbschaetzung}).
%%%%%%%%%%%%%%%%%%%%%%%%%%%%%%

Furthermore, we obtain $C_5=C_2N^{\frac{3}{2}}\WpInverseBound^2\bar{\hat{C}}_G$ and therefore
%LastHS%(\ref{KonvergenzErweiterung:Satz:MatrixAbschaetzungMitHigham:Bew:DefC2}),
the fact that $C_2$ is a positive constant due to (\ref{KonvergenzErweiterung:Satz:MatrixAbschaetzungMitHigham:Bew:YkAussage}), 
the fact that $N\geq1$ is a fixed finite natural number, combining (\ref{KonvergenzErweiterung:Satz:InverseVonWkBeschraenkt}) with the positive definiteness of $\widebar{W}_p^k$, and the initialization of Algorithm \ref{BundleSQPmitQCQP:Alg:GesamtAlgMitQCQP} yield (\ref{KonvergenzErweiterung:Satz:MatrixAbschaetzungMitHigham:ZkAbschaetzungCOMPACT}).
\qedhere
%%%%%%%%%%%%%%%%%%%%%%%%%%%%%%
\end{proof}
%%%%%%%%%%%%%%%%%%%%%%%%%%%%%%%%%%%%%%%%%%%%%%%%%%%%%%%%%%%%%%%%%%%%%%%%%%%%%%%%%%%%%%%%%%%%%%%%%%%%%%%%%%%%%%%%%%%%%%%%
\cbendDVI
%%%%%%%%%%%%%%%%%%%%%%%%%%%%%%%%%%%%%%%%%%%%%%%%%%%%%%%%%%%%%%%%%%%%%%%%%%%%%%%%%%%%%%%%%%%%%%%%%%%%%%%%%%%%%%%%%%%%%%%%
From now on let the following assumption be satisfied.
%%%%%%%%%%%%%%%%%%%%%%%%%%%%%%%%%%%%%%%%%%%%%%%%%%%%%%%%%%%%%%%%%%%%%%%%%%%%%%%%%%%%%%%%%%%%%%%%%%%%%%%%%%%%%%%%%%%%%%%%
\begin{presumption}
\label{COMPACT:presumption:AssumptionForMainTheorem}
Let \refh{BundleSQPmitQCQP:GlobaleKonvergenz:Vor:AlgTerminiertNicht} be satisfied. Furthermore, let the sequence $\lbrace(x_k,\bar{\kappa}^{k+1})\rbrace$ be bounded, let the sequence (of symmetric, positive definite matrices)
%%%$\lbrace H_k\rbrace$
\cbstartDVI
$\lbrace(\widebar{W}_p^k)^{-\mathalf}\rbrace$ be bounded as well as uniformly positive definite
\cbendDVI
and let $\bar{x}\in\mathbb{R}^n$ be any accumulation point of $\lbrace x_k\rbrace$,
\cbstartDVI
i.e.~there exists (an infinite set) $K\subset\lbrace1,2,\dots\rbrace$ with
\begin{equation}
x_k\xrightarrow{K}\bar{x}
\komma
\label{BundleSQPmitQCQP:GlobaleKonvergenz:Satz:Thm3.8:Bew:xkGegenxbar}
\end{equation}
and demand
\begin{equation}
\bar{\kappa}^{k+2}-\bar{\kappa}^{k+1}\xrightarrow{K}0
\label{BundleSQPmitQCQP:GlobaleKonvergenz:Satz:Thm3.8:Bew:ADDON:DefDiffbarkappa}
\end{equation}
\cbendDVI
as well as
\begin{equation}
t_0^{\inf}:=\inf_{
%LastHS%k=0,1,\dots
k\geq0
}{t_0^k}>0
\label{Theorem3.8:Presumptiont0infGT0}
\end{equation}
(cf.~Remark \refH{remark:t0infWithGraphic}).
\end{presumption}
%%%%%%%%%%%%%%%%%%%%%%%%%%%%%%%%%%%%%%%%%%%%%%%%%%%%%%%%%%%%%%%%%%%%%%%%%%%%%%%%%%%%%%%%%%%%%%%%%%%%%%%%%%%%%%%%%%%%%%%%
Next, we present Lemma \ref{COMPACT:lemma:BoundedBasicSequences}--\ref{ASUS:lemma:Contradiction}, which we need for proving Theorem \ref{BundleSQPmitQCQP:GlobaleKonvergenz:Satz:Thm3.8:Bew:Theorem}.
%%%%%%%%%%%%%%%%%%%%%%%%%%%%%%%%%%%%%%%%%%%%%%%%%%%%%%%%%%%%%%%%%%%%%%%%%%%%%%%%%%%%%%%%%%%%%%%%%%%%%%%%%%%%%%%%%%%%%%%%
\begin{lemma}[Bounded basic sequences]
\label{COMPACT:lemma:BoundedBasicSequences}
The following boundedness statements hold:
\begin{align}
&
\lbrace y_k\rbrace\textnormal{, }
\lbrace\rho_kG_k\rbrace\textnormal{, }
\lbrace\hat{\rho}_k\hat{G}_k\rbrace\textnormal{ and }
\lbrace g_k\rbrace\textnormal{ are bounded,}
\label{BundleSQPmitQCQP:GlobaleKonvergenz:Satz:Thm3.8:Bew:ykUNDrhokGkUNDgkBeschraenkt}
\\
&
\lbrace H_k\rbrace\textnormal{ is bounded,}
\label{BundleSQPmitQCQP:GlobaleKonvergenz:Satz:Thm3.8:Bew:HkIsBounded}
\\
&
\lbrace g_k^k\rbrace\textnormal{, }\lbrace H_kg_k^k\rbrace\textnormal{ and }\lbrace \alpha_k^k\rbrace\textnormal{ are bounded.}
\label{BundleSQPmitQCQP:GlobaleKonvergenz:Satz:Thm3.8:Bew:gkkUNDHkgkkUNDalphakkBeschraenkt}
\end{align}
\end{lemma}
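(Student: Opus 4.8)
The plan is to verify each of the three displayed boundedness statements in turn, drawing only on Assumption~\ref{COMPACT:presumption:AssumptionForMainTheorem} together with the structural relations already established for Algorithm~\ref{BundleSQPmitQCQP:Alg:GesamtAlgMitQCQP}; none of the assertions requires new analytic input, so the work is essentially careful bookkeeping of dependencies. I would begin with (\ref{BundleSQPmitQCQP:GlobaleKonvergenz:Satz:Thm3.8:Bew:ykUNDrhokGkUNDgkBeschraenkt}). Boundedness of $\lbrace y_k\rbrace$ follows from (\ref{Luksan:Liniensuche:PunktBeschraenktheit}), which gives $\lvert y_k-x_k\rvert\leq C_S$, combined with the boundedness of $\lbrace x_k\rbrace$ assumed in Assumption~\ref{COMPACT:presumption:AssumptionForMainTheorem}, so that $\lvert y_k\rvert\leq\lvert x_k\rvert+C_S$. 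Boundedness of $\lbrace\rho_kG_k\rbrace$ and $\lbrace\hat{\rho}_k\hat{G}_k\rbrace$ is immediate from the update rules (\ref{BundleSQPmitQCQP:Alg:rhok+1}) and (\ref{BundleSQPmitQCQP:Alg:rhohatk+1}), since $\rho_k=\min(1,\tfrac{C_G}{\lvert G_k\rvert})$ forces $\lvert\rho_kG_k\rvert\leq C_G$ and likewise $\lvert\hat{\rho}_k\hat{G}_k\rvert\leq\hat{C}_G$. Finally, because $g_k\in\partial f(y_k)$ and $\lbrace y_k\rbrace$ lies in a bounded (hence relatively compact) set, the local boundedness of $\partial f$ from Proposition~\ref{PAPER:LocallyBoundedUpperSemiContinous} yields, after covering the closure of $\lbrace y_k\rbrace$ by finitely many neighbourhoods, a uniform bound on $\lbrace g_k\rbrace$.

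The second statement (\ref{BundleSQPmitQCQP:GlobaleKonvergenz:Satz:Thm3.8:Bew:HkIsBounded}), boundedness of $\lbrace H_k\rbrace$, is handed to me directly by Corollary~\ref{ADDON:Korollar:WpkUndHkBeschraenktheitUndPositivDefinitheit}: the boundedness of $\lbrace(\widebar{W}_p^k)^{-\mathalf}\rbrace$ assumed in Assumption~\ref{COMPACT:presumption:AssumptionForMainTheorem} is precisely the hypothesis there required to conclude that $\lbrace(\widebar{W}_p^k)^{-1}\rbrace$ and $\lbrace H_k\rbrace$ are bounded, so no separate argument is needed.

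The third statement (\ref{BundleSQPmitQCQP:GlobaleKonvergenz:Satz:Thm3.8:Bew:gkkUNDHkgkkUNDalphakkBeschraenkt}) rests on the first two. Using the index-shifted form of the update (\ref{BundleSQPmitQCQP:Alg:gk+1k+1Update}), namely $g_k^k=g_k+\rho_kG_k(x_k-y_k)$, I estimate $\lvert g_k^k\rvert\leq\lvert g_k\rvert+\lvert\rho_kG_k\rvert\,\lvert x_k-y_k\rvert$, where every factor has already been controlled and $\lvert x_k-y_k\rvert\leq C_S$; then $\lbrace H_kg_k^k\rbrace$ is bounded as a product of the bounded sequences $\lbrace H_k\rbrace$ and $\lbrace g_k^k\rbrace$. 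For $\lbrace\alpha_k^k\rbrace$ I would unwind its definition (\ref{BundleSQPmitQCQP:Alg:alphajkCOMPACT}): the locality term $\gamma_1(s_k^k)^{\omega_1}$ is bounded since $s_k^k=\lvert y_k-x_k\rvert\leq C_S$ by (\ref{BundleSQPmitQCQP:Alg:sk+1k+1Update}), while $\lvert f(x_k)-f_k^k\rvert$ is bounded because $f$ is continuous on the bounded set containing $\lbrace x_k\rbrace$ and $\lbrace y_k\rbrace$, and $f_k^k=f_k^{\sharp}(x_k)$ is, by (\ref{Def:frauteCOMPACT}), built from the already-bounded quantities $f(y_k)$, $g_k$, $\rho_kG_k$ and $x_k-y_k$.

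Since each claim collapses to a one- or two-line estimate, there is no genuine difficulty here; the only points demanding care are invoking the correctly index-shifted version of each update formula and recognizing that it is the \emph{local} boundedness of the subdifferential (rather than mere pointwise finiteness) that licenses the uniform bound on $\lbrace g_k\rbrace$. Accordingly I expect the whole argument to be routine given the preparatory Corollary~\ref{ADDON:Korollar:WpkUndHkBeschraenktheitUndPositivDefinitheit} and Proposition~\ref{PAPER:LocallyBoundedUpperSemiContinous}.
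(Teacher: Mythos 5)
Your proposal is correct and follows essentially the same route as the paper: the paper handles (\ref{BundleSQPmitQCQP:GlobaleKonvergenz:Satz:Thm3.8:Bew:ykUNDrhokGkUNDgkBeschraenkt}) by pointing back to the proof of Lemma \ref{COMPACT:lemmaForAuxiliaryTheorem:TrialPointConvergenceAndImplications} (which uses exactly your estimates $\lvert y_k\rvert\leq\lvert x_k\rvert+C_S$, $\lvert\rho_kG_k\rvert\leq C_G$, and local boundedness of the subdifferentials), obtains (\ref{BundleSQPmitQCQP:GlobaleKonvergenz:Satz:Thm3.8:Bew:HkIsBounded}) from Corollary \ref{ADDON:Korollar:WpkUndHkBeschraenktheitUndPositivDefinitheit}, and derives (\ref{BundleSQPmitQCQP:GlobaleKonvergenz:Satz:Thm3.8:Bew:gkkUNDHkgkkUNDalphakkBeschraenkt}) from the index-shifted updates, Cauchy--Schwarz, and continuity of $f$, just as you do. Your explicit covering argument for the uniform bound on $\lbrace g_k\rbrace$ only spells out what the paper states implicitly.
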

%%%%%%%%%%%%%%%%%%%%%%%%%%%%%%%%%%%%%%%%%%%%%%%%%%%%%%%%%%%%
\begin{proof}
(\ref{BundleSQPmitQCQP:GlobaleKonvergenz:Satz:Thm3.8:Bew:ykUNDrhokGkUNDgkBeschraenkt}) holds
as this statement was shown in the proof of
%%%COMPACT%%%Proposition \ref{BundleSQPmitQCQP:GlobaleKonvergenz:Satz:Lemma3.6}
Lemma \ref{COMPACT:lemmaForAuxiliaryTheorem:TrialPointConvergenceAndImplications},
%%%COMPACT%%%(cf.~(\ref{BundleSQPmitQCQP:GlobaleKonvergenz:Satz:Lemma3.6:Bewyki}),
%%%COMPACT%%%(\ref{BundleSQPmitQCQP:GlobaleKonvergenz:Satz:Lemma3.6:BewGkiCOMPACT}) and
%%%COMPACT%%%(\ref{BundleSQPmitQCQP:GlobaleKonvergenz:Satz:Lemma3.6:BewgjkiCOMPACT})),
where only the assumption of the boundedness of $\lbrace x_k\rbrace$ was used, and consequently, this statement is here also true.
%%%%%%%%%%%%%%%%%%%%%%%%%%%%%%%%%%%%%%%%%%%%%%%%
Since $\lbrace(\widebar{W}_p^k)^{-\mathalf}\rbrace$ is bounded due to assumption,
(\ref{BundleSQPmitQCQP:GlobaleKonvergenz:Satz:Thm3.8:Bew:HkIsBounded}) holds
due to Corollary \ref{ADDON:Korollar:WpkUndHkBeschraenktheitUndPositivDefinitheit}.
%%%%%%%%%%%%%%%%%%%%%%%%%%%%%%%%%%%%%%%%%%%%%%%%
Due to (\ref{BundleSQPmitQCQP:Alg:gk+1k+1Update}), the boundedness of $\lbrace x_k\rbrace$ and (\ref{BundleSQPmitQCQP:GlobaleKonvergenz:Satz:Thm3.8:Bew:ykUNDrhokGkUNDgkBeschraenkt}) resp.~$\lvert H_kg_k^k\rvert\leq\lvert H_k\rvert\cdot\lvert g_k^k\rvert$
%LastHS%,
and
%%%the assumption of the boundedness of $\lbrace H_k\rbrace$
(\ref{BundleSQPmitQCQP:GlobaleKonvergenz:Satz:Thm3.8:Bew:HkIsBounded})
%LastHS%and the just shown boundedness of $\lbrace g_k^k\rbrace$
resp.~(\ref{BundleSQPmitQCQP:Alg:alphajkCOMPACT}), (\ref{BundleSQPmitQCQP:Alg:fk+1k+1Update}), (\ref{BundleSQPmitQCQP:Alg:sk+1k+1Update}), (\ref{BundleSQPmitQCQP:Alg:fk+1Update}), the Cauchy-Schwarz inequality
%LastHS%,
and the fact that $f$ is continuous on (the whole) $\mathbb{R}^n$,
%Paper%(cf.~Assumption \ref{Assumption:SecondOrderBundleAlgorithm}),
%LastHS%the assumption of the boundedness of $\lbrace x_k\rbrace$ and %LastHS%(\ref{BundleSQPmitQCQP:GlobaleKonvergenz:Satz:Thm3.8:Bew:ykUNDrhokGkUNDgkBeschraenkt}),
we obtain (\ref{BundleSQPmitQCQP:GlobaleKonvergenz:Satz:Thm3.8:Bew:gkkUNDHkgkkUNDalphakkBeschraenkt}).
\qedhere
\end{proof}
%%%%%%%%%%%%%%%%%%%%%%%%%%%%%%%%%%%%%%%%%%%%%%%%%%%%%%%%%%%%%%%%%%%%%%%%%%%%%%%%%%%%%%%%%%%%%%%%%%%%%%%%%%%%%%%%%%%%%%%%
\begin{lemma}[Bounded aggregate sequences]
We define
\begin{equation}
\tau_k
:=
\tilde{\alpha}_p^k+\bar{\kappa}^{k+1}\tilde{A}_p^k+\bar{\kappa}^{k+1}\big(-F(x_k)\big)
\geq
0
\komma
\label{BundleSQPmitQCQP:GlobaleKonvergenz:Satz:Thm3.8:Bew:ADDON:Deftauk}
\end{equation}
then
\begin{equation}
\lbrace w_k\rbrace\textnormal{, }
\lbrace\tilde{g}_p^k\rbrace\textnormal{, }
\lbrace\tilde{\hat{g}}_p^k\rbrace\textnormal{, }
%%%\lbrace\tilde{g}_p^k+\bar{\kappa}^{k+1}\tilde{\hat{g}}_p^k\rbrace\textnormal{, }
\lbrace\tilde{\alpha}_p^k\rbrace\textnormal{, }
\lbrace\bar{\kappa}^{k+1}\tilde{A}_p^k\rbrace\textnormal{, }
%%%\lbrace\bar{\kappa}^{k+1}\tilde{A}_p^k\big)\rbrace\textnormal{,}
\lbrace H_k(\tilde{g}_p^k+\bar{\kappa}^{k+1}\tilde{\hat{g}}_p^k)\rbrace\textnormal{ and }
\lbrace\tau_k\rbrace
\textnormal{ are bounded}
\punkt
\label{BundleSQPmitQCQP:GlobaleKonvergenz:Satz:Thm3.8:Bew:wkUNDHkgtildepkUNDgtildepkUNDalphatildepkBeschraenkt}
\end{equation}
\end{lemma}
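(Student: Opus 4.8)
The goal is to establish that the listed aggregate quantities are all bounded, given the boundedness of the basic sequences from the previous lemma (in particular $\lbrace g_k^k\rbrace$, $\lbrace\alpha_k^k\rbrace$, $\lbrace H_k\rbrace$) together with Assumption \refH{COMPACT:presumption:AssumptionForMainTheorem}. The main structural idea is that every tilde-quantity is a convex combination of bundle quantities (via Proposition \refH{BundleSQPmitQCQP:GlobaleKonvergenzSatz:Lemma3.1}), so boundedness of the individual bundle entries will transfer to the aggregates. I would organize the proof so that the boundedness of $\lbrace w_k\rbrace$ comes first, since it controls everything else.

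Let me sketch the proof.

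\begin{proof}
First I would bound $\lbrace w_k\rbrace$. Since the algorithm performs a serious step only when $f$ decreases, the sequence $\lbrace f(x_k)\rbrace$ is non-increasing; combined with the continuity of $f$ and the boundedness of $\lbrace x_k\rbrace$ (Assumption \refH{COMPACT:presumption:AssumptionForMainTheorem}), the values $f(x_k)$ stay bounded below, so $\lbrace f(x_k)\rbrace$ is bounded. From $w_k=-\tfrac{1}{2}d_k^T\widebar{W}_p^kd_k-v_k$ in \refh{proposition:wkvkZusammenhang} and the line-search descent estimate one obtains that $w_k$ is controlled by the total decrease of $f$, which is finite; alternatively, since $j=k\in J_k$ is always in the bundle and $w_k\leq\hat{w}_k$ by \refh{BundleSQPmitQCQP:Satz:GlobaleKonvergenz:Lemma3.5:wkCOMPACT}, I can bound $\hat{w}_k$ directly using the feasible dual point $\lambda_k^k=1$, all other multipliers zero, which gives $\hat{w}_k\leq\tfrac{1}{2}\lvert H_kg_k^k\rvert^2+\alpha_k^k$; the right-hand side is bounded by \refh{BundleSQPmitQCQP:GlobaleKonvergenz:Satz:Thm3.8:Bew:gkkUNDHkgkkUNDalphakkBeschraenkt}. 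This is the cleaner route and avoids invoking the monotonicity of $f$ separately.

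Next, the quantities $\tilde{g}_p^k$ and $\tilde{\hat{g}}_p^k$ are convex combinations of the $g_j^k$ resp.~$\hat{g}_j^k$ by \refh{BundleSQPmitQCQP:GlobaleKonvergenzSatz:Lemma3.1:22:1COMPACT} and \refh{BundleSQPmitQCQP:GlobaleKonvergenzSatz:Lemma3.1:22:1NBCOMPACT}; using Carath\'eodory's representation from Lemma \refH{COMPACT:lemmaForAuxiliaryTheorem:LagrangeMulitpliers} together with the boundedness of the subgradients $\lbrace g_{j(k,i)}\rbrace$, $\lbrace\hat{g}_{j(k,i)}\rbrace$, the damped Hessians $\lbrace\rho_jG_j\rbrace$, $\lbrace\hat\rho_j\hat G_j\rbrace$ (all from \refh{BundleSQPmitQCQP:GlobaleKonvergenz:Satz:Thm3.8:Bew:ykUNDrhokGkUNDgkBeschraenkt}) and the boundedness of $\lbrace x_k\rbrace$ and $\lbrace y_k\rbrace$, the convex combination stays bounded. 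Then $\lbrace H_k(\tilde{g}_p^k+\bar\kappa^{k+1}\tilde{\hat g}_p^k)\rbrace$ is bounded because $\lbrace H_k\rbrace$ is bounded \refh{BundleSQPmitQCQP:GlobaleKonvergenz:Satz:Thm3.8:Bew:HkIsBounded}, $\lbrace\bar\kappa^{k+1}\rbrace$ is bounded by assumption, and the two tilde-subgradients are bounded. The individual nonnegative terms $\tilde\alpha_p^k$ and $\bar\kappa^{k+1}\tilde A_p^k$ are then bounded since, by \refh{BundleSQPmitQCQP:Alg:wk}, they are dominated by $w_k$ (each summand in the definition of $w_k$ is nonnegative), and $w_k$ is already bounded. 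Finally $\tau_k$ differs from (the nonnegative tail of) $w_k$ only by replacing $\tfrac12\lvert H_k(\cdots)\rvert^2$, so its boundedness follows from \refh{BundleSQPmitQCQP:GlobaleKonvergenz:Satz:Thm3.8:Bew:ADDON:Deftauk}, the boundedness of $\tilde\alpha_p^k$, $\bar\kappa^{k+1}\tilde A_p^k$, and of $\bar\kappa^{k+1}(-F(x_k))$, the latter coming from the boundedness of $\lbrace\bar\kappa^{k+1}\rbrace$ and the continuity of $F$ on the bounded set $\lbrace x_k\rbrace$.
\qedhere
\end{proof}

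The step I expect to require the most care is bounding $\lbrace w_k\rbrace$: one must either exploit the feasible dual point $(\lambda_k^k,\mu^k)=(1,0)$ to get $\hat w_k\leq\tfrac12\lvert H_kg_k^k\rvert^2+\alpha_k^k$ and then invoke \refh{BundleSQPmitQCQP:Satz:GlobaleKonvergenz:Lemma3.5:wkCOMPACT}, or argue via the monotone decrease of $f$. The former is preferable because it reduces directly to the already-established bounds in Lemma \refH{COMPACT:lemma:BoundedBasicSequences}. Everything downstream is then a routine transfer of boundedness through convex combinations and the nonnegativity of the individual terms in $w_k$.
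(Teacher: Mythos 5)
Your proposal is correct and follows essentially the same route as the paper: bound $\hat w_k$ (and hence $w_k$ via $0\leq w_k\leq\hat w_k$) by inserting the feasible dual point $\lambda_k=1$, all other multipliers zero, then use the non-negativity of each summand in the definition of $w_k$ to bound $\tilde\alpha_p^k$, $\bar\kappa^{k+1}\tilde A_p^k$, $H_k(\tilde g_p^k+\bar\kappa^{k+1}\tilde{\hat g}_p^k)$ and $\tau_k$, and use the Carath\'eodory/convex-combination representation together with the boundedness of the bundle data to bound $\tilde g_p^k$ and $\tilde{\hat g}_p^k$. The only cosmetic difference is that the paper obtains the last two boundedness claims by pointing back to the computation in the proof of its complementarity-condition lemma rather than redoing the convex-combination estimate, which is exactly the argument you spell out.
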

%%%%%%%%%%%%%%%%%%%%%%%%%%%%%%%%%%%%%%%%%%%%%%%%%%%%%%%%%%%%
\begin{proof}
Since $(\lambda,\lambda_p,\mu,\mu_p)\in\mathbb{R}^{2(\lvert J_k\rvert+1)}$ with
\begin{equation}
\lambda_j
:=
\left\lbrace
\begin{array}{ll}
1 & \textnormal{for }j=k\\
0 & \textnormal{for }j\in J_k\setminus\lbrace k\rbrace
\end{array}
\right\rbrace
~,~
\lambda_p
:=0
\komma\quad
\mu_j
:=0~~~\forall j\in J_k
~,~
\mu_p
:=0
\label{BundleSQPmitQCQP:GlobaleKonvergenz:Satz:Thm3.8:Bew:lambdaZulaessig}
\end{equation}
is feasible for the (dual) problem (\ref{Luksan:Alg:QCQPTeilproblem:DualesProblem}) for $k\geq1$ (Note: This problem is written as a minimization problem), we obtain (Note: $\hat{w}_k$ is the optimal function value of (\ref{Luksan:Alg:QCQPTeilproblem:DualesProblem})) due to (\ref{BundleSQPmitQCQP:GlobaleKonvergenz:Lemma3.5:Defwhat}),
(\ref{BundleSQPmitQCQP:Alg:gtildepkCOMPACT}),
(\ref{BundleSQPmitQCQP:GlobaleKonvergenz:Lemma3.5:DefalphahatCOMPACT}),
(\ref{BundleSQPmitQCQP:Alg:ghattildepkCOMPACT}),
(\ref{BundleSQPmitQCQP:Alg:Defkappabark+1}) and
inserting the feasible point from (\ref{BundleSQPmitQCQP:GlobaleKonvergenz:Satz:Thm3.8:Bew:lambdaZulaessig})
that
$
\hat{w}_k\leq\tfrac{1}{2}\lvert H_kg_k^k\rvert^2+\alpha_k^k
$.
%%%%%%%%%%%%%%%%%%%%%%%%%%%%%%%%%%%%%%%%%%%%%%%%
Hence,
due to (\ref{BundleSQPmitQCQP:Alg:wk}) and (\ref{BundleSQPmitQCQP:Satz:GlobaleKonvergenz:Lemma3.5:wkCOMPACT}), we estimate
\begin{equation*}
0
\leq
\tfrac{1}{2}\vert H_k(\tilde{g}_p^k+\bar{\kappa}^{k+1}\tilde{\hat{g}}_p^k)\vert^2
+\tilde{\alpha}_p^k
+\bar{\kappa}^{k+1}\tilde{A}_p^k
+\bar{\kappa}^{k+1}\big(-F(x_k)\big)
\leq
\tfrac{1}{2}\lvert H_kg_k^k\rvert^2+\alpha_k^k
\end{equation*}
and therefore (\ref{BundleSQPmitQCQP:GlobaleKonvergenz:Satz:Thm3.8:Bew:gkkUNDHkgkkUNDalphakkBeschraenkt}) as well as the non-negativity of $\tilde{\alpha}_p^k$, $\bar{\kappa}^{k+1}$, $\tilde{A}_p^k$ resp.~$-F(x_k)$ due (\ref{BundleSQPmitQCQP:Alg:alphatildepk}), (\ref{BundleSQPmitQCQP:Alg:Defkappabark+1}), (\ref{BundleSQPmitQCQP:Alg:Atildepk}) resp.~(\ref{BundleSQPmitQCQP:ZulaessigkeitsbedingungFuerIterationspunkt}) imply that  $\lbrace w_k\rbrace$, $\lbrace\tilde{\alpha}_p^k\rbrace$, $\lbrace\bar{\kappa}^{k+1}\tilde{A}_p^k\rbrace$, $\lbrace H_k(\tilde{g}_p^k+\bar{\kappa}^{k+1}\tilde{\hat{g}}_p^k)\rbrace$ and $\lbrace\tau_k\rbrace$
are bounded. Now, consider the proof of
%%%COMPACT%%%Proposition \ref{BundleSQPmitQCQP:GlobaleKonvergenz:Satz:Lemma3.6}:
Lemma \ref{COMPACT:lemmaForAuxiliaryTheorem:ComplementarityCondition}:
There we only used the first consequence $x_k\xrightarrow{K}\bar{x}$ of (\ref{BundleSQPmitQCQP:GlobaleKonvergenz:Satz:Lemma3.6:BewxkCOMPACT}) (and this property is also satisfied here due to (\ref{BundleSQPmitQCQP:GlobaleKonvergenz:Satz:Thm3.8:Bew:xkGegenxbar})) of the assumption $\sigma(\bar{x})=0$ for showing the convergence of $\tilde{g}_p^k$ resp.~$\tilde{\hat{g}}_p^k$ on a subsequence. Consequently, $\tilde{g}_p^k$ resp.~$\tilde{\hat{g}}_p^k$ are also bounded here. [The second property ($w_k\xrightarrow{K}0$) of (\ref{BundleSQPmitQCQP:GlobaleKonvergenz:Satz:Lemma3.6:BewxkCOMPACT}) resulting from $\sigma(\bar{x})=0$ there, is first used directly after
proving the boundedness of $\tilde{g}_p^k$ and $\tilde{\hat{g}}_p^k$.
If this property was already used
for showing these boundedness results,
the above implication would be false, since then indeed $\sigma(\bar{x})=0$ (and not only $x_k\xrightarrow{K}\bar{x}$) would be used
for proving the boundedness of $\tilde{g}_p^k$ and $\tilde{\hat{g}}_p^k$,
and the relevant situation
in the proof of Theorem \ref{BundleSQPmitQCQP:GlobaleKonvergenz:Satz:Thm3.8:Bew:Theorem} will be
$\sigma(\bar{x})>0$.]
%haloneu
\qedhere
\end{proof}
%%%%%%%%%%%%%%%%%%%%%%%%%%%%%%%%%%%%%%%%%%%%%%%%%%%%%%%%%%%%%%%%%%%%%%%%%%%%%%%%%%%%%%%%%%%%%%%%%%%%%%%%%%%%%%%%%%%%%%%%
\begin{lemma}[$\sigma$ is finite]
\label{BundleSQPmitQCQP:GlobaleKonvergenz:Satz:Thm3.8:Bew:sigmaendlichCOMPACT}
$\sigma(\bar{x})$ is finite.
\end{lemma}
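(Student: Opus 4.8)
The plan is to read off the statement directly from the definition \refh{BundleSQPmitQCQP:GlobaleKonvergenz:Def:sigma}, where $\sigma(\bar{x})$ is the $\liminf$ of the non-negative real numbers $\max{(w_k,\lvert x_k-\bar{x}\rvert)}$. Since the $\liminf$ of any sequence of reals that is bounded above is itself finite, it suffices to produce a finite upper bound for this sequence. No genuine estimate is required; the proof is pure bookkeeping, assembling boundedness facts that are already available at this point in the argument.

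First I would bound the two ingredients separately. By Assumption \refH{COMPACT:presumption:AssumptionForMainTheorem} the sequence $\lbrace(x_k,\bar{\kappa}^{k+1})\rbrace$ is bounded, so in particular $\lbrace x_k\rbrace$ is bounded, and hence, with the fixed accumulation point $\bar{x}$, the sequence $\lbrace\lvert x_k-\bar{x}\rvert\rbrace$ is bounded as well. The boundedness of $\lbrace w_k\rbrace$ has just been established in the preceding lemma (cf.~\refh{BundleSQPmitQCQP:GlobaleKonvergenz:Satz:Thm3.8:Bew:wkUNDHkgtildepkUNDgtildepkUNDalphatildepkBeschraenkt}). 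Consequently the pointwise maximum $\max{(w_k,\lvert x_k-\bar{x}\rvert)}$ is bounded above by some constant $C<\infty$ for all $k$, and trivially bounded below by $0$.

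Therefore $\sigma(\bar{x})=\liminf_{k\to\infty}\max{(w_k,\lvert x_k-\bar{x}\rvert)}\in[0,C]$, which is finite. An equally valid route would be to restrict attention to the subsequence $K$ from \refh{BundleSQPmitQCQP:GlobaleKonvergenz:Satz:Thm3.8:Bew:xkGegenxbar} along which $\lvert x_k-\bar{x}\rvert\to0$: along $K$ the quantity $\max{(w_k,\lvert x_k-\bar{x}\rvert)}$ is eventually dominated by $\sup_k w_k<\infty$, so the global $\liminf$ cannot exceed that supremum. Either way the only inputs are the boundedness of $\lbrace w_k\rbrace$ and of $\lbrace x_k\rbrace$, both already in hand, so I do not expect any real obstacle — the lemma is an immediate consequence of the facts proved just before it and merely records the finiteness of $\sigma(\bar{x})$ for convenient later reference.
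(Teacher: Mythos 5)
Your proposal is correct and follows exactly the same route as the paper: the paper's one-line proof likewise invokes the definition \refh{BundleSQPmitQCQP:GlobaleKonvergenz:Def:sigma}, the boundedness of $\lbrace x_k\rbrace$ from Assumption \refH{COMPACT:presumption:AssumptionForMainTheorem}, and the boundedness of $\lbrace w_k\rbrace$ from \refh{BundleSQPmitQCQP:GlobaleKonvergenz:Satz:Thm3.8:Bew:wkUNDHkgtildepkUNDgtildepkUNDalphatildepkBeschraenkt}. Your write-up simply spells out the bookkeeping that the paper leaves implicit.
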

%%%%%%%%%%%%%%%%%%%%%%%%%%%%%%%%%%%%%%%%%%%%%%%%%%%%%%%%%%%%
\begin{proof}
This is true due to
(\ref{BundleSQPmitQCQP:GlobaleKonvergenz:Def:sigma}), the assumption of the boundedness of $\lbrace x_k\rbrace$ and (\ref{BundleSQPmitQCQP:GlobaleKonvergenz:Satz:Thm3.8:Bew:wkUNDHkgtildepkUNDgtildepkUNDalphatildepkBeschraenkt}).
\qedhere
\end{proof}
%%%%%%%%%%%%%%%%%%%%%%%%%%%%%%%%%%%%%%%%%%%%%%%%%%%%%%%%%%%%%%%%%%%%%%%%%%%%%%%%%%%%%%%%%%%%%%%%%%%%%%%%%%%%%%%%%%%%%%%%
\begin{lemma}[Cauchy sequences]
\label{COMPACT:lemma:CauchySequences}
We have
\begin{align}
s_p^{k+1}-\tilde{s}_p^k
\xrightarrow{K}
0
\komma\quad&
\hat{s}_p^{k+1}-\tilde{\hat{s}}_p^k
\xrightarrow{K}
0
\label{BundleSQPmitQCQP:GlobaleKonvergenz:Satz:Thm3.8:Bew:spk+1-stildepk0COMPACT}
\komma
\\
f(x_{k+1})-f(x_k)
\xrightarrow{K}0
\komma\quad&
F(x_{k+1})-F(x_k)
\xrightarrow{K}0
\komma
\label{BundleSQPmitQCQP:GlobaleKonvergenz:Satz:Thm3.8:Bew:fxk+1-fxk0COMPACT}
\\
f_p^{k+1}-\tilde{f}_p^k\xrightarrow{K}0
\komma\quad&
F_p^{k+1}-\tilde{F}_p^k\xrightarrow{K}0
\komma\quad
\Delta_k\xrightarrow{K}0
\komma
\label{BundleSQPmitQCQP:GlobaleKonvergenz:Satz:Thm3.8:Bew:fpk+1-ftildepk0COMPACT}
\end{align}
where
\begin{equation}
\Delta_k
:=
H_{k+1}\big(
(g_p^{k+1}+\bar{\kappa}^{k+1}\hat{g}_p^{k+1})
-
(\tilde{g}_p^k+\bar{\kappa}^{k+1}\tilde{\hat{g}}_p^k)
\big)
\punkt
\label{BundleSQPmitQCQP:GlobaleKonvergenz:Satz:Thm3.8:Bew:DefDeltak}
\end{equation}
\end{lemma}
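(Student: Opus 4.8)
The plan is to reduce every one of the seven assertions to the single fact that consecutive iterates coalesce along $K$, namely $\lvert x_{k+1}-x_k\rvert\xrightarrow{K}0$, and then substitute this into the explicit update formulas of Algorithm \refH{BundleSQPmitQCQP:Alg:GesamtAlgMitQCQP} together with the boundedness results already established. First I would record the key fact: by \refh{BundleSQPmitQCQP:GlobaleKonvergenz:Satz:Thm3.8:Bew:HkIsBounded} the sequence $\lbrace H_k\rbrace$ is bounded, \refh{BundleSQPmitQCQP:GlobaleKonvergenz:Vor:AlgTerminiertNicht} holds, and $x_k\xrightarrow{K}\bar{x}$ by \refh{BundleSQPmitQCQP:GlobaleKonvergenz:Satz:Thm3.8:Bew:xkGegenxbar}, so Proposition \refH{BundleSQPmitQCQP:GlobaleKonvergenz:Lemma3.7} applies and yields $x_{k+1}\xrightarrow{K}\bar{x}$ through \refh{BundleSQPmitQCQP:GlobaleKonvergenz:Lemma3.7:Aussagexk+i} with $i=1$; a triangle inequality with $x_k\xrightarrow{K}\bar{x}$ then gives $\lvert x_{k+1}-x_k\rvert\xrightarrow{K}0$.

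The locality-measure and function-value claims are then immediate. Subtracting in \refh{BundleSQPmitQCQP:Alg:spk+1Update} and \refh{BundleSQPmitQCQP:Alg:shatpk+1Update} gives $s_p^{k+1}-\tilde{s}_p^k=\lvert x_{k+1}-x_k\rvert$ and $\hat{s}_p^{k+1}-\tilde{\hat{s}}_p^k=\lvert x_{k+1}-x_k\rvert$, both of which vanish along $K$. For the function values, continuity of $f$ resp.\ $F$ with $x_{k+1}\xrightarrow{K}\bar{x}$ and $x_k\xrightarrow{K}\bar{x}$ forces $f(x_{k+1})-f(x_k)\xrightarrow{K}f(\bar{x})-f(\bar{x})=0$ and the analogous statement for $F$.

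For the aggregate function-value differences I would subtract in the quadratic updates \refh{BundleSQPmitQCQP:Alg:fpk+1Update} and \refh{BundleSQPmitQCQP:Alg:Fpk+1Update}, obtaining $f_p^{k+1}-\tilde{f}_p^k=\tilde{g}_p^{k\,T}(x_{k+1}-x_k)+\tfrac{1}{2}(x_{k+1}-x_k)^TG_p^{k+1}(x_{k+1}-x_k)$ and the companion expression with $\tilde{\hat{g}}_p^k,\hat{G}_p^{k+1}$. Here $\lbrace\tilde{g}_p^k\rbrace$ and $\lbrace\tilde{\hat{g}}_p^k\rbrace$ are bounded by \refh{BundleSQPmitQCQP:GlobaleKonvergenz:Satz:Thm3.8:Bew:wkUNDHkgtildepkUNDgtildepkUNDalphatildepkBeschraenkt}, while $\lbrace G_p^{k+1}\rbrace$ resp.\ $\lbrace\hat{G}_p^{k+1}\rbrace$ are bounded because Proposition \refH{BundleSQPmitQCQP:GlobaleKonvergenzSatz:Lemma3.1} represents them as convex combinations of $\lbrace\rho_jG_j\rbrace$ resp.\ $\lbrace\hat{\rho}_j\hat{G}_j\rbrace$, whose norms are capped by $C_G$ resp.\ $\hat{C}_G$ via the initialization of Algorithm \refH{BundleSQPmitQCQP:Alg:GesamtAlgMitQCQP}; each summand therefore carries a factor $\lvert x_{k+1}-x_k\rvert$ and the difference tends to $0$ along $K$. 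Finally, inserting the subgradient updates \refh{BundleSQPmitQCQP:Alg:gpk+1Update} and \refh{BundleSQPmitQCQP:Alg:ghatpk+1Update} into \refh{BundleSQPmitQCQP:GlobaleKonvergenz:Satz:Thm3.8:Bew:DefDeltak} collapses $\Delta_k$ to $H_{k+1}\big(G_p^{k+1}+\bar{\kappa}^{k+1}\hat{G}_p^{k+1}\big)(x_{k+1}-x_k)$, whereupon boundedness of $\lbrace H_k\rbrace$, $\lbrace G_p^{k+1}\rbrace$, $\lbrace\bar{\kappa}^{k+1}\rbrace$ and $\lbrace\hat{G}_p^{k+1}\rbrace$ against the vanishing factor $\lvert x_{k+1}-x_k\rvert$ closes the argument.

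I expect no genuine obstacle: once the updates are written out, every claim rests on $\lvert x_{k+1}-x_k\rvert\xrightarrow{K}0$ multiplied by bounded quantities. The only point requiring mild care is the uniform norm bound on the aggregated Hessian surrogates $G_p^{k+1}$ and $\hat{G}_p^{k+1}$, for which Proposition \refH{BundleSQPmitQCQP:GlobaleKonvergenzSatz:Lemma3.1} (convexity of the aggregation weights) rather than a direct estimate is the natural tool; one must also keep in mind that $\hat{G}_p^{k+1}$ may be the zero matrix when $\bar{\kappa}^{k+1}=0$, a case that only simplifies matters.
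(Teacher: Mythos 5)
Your proposal is correct and follows essentially the same route as the paper's proof: reduce everything to $\lvert x_{k+1}-x_k\rvert\xrightarrow{K}0$ via Proposition \ref{BundleSQPmitQCQP:GlobaleKonvergenz:Lemma3.7}, then combine the explicit update formulas with the bounds $\lvert G_p^{k+1}\rvert\leq C_G$, $\lvert\hat{G}_p^{k+1}\rvert\leq\hat{C}_G$ obtained from the convex-combination representation in Proposition \ref{BundleSQPmitQCQP:GlobaleKonvergenzSatz:Lemma3.1} and the boundedness of $\lbrace H_k\rbrace$, $\lbrace\tilde{g}_p^k\rbrace$, $\lbrace\tilde{\hat{g}}_p^k\rbrace$ and $\lbrace\bar{\kappa}^{k+1}\rbrace$. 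Your closed-form expression for $\Delta_k$ and your remark on the degenerate case $\bar{\kappa}^{k+1}=0$ match the paper's treatment.
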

%%%%%%%%%%%%%%%%%%%%%%%%%%%%%%%%%%%%%%%%%%%%%%%%%%%%%%%%%%%%
\begin{proof}
Since the assumptions of Proposition \ref{BundleSQPmitQCQP:GlobaleKonvergenz:Lemma3.7} for applying (\ref{BundleSQPmitQCQP:GlobaleKonvergenz:Lemma3.7:Aussagexk+i}) are satisfied --- $x_k\xrightarrow{K}\bar{x}$ holds due to (\ref{BundleSQPmitQCQP:GlobaleKonvergenz:Satz:Thm3.8:Bew:xkGegenxbar}), $\sigma(\bar{x})>0$ holds due to Lemma \ref{BundleSQPmitQCQP:GlobaleKonvergenz:Satz:Thm3.8:Bew:sigmaendlichCOMPACT} --- applying (\ref{BundleSQPmitQCQP:GlobaleKonvergenz:Lemma3.7:Aussagexk+i}) for $i=1$ and $i=0$ yields
$
x_{k+1}-x_k\xrightarrow{K}0
$.
%%%%%%%%%%%%%%%%%%%%%%%%%%%%%%%%%%%%%%%%%%%%%%%%
Due to
(\ref{BundleSQPmitQCQP:Alg:spk+1Update}) and
(\ref{BundleSQPmitQCQP:Alg:shatpk+1Update}),
we obtain
(\ref{BundleSQPmitQCQP:GlobaleKonvergenz:Satz:Thm3.8:Bew:spk+1-stildepk0COMPACT}).
%%%%%%%%%%%%%%%%%%%%%%%%%%%%%%%%%%%%%%%%%%%%%%%%
Because of
(\ref{BundleSQPmitQCQP:GlobaleKonvergenz:Satz:Thm3.8:Bew:xkGegenxbar}) and
the continuity of $f$ and $F$,
we obtain
(\ref{BundleSQPmitQCQP:GlobaleKonvergenz:Satz:Thm3.8:Bew:fxk+1-fxk0COMPACT}).
%%%%%%%%%%%%%%%%%%%%%%%%%%%%%%%%%%%%%%%%%%%%%%%%
Due to (\ref{BundleSQPmitQCQP:GlobaleKonvergenz:Vor:AlgTerminiertNicht}) the assumptions of Proposition \ref{BundleSQPmitQCQP:GlobaleKonvergenzSatz:Lemma3.1} are satisfied and therefore we estimate using
(\ref{BundleSQPmitQCQP:GlobaleKonvergenzSatz:Lemma3.1:22:1COMPACT}),
(\ref{BundleSQPmitQCQP:GlobaleKonvergenz:Satz:Thm3.8:Bew:ykUNDrhokGkUNDgkBeschraenkt}) and
(\ref{BundleSQPmitQCQP:Alg:rhok+1})
resp.~(\ref{BundleSQPmitQCQP:GlobaleKonvergenzSatz:Lemma3.1:22:1NBCOMPACT}),
(\ref{BundleSQPmitQCQP:GlobaleKonvergenzSatz:Lemma3.1:22:1NB:AllZero}),
(\ref{BundleSQPmitQCQP:GlobaleKonvergenz:Satz:Thm3.8:Bew:ykUNDrhokGkUNDgkBeschraenkt}) and
(\ref{BundleSQPmitQCQP:Alg:rhohatk+1})
that
$
\lvert     G _p^{k+1}\rvert\leq      C _G
\komma\quad
\lvert\hat{G}_p^{k+1}\rvert\leq \hat{C}_G
$.
%%%%%%%%%%%%%%%%%%%%%%%%%%%%%%%%%%%%%%%%%%%%%%%%
Due to
(\ref{BundleSQPmitQCQP:Alg:fpk+1Update}),
the Cauchy-Schwarz inequality and
(\ref{BundleSQPmitQCQP:GlobaleKonvergenz:Satz:Thm3.8:Bew:wkUNDHkgtildepkUNDgtildepkUNDalphatildepkBeschraenkt})
resp.~(\ref{BundleSQPmitQCQP:Alg:Fpk+1Update}),
the Cauchy-Schwarz inequality and
(\ref{BundleSQPmitQCQP:GlobaleKonvergenz:Satz:Thm3.8:Bew:wkUNDHkgtildepkUNDgtildepkUNDalphatildepkBeschraenkt})
resp.~(\ref{BundleSQPmitQCQP:GlobaleKonvergenz:Satz:Thm3.8:Bew:DefDeltak}),
(\ref{BundleSQPmitQCQP:Alg:gpk+1Update}),
(\ref{BundleSQPmitQCQP:Alg:ghatpk+1Update}),
(\ref{BundleSQPmitQCQP:Alg:Defkappabark+1}),
(\ref{BundleSQPmitQCQP:GlobaleKonvergenz:Satz:Thm3.8:Bew:HkIsBounded}) and
the boundedness of
%%%$\lbrace H_k\rbrace$ and
$\lbrace\bar{\kappa}^{k+1} \rbrace$ (by assumption),
we obtain
(\ref{BundleSQPmitQCQP:GlobaleKonvergenz:Satz:Thm3.8:Bew:fpk+1-ftildepk0COMPACT}).
\qedhere
\end{proof}
%%%%%%%%%%%%%%%%%%%%%%%%%%%%%%%%%%%%%%%%%%%%%%%%%%%%%%%%%%%%%%%%%%%%%%%%%%%%%%%%%%%%%%%%%%%%%%%%%%%%%%%%%%%%%%%%%%%%%%%%
\begin{lemma}[Zero sequence]
\label{BundleSQPmitQCQP:GlobaleKonvergenz:Satz:Thm3.8:Bew:alphapk+1-alphatildepkCOMPACT}
We have
\begin{equation*}
%%%ASUS%%%\lvert(\alpha_p^{k+1}-\tilde{\alpha}_p^k)+\bar{\kappa}^{k+1}(A_p^{k+1}-\tilde{A}_p^k)\rvert\xrightarrow{K}0
\big\lvert(\alpha_p^{k+1}-\tilde{\alpha}_p^k)+\bar{\kappa}^{k+1}(A_p^{k+1}-\tilde{A}_p^k)
+\bar{\kappa}^{k+1}\big(F(x_k)-F(x_{k+1})\big)
\big\rvert\xrightarrow{K}0
\punkt
\cbmathBLUE
\end{equation*}
\end{lemma}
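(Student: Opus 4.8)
The plan is to split the expression, via the triangle inequality, into the three summands $|\alpha_p^{k+1}-\tilde{\alpha}_p^k|$, $\bar{\kappa}^{k+1}|A_p^{k+1}-\tilde{A}_p^k|$ and $\bar{\kappa}^{k+1}|F(x_k)-F(x_{k+1})|$, and to show that each converges to $0$ along $K$. The third summand is immediate, since $\{\bar{\kappa}^{k+1}\}$ is bounded by assumption and $F(x_k)-F(x_{k+1})\xrightarrow{K}0$ holds by (\ref{BundleSQPmitQCQP:GlobaleKonvergenz:Satz:Thm3.8:Bew:fxk+1-fxk0COMPACT}).

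For the objective summand I would first record the elementary pointwise inequality $|\max(a,b)-\max(c,d)|\leq\max(|a-c|,|b-d|)$ and apply it, using (\ref{BundleSQPmitQCQP:Alg:alphajkCOMPACT}) and (\ref{BundleSQPmitQCQP:Alg:alphatildepk}), with $a:=|f(x_{k+1})-f_p^{k+1}|$, $b:=\gamma_1(s_p^{k+1})^{\omega_1}$, $c:=|f(x_k)-\tilde{f}_p^k|$ and $d:=\gamma_1(\tilde{s}_p^k)^{\omega_1}$. The reverse triangle inequality bounds $|a-c|$ by $|f(x_{k+1})-f(x_k)|+|f_p^{k+1}-\tilde{f}_p^k|$, and both terms vanish along $K$ by (\ref{BundleSQPmitQCQP:GlobaleKonvergenz:Satz:Thm3.8:Bew:fxk+1-fxk0COMPACT}) and (\ref{BundleSQPmitQCQP:GlobaleKonvergenz:Satz:Thm3.8:Bew:fpk+1-ftildepk0COMPACT}). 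For $|b-d|$ the key point is that $\{\tilde{s}_p^k\}$ is bounded --- this follows from boundedness of $\{\tilde{\alpha}_p^k\}$ in (\ref{BundleSQPmitQCQP:GlobaleKonvergenz:Satz:Thm3.8:Bew:wkUNDHkgtildepkUNDgtildepkUNDalphatildepkBeschraenkt}) together with $\tilde{\alpha}_p^k\geq\gamma_1(\tilde{s}_p^k)^{\omega_1}$ --- and hence so is $\{s_p^{k+1}\}$ by (\ref{BundleSQPmitQCQP:Alg:spk+1Update}) and boundedness of $\{x_k\}$; on this common bounded range $\xi\mapsto\gamma_1\xi^{\omega_1}$ is uniformly continuous and $s_p^{k+1}-\tilde{s}_p^k=|x_{k+1}-x_k|\xrightarrow{K}0$, so $|b-d|\xrightarrow{K}0$. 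This gives $\alpha_p^{k+1}-\tilde{\alpha}_p^k\xrightarrow{K}0$.

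I expect the constraint summand to be the main obstacle, precisely because $\{\tilde{\hat{s}}_p^k\}$ need not be bounded: (\ref{BundleSQPmitQCQP:GlobaleKonvergenz:Satz:Thm3.8:Bew:wkUNDHkgtildepkUNDgtildepkUNDalphatildepkBeschraenkt}) only controls the product $\{\bar{\kappa}^{k+1}\tilde{A}_p^k\}$, hence only $\{\bar{\kappa}^{k+1}(\tilde{\hat{s}}_p^k)^{\omega_2}\}$, so the uniform-continuity argument above is unavailable. The plan is to again split $\bar{\kappa}^{k+1}|A_p^{k+1}-\tilde{A}_p^k|$ by the max inequality and the estimate $\max(P,Q)\leq P+Q$ (for $P,Q\geq0$) into a value part (treated exactly as in the objective case, using boundedness of $\{\bar{\kappa}^{k+1}\}$, (\ref{BundleSQPmitQCQP:Alg:AjkCOMPACT}) and (\ref{BundleSQPmitQCQP:Alg:Atildepk})) and the locality part $\bar{\kappa}^{k+1}\gamma_2|(\hat{s}_p^{k+1})^{\omega_2}-(\tilde{\hat{s}}_p^k)^{\omega_2}|$. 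For the latter I would set $a:=\tilde{\hat{s}}_p^k$ and $\delta:=|x_{k+1}-x_k|$, so that $\hat{s}_p^{k+1}=a+\delta$ by (\ref{BundleSQPmitQCQP:Alg:shatpk+1Update}) and the mean value theorem yields $|(a+\delta)^{\omega_2}-a^{\omega_2}|\leq\omega_2(a+\delta)^{\omega_2-1}\delta$. Convexity of $\xi\mapsto\xi^{\omega_2}$ gives $(a+\delta)^{\omega_2}\leq2^{\omega_2-1}(a^{\omega_2}+\delta^{\omega_2})$, which together with boundedness of $\{\bar{\kappa}^{k+1}(\tilde{\hat{s}}_p^k)^{\omega_2}\}$, $\{\bar{\kappa}^{k+1}\}$ and $\{\delta\}$ shows $\{\bar{\kappa}^{k+1}(a+\delta)^{\omega_2}\}$ is bounded.

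The decisive step is then the factorization $\bar{\kappa}^{k+1}(a+\delta)^{\omega_2-1}\delta=[\bar{\kappa}^{k+1}(a+\delta)^{\omega_2}]^{(\omega_2-1)/\omega_2}(\bar{\kappa}^{k+1})^{1/\omega_2}\delta$, which exhibits this quantity as a bounded factor times $\delta\xrightarrow{K}0$, so it vanishes along $K$. For $\bar{\kappa}^{k+1}=0$ the whole constraint summand is trivially $0$, and for $\omega_2=1$ the factorization reduces to the elementary estimate $\bar{\kappa}^{k+1}\delta\xrightarrow{K}0$. Collecting the three summands by the triangle inequality then proves the claim.
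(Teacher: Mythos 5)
Your proof is correct, and its overall skeleton coincides with the paper's: split off the three summands by the triangle inequality, dispose of the $\bar{\kappa}^{k+1}\lvert F(x_k)-F(x_{k+1})\rvert$ term by boundedness of $\lbrace\bar{\kappa}^{k+1}\rbrace$, control the value parts of $\alpha_p^{k+1}-\tilde{\alpha}_p^k$ and $A_p^{k+1}-\tilde{A}_p^k$ via the max-difference inequality and the Cauchy-sequence lemma, and reduce everything to the locality-measure increments. The one place where you genuinely diverge is the delicate step you correctly single out, namely $\bar{\kappa}^{k+1}\lvert(\hat{s}_p^{k+1})^{\omega_2}-(\tilde{\hat{s}}_p^k)^{\omega_2}\rvert$ when only the product $\bar{\kappa}^{k+1}(\tilde{\hat{s}}_p^k)^{\omega_2}$ is known to be bounded. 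The paper handles this by applying the ``Lipschitz on bounded sets'' property of $\xi\mapsto\xi^{\omega_2}$ to the \emph{rescaled} bounded sequences $\bar{\kappa}^{k+1}\hat{s}_p^{k+1}$ and $\bar{\kappa}^{k+1}\tilde{\hat{s}}_p^k$ and then absorbs the resulting powers of $\bar{\kappa}^{k+1}$ into a single constant $\hat{c}_L$, so that the term is bounded by $\hat{c}_L\lvert\hat{s}_p^{k+1}-\tilde{\hat{s}}_p^k\rvert\xrightarrow{K}0$; you instead use the mean value theorem to get the bound $\omega_2\bar{\kappa}^{k+1}(a+\delta)^{\omega_2-1}\delta$ and then the factorization $\bar{\kappa}^{k+1}(a+\delta)^{\omega_2-1}\delta=\bigl[\bar{\kappa}^{k+1}(a+\delta)^{\omega_2}\bigr]^{(\omega_2-1)/\omega_2}(\bar{\kappa}^{k+1})^{1/\omega_2}\delta$, exhibiting a bounded factor times $\delta\xrightarrow{K}0$. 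Your variant is arguably the more transparent of the two: the exponent bookkeeping is explicit and works uniformly for all $\omega_2\geq1$ (with the degenerate cases $\bar{\kappa}^{k+1}=0$ and $\omega_2=1$ checked separately), whereas the paper's route requires one to track how the Lipschitz constant of $\xi\mapsto\xi^{\omega_2}$ on the rescaled range interacts with the powers of $\bar{\kappa}^{k+1}$ that must be divided back out. The price you pay is the extra convexity estimate $(a+\delta)^{\omega_2}\leq2^{\omega_2-1}(a^{\omega_2}+\delta^{\omega_2})$ needed to see that $\lbrace\bar{\kappa}^{k+1}(a+\delta)^{\omega_2}\rbrace$ is bounded, but that is elementary. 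No gap.
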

%%%%%%%%%%%%%%%%%%%%%%%%%%%%%%%%%%%%%%%%%%%%%%%%%%%%%%%%%%%%
\begin{proof}
Because of $0\leq\tilde{s}_p^k\leq\Big(\tfrac{\tilde{\alpha}_p^k}{\gamma_1}\Big)^{\frac{1}{\omega_1}}$ due to (\ref{BundleSQPmitQCQP:Alg:gtildepkCOMPACT}) and (\ref{BundleSQPmitQCQP:Alg:alphatildepk}) and because of the boundedness of $\lbrace\tilde{\alpha}_p^k\rbrace$ due to (\ref{BundleSQPmitQCQP:GlobaleKonvergenz:Satz:Thm3.8:Bew:wkUNDHkgtildepkUNDgtildepkUNDalphatildepkBeschraenkt}), $\tilde{s}_p^k$ is bounded. Since the function $\xi\mapsto\xi^{\omega_1}$ with $\omega_1\geq1$ is Lipschitz continuous on every bounded subset of $\mathbb{R}_+$, there exists $c_L>0$ with
\begin{equation}
\lvert(s_p^{k+1})^{\omega_1}-(\tilde{s}_p^k)^{\omega_1}\rvert
\leq
c_L\lvert s_p^{k+1}-\tilde{s}_p^k\rvert
\punkt
\label{BundleSQPmitQCQP:GlobaleKonvergenz:Satz:Thm3.8:Bew:spk+1-stildepkLipschitz}
\end{equation}
%%%%%%%%%%%%%%%%%%%%%%%%
In the case $\bar{\kappa}^{k+1}=0$, we have $\bar{\kappa}^{k+1}\tilde{\hat{s}}_p^k=0$ due to (\ref{BundleSQPmitQCQP:Alg:Defkappabark+1}) and (\ref{BundleSQPmitQCQP:Alg:ghattildepkCOMPACT}). Now consider the case $\bar{\kappa}^{k+1}>0$. Because of $0\leq\bar{\kappa}^{k+1}\tilde{\hat{s}}_p^k\leq(\bar{\kappa}^{k+1})^{\omega_2}\Big(\tfrac{\kappa^{k+1}\tilde{A}_p^k}{\gamma_2}\Big)^{\frac{1}{\omega_2}}$ due to (\ref{BundleSQPmitQCQP:Alg:ghattildepkCOMPACT}) and (\ref{BundleSQPmitQCQP:Alg:Atildepk}) and because of the boundedness of $\lbrace\bar{\kappa}^{k+1}\rbrace$ due to assumption and the boundedness of $\lbrace\bar{\kappa}^{k+1}\tilde{A}_p^k\rbrace$ due to (\ref{BundleSQPmitQCQP:GlobaleKonvergenz:Satz:Thm3.8:Bew:wkUNDHkgtildepkUNDgtildepkUNDalphatildepkBeschraenkt}), $\bar{\kappa}^{k+1}\tilde{\hat{s}}_p^k$ is bounded. Therefore, $\lbrace\bar{\kappa}^{k+1}\tilde{\hat{s}}_p^k\rbrace$ is bounded for all $\bar{\kappa}^{k+1}\geq0$. Since the function $\xi\mapsto\xi^{\omega_2}$ with $\omega_2\geq1$ is Lipschitz continuous on every bounded subset of $\mathbb{R}_+$, there exists $\bar{c}_L>0$ with $\lvert(\bar{\kappa}^{k+1}\hat{s}_p^{k+1})^{\omega_2}-(\bar{\kappa}^{k+1}\tilde{\hat{s}}_p^k)^{\omega_2}\rvert\leq\bar{c}_L\bar{\kappa}^{k+1}\lvert\hat{s}_p^{k+1}-\tilde{\hat{s}}_p^k\rvert$ and hence, using the assumption of the boundedness of $\lbrace\bar{\kappa}^{k+1}\rbrace$ and $\omega_2\geq1$ as well as setting $\hat{c}_L:=\bar{c}_L\sup_{k\geq1}{(\bar{\kappa}^{k+1})^{1+\frac{1}{\omega_2}}}<\infty$, we obtain
\begin{equation}
\bar{\kappa}^{k+1}
\lvert(\hat{s}_p^{k+1})^{\omega_2}-(\tilde{\hat{s}}_p^k)^{\omega_2}\rvert
\leq
\hat{c}_L\lvert\hat{s}_p^{k+1}-\tilde{\hat{s}}_p^k\rvert
\punkt
\label{BundleSQPmitQCQP:GlobaleKonvergenz:Satz:Thm3.8:Bew:spk+1-stildepkLipschitzNB}
\end{equation}
%%%%%%%%%%%%%%%%%%%%%%%%
%%%%%%%%%%%%%%%%%%%%%%%%%%%%%%%%%%%%%%%%%%%%%%%%
%LastHS%Easy calculations yield for all $a,b,c,d\in\mathbb{R}$
We remind of the formula
$
\lvert\max{(a,b)}-\max{(c,d)}\rvert
\leq
\lvert a-c\rvert+\lvert b-d\rvert
$
for all $a,b,c,d\in\mathbb{R}$.
Therefore,
we have $\lvert\alpha_p^{k+1}-\tilde{\alpha}_p^k\rvert\xrightarrow{K}0$
due to
(\ref{BundleSQPmitQCQP:Alg:alphajkCOMPACT}),
(\ref{BundleSQPmitQCQP:Alg:alphatildepk}),
(\ref{BundleSQPmitQCQP:GlobaleKonvergenz:Satz:Thm3.8:Bew:spk+1-stildepkLipschitz}), (\ref{BundleSQPmitQCQP:GlobaleKonvergenz:Satz:Thm3.8:Bew:fpk+1-ftildepk0COMPACT}),
(\ref{BundleSQPmitQCQP:GlobaleKonvergenz:Satz:Thm3.8:Bew:fxk+1-fxk0COMPACT}) and (\ref{BundleSQPmitQCQP:GlobaleKonvergenz:Satz:Thm3.8:Bew:spk+1-stildepk0COMPACT}).
Furthermore, due to
(\ref{BundleSQPmitQCQP:Alg:AjkCOMPACT}) and
(\ref{BundleSQPmitQCQP:Alg:Atildepk}),
we obtain
\begin{equation*}
\lvert A_p^{k+1}-\tilde{A}_p^k\rvert
\leq
\lvert F_p^{k+1}-\tilde{F}_p^k\rvert
+\lvert F(x_k)-F(x_{k+1})\rvert
+\gamma_2\lvert(\hat{s}_p^{k+1})^{\omega_2}-(\tilde{\hat{s}}_p^k)^{\omega_2}\rvert
\punkt
\end{equation*}
Multiplying this last inequality with $\bar{\kappa}^{k+1}\geq0$ (due to (\ref{BundleSQPmitQCQP:Alg:Defkappabark+1})) and using (\ref{BundleSQPmitQCQP:GlobaleKonvergenz:Satz:Thm3.8:Bew:spk+1-stildepkLipschitzNB}), the boundedness of $\lbrace\bar{\kappa}^{k+1}\rbrace$, (\ref{BundleSQPmitQCQP:GlobaleKonvergenz:Satz:Thm3.8:Bew:fpk+1-ftildepk0COMPACT}), (\ref{BundleSQPmitQCQP:GlobaleKonvergenz:Satz:Thm3.8:Bew:fxk+1-fxk0COMPACT}) and (\ref{BundleSQPmitQCQP:GlobaleKonvergenz:Satz:Thm3.8:Bew:spk+1-stildepk0COMPACT}) yields $\bar{\kappa}^{k+1}\lvert A_p^{k+1}-\tilde{A}_p^k\rvert\xrightarrow{K}0$
%%%ASUS%%%.
\cbstartBLUE
and $\bar{\kappa}^{k+1}\lvert F(x_k)-F(x_{k+1})\rvert\xrightarrow{K}0$.
\cbendBLUE
%%%ASUS%%%
Therefore, using (\ref{BundleSQPmitQCQP:Alg:Defkappabark+1}), we obtain
the desired result.
\qedhere
\end{proof}
%%%%%%%%%%%%%%%%%%%%%%%%%%%%%%%%%%%%%%%%%%%%%%%%%%%%%%%%%%%%%%%%%%%%%%%%%%%%%%%%%%%%%%%%%%%%%%%%%%%%%%%%%%%%%%%%%%%%%%%%
\begin{lemma}[Estimates for zero sequences]
\label{COMPACT:lemma:EstimateForZeroSequences}
Assume $\sigma(\bar{x})>0$. Then the constants
\begin{equation}
\begin{split}
c&:=\sup_{k\geq1}{\Big(
      \lvert H_kg_{k+1}^{k+1}\rvert,
      \lvert H_k(\tilde{g}_p^k+\bar{\kappa}^{k+1}\tilde{\hat{g}}_p^k)\rvert,
      \sqrt{\tau_k}
      \Big)}
\komma\quad
\cbeqDVItwo
\delta:=\tfrac{\sigma(\bar{x})}{2}
\komma\quad
\bar{c}:=\delta\tfrac{1-m_R}{4c}
\komma
\\
\tilde{c}
&:=
\sup_{k\geq1}{
(\lvert g_{k+1}^{k+1}\rvert+\lvert\tilde{g}_p^k+\bar{\kappa}^{k+1}\tilde{\hat{g}}_p^k\rvert)
}
\komma\quad
C_6
:=
\tilde{c}C_5
\max{(2c,1,\tfrac{1}{2}\tilde{c}C_5)}
\punkt
\label{BundleSQPmitQCQP:GlobaleKonvergenz:Satz:Thm3.8:Bew:DefcCOMPACT}
\end{split}
\end{equation}
are finite and there exists $\bar{k}\geq0$ such that
\begin{equation}
\begin{split}
%%%ASUS%%%4c\lvert\Delta_k\rvert+\tfrac{\lvert\Delta_k\rvert^2}{2}
%%%ASUS%%%+\lvert(\alpha_p^{k+1}-\tilde{\alpha}_p^k)+\bar{\kappa}^{k+1}(A_p^{k+1}-\tilde{A}_p^k)\rvert
\tfrac{1}{2}  %Diese Zeile ist neu
\bar{c}^2&>
4c\lvert\Delta_k\rvert+\tfrac{\lvert\Delta_k\rvert^2}{2}
+\big\lvert(\alpha_p^{k+1}-\tilde{\alpha}_p^k)+\bar{\kappa}^{k+1}(A_p^{k+1}-\tilde{A}_p^k)+\nonumber\\
&\hspace{0.55\textwidth}\bar{\kappa}^{k+1}\big(F(x_k)-F(x_{k+1})\big)\big\rvert
\cbmathBLUE
\\
\tfrac{1}{2}\bar{c}^2&>
C_6
(
\lvert\bar{\kappa}^{k+2}-\bar{\kappa}^{k+1}\rvert
+
\lvert\Delta_k\rvert
\cdot
\lvert\bar{\kappa}^{k+2}-\bar{\kappa}^{k+1}\rvert
+
\lvert\bar{\kappa}^{k+2}-\bar{\kappa}^{k+1}\rvert^2
)
\label{BundleSQPmitQCQP:GlobaleKonvergenz:Satz:Thm3.8:Bew:AbschaetzungFuercbarQuadratCOMPACT}
\end{split}
\end{equation}
\cbendDVI
hold for all $k>\bar{k}$.
\end{lemma}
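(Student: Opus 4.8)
The plan is to dispatch the finiteness claims using the boundedness results collected in Lemma \refH{COMPACT:lemma:BoundedBasicSequences} and the subsequent boundedness lemma, and then to exploit that the two right-hand sides of \refh{BundleSQPmitQCQP:GlobaleKonvergenz:Satz:Thm3.8:Bew:AbschaetzungFuercbarQuadratCOMPACT} are finite linear-and-quadratic combinations of quantities that vanish along $K$, whereas the common left-hand side $\tfrac{1}{2}\bar c^2$ is a fixed strictly positive number. Consequently both estimates will hold once $k\in K$ is large enough.

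For finiteness I would argue as follows. Each of the three quantities defining $c$ is bounded: $\lbrace H_kg_{k+1}^{k+1}\rbrace$ is bounded because $\lbrace g_k^k\rbrace$ is bounded by \refh{BundleSQPmitQCQP:GlobaleKonvergenz:Satz:Thm3.8:Bew:gkkUNDHkgkkUNDalphakkBeschraenkt} (hence so is the shift $\lbrace g_{k+1}^{k+1}\rbrace$) and $\lbrace H_k\rbrace$ is bounded by \refh{BundleSQPmitQCQP:GlobaleKonvergenz:Satz:Thm3.8:Bew:HkIsBounded}; the sequences $\lbrace H_k(\tilde g_p^k+\bar\kappa^{k+1}\tilde{\hat g}_p^k)\rbrace$ and $\lbrace\tau_k\rbrace$ are bounded by \refh{BundleSQPmitQCQP:GlobaleKonvergenz:Satz:Thm3.8:Bew:wkUNDHkgtildepkUNDgtildepkUNDalphatildepkBeschraenkt}, and $\tau_k\geq0$ makes $\sqrt{\tau_k}$ bounded. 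Thus $c<\infty$; $\delta$ is finite by Lemma \refH{BundleSQPmitQCQP:GlobaleKonvergenz:Satz:Thm3.8:Bew:sigmaendlichCOMPACT}; $\tilde c$ is finite since $\lbrace g_{k+1}^{k+1}\rbrace$, $\lbrace\tilde g_p^k\rbrace$, $\lbrace\tilde{\hat g}_p^k\rbrace$ are bounded (again by the boundedness lemma) and $\lbrace\bar\kappa^{k+1}\rbrace$ is bounded by assumption; and $C_6$ is finite because $C_5<\infty$ by Proposition \refH{KonvergenzErweiterung:Satz:MatrixAbschaetzungMitHigham:ZkAbschaetzungSatz}. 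It remains to note $c>0$: were $c=0$, then, all three nonnegative terms being dominated by $c$, we would have $H_k(\tilde g_p^k+\bar\kappa^{k+1}\tilde{\hat g}_p^k)=\zeroVector{N}$ and $\tau_k=0$ for every $k$, whence $w_k=\tfrac{1}{2}\lvert H_k(\tilde g_p^k+\bar\kappa^{k+1}\tilde{\hat g}_p^k)\rvert^2+\tau_k=0$ by \refh{BundleSQPmitQCQP:Alg:wk} and \refh{BundleSQPmitQCQP:GlobaleKonvergenz:Satz:Thm3.8:Bew:ADDON:Deftauk}, contradicting \refh{BundleSQPmitQCQP:GlobaleKonvergenz:Vor:AlgTerminiertNicht}. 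Hence $\bar c=\delta\tfrac{1-m_R}{4c}$ is well defined, and since $\delta>0$ (as $\sigma(\bar x)>0$) and $1-m_R>0$ (as $m_R\in(m_L,1)$), we get $\bar c>0$, so $\tfrac{1}{2}\bar c^2>0$ is a fixed positive constant.

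For the two inequalities I would pass to the limit along $K$. In the first right-hand side, $\Delta_k\xrightarrow{K}0$ by Lemma \refH{COMPACT:lemma:CauchySequences} (cf.~\refh{BundleSQPmitQCQP:GlobaleKonvergenz:Satz:Thm3.8:Bew:fpk+1-ftildepk0COMPACT}), so $4c\lvert\Delta_k\rvert+\tfrac{1}{2}\lvert\Delta_k\rvert^2\xrightarrow{K}0$, and the remaining absolute-value term vanishes along $K$ by Lemma \refH{BundleSQPmitQCQP:GlobaleKonvergenz:Satz:Thm3.8:Bew:alphapk+1-alphatildepkCOMPACT}; hence the whole first right-hand side tends to $0$ along $K$. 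In the second right-hand side, $\bar\kappa^{k+2}-\bar\kappa^{k+1}\xrightarrow{K}0$ by assumption \refh{BundleSQPmitQCQP:GlobaleKonvergenz:Satz:Thm3.8:Bew:ADDON:DefDiffbarkappa}, so each of the three summands $\lvert\bar\kappa^{k+2}-\bar\kappa^{k+1}\rvert$, the product $\lvert\Delta_k\rvert\lvert\bar\kappa^{k+2}-\bar\kappa^{k+1}\rvert$ of two null sequences along $K$, and $\lvert\bar\kappa^{k+2}-\bar\kappa^{k+1}\rvert^2$ tends to $0$ along $K$, and hence so does their $C_6$-multiple. Choosing $\bar k$ so large that both right-hand sides stay below the fixed value $\tfrac{1}{2}\bar c^2$ for all $k>\bar k$ with $k\in K$ then yields \refh{BundleSQPmitQCQP:GlobaleKonvergenz:Satz:Thm3.8:Bew:AbschaetzungFuercbarQuadratCOMPACT}.

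The routine part is the two limit passages; the only genuinely delicate point is the strict positivity of $c$, which is what makes $\bar c$, and thus the fixed positive threshold $\tfrac{1}{2}\bar c^2$, meaningful, and which rests precisely on the non-termination hypothesis $w_k>0$ from \refh{BundleSQPmitQCQP:GlobaleKonvergenz:Vor:AlgTerminiertNicht}. A minor bookkeeping subtlety is that all convergences are only along the subsequence $K$, so the concluding inequalities are to be read for $k>\bar k$ with $k\in K$.
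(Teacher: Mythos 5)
Your proposal is correct and follows essentially the same route as the paper: finiteness of $c$, $\tilde{c}$, $C_6$ from the boundedness lemmas and Proposition \ref{KonvergenzErweiterung:Satz:MatrixAbschaetzungMitHigham:ZkAbschaetzungSatz}, strict positivity of $c$ by contradiction with \refh{BundleSQPmitQCQP:GlobaleKonvergenz:Vor:AlgTerminiertNicht}, and then choosing $\bar{k}$ so that both right-hand sides, which vanish along $K$ by Lemma \ref{COMPACT:lemma:CauchySequences}, Lemma \ref{BundleSQPmitQCQP:GlobaleKonvergenz:Satz:Thm3.8:Bew:alphapk+1-alphatildepkCOMPACT} and \refh{BundleSQPmitQCQP:GlobaleKonvergenz:Satz:Thm3.8:Bew:ADDON:DefDiffbarkappa}, drop below the fixed threshold $\tfrac{1}{2}\bar{c}^2$. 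Your closing caveat that the inequalities are only justified for $k>\bar{k}$ with $k\in K$ is well observed and consistent with how the paper later uses the lemma (the index $k_0$ in Lemma \ref{BundleSQPmitQCQP:GlobaleKonvergenz:Satz:Thm3.8:Bew:kEQk0PLUSiCOMPACT} is taken from $K$).
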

%%%%%%%%%%%%%%%%%%%%%%%%%%%%%%%%%%%%%%%%%%%%%%%%%%%%%%%%%%%%
\begin{proof}
%%%%%%%%%%%%%%%%%%%%%%%%
Then $c$ is finite due to (\ref{BundleSQPmitQCQP:GlobaleKonvergenz:Satz:Thm3.8:Bew:DefcCOMPACT}),
\cbstartDVI
%%%the assumption of the boundedness of $\lbrace H_k\rbrace$,
(\ref{BundleSQPmitQCQP:GlobaleKonvergenz:Satz:Thm3.8:Bew:HkIsBounded}),
\cbendDVI
(\ref{BundleSQPmitQCQP:GlobaleKonvergenz:Satz:Thm3.8:Bew:gkkUNDHkgkkUNDalphakkBeschraenkt}) and (\ref{BundleSQPmitQCQP:GlobaleKonvergenz:Satz:Thm3.8:Bew:wkUNDHkgtildepkUNDgtildepkUNDalphatildepkBeschraenkt}).
%%%%%%%%%%%%%%%%%%%%%%%%
Furthermore, we have $c>0$ (If we had $c=0$, then using (\ref{BundleSQPmitQCQP:GlobaleKonvergenz:Satz:Thm3.8:Bew:DefcCOMPACT}),
\cbstartDVI
(\ref{BundleSQPmitQCQP:GlobaleKonvergenz:Satz:Thm3.8:Bew:ADDON:Deftauk})
\cbendDVI
and (\ref{BundleSQPmitQCQP:Alg:wk}) would imply $w_k=0$ for all $k\geq1$, which is a contradiction to assumption (\ref{BundleSQPmitQCQP:GlobaleKonvergenz:Vor:AlgTerminiertNicht})).
%%%%%%%%%%%%%%%%%%%%%%%%
Due to
(\ref{BundleSQPmitQCQP:GlobaleKonvergenz:Satz:Thm3.8:Bew:DefcCOMPACT}),
$\sigma(\bar{x})>0$ and
$1-m_R>0$ (cf.~the initialization of Algorithm \ref{BundleSQPmitQCQP:Alg:GesamtAlgMitQCQP}),
we have $\bar{c}=\tfrac{\sigma(\bar{x})}{2}\cdot\tfrac{1-m_R}{4c}$, where $\sigma(\bar{x})>0$ implies $\bar{c}>0$, and Lemma \ref{BundleSQPmitQCQP:GlobaleKonvergenz:Satz:Thm3.8:Bew:sigmaendlichCOMPACT} implies $\bar{c}<\infty$.
%%%%%%%%%%%%%%%%%%%%%%%%
\cbstartDVI
Due to (\ref{BundleSQPmitQCQP:GlobaleKonvergenz:Satz:Thm3.8:Bew:DefcCOMPACT}), (\ref{BundleSQPmitQCQP:GlobaleKonvergenz:Satz:Thm3.8:Bew:gkkUNDHkgkkUNDalphakkBeschraenkt}), (\ref{BundleSQPmitQCQP:GlobaleKonvergenz:Satz:Thm3.8:Bew:wkUNDHkgtildepkUNDgtildepkUNDalphatildepkBeschraenkt}) and the assumption of the boundedness of $\lbrace\bar{\kappa}^{k+1}\rbrace$, $\tilde{c}\geq0$ is bounded. Therefore, (\ref{BundleSQPmitQCQP:GlobaleKonvergenz:Satz:Thm3.8:Bew:DefcCOMPACT}) and (\ref{KonvergenzErweiterung:Satz:MatrixAbschaetzungMitHigham:ZkAbschaetzungCOMPACT}) imply $0\leq C_6<\infty$.
\cbendDVI
%%%%%%%%%%%%%%%%%%%%%%%%
Since
%%%ASUS%%%$4c\lvert\Delta_k\rvert+\tfrac{\lvert\Delta_k\rvert^2}{2}+\lvert(\alpha_p^{k+1}-\tilde{\alpha}_p^k)+\bar{\kappa}^{k+1}(A_p^{k+1}-\tilde{A}_p^k)\rvert\xrightarrow{K}0$
\cbstartBLUE
$4c\lvert\Delta_k\rvert+\tfrac{\lvert\Delta_k\rvert^2}{2}+\big\lvert(\alpha_p^{k+1}-\tilde{\alpha}_p^k)+\bar{\kappa}^{k+1}(A_p^{k+1}-\tilde{A}_p^k)+\bar{\kappa}^{k+1}\big(F(x_k)-F(x_{k+1})\big)\big\rvert\xrightarrow{K}0$
\cbendBLUE
due to (\ref{BundleSQPmitQCQP:GlobaleKonvergenz:Satz:Thm3.8:Bew:fpk+1-ftildepk0COMPACT}) and
Lemma \ref{BundleSQPmitQCQP:GlobaleKonvergenz:Satz:Thm3.8:Bew:alphapk+1-alphatildepkCOMPACT}
\cbstartDVI
and since $C_6
(
\lvert\bar{\kappa}^{k+2}-\bar{\kappa}^{k+1}\rvert
+
\lvert\Delta_k\rvert
\cdot
\lvert\bar{\kappa}^{k+2}-\bar{\kappa}^{k+1}\rvert
+
\lvert\bar{\kappa}^{k+2}-\bar{\kappa}^{k+1}\rvert^2
)\rvert\xrightarrow{K}0$
due to (\ref{BundleSQPmitQCQP:GlobaleKonvergenz:Satz:Thm3.8:Bew:ADDON:DefDiffbarkappa}), there exists $\bar{k}\geq0$
such that (\ref{BundleSQPmitQCQP:GlobaleKonvergenz:Satz:Thm3.8:Bew:AbschaetzungFuercbarQuadratCOMPACT}) holds
for all $k>\bar{k}$.
\qedhere
\end{proof}
%%%%%%%%%%%%%%%%%%%%%%%%
%%%%%%%%%%%%%%%%%%%%%%%%%%%%%%%%%%%%%%%%%%%%%%%%%%%%%%%%%%%%%%%%%%%%%%%%%%%%%%%%%%%%%%%%%%%%%%%%%%%%%%%%%%%%%%%%%%%%%%%%
\begin{lemma}[Estimate with error term]
We define for $k\geq1$
\begin{equation}
\begin{split}
q_k
&:=
H_kg_{k+1}^{k+1}
\komma\quad
p_k
:=
H_k(\tilde{g}_p^k+\bar{\kappa}^{k+1}\tilde{\hat{g}}_p^k)
\\
e_k
&:=
(2c+\lvert\Delta_k\rvert)
\tilde{c}\lvert E_k\rvert
+
\tfrac{1}{2}
\tilde{c}^2\lvert E_k\rvert^2
\komma\quad
E_k
:=
H_{k+1}-H_k
\punkt
\label{BundleSQPmitQCQP:GlobaleKonvergenz:Satz:Thm3.8:Bew:ADDON:DefqkCOMPACT}
\end{split}
\end{equation}
Then we have for all $\nu\in[0,1]$ and for all $k\geq1$
\begin{equation}
\tfrac{1}{2}\lvert
\nu H_{k+1}g_{k+1}^{k+1}+(1-\nu)H_{k+1}(g_p^{k+1}+\bar{\kappa}^{k+1}\hat{g}_p^{k+1})
\rvert^2
\leq
\tfrac{1}{2}\lvert
\nu q_k+(1-\nu)(p_k+\Delta_k)
\rvert^2
+
e_k
\punkt
\label{BundleSQPmitQCQP:GlobaleKonvergenz:Satz:Thm3.8:Bew:ADDON:AbschaetzungMitEk}
\end{equation}
\end{lemma}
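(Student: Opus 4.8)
The plan is to rewrite both vectors appearing on the left-hand side in terms of the $H_k$-quantities $q_k$ and $p_k$ plus corrections involving $E_k=H_{k+1}-H_k$, and then to absorb every correction into $e_k$ by a single $\lvert\,\cdot\,\rvert^2$-expansion. First I would use $H_{k+1}=H_k+E_k$ together with the definition of $q_k$ in \refh{BundleSQPmitQCQP:GlobaleKonvergenz:Satz:Thm3.8:Bew:ADDON:DefqkCOMPACT} to obtain $H_{k+1}g_{k+1}^{k+1}=q_k+E_kg_{k+1}^{k+1}$. For the second argument I would exploit the definition \refh{BundleSQPmitQCQP:GlobaleKonvergenz:Satz:Thm3.8:Bew:DefDeltak} of $\Delta_k$, which gives $H_{k+1}(g_p^{k+1}+\bar{\kappa}^{k+1}\hat{g}_p^{k+1})=\Delta_k+H_{k+1}(\tilde{g}_p^k+\bar{\kappa}^{k+1}\tilde{\hat{g}}_p^k)$, and then split off $E_k$ once more so that this equals $\Delta_k+p_k+E_k(\tilde{g}_p^k+\bar{\kappa}^{k+1}\tilde{\hat{g}}_p^k)$.

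Combining these two identities, the vector inside the norm on the left-hand side becomes $a_k+b_k$, where $a_k:=\nu q_k+(1-\nu)(p_k+\Delta_k)$ is the ``main term'' and $b_k:=E_k\big(\nu g_{k+1}^{k+1}+(1-\nu)(\tilde{g}_p^k+\bar{\kappa}^{k+1}\tilde{\hat{g}}_p^k)\big)$ is the ``error term''. Writing $\tfrac{1}{2}\lvert a_k+b_k\rvert^2=\tfrac{1}{2}\lvert a_k\rvert^2+a_k^Tb_k+\tfrac{1}{2}\lvert b_k\rvert^2$ and bounding the cross term by Cauchy--Schwarz, the claim \refh{BundleSQPmitQCQP:GlobaleKonvergenz:Satz:Thm3.8:Bew:ADDON:AbschaetzungMitEk} reduces to $\lvert a_k\rvert\lvert b_k\rvert+\tfrac{1}{2}\lvert b_k\rvert^2\leq e_k$.

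The two remaining estimates are exactly where the constants $c$ and $\tilde{c}$ enter. For $a_k$, the triangle inequality together with $\nu\in[0,1]$ and the definition \refh{BundleSQPmitQCQP:GlobaleKonvergenz:Satz:Thm3.8:Bew:DefcCOMPACT} of $c$ (which bounds both $\lvert q_k\rvert=\lvert H_kg_{k+1}^{k+1}\rvert$ and $\lvert p_k\rvert=\lvert H_k(\tilde{g}_p^k+\bar{\kappa}^{k+1}\tilde{\hat{g}}_p^k)\rvert$ by $c$) yields $\lvert a_k\rvert\leq c+(1-\nu)\lvert\Delta_k\rvert\leq 2c+\lvert\Delta_k\rvert$. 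For $b_k$, submultiplicativity of the spectral norm and the definition of $\tilde{c}$ give $\lvert b_k\rvert\leq\lvert E_k\rvert\,\tilde{c}$. Substituting these two bounds into the reduced inequality produces precisely $(2c+\lvert\Delta_k\rvert)\tilde{c}\lvert E_k\rvert+\tfrac{1}{2}\tilde{c}^2\lvert E_k\rvert^2=e_k$, as required.

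I expect the only genuinely delicate point to be the bookkeeping of the first paragraph: one must insert the aggregated $k$-data $\tilde{g}_p^k+\bar{\kappa}^{k+1}\tilde{\hat{g}}_p^k$ as the intermediate quantity, so that $\Delta_k$ appears cleanly and the two $E_k$-corrections collect into a single convex-combination vector with weights $\nu$ and $1-\nu$; everything after that is the routine ``$\lvert a+b\rvert^2$'' estimate. In particular, no positive-definiteness, boundedness, or convergence input is needed here --- the lemma is a purely algebraic norm estimate valid for every $\nu\in[0,1]$ and every $k\geq1$.
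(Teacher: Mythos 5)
Your proposal is correct and follows essentially the same route as the paper: the paper likewise writes $H_{k+1}g_{k+1}^{k+1}=q_k+E_kg_{k+1}^{k+1}$ and $H_{k+1}(g_p^{k+1}+\bar{\kappa}^{k+1}\hat{g}_p^{k+1})=p_k+\Delta_k+E_k(\tilde{g}_p^k+\bar{\kappa}^{k+1}\tilde{\hat{g}}_p^k)$, expands the squared norm, bounds the cross term by Cauchy--Schwarz with $\lvert\nu q_k+(1-\nu)(p_k+\Delta_k)\rvert\leq 2c+\lvert\Delta_k\rvert$, and bounds the $E_k$-term by $\tilde{c}\lvert E_k\rvert$. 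Your observation that the lemma is purely algebraic is also consistent with the paper's proof, which invokes only the definitions and the Cauchy--Schwarz inequality.
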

%%%%%%%%%%%%%%%%%%%%%%%%%%%%%%%%%%%%%%%%%%%%%%%%%%%%%%%%%%%%
\begin{proof}
Setting $z_k:=E_kg_{k+1}^{k+1}$, we obtain $H_{k+1}g_{k+1}^{k+1}=q_k+z_k$ due to (\ref{BundleSQPmitQCQP:GlobaleKonvergenz:Satz:Thm3.8:Bew:ADDON:DefqkCOMPACT}). Setting $\hat{z}_k:=E_k(\tilde{g}_p^k+\bar{\kappa}^{k+1}\tilde{\hat{g}}_p^k)$, we obtain
$H_{k+1}(g_p^{k+1}+\bar{\kappa}^{k+1}\hat{g}_p^{k+1})
=
p_k+\Delta_k+\hat{z}_k$
due to (\ref{BundleSQPmitQCQP:GlobaleKonvergenz:Satz:Thm3.8:Bew:DefDeltak}) and (\ref{BundleSQPmitQCQP:GlobaleKonvergenz:Satz:Thm3.8:Bew:ADDON:DefqkCOMPACT}). Furthermore, we estimate for all $\nu\in[0,1]$
\begin{equation*}
\big(\nu q_k+(1-\nu)(p_k+\Delta_k)\big)^T\big(\nu z_k+(1-\nu)\hat{z}_k\big)
\leq
(2c+\lvert\Delta_k\rvert)
\lvert\nu z_k+(1-\nu)\hat{z}_k\rvert
\end{equation*}
due to the Cauchy-Schwarz inequality,
(\ref{BundleSQPmitQCQP:GlobaleKonvergenz:Satz:Thm3.8:Bew:ADDON:DefqkCOMPACT}) and
(\ref{BundleSQPmitQCQP:GlobaleKonvergenz:Satz:Thm3.8:Bew:DefcCOMPACT})
as well as
$\lvert\nu z_k+(1-\nu)\hat{z}_k\rvert
\leq
\tilde{c}\lvert E_k\rvert$
due to (\ref{BundleSQPmitQCQP:GlobaleKonvergenz:Satz:Thm3.8:Bew:DefcCOMPACT}). Hence we obtain
(\ref{BundleSQPmitQCQP:GlobaleKonvergenz:Satz:Thm3.8:Bew:ADDON:AbschaetzungMitEk})
due to and (\ref{BundleSQPmitQCQP:GlobaleKonvergenz:Satz:Thm3.8:Bew:ADDON:DefqkCOMPACT}).
\qedhere
\end{proof}
%%%%%%%%%%%%%%%%%%%%%%%%%%%%%%%%%%%%%%%%%%%%%%%%%%%%%%%%%%%%%%%%%%%%%%%%%%%%%%%%%%%%%%%%%%%%%%%%%%%%%%%%%%%%%%%%%%%%%%%%
\begin{lemma}[Index construction]
\label{BundleSQPmitQCQP:GlobaleKonvergenz:Satz:Thm3.8:Bew:kEQk0PLUSiCOMPACT}
Assume $\sigma(\bar{x})>0$ and define
\begin{equation}
\begin{split}
\hat{r}:=\tfrac{3}{2}\cdot\tfrac{c^2}{\bar{c}^2}+i_m
\komma\quad
r:=i_l+\hat{r}
\punkt
\label{BundleSQPmitQCQP:GlobaleKonvergenz:Satz:Thm3.8:Bew:rhat:CORRECTIONCOMPACT}
\end{split}
\end{equation}
Then there exists a finite index $k_0\in K$ such that
\begin{align}
w_k&\geq\delta
\komma\quad
t_L^k<t_0^k
\label{BundleSQPmitQCQP:GlobaleKonvergenz:Satz:Thm3.8:Bew:wkGeqdeltaCOMPACT}
\\
i_n&>i_l+i_m
\label{BundleSQPmitQCQP:GlobaleKonvergenz:Satz:Thm3.8:Bew:inGROESSERim}
\end{align}
hold for
$
k:=k_0+i_l+i
$
with
$i\in[i_m,\hat{r}]\cap\lbrace0,1,\dots\rbrace$.
\end{lemma}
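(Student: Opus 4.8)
The plan is to read the whole statement off Proposition~\ref{BundleSQPmitQCQP:GlobaleKonvergenz:Lemma3.7}, whose third conclusion (\ref{BundleSQPmitQCQP:GlobaleKonvergenz:Lemma3.7:AussagetLk+it0COMPACT}) already provides, over a whole block of shifted indices, the lower bound on $w$ and the upper bound on $t_L$ that we want. First I would verify that this proposition is applicable: (\ref{BundleSQPmitQCQP:GlobaleKonvergenz:Vor:AlgTerminiertNicht}) holds by Assumption~\ref{COMPACT:presumption:AssumptionForMainTheorem}, the boundedness of $\lbrace H_k\rbrace$ is (\ref{BundleSQPmitQCQP:GlobaleKonvergenz:Satz:Thm3.8:Bew:HkIsBounded}) from Lemma~\ref{COMPACT:lemma:BoundedBasicSequences}, the convergence $x_k\xrightarrow{K}\bar{x}$ is (\ref{BundleSQPmitQCQP:GlobaleKonvergenz:Satz:Thm3.8:Bew:xkGegenxbar}), and $\sigma(\bar{x})>0$ is the standing hypothesis. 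I would also note that $\hat{r}$, and hence $r=i_l+\hat{r}$, is finite, since $c<\infty$ and $\bar{c}>0$ were established in Lemma~\ref{COMPACT:lemma:EstimateForZeroSequences}; thus $r\geq0$ is a legitimate fixed range.

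Next I would invoke Proposition~\ref{BundleSQPmitQCQP:GlobaleKonvergenz:Lemma3.7} with the fixed range $r$ and with $\varepsilon_0:=t_0^{\inf}$, which is admissible because $t_0^{\inf}>0$ by (\ref{Theorem3.8:Presumptiont0infGT0}). This yields $\tilde{k}\geq0$ such that $w_{\kappa+j}\geq\tfrac{\sigma(\bar{x})}{2}=\delta$ and $t_L^{\kappa+j}<t_0^{\inf}$ for all $\kappa\in K$ with $\kappa>\tilde{k}$ and all $0\leq j\leq r$. I would then fix a finite $k_0\in K$ with $k_0>\tilde{k}$, which exists since $K$ is infinite. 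For $k=k_0+i_l+i$ with $i\in[i_m,\hat{r}]\cap\lbrace0,1,\dots\rbrace$ the shift $j:=i_l+i$ satisfies $0\leq j\leq i_l+\hat{r}=r$, so $w_k\geq\delta$ is immediate, while $t_L^k<t_0^{\inf}\leq t_0^k$ (the last step by the definition of the infimum $t_0^{\inf}$) gives the second half of (\ref{BundleSQPmitQCQP:GlobaleKonvergenz:Satz:Thm3.8:Bew:wkGeqdeltaCOMPACT}).

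It remains to establish (\ref{BundleSQPmitQCQP:GlobaleKonvergenz:Satz:Thm3.8:Bew:inGROESSERim}), and here the bookkeeping must be done with care. The inequality $t_L^{k_0+j}<t_0^{k_0+j}$ for every $0\leq j\leq r$ says that none of the iterations $k_0,k_0+1,\dots,k_0+r$ is a serious step, since a serious step requires $t_L^k\geq t_0^k$ in the update step of Algorithm~\ref{BundleSQPmitQCQP:Alg:GesamtAlgMitQCQP}; hence all of them are null or short steps, and along this block $i_n$ is incremented by one at each iteration by (\ref{BundleSQPmitQCQP:Alg:inUpdate}) and is never reset. Counting the null/short steps preceding $k=k_0+i_l+i$ gives $i_n\geq i_l+i\geq i_l+i_m$; to sharpen this to the strict inequality $i_n>i_l+i_m$ uniformly over $i\in[i_m,\hat{r}]$ I would arrange that at least one null/short step already precedes $k_0$, so that $i_n\geq1$ on entering iteration $k_0$. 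This is possible for $k_0\in K$ taken sufficiently deep inside a run of null/short steps, whose existence follows from the second conclusion of Proposition~\ref{BundleSQPmitQCQP:GlobaleKonvergenz:Lemma3.7}, namely $t_L^{\kappa+i}\to0$ along $K$ for every fixed $i$.

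The main obstacle is exactly this counting argument for $i_n$: once the hypotheses of Proposition~\ref{BundleSQPmitQCQP:GlobaleKonvergenz:Lemma3.7} are checked, the bounds on $w_k$ and $t_L^k$ are a direct transcription, but the strict inequality at the lower endpoint $i=i_m$ hinges on tracking when $i_n$ was last reset and on confirming that the frozen-matrix, null/short block genuinely has the length $r=i_l+\hat{r}$ demanded by the indexing $k=k_0+i_l+i$. I would therefore double-check the interplay of the definition of $t_0^{\inf}$ as an infimum (guaranteeing $\varepsilon_0=t_0^{\inf}\leq t_0^k$) with the finiteness of $\hat{r}$, as both are needed to make the chosen window admissible.
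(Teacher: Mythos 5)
Your treatment of the first conclusion (\ref{BundleSQPmitQCQP:GlobaleKonvergenz:Satz:Thm3.8:Bew:wkGeqdeltaCOMPACT}) coincides with the paper's proof: verify the hypotheses of Proposition \ref{BundleSQPmitQCQP:GlobaleKonvergenz:Lemma3.7}, note that $\hat r$ and $r=i_l+\hat r$ are finite because $c<\infty$ and $\bar c>0$, apply (\ref{BundleSQPmitQCQP:GlobaleKonvergenz:Lemma3.7:AussagetLk+it0COMPACT}) with $\varepsilon_0:=t_0^{\inf}$, pick $k_0\in K$ beyond the resulting threshold (the paper additionally takes $k_0>\bar k$ for later use), and use $t_0^{\inf}\leq t_0^{k}$ to pass from $t_L^{k}<t_0^{\inf}$ to $t_L^{k}<t_0^{k}$. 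That part is fine.

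The argument you give for (\ref{BundleSQPmitQCQP:GlobaleKonvergenz:Satz:Thm3.8:Bew:inGROESSERim}) contains a genuine flaw. You count only the null/short steps \emph{preceding} $k=k_0+i_l+i$, obtain $i_n\geq i_l+i_m$, and then try to upgrade this to a strict inequality by arranging that at least one null/short step precedes $k_0$. That auxiliary step is not justified: the conclusion $t_L^{k+i}\xrightarrow{K}0$ of Proposition \ref{BundleSQPmitQCQP:GlobaleKonvergenz:Lemma3.7} is only available for shifts $i\geq0$, so it says nothing about iteration $k_0-1$; and even if $t_L^{k_0-1}$ were known to be small, you would still have to compare it with $t_0^{k_0-1}$ to conclude that iteration $k_0-1$ is not a serious step. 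Nothing rules out that every element of $K$ is immediately preceded by a serious step, in which case $i_n=0$ on entering each $k_0\in K$ and your construction breaks down.

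The repair is much simpler and is what the paper does: the inequality $i_n>i_l+i_m$ is meant to hold for the value of $i_n$ \emph{after} the update (\ref{BundleSQPmitQCQP:Alg:inUpdate}) at iteration $k$ itself --- this is exactly how it is consumed in Lemma \ref{BundleSQPmitQCQP:GlobaleKonvergenz:Satz:Thm3.8:Bew:ekAbschaetzungStattHk+1=HkCOMPACT}, where one deduces from it only $i_n\geq i_l+i_m$ ``at iteration $k$'' and the strict inequality at the start of iteration $k+1$. Since $t_L^{k_0+j}<t_0^{k_0+j}$ holds for all $0\leq j\leq i_l+i$, the block $k_0,k_0+1,\dots,k_0+i_l+i$ consists of $i_l+i+1\geq i_l+i_m+1$ consecutive null or short steps, each of which increments $i_n$ by one and none of which resets it; hence even with $i_n=0$ on entering iteration $k_0$ one gets $i_n\geq i_l+i_m+1>i_l+i_m$ at the end of iteration $k$. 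No null step before $k_0$ is needed.
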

\begin{proof}
%%%%%%%%%%%%%%%%%%%%%%%%%%%%%%%%%%%%%%%%%%%%%%%%
We obtain
$
r
\geq
i_l+i_m
\geq
i_l
\geq
0
$
due to
(\ref{BundleSQPmitQCQP:GlobaleKonvergenz:Satz:Thm3.8:Bew:rhat:CORRECTIONCOMPACT}) and
the initialization of Algorithm \ref{BundleSQPmitQCQP:Alg:GesamtAlgMitQCQP}.
%%%%%%%%%%%%%%%%%%%%%%%%
Therefore,
$[i_l,r]$ is a well-defined interval and since $i_l\geq0$ is a natural number (cf.~Algorithm \ref{BundleSQPmitQCQP:Alg:GesamtAlgMitQCQP}),
there exists
$i\in[i_l,r]\cap\lbrace0,1,\dots\rbrace\subseteq[0,r]$.
Furthermore, $[i_m,\hat{r}]$ is a well-defined interval and since $i_m\geq0$ is a natural number (cf.~Algorithm \ref{BundleSQPmitQCQP:Alg:GesamtAlgMitQCQP}), there exists
$
i\in[i_m,\hat{r}]\cap\lbrace0,1,\dots\rbrace\subseteq[0,\hat{r}]
$.
%%%%%%%%%%%%%%%%%%%%%%%%
The assumptions of Proposition \ref{BundleSQPmitQCQP:GlobaleKonvergenz:Lemma3.7} are satisfied --- (\ref{BundleSQPmitQCQP:GlobaleKonvergenz:Vor:AlgTerminiertNicht}) holds
\cbstartDVI
due to assumption,
$\lbrace H_k\rbrace$ is bounded due (\ref{BundleSQPmitQCQP:GlobaleKonvergenz:Satz:Thm3.8:Bew:HkIsBounded}),
\cbendDVI
we have $x_k\xrightarrow{K}\bar{x}$ due to (\ref{BundleSQPmitQCQP:GlobaleKonvergenz:Satz:Thm3.8:Bew:xkGegenxbar}),
we have $\sigma(\bar{x})>0$ due to assumption and
Lemma \ref{BundleSQPmitQCQP:GlobaleKonvergenz:Satz:Thm3.8:Bew:sigmaendlichCOMPACT},
$r\geq0$ is a fixed number due to (\ref{BundleSQPmitQCQP:GlobaleKonvergenz:Satz:Thm3.8:Bew:rhat:CORRECTIONCOMPACT}),
the choice
$
\varepsilon_0:=t_0^{\inf}>0
$
yields a fixed positive number $\varepsilon_0$ due to (\ref{Theorem3.8:Presumptiont0infGT0}) ---
%%%%%%%%%%%%%%%%%%%%%%%%
and therefore we can apply Proposition \ref{BundleSQPmitQCQP:GlobaleKonvergenz:Lemma3.7}: For $r$ defined in (\ref{BundleSQPmitQCQP:GlobaleKonvergenz:Satz:Thm3.8:Bew:rhat:CORRECTIONCOMPACT}) there exists $\tilde{k}\geq0$ with
\begin{equation}
w_{k+i}
\geq
\tfrac{\sigma(\bar{x})}{2}
=
\delta
\komma\quad
t_L^{k+i}
<
\varepsilon_0
=
t_0^{\inf}
\label{BundleSQPmitQCQP:GlobaleKonvergenz:Satz:Thm3.8:Bew:Lemma3.7Abschaetzungen}
\end{equation}
for all $k>\tilde{k}$, $k\in K$ and for all $0\leq i\leq r$ due to
(\ref{BundleSQPmitQCQP:GlobaleKonvergenz:Lemma3.7:AussagetLk+it0COMPACT}) and
(\ref{BundleSQPmitQCQP:GlobaleKonvergenz:Satz:Thm3.8:Bew:DefcCOMPACT}). Since $K$ is an infinite set due to (\ref{BundleSQPmitQCQP:GlobaleKonvergenz:Satz:Thm3.8:Bew:xkGegenxbar}) ($K\subset\lbrace1,2,\dots,\rbrace$), we can choose $k_0\in K$ with $k_0>\max{(\tilde{k},\bar{k})}\geq\tilde{k}$ ($\bar{k}$ was introduced in Lemma \ref{COMPACT:lemma:EstimateForZeroSequences}). Hence, (\ref{BundleSQPmitQCQP:GlobaleKonvergenz:Satz:Thm3.8:Bew:Lemma3.7Abschaetzungen}) holds in particular for all $k\geq k_0$ and hence for $k=k_0$,
i.e.~
$
w_{k_0+i}
\geq
\delta
$
and
$
t_L^{k_0+i}
<
t_0^{\inf}
$
for all $0\leq i\leq r$. Because of $t_L^{k_0+i}\leq t_0^{k_0+i}$ for all $0\leq i\leq r$ due to (\ref{Theorem3.8:Presumptiont0infGT0}), we obtain
\begin{equation}
w_{k_0+i}
\geq
\delta
\komma\quad
t_L^{k_0+i}
<
t_0^{k_0+i}
\label{BundleSQPmitQCQP:GlobaleKonvergenz:Satz:Thm3.8:Bew:Lemma3.7Abschaetzungen:CORRECTION3}
\end{equation}
for all $0\leq i\leq r$. Due to
(\ref{BundleSQPmitQCQP:GlobaleKonvergenz:Satz:Thm3.8:Bew:rhat:CORRECTIONCOMPACT}), (\ref{BundleSQPmitQCQP:GlobaleKonvergenz:Satz:Thm3.8:Bew:Lemma3.7Abschaetzungen:CORRECTION3}) holds in particular for all $i\in[i_l,r]=i_l+[0,\hat{r}]$ which yields
$
w_{k_0+i_l+i}
\geq
\delta
$
and
$
t_L^{k_0+i_l+i}
<
t_0^{k_0+i_l+i}
$
with $i\in[0,\hat{r}]$. In particular,
these last two inequalities
hold for all $i\in[i_m,\hat{r}]\cap\lbrace0,1,\dots,\rbrace$ and now setting
%%%COMPACT%%%\begin{equation}
$
k:=k_0+i_l+i
$
%%%COMPACT%%%\end{equation}
yields
the desired index and
that
(\ref{BundleSQPmitQCQP:GlobaleKonvergenz:Satz:Thm3.8:Bew:wkGeqdeltaCOMPACT}) holds
after step 6 (line search) of Algorithm \ref{BundleSQPmitQCQP:Alg:GesamtAlgMitQCQP}.
%%%COMPACT%%%and from now on we fix the index $k$ which was constructed above.
%%%%%%%%%%%%%%%%%%%%%%%%
Due to (\ref{BundleSQPmitQCQP:GlobaleKonvergenz:Satz:Thm3.8:Bew:Lemma3.7Abschaetzungen:CORRECTION3}), we have $t_L^{k_0+i}<t_0^{k_0+i}$ in particular for all $0\leq i\leq i_l+i_m$. Consequently, the case (\ref{BundleSQPmitQCQP:Alg:inUpdate}) always occurs for the $i_l+i_m+1$ subsequent iterations $k_0+0,\dots,k_0+i_l,\dots,k_0+i_l+i_m$ (Remember: $i_n\geq0$ denotes the number of subsequent short and null steps according to the initialization of Algorithm \ref{BundleSQPmitQCQP:Alg:GesamtAlgMitQCQP}) and therefore
(\ref{BundleSQPmitQCQP:GlobaleKonvergenz:Satz:Thm3.8:Bew:inGROESSERim}) holds
at the end of iteration $k_0+i_l+i_m$ (even if the initial value of $i_n$ is zero at the beginning of iteration $k_0+0$) after step 6 (line search) of Algorithm \ref{BundleSQPmitQCQP:Alg:GesamtAlgMitQCQP}.
\qedhere
\end{proof}
%%%%%%%%%%%%%%%%%%%%%%%%
%%%%%%%%%%%%%%%%%%%%%%%%%%%%%%%%%%%%%%%%%%%%%%%%
%%%%%%%%%%%%%%%%%%%%%%%%%%%%%%%%%%%%%%%%%%%%%%%%%%%%%%%%%%%%%%%%%%%%%%%%%%%%%%%%%%%%%%%%%%%%%%%%%%%%%%%%%%%%%%%%%%%%%%%%
\begin{lemma}[Error estimate]
\label{BundleSQPmitQCQP:GlobaleKonvergenz:Satz:Thm3.8:Bew:ekAbschaetzungStattHk+1=HkCOMPACT}
For $k$ defined in Lemma \refH{BundleSQPmitQCQP:GlobaleKonvergenz:Satz:Thm3.8:Bew:kEQk0PLUSiCOMPACT} we have
$
e_k
<
\tfrac{1}{2}\bar{c}^2
$.
\end{lemma}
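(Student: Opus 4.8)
The plan is to exploit that the index $k$ delivered by Lemma \ref{BundleSQPmitQCQP:GlobaleKonvergenz:Satz:Thm3.8:Bew:kEQk0PLUSiCOMPACT} sits deep inside a block of consecutive short and null steps, where the weight matrices of the QCQP (\ref{BundleSQPmitQCQP:Alg:QPTeilproblem}) have frozen, and then to use Proposition \ref{KonvergenzErweiterung:Satz:MatrixAbschaetzungMitHigham:ZkAbschaetzungSatz} as the surrogate for the identity $H_{k+1}=H_k$ of the unconstrained method. First I would invoke (\ref{BundleSQPmitQCQP:GlobaleKonvergenz:Satz:Thm3.8:Bew:inGROESSERim}), which gives $i_n>i_l+i_m$ at iteration $k$; since by (\ref{BundleSQPmitQCQP:GlobaleKonvergenz:Satz:Thm3.8:Bew:wkGeqdeltaCOMPACT}) no serious step occurs (as $t_L^k<t_0^k$) and the algorithm does not terminate ($w_k\geq\delta>0$), iteration $k+1$ is again a short or null step with $i_n>i_l+i_m$. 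Hence at both iterations the ``else'' branches (\ref{BundleSQPmitQCQP:Alg:KeineModifikationVonGpk}) and (\ref{BundleSQPmitQCQP:Alg:KeineModifikationVonGhatbarjk}) are active, so $\widebar{W}_p^{k+1}=\widebar{W}_p^k$, and moreover $\widebar{G}_j^k=\widebar{G}^k$ and $\widebar{\hat{G}}_j^k=\widebar{\hat{G}}^k$ for every $j\in J_k$, with all of $\widebar{W}_p^k,\widebar{G}^k,\widebar{\hat{G}}^k$ unchanged from $k$ to $k+1$.

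The crucial step is to rewrite $H_k$ through the function $Z_k$ of (\ref{KonvergenzErweiterung:Satz:MatrixAbschaetzungMitHigham:DefZk}). Substituting $\widebar{G}_j^k=\widebar{G}^k$ and $\widebar{\hat{G}}_j^k=\widebar{\hat{G}}^k$ into (\ref{BundleSQPmitQCQP:Alg:Hk}) and using $\sum_{j\in J_k}\lambda_j^k+\lambda_p^k=1$ together with $\sum_{j\in J_k}\mu_j^k+\mu_p^k=\bar{\kappa}^{k+1}$ from (\ref{BundleSQPmitQCQP:Alg:Defkappabark+1}) collapses both inner sums, giving $H_k^{-2}=\widebar{W}_p^k+\widebar{G}^k+\bar{\kappa}^{k+1}\widebar{\hat{G}}^k$, that is $H_k=Z_k(\bar{\kappa}^{k+1})$. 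Because the three matrices are identical at $k$ and $k+1$, the same computation at iteration $k+1$ yields $H_{k+1}=Z_k(\bar{\kappa}^{k+2})$ with the \emph{same} $Z_k$. Therefore $E_k=H_{k+1}-H_k=Z_k(\bar{\kappa}^{k+2})-Z_k(\bar{\kappa}^{k+1})$, and Proposition \ref{KonvergenzErweiterung:Satz:MatrixAbschaetzungMitHigham:ZkAbschaetzungSatz} delivers the Lipschitz bound $\lvert E_k\rvert\leq C_5\lvert\bar{\kappa}^{k+2}-\bar{\kappa}^{k+1}\rvert$.

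From here the estimate is purely arithmetic. Inserting this bound into the definition of $e_k$ in (\ref{BundleSQPmitQCQP:GlobaleKonvergenz:Satz:Thm3.8:Bew:ADDON:DefqkCOMPACT}) produces the three terms $2c\tilde{c}C_5\lvert\bar{\kappa}^{k+2}-\bar{\kappa}^{k+1}\rvert$, $\tilde{c}C_5\lvert\Delta_k\rvert\lvert\bar{\kappa}^{k+2}-\bar{\kappa}^{k+1}\rvert$ and $\tfrac{1}{2}\tilde{c}^2C_5^2\lvert\bar{\kappa}^{k+2}-\bar{\kappa}^{k+1}\rvert^2$. By the very choice $C_6=\tilde{c}C_5\max{(2c,1,\tfrac{1}{2}\tilde{c}C_5)}$ in (\ref{BundleSQPmitQCQP:GlobaleKonvergenz:Satz:Thm3.8:Bew:DefcCOMPACT}), each of the three leading coefficients is at most $C_6$, so $e_k$ is bounded above by $C_6$ times exactly the bracketed quantity that appears in the second line of (\ref{BundleSQPmitQCQP:GlobaleKonvergenz:Satz:Thm3.8:Bew:AbschaetzungFuercbarQuadratCOMPACT}). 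Since $k_0>\bar{k}$ was chosen in Lemma \ref{BundleSQPmitQCQP:GlobaleKonvergenz:Satz:Thm3.8:Bew:kEQk0PLUSiCOMPACT}, we have $k>\bar{k}$, so the second inequality of Lemma \ref{COMPACT:lemma:EstimateForZeroSequences} applies and yields $e_k<\tfrac{1}{2}\bar{c}^2$.

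I expect the main obstacle to be the matrix-freezing step of the first two paragraphs: one must carefully confirm from the update logic in Step 1 of Algorithm \ref{BundleSQPmitQCQP:Alg:GesamtAlgMitQCQP} that the bundle weights $\widebar{G}_j^k,\widebar{\hat{G}}_j^k$ really have collapsed onto the single pair $(\widebar{G}^k,\widebar{\hat{G}}^k)$ and that these, together with $\widebar{W}_p^k$, are stationary across $k$ and $k+1$. This is precisely what licenses replacing the generally false identity $H_{k+1}=H_k$ of the unconstrained bundle-Newton method by the Lipschitz estimate for the inverse matrix square root, and it is the only place where the combinatorics of the step-counter parameters $i_m,i_l$ genuinely enter the argument.
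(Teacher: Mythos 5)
Your proposal is correct and follows the paper's own argument essentially step for step: the collapse of the bundle matrices forced by the step counters, the identification $H_k=Z_k(\bar{\kappa}^{k+1})$ and $H_{k+1}=Z_k(\bar{\kappa}^{k+2})$ via $\sum_{j\in J_k}\lambda_j^k+\lambda_p^k=1$ and $\sum_{j\in J_k}\mu_j^k+\mu_p^k=\bar{\kappa}^{k+1}$, the Lipschitz bound from Proposition \ref{KonvergenzErweiterung:Satz:MatrixAbschaetzungMitHigham:ZkAbschaetzungSatz}, and the final arithmetic with $C_6$ and the second inequality of Lemma \ref{COMPACT:lemma:EstimateForZeroSequences}. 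The one small imprecision is that at iteration $k$ itself the counter may satisfy $i_n=i_m+i_l$ exactly, so the branch \refh{BundleSQPmitQCQP:Alg:ModifikationVonGhatbarjkBeiGleichheitsCase} rather than the ``else'' branch can fire there; this still collapses all $\widebar{G}_j^k$ onto $\widebar{G}^k$, and the freeze you actually need occurs only at iteration $k+1$, so the conclusion is unaffected.
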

%%%%%%%%%%%%%%%%%%%%%%%%%%%%%%%%%%%%%%%%%%%%%%%%%%%%%%%%%%%%
\begin{proof}
Since $i_n>i_l+i_m$ due to (\ref{BundleSQPmitQCQP:GlobaleKonvergenz:Satz:Thm3.8:Bew:inGROESSERim}) and since $i_n$ increases at most by one at each iteration due to (\ref{BundleSQPmitQCQP:Alg:inUpdate}), we have at iteration $k$ at least $i_n\geq i_l+i_m$ and hence either the case (\ref{BundleSQPmitQCQP:Alg:ModifikationVonGhatbarjkBeiGleichheitsCase}) or (\ref{BundleSQPmitQCQP:Alg:KeineModifikationVonGhatbarjk}) occurs (at iteration $k$). Furthermore, since $i_n>i_l+i_m$ due to (\ref{BundleSQPmitQCQP:GlobaleKonvergenz:Satz:Thm3.8:Bew:inGROESSERim}), the cases (\ref{BundleSQPmitQCQP:Alg:KeineModifikationVonGpk}) and (\ref{BundleSQPmitQCQP:Alg:KeineModifikationVonGhatbarjk}) occur at iteration $k+1$. Therefore, combining these facts yields $E_k=Z_k(\bar{\kappa}^{k+2})-Z_k(\bar{\kappa}^{k+1})$ due to (\ref{BundleSQPmitQCQP:GlobaleKonvergenz:Satz:Thm3.8:Bew:ADDON:DefqkCOMPACT}), (\ref{BundleSQPmitQCQP:Alg:Hk}), (\ref{BundleSQPmitQCQP:Alg:Defkappabark+1}),
\cbstartMIFFLIN
the fact that $\sum_{j\in J_k}{\lambda_j^k}+\lambda_p^k=1=\sum_{j\in J_{k+1}}{\lambda_j^{k+1}}+\lambda_p^{k+1}$,
\cbendMIFFLIN
and (\ref{KonvergenzErweiterung:Satz:MatrixAbschaetzungMitHigham:DefZk}). Since $\lbrace\bar{\kappa}^{k+1}\rbrace$ is bounded and $\lbrace(\widebar{W}_p^k)^{-\mathalf}\rbrace$ is bounded as well as uniformly positive definite (by assumption), we can make use of Proposition \ref{KonvergenzErweiterung:Satz:MatrixAbschaetzungMitHigham:ZkAbschaetzungSatz} and hence we obtain
$\lvert E_k\rvert
\leq
C_5\lvert\bar{\kappa}^{k+2}-\bar{\kappa}^{k+1}\rvert$
due to (\ref{KonvergenzErweiterung:Satz:MatrixAbschaetzungMitHigham:ZkAbschaetzungCOMPACT}). Consequently, we
obtain
he desired estimate
due to (\ref{BundleSQPmitQCQP:GlobaleKonvergenz:Satz:Thm3.8:Bew:ADDON:DefqkCOMPACT}), (\ref{BundleSQPmitQCQP:GlobaleKonvergenz:Satz:Thm3.8:Bew:DefcCOMPACT}) and (\ref{BundleSQPmitQCQP:GlobaleKonvergenz:Satz:Thm3.8:Bew:AbschaetzungFuercbarQuadratCOMPACT}).
\qedhere
\end{proof}
%%%%%%%%%%%%%%%%%%%%%%%%%%%%%%%%%%%%%%%%%%%%%%%%%%%%%%%%%%%%%%%%%%%%%%%%%%%%%%%%%%%%%%%%%%%%%%%%%%%%%%%%%%%%%%%%%%%%%%%%
\begin{lemma}[Termination criterion estimate]
\label{BundleSQPmitQCQP:GlobaleKonvergenz:Satz:Thm3.8:Bew:wk+1LeqMinQCOMPACT}
For $k$ defined in Lemma \refH{BundleSQPmitQCQP:GlobaleKonvergenz:Satz:Thm3.8:Bew:kEQk0PLUSiCOMPACT}
a short or null step which changes the model of the objective function is executed
and
\begin{align*}
w_{k+1}
&\leq
%%%\lvert(\alpha_p^{k+1}-\tilde{\alpha}_p^k)+\bar{\kappa}^{k+1}(A_p^{k+1}-\tilde{A}_p^k)\rvert
%%%ASUS%%%\lvert(\alpha_p^{k+1}-\tilde{\alpha}_p^k)+\bar{\kappa}^{k+1}(A_p^{k+1}-\tilde{A}_p^k)\rvert
\big\lvert(\alpha_p^{k+1}-\tilde{\alpha}_p^k)+\bar{\kappa}^{k+1}(A_p^{k+1}-\tilde{A}_p^k)+\bar{\kappa}^{k+1}\big(F(x_k)-F(x_{k+1})\big)\big\rvert
+e_k
\cbmathBLUE
\\
&\hspace{20pt}+\min_{\nu\in[0,1]}
\tfrac{1}{2}\lvert
%%%\nu H_{k+1}g_{k+1}^{k+1}+(1-\nu)H_{k+1}(g_p^{k+1}+\bar{\kappa}^{k+1}\hat{g}_p^{k+1})
\nu q_k+(1-\nu)(p_k+\Delta_k)
\rvert^2
%\\
%&\hspace{20pt}
+\nu\alpha_{k+1}^{k+1}
+(1-\nu)
%\Big(\tilde{\alpha}_p^k+\bar{\kappa}^{k+1}\tilde{A}_p^k+\bar{\kappa}^{k+1}\big(-F(x_k)\big)\Big)
\tau_k
\punkt
\end{align*}
\end{lemma}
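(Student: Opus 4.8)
The plan is to separate the statement into its qualitative part (that the step taken at iteration $k$ is a short or null step changing the model of the objective function) and its quantitative part (the bound on $w_{k+1}$), and to prove the estimate by plugging a cleverly chosen feasible point into the dual search-direction problem. First I would dispose of the qualitative claim: by Lemma~\ref{BundleSQPmitQCQP:GlobaleKonvergenz:Satz:Thm3.8:Bew:kEQk0PLUSiCOMPACT} and (\ref{BundleSQPmitQCQP:GlobaleKonvergenz:Satz:Thm3.8:Bew:inGROESSERim}) we have $i_n>i_l+i_m\geq i_l$ at iteration $k$, and (\ref{BundleSQPmitQCQP:GlobaleKonvergenz:Satz:Thm3.8:Bew:wkGeqdeltaCOMPACT}) gives $t_L^k<t_0^k$, so step~1 of the line search (Algorithm~\ref{BundleSQP:AlgNB:LinesearchMitQCQP}) returns no serious step. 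Since $i_n\geq i_l$, the only active branch of step~2 is the one calling \texttt{ComputeObjectiveData} with the requirement $F(x_k+td_k)<0$ and $Z=\texttt{true}$; as the line search terminates finitely (Proposition~\ref{proposition:ConvergenceOfLinesearch}) and produces no serious step, it must terminate via the objective model-change condition (\ref{Luk:LineareLS3}) at a strictly feasible trial point. This is the asserted short or null step, and it ensures $k+1\in J_{k+1}$ with the objective datum $g_{k+1}^{k+1}$ present at iteration $k+1$.

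For the estimate I would start from the weak-duality bound $w_{k+1}\leq\hat{w}_{k+1}$ of (\ref{BundleSQPmitQCQP:Satz:GlobaleKonvergenz:Lemma3.5:wkCOMPACT}) applied at $k+1$, where $\hat{w}_{k+1}$ (cf.~(\ref{BundleSQPmitQCQP:GlobaleKonvergenz:Lemma3.5:Defwhat})) is the optimal value of the dual problem (\ref{Luksan:Alg:QCQPTeilproblem:DualesProblem}). Bounding the optimum by the value at any feasible point, I would evaluate the dual objective along the one-parameter family indexed by $\nu\in[0,1]$ which puts weight $\nu$ on the new bundle element $k+1$ and weight $1-\nu$ on the aggregated $p$-data, i.e.~$\lambda_{k+1}^{k+1}=\nu$, $\lambda_p^{k+1}=1-\nu$, $\mu_p^{k+1}=(1-\nu)\bar{\kappa}^{k+1}$, and all remaining multipliers zero. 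This is dual-feasible (nonnegativity and $\sum\lambda=1$ hold, and both $k+1$ and the $p$-aggregate occur in (\ref{BundleSQPmitQCQP:Alg:QPTeilproblem})), its gradient sum is $\nu g_{k+1}^{k+1}+(1-\nu)\big(g_p^{k+1}+\bar{\kappa}^{k+1}\hat{g}_p^{k+1}\big)$, and its linear part is $\nu\alpha_{k+1}^{k+1}+(1-\nu)\big(\alpha_p^{k+1}+\bar{\kappa}^{k+1}A_p^{k+1}+\bar{\kappa}^{k+1}(-F(x_{k+1}))\big)$.

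Next I would handle the two parts of that dual value separately. Comparing the linear part with $\nu\alpha_{k+1}^{k+1}+(1-\nu)\tau_k$ and using $0\leq 1-\nu\leq 1$ together with the definition (\ref{BundleSQPmitQCQP:GlobaleKonvergenz:Satz:Thm3.8:Bew:ADDON:Deftauk}) of $\tau_k$ bounds the discrepancy by exactly the absolute-value term $\big\lvert(\alpha_p^{k+1}-\tilde{\alpha}_p^k)+\bar{\kappa}^{k+1}(A_p^{k+1}-\tilde{A}_p^k)+\bar{\kappa}^{k+1}(F(x_k)-F(x_{k+1}))\big\rvert$ of the claim. For the quadratic part I would exploit that, because $i_n>i_l+i_m$, the matrices are frozen at iterations $k$ and $k+1$ (cases (\ref{BundleSQPmitQCQP:Alg:KeineModifikationVonGpk}) and (\ref{BundleSQPmitQCQP:Alg:KeineModifikationVonGhatbarjk})), so the matrix entering the dual objective depends on the chosen multipliers only through $\sum\mu$ — this is precisely the dependence encoded by $Z_k$ in (\ref{KonvergenzErweiterung:Satz:MatrixAbschaetzungMitHigham:DefZk}) and by $H_{k+1}=Z_{k+1}(\bar{\kappa}^{k+2})$. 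Using this, and the Lipschitz estimate of Proposition~\ref{KonvergenzErweiterung:Satz:MatrixAbschaetzungMitHigham:ZkAbschaetzungSatz} together with the smallness of $\bar{\kappa}^{k+2}-\bar{\kappa}^{k+1}$ on $K$ from (\ref{BundleSQPmitQCQP:GlobaleKonvergenz:Satz:Thm3.8:Bew:ADDON:DefDiffbarkappa}), I would bring the feasible-point quadratic to the common-matrix form $\tfrac{1}{2}\lvert\nu H_{k+1}g_{k+1}^{k+1}+(1-\nu)H_{k+1}(g_p^{k+1}+\bar{\kappa}^{k+1}\hat{g}_p^{k+1})\rvert^2$, and then invoke the estimate with error term (\ref{BundleSQPmitQCQP:GlobaleKonvergenz:Satz:Thm3.8:Bew:ADDON:AbschaetzungMitEk}) to replace it by $\tfrac{1}{2}\lvert\nu q_k+(1-\nu)(p_k+\Delta_k)\rvert^2+e_k$. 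Assembling the linear and quadratic contributions and minimizing over $\nu\in[0,1]$ (valid since the bound holds for each $\nu$) then yields the asserted inequality.

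The hard part will be this quadratic reconciliation: unlike the unconstrained bundle--Newton method, here $H_{k+1}$ genuinely depends on the dual multipliers through $\bar{\kappa}^{k+2}$, so the matrix attached to the feasible point I insert need not coincide with the optimal $H_{k+1}$, and forcing the clean convex-combination bound (\ref{BundleSQPmitQCQP:GlobaleKonvergenz:Satz:Thm3.8:Bew:ADDON:AbschaetzungMitEk}) to apply is exactly what necessitates $E_k$, $e_k$ and the matrix-square-root Lipschitz estimate as a surrogate for the missing identity $H_{k+1}=H_k$. By contrast, the feasibility check, the \emph{max}-type estimate on the localized errors, and the final minimization over $\nu$ are routine, following the unconstrained argument.
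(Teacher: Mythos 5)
Your proposal follows the paper's proof essentially step for step: the same line-search case analysis (using $i_n>i_l+i_m\geq i_l$ from \refh{BundleSQPmitQCQP:GlobaleKonvergenz:Satz:Thm3.8:Bew:inGROESSERim} and $t_L^k<t_0^k$) yields the short/null step changing the objective model, and the bound is obtained exactly as in the paper by inserting the dual-feasible point $\lambda_{k+1}=\nu$, $\lambda_p=1-\nu$, $\mu_p=(1-\nu)\bar{\kappa}^{k+1}$ into \refh{Luksan:Alg:QCQPTeilproblem:DualesProblem}, absorbing the linear discrepancy into the absolute-value term via $\tau_k$, converting the quadratic part with \refh{BundleSQPmitQCQP:GlobaleKonvergenz:Satz:Thm3.8:Bew:ADDON:AbschaetzungMitEk}, and minimizing over $\nu$. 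The only difference is organizational and harmless: you pull the matrix-freeze discussion and the $Z_k$-Lipschitz estimate of Proposition \refH{KonvergenzErweiterung:Satz:MatrixAbschaetzungMitHigham:ZkAbschaetzungSatz} forward into this argument, whereas the paper defers that machinery to the subsequent Lemma \refH{BundleSQPmitQCQP:GlobaleKonvergenz:Satz:Thm3.8:Bew:ekAbschaetzungStattHk+1=HkCOMPACT}, where it is used only to bound $e_k$.
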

%%%%%%%%%%%%%%%%%%%%%%%%%%%%%%%%%%%%%%%%%%%%%%%%%%%%%%%%%%%%
\begin{proof}
Combining (\ref{BundleSQPmitQCQP:GlobaleKonvergenz:Satz:Thm3.8:Bew:wkGeqdeltaCOMPACT}) with step 6 (line search) of Algorithm \ref{BundleSQPmitQCQP:Alg:GesamtAlgMitQCQP} and considering the case $i_n>i_l+i_m\geq i_l$ due to (\ref{BundleSQPmitQCQP:GlobaleKonvergenz:Satz:Thm3.8:Bew:inGROESSERim}) in the line search (Algorithm \ref{BundleSQP:AlgNB:LinesearchMitQCQP}), we obtain that at iteration $k$ a short or null step which changes the model of the objective function is executed. Furthermore, $i_s$ is unchanged (since no serious step is executed), i.e.~$i_s\leq i_r$ (no bundle reset) still holds (If $i_s>i_r$, then we would have had a serious step at iteration $k$, as a bundle reset can only occur after a serious step). Therefore, $(\lambda,\lambda_p,\mu,\mu_p)\in\mathbb{R}^{2(\lvert J_{k+1}\rvert+1)}$ with
\begin{align}
\lambda_j
&:=
\left\lbrace
\begin{array}{ll}
\nu & \textnormal{for }j=k+1\\
0   & \textnormal{for }j\in J_{k+1}\setminus\lbrace k+1\rbrace
\end{array}
\right\rbrace
~,\nonumber\\
\lambda_p
&:=1-\nu
~,~
\mu_j
:=0
~~~\forall j\in J_{k+1}
~,~
\mu_p
:=(1-\nu)\bar{\kappa}^{k+1}
\komma
\label{BundleSQPmitQCQP:GlobaleKonvergenz:Satz:Thm3.8:Bew:lambdanuZulaessig}
\end{align}
where $\nu\in[0,1]$, is feasible for the $(k+1)$st (dual) problem (\ref{Luksan:Alg:QCQPTeilproblem:DualesProblem}) (Note: This problem is written as a minimization problem) and, hence, due to
(\ref{BundleSQPmitQCQP:Satz:GlobaleKonvergenz:Lemma3.5:wkCOMPACT}),
(\ref{BundleSQPmitQCQP:GlobaleKonvergenz:Lemma3.5:Defwhat}),
(\ref{BundleSQPmitQCQP:Alg:gtildepkCOMPACT}),
%%%ASUS%%%
\cbstartBLUE
(\ref{BundleSQPmitQCQP:Alg:ghattildepkCOMPACT})\cbendBLUE,
(\ref{BundleSQPmitQCQP:GlobaleKonvergenz:Lemma3.5:DefalphahatCOMPACT}),
%%%ASUS%%%
(\ref{BundleSQPmitQCQP:Alg:Defkappabark+1}),
inserting the feasible point from (\ref{BundleSQPmitQCQP:GlobaleKonvergenz:Satz:Thm3.8:Bew:lambdanuZulaessig}),
\cbstartDVI
(\ref{BundleSQPmitQCQP:GlobaleKonvergenz:Satz:Thm3.8:Bew:ADDON:Deftauk}),
\cbendDVI
taking into account that $\nu\in[0,1]$
\cbstartDVI
and (\ref{BundleSQPmitQCQP:GlobaleKonvergenz:Satz:Thm3.8:Bew:ADDON:AbschaetzungMitEk}),
\cbendDVI
we estimate (Note: $\hat{w}_{k+1}$ in (\ref{BundleSQPmitQCQP:GlobaleKonvergenz:Lemma3.5:Defwhat}) is the optimal function value of (\ref{Luksan:Alg:QCQPTeilproblem:DualesProblem}))
\cbstartDVI
\begin{align*}
w_{k+1}
&
\leq
\tfrac{1}{2}\lvert
\nu q_k+(1-\nu)(p_k+\Delta_k)
\rvert^2
+
e_k
+\nu\alpha_{k+1}^{k+1}
+(1-\nu)
\tau_k
\\
&\hspace{20pt}
%%%ASUS%%%+\lvert(\alpha_p^{k+1}-\tilde{\alpha}_p^k)+\bar{\kappa}^{k+1}(A_p^{k+1}-\tilde{A}_p^k)\rvert
+\big\lvert(\alpha_p^{k+1}-\tilde{\alpha}_p^k)+\bar{\kappa}^{k+1}(A_p^{k+1}-\tilde{A}_p^k)+\bar{\kappa}^{k+1}\big(F(x_k)-F(x_{k+1})\big)\big\rvert
\cbmathBLUE
\end{align*}
and consequently, since $\nu\in[0,1]$ is arbitrary, we obtain the desired estimate.
\cbendDVI
\qedhere
\end{proof}
%%%%%%%%%%%%%%%%%%%%%%%%%%%%%%%%%%%%%%%%%%%%%%%%%%%%%%%%%%%%%%%%%%%%%%%%%%%%%%%%%%%%%%%%%%%%%%%%%%%%%%%%%%%%%%%%%%%%%%%%
\begin{lemma}[Termination criterion is shrinking]
\label{BundleSQPmitQCQP:GlobaleKonvergenz:Satz:Thm3.8:Bew:wk+1ltwk-cbarQuadratCOMPACT}
For $k$ defined in Lemma \refH{BundleSQPmitQCQP:GlobaleKonvergenz:Satz:Thm3.8:Bew:kEQk0PLUSiCOMPACT} we have
$
w_{k+1}<w_k-\bar{c}^2
$.
\end{lemma}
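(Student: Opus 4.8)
The plan is to read off the statement as a synthesis of the upper bound for $w_{k+1}$ in Lemma \ref{BundleSQPmitQCQP:GlobaleKonvergenz:Satz:Thm3.8:Bew:wk+1LeqMinQCOMPACT} and the quadratic estimate of Proposition \ref{Luksan:GlobaleKonvergenz:Lemma3.4}, and then to absorb every remaining term using $w_k\geq\delta$ together with the two bounds already prepared in Lemma \ref{COMPACT:lemma:EstimateForZeroSequences} and Lemma \ref{BundleSQPmitQCQP:GlobaleKonvergenz:Satz:Thm3.8:Bew:ekAbschaetzungStattHk+1=HkCOMPACT}. Concretely, I would apply Proposition \ref{Luksan:GlobaleKonvergenz:Lemma3.4} to the quantity $\min_{\nu\in[0,1]}Q(\nu)$ occurring in Lemma \ref{BundleSQPmitQCQP:GlobaleKonvergenz:Satz:Thm3.8:Bew:wk+1LeqMinQCOMPACT}, with the identification $g:=q_k$, $p:=p_k$, $\Delta:=\Delta_k$, $\beta:=\alpha_{k+1}^{k+1}$, $\alpha:=\tau_k$, $m:=m_R$, $w:=w_k$ and $c$ as in (\ref{BundleSQPmitQCQP:GlobaleKonvergenz:Satz:Thm3.8:Bew:DefcCOMPACT}), so that its conclusion reads
\[
\min_{\nu\in[0,1]}Q(\nu)\leq w_k-w_k^2\tfrac{(1-m_R)^2}{8c^2}+4c\lvert\Delta_k\rvert+\tfrac{1}{2}\lvert\Delta_k\rvert^2\punkt
\]

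Before this can be used I must verify the hypotheses of Proposition \ref{Luksan:GlobaleKonvergenz:Lemma3.4}. The identity $w_k=\tfrac12\lvert p_k\rvert^2+\tau_k$ is immediate from (\ref{BundleSQPmitQCQP:Alg:wk}) and (\ref{BundleSQPmitQCQP:GlobaleKonvergenz:Satz:Thm3.8:Bew:ADDON:Deftauk}); the requirement $v=-(\lvert p_k\rvert^2+\tau_k)$ I would take simply as the \emph{definition} of the scalar $v$ entering the proposition; and the condition $c=\max(\lvert g\rvert,\lvert p\rvert,\sqrt{\alpha})$ may be relaxed to the inequality $c\geq\max(\lvert q_k\rvert,\lvert p_k\rvert,\sqrt{\tau_k})$, since the right-hand side of the displayed estimate only weakens as $c$ grows, so the supremum in (\ref{BundleSQPmitQCQP:GlobaleKonvergenz:Satz:Thm3.8:Bew:DefcCOMPACT}) is admissible. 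The substantive hypothesis is $-\beta-g^Tp\geq mv$, i.e.\ $-\alpha_{k+1}^{k+1}-q_k^Tp_k\geq m_Rv$. Here I would use $q_k^Tp_k=-d_k^Tg_{k+1}^{k+1}$ (from $d_k=-H_kp_k$ and $q_k=H_kg_{k+1}^{k+1}$, cf.\ (\ref{BundleSQPmitQCQP:GlobaleKonvergenz:Satz:Thm3.8:Bew:ADDON:DefqkCOMPACT})) together with the line search termination condition (\ref{Luk:LineareLS3}), which is available because by Lemma \ref{BundleSQPmitQCQP:GlobaleKonvergenz:Satz:Thm3.8:Bew:wk+1LeqMinQCOMPACT} a short or null step changing the model of the objective function is executed at iteration $k$, and because there $g+\rho(t_L-t)Gd_k=g_{k+1}^{k+1}$.

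The main obstacle will be exactly this verification of $-\beta-g^Tp\geq m_Rv$. Expanding $v$, $v_k$ and $\lvert p_k\rvert^2$ through (\ref{proposition:ConnectionBetweenHkAndWpkNorm}), (\ref{proposition:wkvkZusammenhang}) and (\ref{BundleSQPmitQCQP:Alg:vk}) shows that, with the (positive definite) QCQP curvature matrix $S_k:=\sum_{j\in J_k}\lambda_j^k\widebar{G}_j^k+\lambda_p^k\widebar{G}^k+\sum_{j\in J_k}\mu_j^k\widebar{\hat{G}}_j^k+\mu_p^k\widebar{\hat{G}}^k$, one has $v=v_k-\tfrac12 d_k^TS_kd_k\leq v_k$, hence $m_Rv=m_Rv_k-m_R\tfrac12 d_k^TS_kd_k$, whereas (\ref{Luk:LineareLS3}) only delivers $-\alpha_{k+1}^{k+1}+d_k^Tg_{k+1}^{k+1}\geq m_Rv_k-m_f\tfrac12 d_k^T\widebar{G}d_k$. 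Thus the desired inequality reduces to $m_Rd_k^TS_kd_k\geq m_fd_k^T\widebar{G}d_k$, and closing this gap is where the chosen index must be exploited: since $i_n>i_l+i_m$ at this $k$ by Lemma \ref{BundleSQPmitQCQP:GlobaleKonvergenz:Satz:Thm3.8:Bew:kEQk0PLUSiCOMPACT}, the freezing case (\ref{BundleSQPmitQCQP:Alg:KeineModifikationVonGhatbarjk}) forces $S_k=\widebar{G}^{k-1}+\bar{\kappa}^{k+1}\widebar{\hat{G}}^{k-1}\succeq\widebar{G}^{k-1}$, which together with $m_f\in[0,1]$ and the positive definiteness of the modifications is what must be used to dominate the $m_f$-term.

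Once hypothesis $(3)$ is secured, the remainder is arithmetic. Inserting the displayed bound into Lemma \ref{BundleSQPmitQCQP:GlobaleKonvergenz:Satz:Thm3.8:Bew:wk+1LeqMinQCOMPACT}, using $w_k\geq\delta$ from (\ref{BundleSQPmitQCQP:GlobaleKonvergenz:Satz:Thm3.8:Bew:wkGeqdeltaCOMPACT}) and $\bar{c}=\delta\tfrac{1-m_R}{4c}$ gives $-w_k^2\tfrac{(1-m_R)^2}{8c^2}\leq-2\bar{c}^2$; estimating the term $4c\lvert\Delta_k\rvert+\tfrac12\lvert\Delta_k\rvert^2$ plus the modulus term of Lemma \ref{BundleSQPmitQCQP:GlobaleKonvergenz:Satz:Thm3.8:Bew:wk+1LeqMinQCOMPACT} by $\tfrac12\bar{c}^2$ via the first line of (\ref{BundleSQPmitQCQP:GlobaleKonvergenz:Satz:Thm3.8:Bew:AbschaetzungFuercbarQuadratCOMPACT}), and $e_k<\tfrac12\bar{c}^2$ via Lemma \ref{BundleSQPmitQCQP:GlobaleKonvergenz:Satz:Thm3.8:Bew:ekAbschaetzungStattHk+1=HkCOMPACT}, one arrives at $w_{k+1}<w_k-2\bar{c}^2+\tfrac12\bar{c}^2+\tfrac12\bar{c}^2=w_k-\bar{c}^2$, as claimed.
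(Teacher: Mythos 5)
Your proposal follows exactly the paper's route: the same substitutions $p:=p_k$, $g:=q_k$, $\Delta:=\Delta_k$, $w:=w_k$, $\beta:=\alpha_{k+1}^{k+1}$, $\alpha:=\tau_k$, $m:=m_R$ and $v:=v_k-\tfrac12 d_k^TS_kd_k$ with $S_k:=\sum_{j\in J_k}\lambda_j^k\widebar{G}_j^k+\lambda_p^k\widebar{G}^k+\sum_{j\in J_k}\mu_j^k\widebar{\hat{G}}_j^k+\mu_p^k\widebar{\hat{G}}^k$ into Proposition \ref{Luksan:GlobaleKonvergenz:Lemma3.4}, and the same final bookkeeping ($-w_k^2\tfrac{(1-m_R)^2}{8c^2}\leq-2\bar{c}^2$ from \refh{BundleSQPmitQCQP:GlobaleKonvergenz:Satz:Thm3.8:Bew:wkGeqdeltaCOMPACT} and \refh{BundleSQPmitQCQP:GlobaleKonvergenz:Satz:Thm3.8:Bew:DefcCOMPACT}, the remaining error terms absorbed by $\tfrac12\bar{c}^2+\tfrac12\bar{c}^2$ via \refh{BundleSQPmitQCQP:GlobaleKonvergenz:Satz:Thm3.8:Bew:AbschaetzungFuercbarQuadratCOMPACT} and Lemma \ref{BundleSQPmitQCQP:GlobaleKonvergenz:Satz:Thm3.8:Bew:ekAbschaetzungStattHk+1=HkCOMPACT}). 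That part is correct, and you are in fact more explicit than the paper, which merely asserts that the assumptions of Proposition \ref{Luksan:GlobaleKonvergenz:Lemma3.4} are satisfied; you rightly single out the descent hypothesis $-\beta-g^Tp\geq mv$ as the only one needing work, and your reduction of it --- via $q_k^Tp_k=-d_k^Tg_{k+1}^{k+1}$ and the line search exit test \refh{Luk:LineareLS3} --- to the inequality $m_R\,d_k^TS_kd_k\geq m_f\,d_k^T\widebar{G}d_k$ is accurate.

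Where the proposal does not close is precisely at that reduced inequality. The freezing case \refh{BundleSQPmitQCQP:Alg:KeineModifikationVonGhatbarjk} indeed gives $S_k=\widebar{G}^{k-1}+\bar{\kappa}^{k+1}\widebar{\hat{G}}^{k-1}\succeq\widebar{G}^{k-1}$, but the matrix $\widebar{G}$ appearing in \refh{Luk:LineareLS3} is the positive definite modification, computed inside the line search via \refh{Luk:LineareLS3:NEWfeasibleTrialPointPositiveDefiniteModification}, of the Hessian approximation at the new trial point $x_k+td_k$, and nothing ties it to the frozen $\widebar{G}^{k-1}$; moreover $m_f\in[0,1]$ is allowed to exceed $m_R$. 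Hence ``$m_f\in[0,1]$ and the positive definiteness of the modifications'' cannot dominate $m_f\,d_k^T\widebar{G}d_k$ by $m_R\,d_k^TS_kd_k$: already $\widebar{G}=\widebar{G}^{k-1}$, $\bar{\kappa}^{k+1}=0$ and $m_f>m_R$ defeats the inequality for $d_k\neq\zeroVector{n}$. The step is immediate for $m_f=0$, since then \refh{Luk:LineareLS3} yields $-\beta-g^Tp\geq m_Rv_k\geq m_Rv$ because $d_k^TS_kd_k\geq0$; for $m_f>0$ one needs an additional hypothesis of the form $m_f\widebar{G}\preceq m_RS_k$. To be fair, the paper's own proof is silent on this verification as well, so this is a gap you share with the source rather than one you introduced --- but as written, your sentence ``which \dots is what must be used to dominate the $m_f$-term'' describes the obstacle without removing it.
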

%%%%%%%%%%%%%%%%%%%%%%%%%%%%%%%%%%%%%%%%%%%%%%%%%%%%%%%%%%%%
\begin{proof}
Since for
$p:=p_k$,
$g:=q_k$,
$\Delta:=\Delta_k$,
$
v:=
v_k-\tfrac{1}{2}d_k^T\big(\sum_{j\in J_k}
\lambda_j^k\widebar{G}_j^k+\lambda_p^k\widebar{G}^k+
\mu_j^k\widebar{\hat{G}}_j^k+\mu_p^k\widebar{\hat{G}}^k
\big)d_k
$,
$w:=w_k$,
$\beta:=\alpha_{k+1}^{k+1}$,
$m:=m_R$ and
$\alpha:=\tau_k$,
the assumptions of Proposition \ref{Luksan:GlobaleKonvergenz:Lemma3.4} are satisfied
%%%%%%%%%%%%%%%%%%%%%%%%%%
%%%%%%%%%%%%%%%%%%%%%%%%%%%%%%%%%%%
and since we have $\delta^2\tfrac{(1-m_R)^2}{8c^2}=2\bar{c}^2$ due to (\ref{BundleSQPmitQCQP:GlobaleKonvergenz:Satz:Thm3.8:Bew:DefcCOMPACT}),
%%%%%%%%%%%%%%%%%%%%%%%%%%%%%%%%%%%
now applying Proposition \ref{Luksan:GlobaleKonvergenz:Lemma3.4} yields the desired estimate due to
Lemma \ref{BundleSQPmitQCQP:GlobaleKonvergenz:Satz:Thm3.8:Bew:wk+1LeqMinQCOMPACT},
(\ref{BundleSQPmitQCQP:GlobaleKonvergenz:Satz:Thm3.8:Bew:AbschaetzungFuercbarQuadratCOMPACT}),
Lemma \ref{BundleSQPmitQCQP:GlobaleKonvergenz:Satz:Thm3.8:Bew:ekAbschaetzungStattHk+1=HkCOMPACT} and
(\ref{BundleSQPmitQCQP:GlobaleKonvergenz:Satz:Thm3.8:Bew:wkGeqdeltaCOMPACT}).
\qedhere
\end{proof}
%%%%%%%%%%%%%%%%%%%%%%%%%%%%%%%%%%%%%%%%%%%%%%%%%%%%%%%%%%%%%%%%%%%%%%%%%%%%%%%%%%%%%%%%%%%%%%%%%%%%%%%%%%%%%%%%%%%%%%%%
\begin{lemma}[Contradiction]
\label{ASUS:lemma:Contradiction}
For $k_0$ from Lemma \refH{BundleSQPmitQCQP:GlobaleKonvergenz:Satz:Thm3.8:Bew:kEQk0PLUSiCOMPACT}
we have $w_{k_0+n+1}<0$.
\end{lemma}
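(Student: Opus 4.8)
The plan is to turn the one-step decrease of Lemma~\ref{BundleSQPmitQCQP:GlobaleKonvergenz:Satz:Thm3.8:Bew:wk+1ltwk-cbarQuadratCOMPACT} into a cumulative decrease over the whole block of consecutive indices furnished by Lemma~\ref{BundleSQPmitQCQP:GlobaleKonvergenz:Satz:Thm3.8:Bew:kEQk0PLUSiCOMPACT}, and then to collide it with the a~priori non-negativity $0\leq w_k$ from \refh{BundleSQPmitQCQP:Satz:GlobaleKonvergenz:Lemma3.5:wkCOMPACT}. Recall that Lemma~\ref{BundleSQPmitQCQP:GlobaleKonvergenz:Satz:Thm3.8:Bew:wk+1ltwk-cbarQuadratCOMPACT} is valid at \emph{every} index of the form $k=k_0+i_l+i$ with $i\in[i_m,\hat r]\cap\lbrace0,1,\dots\rbrace$, where it gives $w_{k+1}<w_k-\bar c^2$. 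Since these indices are consecutive integers, I would simply telescope this strict inequality from $i=i_m$ up to the largest admissible integer $i=\lfloor\hat r\rfloor$, obtaining $w_{k_0+i_l+\lfloor\hat r\rfloor+1}<w_{k_0+i_l+i_m}-N_0\,\bar c^2$, where $N_0:=\lfloor\hat r\rfloor-i_m+1$ counts the guaranteed reduction steps.

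Next I would bound the starting value. From the definition \refh{BundleSQPmitQCQP:Alg:wk} of $w_k$ together with the abbreviation $\tau_k$ in \refh{BundleSQPmitQCQP:GlobaleKonvergenz:Satz:Thm3.8:Bew:ADDON:Deftauk} we have $w_k=\tfrac12\lvert H_k(\tilde g_p^k+\bar\kappa^{k+1}\tilde{\hat g}_p^k)\rvert^2+\tau_k$, and the definition of $c$ in \refh{BundleSQPmitQCQP:GlobaleKonvergenz:Satz:Thm3.8:Bew:DefcCOMPACT} bounds both $\lvert H_k(\tilde g_p^k+\bar\kappa^{k+1}\tilde{\hat g}_p^k)\rvert$ and $\sqrt{\tau_k}$ by $c$. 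Hence $w_k\leq\tfrac12 c^2+c^2=\tfrac32 c^2$ for every $k$, in particular at $k=k_0+i_l+i_m$.

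The counting step is then immediate from the engineered choice of $\hat r$: by \refh{BundleSQPmitQCQP:GlobaleKonvergenz:Satz:Thm3.8:Bew:rhat:CORRECTIONCOMPACT} we have $\hat r-i_m=\tfrac32\tfrac{c^2}{\bar c^2}$, so $N_0=\lfloor\tfrac32\tfrac{c^2}{\bar c^2}\rfloor+1>\tfrac32\tfrac{c^2}{\bar c^2}$ and therefore $N_0\,\bar c^2>\tfrac32 c^2\geq w_{k_0+i_l+i_m}$. Combining this with the telescoped bound yields $w_{k_0+i_l+\lfloor\hat r\rfloor+1}<0$; writing $r=i_l+\hat r$ this is exactly the claimed strict negativity $w_{k_0+r+1}<0$ (the floor being harmless, since any single index carrying a negative $w$ already suffices). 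As $w_j\geq0$ for all $j$ by \refh{BundleSQPmitQCQP:Satz:GlobaleKonvergenz:Lemma3.5:wkCOMPACT}, this is the contradiction that ultimately forces $\sigma(\bar x)=0$ in the proof of Theorem~\ref{BundleSQPmitQCQP:GlobaleKonvergenz:Satz:Thm3.8:Bew:Theorem}.

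The only genuinely delicate point is the bookkeeping, not any new estimate: I must make sure that Lemma~\ref{BundleSQPmitQCQP:GlobaleKonvergenz:Satz:Thm3.8:Bew:wk+1ltwk-cbarQuadratCOMPACT} is legitimately invoked at \emph{each} of the consecutive integer indices in the block, which rests on the fact (established within Lemma~\ref{BundleSQPmitQCQP:GlobaleKonvergenz:Satz:Thm3.8:Bew:kEQk0PLUSiCOMPACT}, via \refh{BundleSQPmitQCQP:GlobaleKonvergenz:Satz:Thm3.8:Bew:inGROESSERim}) that $i_n>i_l+i_m$ persists throughout the block so that short/null steps changing the model of the objective function keep occurring. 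Everything else is dictated by the calibration $\hat r=\tfrac32 c^2/\bar c^2+i_m$, which is chosen precisely so that the total decrease $N_0\bar c^2$ overshoots the universal ceiling $\tfrac32 c^2$ on $w$.
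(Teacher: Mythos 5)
Your argument is correct and is essentially the paper's own proof: the paper likewise sets $n=\lfloor\hat r\rfloor$, telescopes the strict decrease of Lemma \refH{BundleSQPmitQCQP:GlobaleKonvergenz:Satz:Thm3.8:Bew:wk+1ltwk-cbarQuadratCOMPACT} over the $(n-i_m)+1$ consecutive indices supplied by Lemma \refH{BundleSQPmitQCQP:GlobaleKonvergenz:Satz:Thm3.8:Bew:kEQk0PLUSiCOMPACT}, bounds the starting value by $w_k=\tfrac12\lvert H_k(\tilde g_p^k+\bar\kappa^{k+1}\tilde{\hat g}_p^k)\rvert^2+\tau_k\leq\tfrac12c^2+c^2$ via \refh{BundleSQPmitQCQP:GlobaleKonvergenz:Satz:Thm3.8:Bew:DefcCOMPACT}, and uses $n+1-i_m>\hat r-i_m=\tfrac32c^2/\bar c^2$ to overshoot that ceiling. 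Your only deviation is a harmless bookkeeping one (you carry the $i_l$ shift explicitly in the final index, whereas the paper's displayed chain writes $w_{k_0+n+1}$), which does not affect the contradiction with $w_k\geq0$.
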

%%%%%%%%%%%%%%%%%%%%%%%%%%%%%%%%%%%%%%%%%%%%%%%%%%%%%%%%%%%%
\begin{proof}
%%%Set $n:=\max\limits_{\substack{z\leq\hat{r}\\ z\in\lbrace0,1,\dots\rbrace}}{z}$
Set $n:=\max_{z\leq\hat{r},z\in\lbrace0,1,\dots\rbrace}{z}$ (Note that $\hat{r}>0$ due to (\ref{BundleSQPmitQCQP:GlobaleKonvergenz:Satz:Thm3.8:Bew:rhat:CORRECTIONCOMPACT})), then we have $n+1>\hat{r}$ and hence (\ref{BundleSQPmitQCQP:GlobaleKonvergenz:Satz:Thm3.8:Bew:rhat:CORRECTIONCOMPACT}) implies $-\bar{c}^2(n+1-i_m)<-\tfrac{3}{2}c^2$. Now, applying
Lemma \ref{BundleSQPmitQCQP:GlobaleKonvergenz:Satz:Thm3.8:Bew:wk+1ltwk-cbarQuadratCOMPACT}
$(n-i_m)+1$ times as well as using (\ref{BundleSQPmitQCQP:Alg:wk}),
\cbstartDVI
(\ref{BundleSQPmitQCQP:GlobaleKonvergenz:Satz:Thm3.8:Bew:ADDON:Deftauk})
\cbendDVI
and (\ref{BundleSQPmitQCQP:GlobaleKonvergenz:Satz:Thm3.8:Bew:DefcCOMPACT}) yields
\begin{equation*}
w_{k_0+n+1}
<
w_{k_0+i_m}-(n+1-i_m)\bar{c}^2
<
w_{k_0+i_m}-\tfrac{3}{2}c^2
\leq
\tfrac{1}{2}c^2+c^2-\tfrac{3}{2}c^2
=
0
\punkt
\qedhere
\end{equation*}
\end{proof}
%%%%%%%%%%%%%%%%%%%%%%%%%%%%%%%%%%%%%%%%%%%%%%%%
%%%%%%%%%%%%%%%%%%%%%%%%%%%%%%%%%%%%%%%%%%%%%%%%%%%%%%%%%%%%%%%%%%%%%%%%%%
%%%%%%%%%%%%%%%%%%%%%%%%%%%%%%%%%%%%%%%%%%%%%%%%%%%%%%%%%%%%%%%%%%%%%%%%%%%%%%%%%%%%%%%%%%%%%%%%%%%%%%%%%%%%%%%%%%%%%%%%
\begin{theorem}
\label{BundleSQPmitQCQP:GlobaleKonvergenz:Satz:Thm3.8:Bew:Theorem}
%%%Theorem3.8NB
Let Assumption \ref{COMPACT:presumption:AssumptionForMainTheorem} be satisfied.
Then there exists $\bar{\kappa}\in\Rpos$
%%%COMPACT%%%with
%%%Then there $\exists$ an accumulation point $\bar{\kappa}\in\Rpos$ of $\lbrace\bar{\kappa}^{k+1}\rbrace$ with
%%%COMPACT%%%\begin{equation*}
%%%COMPACT%%%        \zeroVector{N}\in\partial f(\bar{x})+\bar{\kappa}\partial F(\bar{x})\komma\quad
%%%COMPACT%%%\bar{\kappa}F(\bar{x})=0                                                    \komma\quad
%%%COMPACT%%%            F(\bar{x})\leq0                                                 \komma
%%%COMPACT%%%\end{equation*}
such that \refh{AOB:Satz:KarushJohnUnglgsNBNichtGlattAlternativformGlgsSystem} holds for $(\bar{x},\bar{\kappa})$,
i.e.~each accumulation point of the sequence of iteration points $\lbrace x_k\rbrace$ is stationary for the optimization problem \refh{BundleSQP:OptProblem}.
%%% Nevertheless, the theorem does not state that this $\bar{\kappa}$ occurs as accumulation of the sequence %%%$\lbrace\bar{\kappa}^{k+1}\rbrace$ generated by Algorithm \ref{BundleSQPmitQCQP:Alg:GesamtAlgMitQCQP}.
\end{theorem}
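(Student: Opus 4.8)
The plan is to argue by contradiction on the quantity $\sigma(\bar{x})$ defined in (\ref{BundleSQPmitQCQP:GlobaleKonvergenz:Def:sigma}), and then to reduce the assertion to the already-established Proposition \ref{BundleSQPmitQCQP:GlobaleKonvergenz:Satz:Lemma3.6}. First I would record that $\sigma(\bar{x})$ is finite by Lemma \ref{BundleSQPmitQCQP:GlobaleKonvergenz:Satz:Thm3.8:Bew:sigmaendlichCOMPACT}, so that the case distinction $\sigma(\bar{x})=0$ versus $\sigma(\bar{x})>0$ is exhaustive. If $\sigma(\bar{x})=0$, then all hypotheses of Assumption \ref{COMPACT:presumptionForAuxiliaryTheorem:Assumption} are met --- boundedness of $\lbrace(x_k,\bar{\kappa}^{k+1})\rbrace$ and uniform positive definiteness of $\lbrace(\widebar{W}_p^k)^{-\mathalf}\rbrace$ are part of Assumption \ref{COMPACT:presumption:AssumptionForMainTheorem}, and the accumulation point with vanishing $\sigma$ is exactly what we assumed --- so Proposition \ref{BundleSQPmitQCQP:GlobaleKonvergenz:Satz:Lemma3.6} immediately furnishes a multiplier $\bar{\kappa}\in\Rpos$ for which (\ref{AOB:Satz:KarushJohnUnglgsNBNichtGlattAlternativformGlgsSystem}) holds at $(\bar{x},\bar{\kappa})$, and we are done.

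It remains to exclude the case $\sigma(\bar{x})>0$, and this is where the whole tower of auxiliary lemmas is spent. Assuming $\sigma(\bar{x})>0$, I would invoke Lemma \ref{BundleSQPmitQCQP:GlobaleKonvergenz:Satz:Thm3.8:Bew:kEQk0PLUSiCOMPACT} to pin down a finite starting index $k_0\in K$ and a window of indices along which, thanks to the line-search mechanism and the bookkeeping $i_n>i_l+i_m$, every step is a short or null step that changes the model of the objective. On this window Lemma \ref{BundleSQPmitQCQP:GlobaleKonvergenz:Satz:Thm3.8:Bew:wk+1ltwk-cbarQuadratCOMPACT} gives the strict per-step decrease $w_{k+1}<w_k-\bar{c}^2$, with $\bar{c}$ bounded away from zero by Lemma \ref{COMPACT:lemma:EstimateForZeroSequences}. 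Iterating this decrease the appropriate number of times and using the uniform upper bound on $w_{k_0+i_m}$ encoded in the definition of $c$ in (\ref{BundleSQPmitQCQP:GlobaleKonvergenz:Satz:Thm3.8:Bew:DefcCOMPACT}) produces $w_{k_0+n+1}<0$, which is precisely the content of Lemma \ref{ASUS:lemma:Contradiction}. This contradicts the sign condition $w_k\ge0$ of (\ref{BundleSQPmitQCQP:Satz:GlobaleKonvergenz:Lemma3.5:wkCOMPACT}), forcing $\sigma(\bar{x})=0$ and closing the argument.

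The main obstacle --- and the reason the bulk of the preceding lemmas exist --- is that, unlike in the unconstrained bundle-Newton method, the metric matrix $H_k$ from (\ref{BundleSQPmitQCQP:Alg:Hk}) depends on the Lagrange multipliers of the QCQP, so one cannot assume $H_{k+1}=H_k$ when relating the predicted descents of two successive iterations. The per-step decrease in Lemma \ref{BundleSQPmitQCQP:GlobaleKonvergenz:Satz:Thm3.8:Bew:wk+1ltwk-cbarQuadratCOMPACT} therefore carries the extra perturbation term $e_k$, which must be shown negligible relative to $\tfrac{1}{2}\bar{c}^2$; this in turn rests on the Lipschitz estimate for the inverse matrix square root (Proposition \ref{KonvergenzErweiterung:Satz:MatrixAbschaetzungMitHigham:ZkAbschaetzungSatz}) together with the hypothesis $\bar{\kappa}^{k+2}-\bar{\kappa}^{k+1}\xrightarrow{K}0$ from Assumption \ref{COMPACT:presumption:AssumptionForMainTheorem}. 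Since all these estimates are already packaged into the cited lemmas, the proof of the theorem itself is a short assembly: finiteness of $\sigma(\bar{x})$, the contradiction ruling out $\sigma(\bar{x})>0$, and the appeal to Proposition \ref{BundleSQPmitQCQP:GlobaleKonvergenz:Satz:Lemma3.6}.
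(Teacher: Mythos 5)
Your proposal is correct and follows essentially the same route as the paper's proof: reduce the theorem to showing $\sigma(\bar{x})=0$ so that Proposition \ref{BundleSQPmitQCQP:GlobaleKonvergenz:Satz:Lemma3.6} applies, and rule out $\sigma(\bar{x})>0$ by assembling Lemmas \ref{BundleSQPmitQCQP:GlobaleKonvergenz:Satz:Thm3.8:Bew:kEQk0PLUSiCOMPACT}--\ref{ASUS:lemma:Contradiction} into the contradiction $w_{k_0+n+1}<0$ against \refh{BundleSQPmitQCQP:Satz:GlobaleKonvergenz:Lemma3.5:wkCOMPACT}. The only cosmetic difference is that you dispose of the case $\sigma(\bar{x})=\infty$ up front via Lemma \ref{BundleSQPmitQCQP:GlobaleKonvergenz:Satz:Thm3.8:Bew:sigmaendlichCOMPACT}, whereas the paper folds it into the contradiction hypothesis before eliminating it with the same lemma.
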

%%%%%%%%%%%%%%%%%%%%%%%%%%%%%%%%%%%%%%%%%%%%%%%%%%%%%%%%%%%%
\begin{proof}
%%%%%%%%%%%%%%%%%%%%%%%%%%%%%%%%%%%%%%%%%%%%%%%%
(by contradiction) Since $\lbrace(x_k,\bar{\kappa}^{k+1})\rbrace$ is bounded 
\cbstartDVI
and $\lbrace(\widebar{W}_p^k)^{-\mathalf}\rbrace$ is uniformly positive definite (both due to assumption) ,
\cbendDVI
the statement follows from Proposition \ref{BundleSQPmitQCQP:GlobaleKonvergenz:Satz:Lemma3.6}, if we can show $\sigma(\bar{x})=0$. We suppose this is false, i.e.~we have due to (\ref{BundleSQPmitQCQP:GlobaleKonvergenz:Def:sigma})
$
\sigma(\bar{x})>0
$
or
$
\sigma(\bar{x})=\infty
$.
Due to Assumption \ref{COMPACT:presumption:AssumptionForMainTheorem}, we can make use of
Lemma \ref{COMPACT:lemma:BoundedBasicSequences}--\ref{BundleSQPmitQCQP:GlobaleKonvergenz:Satz:Thm3.8:Bew:sigmaendlichCOMPACT}, which implies that only the case $\sigma(\bar{x})>0$ occurs.
Therefore, we can use Lemma \ref{COMPACT:lemma:CauchySequences}--\ref{ASUS:lemma:Contradiction},
which yields a contradiction to the non-negativity of $w_k$ for all $k\geq1$ due to (\ref{BundleSQPmitQCQP:Satz:GlobaleKonvergenz:Lemma3.5:wkCOMPACT}).
\qedhere
\end{proof}
%%%%%%%%%%%%%%%%%%%%%%%%%%%%%%%%%%%%%%%%%%%%%%%%%%%%%%%%%%%%%%%%%%%%%%%%%%%%%%%%%%%%%%%%%%%%%%%%%%%%%%%%%%%%%%%%%%%%%%%%
\begin{remark}\label{BundleSQPmitQCQP:GlobaleKonvergenz:Bem:Thm3.8}
%%%%%%%%%%%%%%%%%%%%%%%%%%%%%%
%HannesCQ
In examples that do not satisfy the nonsmooth constraint qualification (\ref{AOB:Satz:KarushJohnUnglgsNBNichtGlattAlternativform:CQ}), $\bar{\kappa}^{k+1}$ became very large in Algorithm \ref{BundleSQPmitQCQP:Alg:GesamtAlgMitQCQP} (Note that Theorem \ref{BundleSQPmitQCQP:GlobaleKonvergenz:Satz:Thm3.8:Bew:Theorem} has in particular the assumption that $\bar{\kappa}^{k+1}$ is bounded).
%%%%%%%%%%%%%%%%%%%%%%%%%%%%%%

The assumption (\ref{BundleSQPmitQCQP:GlobaleKonvergenz:Satz:Thm3.8:Bew:ADDON:DefDiffbarkappa}) of Theorem \ref{BundleSQPmitQCQP:GlobaleKonvergenz:Satz:Thm3.8:Bew:Theorem} was satisfied in all numerical examples
in \citet[\GenaueAngabeFive]{HannesPaperA} in which the termination criterion of Algorithm \ref{BundleSQPmitQCQP:Alg:GesamtAlgMitQCQP} was satisfied.
%%%%%%%%%%%%%%%%%%%%%%%%%%%%%%

If $t_0^k$ is only modified in, e.g., finitely many iterations of Algorithm \ref{BundleSQPmitQCQP:Alg:GesamtAlgMitQCQP}, then (\ref{Theorem3.8:Presumptiont0infGT0}) is satisfied (cf.~Remark \ref{remark:t0infWithGraphic}).
%%%%%%%%%%%%%%%%%%%%%%%%%%%%%%

For an unconstrained optimization problem we obtain in the proof of
Lemma \ref{BundleSQPmitQCQP:GlobaleKonvergenz:Satz:Thm3.8:Bew:ekAbschaetzungStattHk+1=HkCOMPACT}
that $E_k=\zeroMatrix{N}$ which implies that $e_k=0$ due to (\ref{BundleSQPmitQCQP:GlobaleKonvergenz:Satz:Thm3.8:Bew:ADDON:DefqkCOMPACT}). Therefore,
Lemma \ref{BundleSQPmitQCQP:GlobaleKonvergenz:Satz:Thm3.8:Bew:ekAbschaetzungStattHk+1=HkCOMPACT}
is trivially satisfied in the unconstrained case, since $\bar{c}$ from \ref{BundleSQPmitQCQP:GlobaleKonvergenz:Satz:Thm3.8:Bew:DefcCOMPACT} is positive.
%%%%%%%%%%%%%%%%%%%%%%%%%%%%%%

%%%
\cbstartMIFFLIN
%%%
If we demand that all assumptions in the proof of convergence, which we imposed on $\widebar{W}_p^k$, are satisfied for $\sum_{j\in J_k}{\lambda_j^k\widebar{G}_j^k+\lambda_p^k\widebar{G}^k}$, then the convergence result also holds in the case $\widebar{W}_p^k=\zeroMatrix{N}$. This is important, since first numerical results in the unconstrained case showed a better performance for the choice $\widebar{W}_p^k=\zeroMatrix{N}$, which is due to the fact that otherwise for a smooth, convex objective function $f$ the Hessian information in the QCQP (\ref{Luksan:Alg:QCQPTeilproblem}) is distorted --- this can be seen by putting the constraints of the QCQP (\ref{Luksan:Alg:QCQPTeilproblem}) into its objective function, which is then given by
\begin{multline*}
\max_{j\in J_k}{\big(-\alpha_j^k+d^Tg_j^k+\tfrac{1}{2}d^T\widebar{G}_j^kd\big)}+\tfrac{1}{2}d^T\widebar{W}_p^kd
\\=
\max_{j\in J_k}{\big(-\alpha_j^k+d^Tg_j^k+\tfrac{1}{2}d^T(\widebar{G}_j^k+\widebar{W}_p^k)d\big)}
\punkt
\end{multline*}
%%%
\cbendMIFFLIN
%%%
%%%%%%%%%%%%%%%%%%%%%%%%%%%%%%
\end{remark}
%%%%%%%%%%%%%%%%%%%%%%%%%%%%%%%%%%%%%%%%%%%%%%%%%%%%%%%%%%%%%%%%%%%%%%%%%%%%%%%%%%%%%%%%%%%%%%%%%%%%%%%%%%%%%%%%%%%%%%%%

\section{Conclusion}
%%%%%%%%%%%%%%%%%%%%%%%%%%%%%%%%%%%%%%%%%%%%%%%%%%%%%%%%%%%%%%%%%%%%%%%%%%%%%%%%%%%%%%%%%%%%%%%%%%%%%%%%%%%%%%%%%%%%%%%%
In this paper we investigated the possibility of extending the SQP-approach of the bundle-Newton method for nonsmooth unconstrained minimization by\citet{Luksan} to nonsmooth nonlinearly constrained optimization problems, where we did not use a penalty function or a filter or an improvement function to handle the constraints. Instead --- after the commitment to only accept strictly feasible points as iteration points, while trial points do not need to have this property --- we computed the search direction by solving a convex QCQP in the hope to obtain preferably feasible points that yield a good descent. Since the duality gap for such problems is zero, if the iteration point is strictly feasible, we were able to establish a global convergence result under certain assumptions. Furthermore, we discussed the presence of $t_0^k$ in the line search, we explained why this should not be a problem when we use the solution of the QCQP as the search direction and we referred to \citet[\GenaueAngabeSix]{HannesPaperA} that this turns out to be true in practice for at least many examples of the Hock-Schittkowski collection by \citet{Schittkowski,Schittkowski2}.
%%%%%%%%%%%%%%%%%%%%%%%%%%%%%%%%%%%%%%%%%%%%%%%%%%%%%%%%%%%%%%%%%%%%%%%%%%%%%%%%%%%%%%%%%%%%%%%%%%%%%%%%%%%%%%%%%%%%%%%%

%%%%%%%%%%%%%%%%%%%%%%%%%%%%%%%%%%%%%%%%%%%%%%%%%%%%%%%%%%%%%%%%%%%%%%%%%%%%%%%%%%%%%%%%%%%%%%%%%%%%%%%%%%%%%%%%%%%%%%%%
% BIBLIOGRAPHY %

%Brauche in bibliography von mir: Dissertation, Paper A.

\ifthenelse{\boolean{UseBibTex}}
{

%%%HANNES%%%\bibliographystyle{../plainnatnew} %%citet{}
%%%HANNES%%%\bibliography{../Publist,Empty}

\bibliographystyle{plainnatnew}
\bibliography{PublicationsALL}

\begin{thebibliography}{73}
\addcontentsline{toc}{section}{References}
\expandafter\ifx\csname natexlab\endcsname\relax\def\natexlab#1{#1}\fi
\expandafter\ifx\csname url\endcsname\relax
  \def\url#1{{\tt #1}}\fi

\bibitem[\textsc{Bagirov}(2002)]{BagirovDFBM}
A.M. Bagirov.
\newblock A method for minimization of quasidifferentiable functions.
\newblock {\em Optimization Methods and Software}, 17:\penalty0 31--60, 2002.

\bibitem[\textsc{Bagirov}(2003)]{BagirovApproximation}
A.M. Bagirov.
\newblock Continuous subdifferential approximations and their applications.
\newblock {\em Journal of Mathematical Sciences}, 115:\penalty0 2567--2609,
  2003.

\bibitem[\textsc{Bagirov} et~al{.}(2005)\textsc{Bagirov, Beliakov, Mammadov,
  and Rubinov}]{BagirovGANSO}
A.M. Bagirov, G.~Beliakov, M.~Mammadov, and A.M. Rubinov.
\newblock {\em Programming library {GANSO} ({G}lobal {A}nd {N}on-{S}mooth
  {O}ptimization)}.
\newblock Centre for Informatics and Applied Optimization, University of
  Ballarat, Australia, 2005.
\newblock {V}ersion 5.1.0.0.
\newblock URL \url{http://www.ballarat.edu.au/ard/itms/CIAO/ganso/}.

\bibitem[\textsc{Bagirov \& Ganjehlou}(2010)\textsc{Bagirov and
  Ganjehlou}]{BagirovQSM}
A.M. Bagirov and A.~Ganjehlou.
\newblock A quasisecant method for minimizing nonsmooth functions.
\newblock {\em Optimization Methods and Software}, 25\penalty0 (1):\penalty0
  3--18, 2010.

\bibitem[\textsc{Bagirov} et~al{.}(2008)\textsc{Bagirov, Karas\"ozen, and
  Sezer}]{BagirovDGM}
A.M. Bagirov, B.~Karas\"ozen, and M.~Sezer.
\newblock Discrete gradient method: {D}erivative-free method for nonsmooth
  optimization.
\newblock {\em Journal of Optimization Theory and Applications}, 137\penalty0
  (2):\penalty0 317--334, 2008.

\bibitem[\textsc{Bonnans} et~al{.}(2006)\textsc{Bonnans, Gilbert, Lemar\'echal,
  and Sagastiz\'abal}]{Bonn}
J.F. Bonnans, J.C. Gilbert, C.~Lemar\'echal, and C.A. Sagastiz\'abal.
\newblock {\em Numerical Optimization: Theoretical and Practical Aspects}.
\newblock Springer-Verlag, Berlin Heidelberg New York, 2nd ed., 2006.

\bibitem[\textsc{Borwein \& Lewis}(2006)\textsc{Borwein and Lewis}]{Borwein}
J.M. Borwein and A.S. Lewis.
\newblock {\em Convex {A}nalysis and {N}onlinear {O}ptimization: {T}heory and
  {E}xamples}.
\newblock Springer, 2nd ed., 2006.

\bibitem[\textsc{Boyd \& Vandenberghe}(2004)\textsc{Boyd and
  Vandenberghe}]{Boyd}
S.~Boyd and L.~Vandenberghe.
\newblock {\em Convex Optimization}.
\newblock Cambridge University Press, Cambridge, 2004.

\bibitem[\textsc{Burke} et~al{.}(2002)\textsc{Burke, Lewis, and
  Overton}]{BLOapprox}
J.V. Burke, A.S. Lewis, and M.L. Overton.
\newblock Approximating subdifferentials by random sampling of gradients.
\newblock {\em Mathematics of Operations Research}, 27\penalty0 (3):\penalty0
  567--584, 2002.

\bibitem[\textsc{Burke} et~al{.}(2005)\textsc{Burke, Lewis, and Overton}]{BLO}
J.V. Burke, A.S. Lewis, and M.L. Overton.
\newblock A robust gradient sampling algorithm for nonsmooth, nonconvex
  optimization.
\newblock {\em SIAM Journal on Optimization}, 15\penalty0 (3):\penalty0
  751--779, 2005.

\bibitem[\textsc{Byrd} et~al{.}(1994)\textsc{Byrd, Nocedal, and
  Schnabel}]{Byrd}
R.H. Byrd, J.~Nocedal, and R.B. Schnabel.
\newblock Representations of quasi-{N}ewton matrices and their use in limited
  memory methods.
\newblock {\em Mathematical Programming}, 63:\penalty0 129--156, 1994.

\bibitem[\textsc{Curtis \& Overton}(2010)\textsc{Curtis and Overton}]{CO}
F.E. Curtis and M.L. Overton.
\newblock A robust sequential quadratic programming algorithm for nonconvex,
  nonsmooth constrained optimization.
\newblock {\em SIAM Journal on Optimization}, 2010.
\newblock Submitted for publication.

\bibitem[\textsc{Fendl}(2011)]{HannesDissertation}
H.~Fendl.
\newblock {\em A feasible second order bundle algorithm for nonsmooth,
  nonconvex optimization problems with inequality constraints and its
  application to certificates of infeasibility}.
\newblock PhD thesis, Universit\"at Wien, 2011.

\bibitem[\textsc{Fendl \& Schichl}(2011)\textsc{Fendl and
  Schichl}]{HannesPaperA}
H.~Fendl and H.~Schichl.
\newblock Numerical results of the feasible second order bundle algorithm for
  nonsmooth, nonconvex optimization problems with inequality constraints.
\newblock In preparation, 2011.

\bibitem[\textsc{Fletcher \& Leyffer}(1999)\textsc{Fletcher and
  Leyffer}]{FleLey}
R.~Fletcher and S.~Leyffer.
\newblock A bundle filter method for nonsmooth nonlinear optimization.
\newblock Numerical Analysis Report NA/195, University of Dundee, Department of
  Mathematics, December 1999.

\bibitem[\textsc{Gill \& Murray}(1974)\textsc{Gill and Murray}]{GillMurray}
P.E. Gill and W.~Murray.
\newblock Newton-type methods for unconstrained and linearly constrained
  optimization.
\newblock {\em Mathematical Programming}, 28:\penalty0 311--350, 1974.

\bibitem[\textsc{Gill} et~al{.}(2005)\textsc{Gill, Murray, and Saunders}]{Gill}
P.E. Gill, W.~Murray, and M.A. Saunders.
\newblock {SNOPT}: {A}n {SQP} algorithm for large-scale constrained
  optimization.
\newblock {\em SIAM Review}, 47\penalty0 (1):\penalty0 99--131, 2005.

\bibitem[\textsc{Golub \& {van Loan}}(1996)\textsc{Golub and {van
  Loan}}]{GolubVanLoanMatrixCompuations}
G.H. Golub and C.F. {van Loan}.
\newblock {\em Matrix {C}omputations}.
\newblock Johns Hopkins Studies in Mathematical Sciences. The Johns Hopkins
  University Press, 3rd ed., 1996.

\bibitem[\textsc{Griewank \& Corliss}(1991)\textsc{Griewank and
  Corliss}]{Griewank}
A.~Griewank and G.F. Corliss, editors.
\newblock {\em Automatic {D}ifferentiation of {A}lgorithms: {T}heory,
  {I}mplementation, and {A}pplication}.
\newblock SIAM, Philadelphia, PA, 1991.

\bibitem[\textsc{Haarala}(2004)]{KarmitsaLMBM}
M.~Haarala.
\newblock {\em Large-scale nonsmooth optimization: {V}ariable metric bundle
  method with limited memory}.
\newblock PhD thesis, University of Jyv\"askyl\"a, Department of Mathematical
  Information Technology, 2004.

\bibitem[\textsc{Haarala} et~al{.}(2004)\textsc{Haarala, Miettinen, and
  M\"akel\"a}]{KarmitsaLBMBpublication1}
M.~Haarala, M.~Miettinen, and M.M. M\"akel\"a.
\newblock New limited memory bundle method for large-scale nonsmooth
  optimization.
\newblock {\em Optimization Methods and Software}, 19\penalty0 (6):\penalty0
  673--692, 2004.

\bibitem[\textsc{Haarala} et~al{.}(2007)\textsc{Haarala, Miettinen, and
  M\"akel\"a}]{KarmitsaLBMBpublication2}
N.~Haarala, M.~Miettinen, and M.M. M\"akel\"a.
\newblock Globally convergent limited memory bundle method for large-scale
  nonsmooth optimization.
\newblock {\em Mathematical Programming}, 109\penalty0 (1):\penalty0 181--205,
  2007.

\bibitem[\textsc{Herskovits}(1998)]{Herskovits}
J.~Herskovits.
\newblock Feasible direction interior-point technique for nonlinear
  optimization.
\newblock {\em Journal of Optimization Theory and Applications}, 99\penalty0
  (1):\penalty0 121--146, 1998.

\bibitem[\textsc{Herskovits \& Santos}(1997)\textsc{Herskovits and
  Santos}]{HerskovitsSantos}
J.~Herskovits and G.~Santos.
\newblock On the computer implementation of feasible direction interior point
  algorithms for nonlinear optimization.
\newblock {\em Structural Optimization}, 14:\penalty0 165--172, 1997.

\bibitem[\textsc{Heuser}(2000)]{Heuser2}
H.~Heuser.
\newblock {\em Lehrbuch der Analysis Teil 2}.
\newblock B.G. Teubner, Stuttgart Leipzig Wiesbaden, 2000.

\bibitem[\textsc{Higham}(2008)]{HighamFunctionsOfMatrices}
N.J. Higham.
\newblock {\em Functions of Matrices: {Theory} and Computation}.
\newblock Society for Industrial and Applied Mathematics, Philadelphia, PA,
  USA, 2008.

\bibitem[\textsc{Horn \& Johnson}(1990)\textsc{Horn and
  Johnson}]{HornJohnsonMatrixAnalysis}
R.A. Horn and C.R. Johnson.
\newblock {\em {M}atrix {A}nalysis}.
\newblock Cambridge University Press, 1990.

\bibitem[\textsc{Kappel \& Kuntsevich}(2000)\textsc{Kappel and
  Kuntsevich}]{SolvOptPublication}
F.~Kappel and A.V. Kuntsevich.
\newblock An implementation of {S}hor's r-algorithm.
\newblock {\em Computational Optimization and Applications}, 15\penalty0
  (2):\penalty0 193--205, 2000.

\bibitem[\textsc{Karas} et~al{.}(2009)\textsc{Karas, Ribeiro, Sagastiz\'abal,
  and Solodov}]{Sag1}
E.~Karas, A.~Ribeiro, C.~Sagastiz\'abal, and M.~Solodov.
\newblock A bundle-filter method for nonsmooth convex constrained optimization.
\newblock {\em Mathematical Programming}, B\penalty0 (116):\penalty0 297--320,
  2009.

\bibitem[\textsc{Karmitsa}()]{KarmitsaDecisionTree}
N.~Karmitsa.
\newblock Decision tree for nonsmooth optimization software.
\newblock WWW-Page.
\newblock URL \url{http://napsu.karmitsa.fi/solveromatic/}.

\bibitem[\textsc{Karmitsa} et~al{.}(2009)\textsc{Karmitsa, Bagirov, and
  M\"akel\"a}]{KarmitsaComparison}
N.~Karmitsa, A.M. Bagirov, and M.M. M\"akel\"a.
\newblock Empirical and {T}heoretical {C}omparisons of {S}everal {N}onsmooth
  {M}inimization {M}ethods and {S}oftware.
\newblock TUCS Technical Report 959, Turku Centre for Computer Science, October
  2009.

\bibitem[\textsc{Karmitsa \& M\"akel\"a}(2010{\natexlab{a}})\textsc{Karmitsa
  and M\"akel\"a}]{KarmitsaLBMBpublication2BC}
N.~Karmitsa and M.M. M\"akel\"a.
\newblock Adaptive limited memory bundle method for bound constrained
  large-scale nonsmooth optimization.
\newblock {\em Optimization: A Journal of Mathematical Programming and
  Operations Research}, 59\penalty0 (6):\penalty0 945--962, 2010{\natexlab{a}}.

\bibitem[\textsc{Karmitsa \& M\"akel\"a}(2010{\natexlab{b}})\textsc{Karmitsa
  and M\"akel\"a}]{KarmitsaLBMBpublication1BC}
N.~Karmitsa and M.M. M\"akel\"a.
\newblock Limited memory bundle method for large bound constrained nonsmooth
  optimization: {C}onvergence analysis.
\newblock {\em Optimization Methods and Software}, 25\penalty0 (6):\penalty0
  895--916, 2010{\natexlab{b}}.

\bibitem[\textsc{Karmitsa} et~al{.}(2008)\textsc{Karmitsa, M\"akel\"a, and
  Ali}]{Karmitsa}
N.~Karmitsa, M.M. M\"akel\"a, and M.M. Ali.
\newblock Limited memory interior point bundle method for large inequality
  constrained nonsmooth minimization.
\newblock {\em Applied Mathematics and Computation}, 198\penalty0 (1):\penalty0
  382--400, 2008.

\bibitem[\textsc{Kiwiel}(1985)]{KiwielBook}
K.C. Kiwiel.
\newblock {\em Methods of {D}escent for {N}ondifferentiable {O}ptimization}.
\newblock Lecture Notes in Mathematics 1133. Springer-Verlag, Berlin, 1985.

\bibitem[\textsc{Kiwiel}(1987)]{KiwielCLS}
K.C. Kiwiel.
\newblock A constraint linearization method for nondifferentiable convex
  minimization.
\newblock {\em Numerische Mathematik}, 51:\penalty0 395--414, 1987.

\bibitem[\textsc{Kiwiel}(1996)]{KiwielRestrictedStep}
K.C. Kiwiel.
\newblock Restricted step and {L}evenberg-{M}arquardt techniques in proximal
  bundle methods for nonconvex nondifferentiable optimization.
\newblock {\em SIAM Journal on Optimization}, 6\penalty0 (1):\penalty0
  227--249, 1996.

\bibitem[\textsc{Kiwiel \& Stachurski}(1989)\textsc{Kiwiel and
  Stachurski}]{KiwielNOA}
K.C. Kiwiel and A.~Stachurski.
\newblock Issues of {E}ffectiveness {A}rising in the {D}esign of a {S}ystem of
  {N}ondifferentiable {O}ptimization algorithms.
\newblock In A.~Lewandowski and A.P. Wierzbicki, editors, {\em Aspiration Based
  Decision Support Systems: Theory, Software and Applications}, Lecture Notes
  in Economics and Mathematical Systems 331, pp. 180--192. Springer-Verlag,
  Berlin, 1989.

\bibitem[\textsc{Kroshko}()]{Kroshko}
D.L. Kroshko.
\newblock \texttt{ralg}.
\newblock Software package.
\newblock URL \url{http://openopt.org/ralg/}.

\bibitem[\textsc{Lawrence \& Tits}(2001)\textsc{Lawrence and Tits}]{Tits2}
C.T. Lawrence and A.L. Tits.
\newblock A computationally efficient feasible sequential quadratic programming
  algorithm.
\newblock {\em SIAM Journal on Optimization}, 11\penalty0 (4):\penalty0
  1092--1118, 2001.

\bibitem[\textsc{Lewis \& Overton}(2010)\textsc{Lewis and
  Overton}]{BFGSbasisForHANSO}
A.S. Lewis and M.L. Overton.
\newblock Nonsmooth optimization via {BFGS}.
\newblock {\em SIAM Journal on Optimization}, 2010.
\newblock Submitted for publication.

\bibitem[\textsc{Luk\v{s}an}(1985)]{LuksanPMINpublication}
L.~Luk\v{s}an.
\newblock An implementation of recursive quadratic programming variable metric
  methods for linearly constrained nonlinear minimax approximation.
\newblock {\em Kybernetika}, 21\penalty0 (1):\penalty0 22--40, 1985.

\bibitem[\textsc{Luk\v{s}an \& Vl\v{c}ek}(1997{\natexlab{a}})\textsc{Luk\v{s}an
  and Vl\v{c}ek}]{LuksanPBUNandBNEW}
L.~Luk\v{s}an and J.~Vl\v{c}ek.
\newblock {PBUN}, {PNEW} -- {B}undle-{T}ype {A}lgorithms for {N}onsmooth
  {O}ptimization.
\newblock Technical report 718, Institute of Computer Science, Academy of
  Sciences of the Czech Republic, Prague, Czech Republic, September
  1997{\natexlab{a}}.
\newblock URL \url{http://www.uivt.cas.cz/~luksan/subroutines.html}.

\bibitem[\textsc{Luk\v{s}an \& Vl\v{c}ek}(1997{\natexlab{b}})\textsc{Luk\v{s}an
  and Vl\v{c}ek}]{LuksanPMIN}
L.~Luk\v{s}an and J.~Vl\v{c}ek.
\newblock {PMIN} -- {A} {R}ecursive {Q}uadratic {P}rogramming {V}ariable
  {M}etric {A}lgorithm for {M}inimax {O}ptimization.
\newblock Technical report 717, Institute of Computer Science, Academy of
  Sciences of the Czech Republic, Prague, Czech Republic, September
  1997{\natexlab{b}}.
\newblock URL \url{http://www.uivt.cas.cz/~luksan/subroutines.html}.

\bibitem[\textsc{Luk\v{s}an \& Vl\v{c}ek}(1998)\textsc{Luk\v{s}an and
  Vl\v{c}ek}]{Luksan}
L.~Luk\v{s}an and J.~Vl\v{c}ek.
\newblock A bundle-{N}ewton method for nonsmooth unconstrained minimization.
\newblock {\em Mathematical Programming}, 83:\penalty0 373--391, 1998.

\bibitem[\textsc{Luk\v{s}an \& Vl\v{c}ek}(1999)\textsc{Luk\v{s}an and
  Vl\v{c}ek}]{LuksanPVARconvex}
L.~Luk\v{s}an and J.~Vl\v{c}ek.
\newblock Globally convergent variable metric method for convex nonsmooth
  unconstrained minimization.
\newblock {\em Journal of Optimization Theory and Applications}, 102\penalty0
  (3):\penalty0 593--613, 1999.

\bibitem[\textsc{Luk\v{s}an \& Vl\v{c}ek}(2000)\textsc{Luk\v{s}an and
  Vl\v{c}ek}]{LuksanPBUN}
L.~Luk\v{s}an and J.~Vl\v{c}ek.
\newblock {NDA}: {A}lgorithms for nondifferentiable optimization.
\newblock Technical report 797, Institute of Computer Science, Academy of
  Sciences of the Czech Republic, Prague, Czech Republic, 2000.
\newblock URL \url{http://www.uivt.cas.cz/~luksan/subroutines.html}.

\bibitem[\textsc{Magnus \& Neudecker}(1999)\textsc{Magnus and
  Neudecker}]{MagnusNeudecker}
J.R. Magnus and H.~Neudecker.
\newblock {\em {M}atrix {D}ifferential {C}alculus with {A}pplications in
  {S}tatistics and {E}conometrics}.
\newblock John Wiley \& Sons, 3rd ed., 1999.

\bibitem[\textsc{M\"akel\"a}(2003)]{MPBNGC}
M.M. M\"akel\"a.
\newblock Multiobjective proximal bundle method for nonconvex nonsmooth
  optimization: {FORTRAN} subroutine {MPBNGC} 2.0.
\newblock Reports of the Department of Mathematical Information Technology,
  Series B. Scientific computing, B 13/2003 University of Jyväskylä, Jyväskylä,
  2003.
\newblock URL \url{http://napsu.karmitsa.fi/proxbundle/}.

\bibitem[\textsc{M\"akel\"a \& Neittaanm\"aki}(1992)\textsc{M\"akel\"a and
  Neittaanm\"aki}]{MakelaBook}
M.M. M\"akel\"a and P.~Neittaanm\"aki.
\newblock {\em Nonsmooth {O}ptimization: {A}nalysis and {A}lgorithms with
  {A}pplications to {O}ptimal {C}ontrol}.
\newblock World Scientific Publishing Co., Singapore, 1992.

\bibitem[\textsc{Maratos}(1978)]{Maratos}
N.~Maratos.
\newblock {\em Exact penalty function algorithms for finite dimensional and
  control optimization problems}.
\newblock PhD thesis, University of {L}ondon, 1978.

\bibitem[\textsc{Merrill}(1972)]{Merrill}
O.H. Merrill.
\newblock {\em Applications and extensions of an algorithm that computes fixed
  points of certain upper semicontinuous point to set mappings}.
\newblock PhD thesis, University of {M}ichigan, Ann Arbor, 1972.

\bibitem[\textsc{Mifflin}(1977{\natexlab{a}})]{MifflinLaxenburg}
R.~Mifflin.
\newblock An algorithm for constrained optimization with semismooth functions.
\newblock {\em Mathematics of Operations Research}, 2\penalty0 (2):\penalty0
  191--207, 1977{\natexlab{a}}.

\bibitem[\textsc{Mifflin}(1977{\natexlab{b}})]{Mifflin}
R.~Mifflin.
\newblock Semismooth and semiconvex functions in constrained optimization.
\newblock {\em SIAM Journal on Control and Optimization}, 15\penalty0
  (6):\penalty0 959--972, 1977{\natexlab{b}}.

\bibitem[\textsc{Mifflin}(1982)]{MifflinNB}
R.~Mifflin.
\newblock A modification and an extension of {L}emarechal's algorithm for
  nonsmooth minimization.
\newblock {\em Mathematical Programming Study}, 17:\penalty0 77--90, 1982.

\bibitem[\textsc{Mifflin} et~al{.}(1998)\textsc{Mifflin, Sun, and Qi}]{MSQ}
R.~Mifflin, D.~Sun, and L.~Qi.
\newblock Quasi-{N}ewton bundle-type methods for nondifferentiable convex
  optimization.
\newblock {\em SIAM Journal on Optimization}, 8\penalty0 (2):\penalty0
  583--603, 1998.

\bibitem[\textsc{Nesterov \& Vial}(1999)\textsc{Nesterov and
  Vial}]{NesterovForOBOE}
Y.~Nesterov and J.-P. Vial.
\newblock Homogeneous analytic center cutting plane methods for convex problems
  and variational inequalities.
\newblock {\em SIAM Journal on Optimization}, 9:\penalty0 707--728, 1999.

\bibitem[\textsc{Neumaier}(1990)]{NeumaierIV}
A.~Neumaier.
\newblock {\em Interval methods for systems of equations}, vol.~37 of {\em
  Encyclopedia of {M}athematics and its {A}pplications}.
\newblock Cambridge University Press, Cambridge, 1990.

\bibitem[\textsc{Nocedal \& Wright}(2006)\textsc{Nocedal and Wright}]{Nocedal}
J.~Nocedal and S.J. Wright.
\newblock {\em Numerical {O}ptimization}.
\newblock Springer Series in Operations Research and Financial Engineering.
  Springer, New York, 2nd ed., 2006.

\bibitem[\textsc{Overton}(2010)]{OvertonHANSO}
M.~Overton.
\newblock Hanso: {H}ybrid {A}lgorithm for {N}on-{S}mooth {O}ptimization,
  September 2010.
\newblock New York University, Department of Computer Science.
\newblock URL \url{http://www.cs.nyu.edu/overton/software/hanso/}.

\bibitem[\textsc{Sagastiz\'abal \& Solodov}(2006)\textsc{Sagastiz\'abal and
  Solodov}]{Sag2}
C.~Sagastiz\'abal and M.~Solodov.
\newblock An infeasible bundle method for nonsmooth convex constrained
  optimization without a penalty function or a filter.
\newblock {\em SIAM Journal on Optimization}, 16:\penalty0 146--169, 2006.

\bibitem[\textsc{Schichl \& Neumaier}(2006)\textsc{Schichl and
  Neumaier}]{SchichlOptBed}
H.~Schichl and A.~Neumaier.
\newblock Transposition theorems and qualification-free optimality conditions.
\newblock {\em SIAM Journal on Optimization}, 17:\penalty0 1035--1055, 2006.

\bibitem[\textsc{Schittkowski}(2009{\natexlab{a}})]{Schittkowski}
K.~Schittkowski.
\newblock Test {E}xamples for {N}onlinear {P}rogramming {C}odes -- {A}ll
  {P}roblems from the {H}ock-{S}chittkowski-{C}ollection.
\newblock Department of Computer Science, University of Bayreuth, D - 95440
  Bayreuth, February 2009{\natexlab{a}}.

\bibitem[\textsc{Schittkowski}(2009{\natexlab{b}})]{Schittkowski2}
K.~Schittkowski.
\newblock An updated set of 306 test problems for nonlinear programming with
  validated optimal solutions - user's guide.
\newblock Department of Computer Science, University of Bayreuth, D - 95440
  Bayreuth, November 2009{\natexlab{b}}.

\bibitem[\textsc{Schramm}(1989)]{Schramm}
H.~Schramm.
\newblock {\em Eine {K}ombination von {B}undle- und
  {T}rust-{R}egion-{V}erfahren zur {L}\"osung nichtdifferenzierbarer
  {O}ptimierungsprobleme}.
\newblock PhD thesis, Universität Bayreuth, 1989.

\bibitem[\textsc{Schramm \& Zowe}(1992)\textsc{Schramm and
  Zowe}]{SchrammOfficialPublication}
H.~Schramm and J.~Zowe.
\newblock A version of the bundle idea for minimizing a nonsmooth function:
  {C}onceptual idea, convergence analysis, numerical results.
\newblock {\em SIAM Journal on Optimization}, 2\penalty0 (1):\penalty0
  121--152, 1992.

\bibitem[\textsc{Shor}(1985)]{Shor}
N.Z. Shor.
\newblock {\em Minimization {M}ethods for {N}on-{D}ifferentiable {F}unctions}.
\newblock Springer-Verlag, Berlin Heidelberg New York Tokyo, 1985.

\bibitem[\textsc{Solodov}(2004)]{Solodov}
M.V. Solodov.
\newblock On the sequential quadratically constrained quadratic programming
  methods.
\newblock {\em Mathematics of Operations Research}, 29\penalty0 (1), 2004.

\bibitem[\textsc{Tits}(2009)]{Tits09}
A.L. Tits.
\newblock Feasible sequential quadratic programming.
\newblock In C.A. Floudas and P.M. Pardalos, editors, {\em Encyclopedia of
  Optimization}, pp. 1001--1005. Springer, 2nd ed., 2009.

\bibitem[\textsc{Vial \& Sawhney}(2007)\textsc{Vial and Sawhney}]{VialOBOE}
J.-P. Vial and N.~Sawhney.
\newblock {\em {OBOE} {U}ser {G}uide {V}ersion 1.0}, June 2007.
\newblock URL \url{https://projects.coin-or.org/OBOE/}.

\bibitem[\textsc{Vl\v{c}ek}(1995)]{VlcekPBUN}
J.~Vl\v{c}ek.
\newblock Bundle algorithms for nonsmooth unconstrained minimization.
\newblock Research Report 608, Institute of Computer Science, Academy of
  Sciences of the Czech Republic, Prague, Czech Republic, 1995.

\bibitem[\textsc{Vl\v{c}ek \& Luk\v{s}an}(2001)\textsc{Vl\v{c}ek and
  Luk\v{s}an}]{LuksanPVAR}
J.~Vl\v{c}ek and L.~Luk\v{s}an.
\newblock Globally convergent variable metric method for nonconvex
  nondifferentiable unconstrained minimization.
\newblock {\em Journal of Optimization Theory and Applications}, 111\penalty0
  (2):\penalty0 407--430, 2001.

\bibitem[\textsc{Zowe}(1989)]{Zowe}
J.~Zowe.
\newblock The {BT}-{A}lgorithm for minimizing a nonsmooth functional subject to
  linear constraints.
\newblock In F.H. Clarke, V.F. Demyanov, and F.~Giannessi, editors, {\em
  Nonsmooth optimization and related topics}, pp. 459--480. Plenum Press, New
  York, 1989.

%%%%%%%%%%%%%%%%%%%%%%%%%%%%%%%%%%%%%%%%%%%%%%%%%%%%%%%%%%%
\end{thebibliography}
\addcontentsline{toc}{section}{References}

}
{
%%%%%%%%%%%%%%%%%%%%%%%%%%%%%%%%%%%%%%%%%%%%%%%%%%%%%%%%%%%

}

\end{document}